\newcommand{\bbold}{\mathbb}
\def\R { {\bbold R} }
\def\Q { {\bbold Q} }
\def\Z { {\bbold Z} }
\def\C { {\bbold C} }
\def\N { {\bbold N} }
\def \order{\operatorname{order}}
\def \exc {{\mathscr E}}
\def \ex{\operatorname{e}}
\renewcommand\epsilon{\varepsilon}
\def \d{\operatorname{d}}
\def \ev{\operatorname{e}}
\def \bar {\overline}
\def \<{\langle}
\def \>{\rangle}
\def \new{{\operatorname{new}}}
\def \dd{\operatorname{ddeg}}
\def \hat {\widehat}
\def \((  {(\!(}
\def \)) {)\!)}
\def \res{\operatorname{res}}
\def \k {{{\boldsymbol{k}}}}
\DeclareMathSymbol{\precequ}{\mathrel}{symbols}{"16}
\DeclareMathSymbol{\succequ}{\mathrel}{symbols}{"17}
\def \comp{\mathrel{-{\hskip0.06em\!\!\!\!\!\asymp}}}
\def \nasymp{\not\asymp}
\newcommand{\claim}[2][\!\!]{\medskip\noindent {\it Claim #1:} {\it #2}\medskip}
\newcommand{\case}[2][\!\!]{\medskip\noindent {\it Case #1:} {\it #2}\/}
\newtheorem{theorem}{Theorem}[section]
\newtheorem{lemma}[theorem]{Lemma}
\newtheorem{prop}[theorem]{Proposition}
\newtheorem{cor}[theorem]{Corollary}
\newtheorem*{theoremUnnumbered}{Theorem}
\newtheorem*{corUnnumbered}{Corollary}
\theoremstyle{definition}
\theoremstyle{remark}
\newtheorem*{remark}{Remark}
\newtheorem*{conjecture}{Conjecture}
\newcommand{\abs}[1]{\lvert#1\rvert}
\def \fd {{\mathfrak d}}
\def \fm {{\mathfrak m}}
\def \fn {{\mathfrak n}}
\def \fv {{\mathfrak v}}
\def \fw {{\mathfrak w}}
\def \ddeg{\operatorname{ddeg}}
\let\oldi\i
\let\oldj\j
\renewcommand\i{\relax\ifmmode{\boldsymbol{i}}\else\oldi\fi}
\renewcommand\j{\relax\ifmmode{\boldsymbol{j}}\else\oldj\fi}
\def \btau{{\boldsymbol{\tau}}}
\renewcommand\leq{\leqslant}
\renewcommand\geq{\geqslant}
\renewcommand\preceq{\preccurlyeq}
\renewcommand\succeq{\succcurlyeq}
\renewcommand\le{\leq}
\renewcommand\ge{\geq}
\DeclareMathAlphabet{\mathbf}{OML}{cmm}{b}{it}
\DeclareFontFamily{U}{fsy}{}
\DeclareFontShape{U}{fsy}{m}{n}{<->s*[.9]psyr}{}
\DeclareSymbolFont{der@m}{U}{fsy}{m}{n}
\DeclareMathSymbol{\der}{\mathord}{der@m}{182}
\DeclareSymbolFont{der@m}{U}{fsy}{m}{n}
\DeclareMathSymbol{\derdelta}{\mathord}{der@m}{100}
\newcommand\wt{\operatorname{wt}}
\newcommand\bsigma{\boldsymbol{\sigma}}
\newcommand\dwt{\operatorname{dwt}}
\newcommand\ndeg{\operatorname{ndeg}}
\DeclareSymbolFont{imag@m}{OT1}{cmr}{m}{ui}
\DeclareMathSymbol{\imag}{\mathord}{imag@m}{105}
\DeclareFontFamily{OMS}{smallo}{}
\DeclareFontShape{OMS}{smallo}{m}{n}{<->s*[.65]cmsy10}{}
\DeclareSymbolFont{smallo@m}{OMS}{smallo}{m}{n}
\DeclareMathSymbol{\smallo}{\mathord}{smallo@m}{79}
\DeclareFontFamily{OMS}{largerdot}{}
\DeclareFontShape{OMS}{largerdot}{m}{n}{<->s*[.8]cmsy10}{}
\DeclareSymbolFont{largerdot@m}{OMS}{largerdot}{m}{n}
\DeclareMathSymbol{\largerdot}{\mathord}{largerdot@m}{15}
\DeclareMathSymbol{\llambda}{\mathord}{der@m}{108}
\DeclareMathSymbol{\rrho}{\mathord}{der@m}{114}
\def \upl{\uplambda}
\begin{document}


\title{Maximal Immediate Extensions of Valued Differential Fields}

\author[Aschenbrenner]{Matthias Aschenbrenner}
\address{Department of Mathematics\\
University of California, Los Angeles\\
Los Angeles, CA 90095\\
U.S.A.}
\email{matthias@math.ucla.edu}

\author[van den Dries]{Lou van den Dries}
\address{Department of Mathematics\\
University of Illinois at Urbana-Cham\-paign\\
Urbana, IL 61801\\
U.S.A.}
\email{vddries@math.uiuc.edu}

\author[van der Hoeven]{Joris van der Hoeven}
\address{\'Ecole Polytechnique\\
91128 Palaiseau Cedex\\
France}
\email{vdhoeven@lix.polytechnique.fr}

\begin{abstract} We show that every valued differential field has
an immediate strict extension that is spherically complete. We also discuss 
the issue of uniqueness up to isomorphism of such an extension. 
\end{abstract}

\date{December 2017}

\maketitle

\section*{Introduction} 

\noindent  
In this paper a {\em valued differential field\/} is a valued field $K$ of equicharacteristic zero, equipped with a derivation 
$\der\colon K \to K$ that is continuous with respect to the valuation topology on the field. (The difference with \cite{ADH} and \cite{ADHO} is that there the definition did not include the continuity requirement.)

Let $K$ be a valued differential field. Unless specified otherwise, $\der$ is the derivation of $K$, and we let $v\colon K^\times=K\setminus\{0\} \to \Gamma=v(K^\times)$ be the valuation, with valuation ring $\mathcal{O}=\mathcal{O}_v$ and maximal ideal
$\smallo=\smallo_v$ of $\mathcal{O}$; we use the subscript 
$K$, as in $\der_K$, $v_K$, $\Gamma_K$, $\mathcal{O}_K$, $\smallo_K$, if we wish to indicate the dependence of $\der$, $v$, $\Gamma$, $\mathcal{O}$, $\smallo$ on $K$. We denote the residue field $\mathcal{O}/\smallo$ of $K$ by $\res(K)$. When the ambient $K$ is clear from the context we often write $a'$ instead of $\der(a)$
for $a\in K$, and set $a^\dagger:= a'/a$ for $a\in K^\times$. 

By \cite[Section~4.4]{ADH}, the continuity requirement on $\der$ amounts to 
the existence of a $\phi\in K^\times$ such that $\der\smallo\subseteq \phi\smallo$; the derivation of $K$ is said to be {\it small}\/ if this holds for $\phi=1$, that is, $\der\smallo\subseteq \smallo$.  By an {\it extension of $K$}\/ we
mean a valued differential field extension of $K$.
Let $L$ be an extension of $K$.
We identify $\Gamma$ in the usual way with an ordered subgroup of $\Gamma_L$ and $\res(K)$ with a subfield
of $\res(L)$, and we say that~$L$ is an {\it immediate}\/ extension of $K$ if $\Gamma=\Gamma_L$ and $\res(K)=\res(L)$.
We call the extension~$L$ of $K$  {\it strict}\/ if for every
$\phi\in K^\times$,
$$ \der\smallo\ \subseteq\ \phi\smallo\ \Rightarrow\
\der_L\smallo_L\ \subseteq\ \phi\smallo_L,  \qquad 
\der\mathcal{O}\ \subseteq\ \phi\smallo\ \Rightarrow\
\der_L\mathcal{O}_L\ \subseteq\ \phi\smallo_L.$$
With these conventions in place, our goal is to establish the following:

\begin{theoremUnnumbered}\label{imms} Every valued differential field has an immediate strict extension that is spherically complete.
\end{theoremUnnumbered}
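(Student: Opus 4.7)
The plan is to follow the classical Kaplansky pattern, adapted to the differential setting. First I would check that the class of immediate strict extensions of $K$ is closed under unions of chains: immediacy clearly passes through directed unions, and the strictness condition, being a conjunction of universally quantified implications whose hypotheses only mention $K$ and whose conclusions are preserved by taking unions of valuation rings, also survives. Zorn's lemma then yields a maximal immediate strict extension $K^*$ of $K$, and the problem reduces to showing $K^*$ is spherically complete.

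Suppose for contradiction that $K^*$ is not spherically complete: there is a pseudo-Cauchy sequence $(a_\rho)$ in $K^*$ with no pseudo-limit in $K^*$. I would then construct a proper immediate strict extension $L = K^*\<\ell\>$ in which $\ell$ is a pseudo-limit of $(a_\rho)$, contradicting maximality. Following the differential Kaplansky machinery available in the ADH framework, one splits into cases according to whether $(a_\rho)$ is of differential-transcendental type over $K^*$, in which case $\ell$ can be taken differentially transcendental with the prescribed pseudo-limiting behaviour, or of differential-algebraic type, in which case $\ell$ is forced to be a zero of a minimal differential polynomial $P \in K^*\{Y\}$ associated to the sequence. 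Either construction yields a valued differential field extension $L$ that is immediate over $K^*$.

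The critical step is showing $L$ is strict over $K$. First I would reduce to the case of small derivation by rescaling: continuity of $\der$ on $K$ supplies a $\phi \in K^\times$ with $\der\smallo \subseteq \phi\smallo$, and replacing $\der$ by $\phi^{-1}\der$ converts the first strictness clause into preservation of smallness. Concretely, one must pin down $\der_L\ell$: since $\ell$ is a pseudo-limit of $(a_\rho)$ and $\der_{K^*}$ has the required smallness modulus, one expresses $\der_L\ell$ as a pseudo-limit involving the $\der a_\rho$ — automatically in the d-transcendental case, and via differentiating the relation $P(\ell)=0$ in the d-algebraic case — and extracts the desired inclusion $\der_L\smallo_L \subseteq \phi\smallo_L$. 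The second clause of strictness, concerning $\der\mathcal{O} \subseteq \phi\smallo$, is handled analogously with slightly sharper bounds.

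I expect the d-algebraic case of strictness to be the main obstacle. There, the presence of the minimal differential polynomial $P$ means one cannot choose $\der_L\ell$ freely; instead one must invoke fine information about the Newton diagram of $P$, together with the precise valuation relations forced by $(a_\rho)$ being pseudo-Cauchy, to bound $v_L(\der_L\ell)$ from below by the valuation of $\phi$ whenever $v_L(\ell)>0$. The minimality of $P$ is what makes these estimates tight enough to carry through, and similar Newton-diagram techniques should control the second strictness clause. Once strictness of $L$ is established, maximality of $K^*$ is contradicted, completing the proof.
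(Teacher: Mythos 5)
Your high-level template -- take a maximal immediate strict extension $K^*$ via Zorn and show it is spherically complete by contradiction -- is in fact the skeleton of the paper's own argument, so the outline is sound.  But the proposal is missing the ideas that make the plan work, and in two places it asserts steps that do not hold as stated.

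The central issue is the strictness of the proper immediate extension $L = K^*\langle\ell\rangle$.  You suggest pinning down $\der_L\ell$ via the pseudo-Cauchy data, by ``differentiating $P(\ell)=0$'' in the differential-algebraic case.  That is the classical Kaplansky move and it does not transport: differentiating $P(\ell)=0$ involves $\ell', \ell'', \dots$ which are not yet constrained, and the paper instead builds the extension as $K[Y_0,\dots,Y_r]/(p)$ with a valuation defined via Newton-degree arguments, not via any pseudo-limit formula for $\ell'$.  The real difficulty in the strictness proof is what the paper calls Case~2 of Lemma~\ref{zda} (where the minimal differential polynomial $P$ has degree~$1$ in $Y^{(r)}$), and resolving it requires the \emph{flexibility} of $K^*$, i.e.\ that for $P$ with $\ndeg P\ge 1$ the values $P(y)^\sim$ take infinitely many equivalence classes near $0$.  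Flexibility appears nowhere in your plan, and it is not a free hypothesis: it is only established under the structural assumptions $\Gamma^{>}$ has no least element and $S(\der)=\{0\}$, where $S(\der)$ is the stabilizer of $\Gamma(\der)$.  Consequently the maximality-plus-construction argument as you describe it cannot run directly for an arbitrary $K$; the paper first proves the result under those two hypotheses (Theorem~\ref{thimm}), and then reduces the general case to that one by coarsening with respect to $S(\der)$ and by a separate treatment of the case where $\Gamma^{>}$ has a least element (Section~\ref{sec:coarsening}).  Your proposal does not identify $S(\der)$, flexibility, or the coarsening step, and these are exactly the new ingredients the problem requires.

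Two smaller misstatements.  First, you say the second strictness clause (the one with $\der\mathcal{O}\subseteq\phi\smallo$) is handled ``analogously with slightly sharper bounds''; in fact for immediate extensions it simply follows from the first clause (this is Lemma~\ref{strictimm}), so there is nothing to estimate there.  Second, the dichotomy you invoke is the Kaplansky transcendental/algebraic type split, whereas the paper's dichotomy is $Z(K,\ell)=\emptyset$ versus $Z(K,\ell)\ne\emptyset$, and the $Z(K,\ell)$ formalism is designed precisely so that strictness can be controlled through the vanishing set and Lemma~\ref{Zw} -- a step whose proof again invokes flexibility.  So while your plan shares the Zorn-then-complete architecture of Section~\ref{sec:immext}, the part you yourself flag as ``the main obstacle'' is left as a hope, and the ideas needed to overcome it (flexibility, $S(\der)$, and the coarsening reduction) are absent.
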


\noindent
We consider this as a differential analogue of Krull's well-known theorem in~\cite[\S{}13]{K} that every valued field has a spherically complete immediate valued field extension. (Recall that for a valued field the geometric condition of spherical completeness is equivalent to the algebraic condition of being maximal in the sense of not having a proper immediate valued field extension.)  In our situation, strictness is analogous to the extended derivation ``preserving the norm''.  Weakening the theorem by dropping ``strict'' would still require strictness at various places in the proof, for example when using Lemma~\ref{Zw} and in coarsening arguments at the end of Section~\ref{sec:coarsening}. 

Throughout this paper $K$ is a valued differential field. For the sake of brevity we
say that $K$ has the {\it Krull property\/} if $K$ has a spherically complete immediate strict extension.
Let us first consider two trivial cases:

\medskip\noindent
{\em Case $\Gamma=\{0\}$.}\/ Then $K$ itself is a spherically complete immediate strict extension of $K$, and thus $K$ has the
Krull property.

\medskip\noindent
{\em Case $\der=0$.}\/ Take a spherically complete immediate valued field extension $L$ of the valued field $K$. Then $L$ with the trivial derivation is a spherically complete immediate
strict extension of $K$, so $K$ has the Krull property.

\medskip\noindent
Thus towards proving our main theorem we can assume $\Gamma\ne \{0\}$
and $\der\ne 0$ when convenient. We shall freely use facts
(with detailed references) from Sections 3.4, 4.1, 4.2, 4.3, 4.4, 4.5, 5.7, 6.1, 6.2, 6.3, 6.5, 6.6, 6.9, 9.1, 9.2, 10.5, and 11.1~in~\cite{ADH}.

Special cases of the main theorem 
are  in \cite{ADH}:  by \cite[Corollary~6.9.5]{ADH}, if $K$ has small derivation and $\der\mathcal O\not\subseteq\smallo$, then $K$ has a spherically complete immediate  extension with small derivation;  in \cite[Corollary~11.4.10]{ADH} we obtained spherically complete immediate extensions of
certain asymptotic fields.
What is new compared to
the proofs of these special cases? Mainly the notion of 
{\em strict extension\/}, the invariant convex subgroup $S(\der)$
of $\Gamma$, the {\em flexibility\/}
condition on $K$, and the lemmas about these (related) concepts; see 
Sections~\ref{sec:prelim}, \ref{secflex}, \ref{sec:flexlemmas}, \ref{sec:coarsening}. We also generalize in Section~\ref{evtbeh} the notion of 
{\em Newton degree\/} from \cite[11.1,~11.2]{ADH} to our setting. 
This gives us the
tools to adapt in Section~\ref{sec:immext} the proofs of 
these special cases to deriving our main theorem for $K$ such that $\Gamma^{>}$ has no least element and
$S(\der)=\{0\}$. Section~\ref{sec:coarsening} shows how that case extends to arbitrary $K$ using coarsening by $S(\der)$. 

We give special attention to asymptotic fields, a special kind of valued differential field introduced in~\cite[Section~9.1]{ADH}: $K$ is {\it asymptotic}\/
if for all nonzero $ f,g \in \smallo$,  $$f\in g\smallo\ \Longleftrightarrow\  f'\in   g'\smallo.$$  
For us, $H$-fields are asymptotic fields of particular interest, see~\cite[Sec\-tion~10.5]{ADH}: an {\it $H$-field}\/ is an {\it ordered}\/ valued differential field $K$ whose valuation ring~$\mathcal O$ is convex and such that, with $C=\{f\in K:f'=0\}$ denoting the constant field of~$K$, we have $\mathcal O=C+\smallo$, and  for all $f\in K$,
$f>C\Rightarrow{f'>0}$. Hardy fields extending $\R$ are $H$-fields.
Our theorem answers some questions about Hardy fields and $H$-fields that have been
around for some time. For example, it gives the following positive answer to
Question~2 in Matusinski~\cite{Ma}. (However, in~\cite{Ma}
the notion of $H$-field is construed too narrowly.) 
See also the remarks at the end of Sec\-tion~\ref{secflex}. 

\begin{corUnnumbered}
Each $H$-field has an immediate spherically complete $H$-field extension. 
\end{corUnnumbered}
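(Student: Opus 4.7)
The plan is to deduce the corollary from the main theorem by showing that the immediate strict spherically complete valued differential field extension $L$ of $K$ produced by the main theorem can be canonically promoted to an $H$-field extension. Since the corollary does not mention strictness while the main theorem produces a strict extension, we have more information than we literally need, and it is precisely the strictness that makes the promotion possible.

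To construct an ordering on $L$: since $L$ is immediate over $K$, we have $\Gamma_L=\Gamma$ and $\res L=\res K$, and the common residue field inherits an ordering from the convexity of $\mathcal O$ in the $H$-field $K$. By a standard construction for immediate extensions of ordered valued fields, there is a unique ordering on $L$ extending that on $K$ and making $\mathcal O_L$ convex---concretely, for $a\in L^\times$ pick $b\in K^\times$ with $v(b)=v(a)$ and define $\sgn(a):=\sgn_K(b)\cdot\sgn_{\res K}(\overline{a/b})$, noting that $a/b\in\mathcal O_L^\times$ has nonzero residue in $\res L=\res K$. The axiom $\mathcal O_L=C_L+\smallo_L$ then follows at once from $\mathcal O=C+\smallo$ together with $\res L=\res K$: any $a\in\mathcal O_L$ is congruent modulo $\smallo_L$ to an element of $C\subseteq C_L$.

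The remaining axiom $f>C_L\Rightarrow f'>0$ is where the real work lies. My plan is first to show that $L$ is asymptotic, using that it is an immediate strict extension of the asymptotic field $K$ and invoking the strictness-preservation lemmas that form the backbone of the paper's proof of the main theorem. Once $L$ is known to be asymptotic, a direct computation yields the axiom: given $f>C_L$, we have $v(f)<0$; pick $a\in K^{>0}$ with $v(a)=v(f)$, so $a>C$ in $K$ and hence $a'>0$. Writing $u:=f/a\in\mathcal O_L^\times$ and decomposing $u=c+\epsilon$ with $c\in C_L^{>0}$ and $\epsilon\in\smallo_L$, one obtains $f'=a'c+a'\epsilon+a\epsilon'$; the asymptotic property yields $v(a'\epsilon),v(a\epsilon')>v(a'c)=v(a')$ (the latter reducing to $v(\epsilon')>v(a^\dagger)$, which is the asymptotic comparison of $\epsilon$ with $a^{-1}$), so $f'$ has the sign of $a'c>0$. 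The main obstacle is thus the preservation of the asymptotic property under the immediate strict extension---this is exactly where strictness, rather than a bare immediate extension, earns its keep, and it is the technical hinge on which the corollary rests.
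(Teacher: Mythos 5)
Your proposal is correct and follows essentially the same route as the paper: apply the main theorem to get a spherically complete immediate strict extension $L$, invoke Lemma~\ref{as2} (strictness preserves the asymptotic property for immediate extensions) to conclude $L$ is asymptotic, then lift the ordering to $L$ and verify the $H$-field axioms. The only difference is that the paper outsources the ordering-lift and axiom verification to \cite[Lemma~10.5.8]{ADH}, whereas you carry it out directly (and correctly, though the key inequality $v(\epsilon')>v(a^\dagger)$ is most cleanly read off from $\Psi_L < v(\der\smallo_L^{\neq})$ for asymptotic $L$ rather than from a comparison of $\epsilon$ with $a^{-1}$, since the relation between $v\epsilon$ and $-va$ is not a priori determined).
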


\noindent
(Here strictness of the extension is automatic by Lemma~\ref{as1} below.)  
This corollary  follows from our main theorem in conjunction with the following: any immediate strict extension of an
asymptotic field is again asymptotic by Lemma~\ref{as2} below; and 
any immediate asymptotic extension $L$ of an $H$-field $K$ has
a unique field ordering extending that of $K$ in which $\mathcal O_L$ is convex;  equipped with this
ordering, $L$ is an $H$-field by \cite[Lemma~10.5.8]{ADH}.

\subsection*{Uniqueness} By Kaplansky~\cite{Ka}, a valued field 
$F$ of equicharacteristic zero
has up to isomorphism over $F$ a unique spherically complete
immediate valued field extension.  In Section~\ref{Uniqueness} we prove such uniqueness  in the setting of valued differential fields, but only when the valuation is discrete.
We also discuss there a conjecture from~\cite{ADH} about this, and recent progress on it. 

In Section~\ref{Nonuniqueness} we give an example of an $H$-field
where such uniqueness fails. Here we use some basic facts
related to transseries from Sections~10.4, 10.5, 13.9, and Appen\-dix~A in~\cite{ADH}. 

\subsection*{Acknowledgements}  We thank the referee for suggesting to make the paper more accessible by including 
explicit statements of some material from \cite{ADH}.

\subsection*{Notations and conventions} We borrow these notational conventions from~\cite{ADH}. For the reader's convenience
we repeat what is most needed in this paper. We set
$\N:=\{0,1,2,\dots\}$ and let $m$,~$n$ range over $\N$.

A valuation (tacitly, on a field) takes values in an ordered (additively written) abelian group $\Gamma$, where ``ordered'' here means ``totally ordered'', and for such $\Gamma$,
$$\Gamma^{<}\ :=\  \{\gamma\in \Gamma:\ \gamma<0\}, \quad \Gamma^{\le}\ :=\  \{\gamma\in \Gamma:\ \gamma\le 0\},$$
and likewise we define the subsets $\Gamma^{>}$, $\Gamma^{\ge}$, and $\Gamma^{\ne}:=\Gamma\setminus \{0\}$ of $\Gamma$.   
For $\alpha,\beta\in\Gamma$, $\alpha=o(\beta)$  means that $n\abs{\alpha}<\abs{\beta}$ for all $n\geq 1$.

For a field $E$ we set $E^\times:=E\setminus \{0\}$. Let $E$ be a valued field
with valuation $v\colon E^\times \to \Gamma_E=v(E^\times)$, valuation ring $\mathcal{O}_E$ and maximal ideal $\smallo_E$ of 
$\mathcal{O}_E$. When the ambient valued field $E$ is clear from the context, then for $a,b\in E$ we set 
\begin{align*} a\asymp b &\ :\Leftrightarrow\ va =vb, & a\preceq b&\ :\Leftrightarrow\ va\ge vb, & a\prec b &\ :\Leftrightarrow\  va>vb,\\
a\succeq b &\ :\Leftrightarrow\ b \preceq a, &
a\succ b &\ :\Leftrightarrow\ b\prec a, & a\sim b &\ :\Leftrightarrow\ a-b\prec a.
\end{align*}
It is easy to check that if $a\sim b$, then $a, b\ne 0$, and that
$\sim$ is an equivalence relation on $E^\times$; let $a^{\sim}$
be the equivalence class of an element $a\in E^\times$ with respect to~$\sim$. We use {\em pc-sequence\/} to abbreviate
{\em pseudocauchy sequence}; see \cite[Sections 2.2, 3.2]{ADH}.
Let also a valued field extension $F$ of $E$ be given. Then we identify in the usual way $\res(E)$ with a subfield of $\res(F)$, and
$\Gamma_E$ with an ordered subgroup of $\Gamma_F$.

Next, let $E$ be a differential field of characteristic $0$ (so the field $E$ is equipped with a single derivation $\der\colon E \to E$, as in \cite{ADH}). Then we have the differential ring $E\{Y\}=E[Y, Y', Y'',\dots]$ of differential polynomials in an indeterminate $Y$, and we set $E\{Y\}^{\ne}:= E\{Y\}\setminus \{0\}$. Let  
$P=P(Y)\in E\{Y\}$ have order at most $r\in \N$, that is,
$P\in E[Y,Y',\dots, Y^{(r)}]$. Then $P=\sum_{\i}P_{\i}Y^{\i}$, as in \cite[Section 4.2]{ADH}, with $\i$ ranging over tuples
$(i_0,\dots,i_r)\in \N^{1+r}$, $Y^{\i}:= Y^{i_0}(Y')^{i_1}\cdots (Y^{(r)})^{i_r}$, and the coefficients $P_{\i}$ are in $E$, and $P_{\i}\ne 0$ for only finitely many $\i$. For such $\i$ we set 
$$|\i|\ :=\ i_0+i_1+ \cdots + i_r, \qquad \|\i\|\ :=\ i_1+2i_2 + \dots + ri_r.$$
The {\em degree\/} and    the {\em weight} of $P\ne 0$ are, respectively,
$$\deg P:=\max\big\{|\i|:\ P_{\i}\ne 0\big\}\in \N, \qquad \wt P:=\max\big\{\|\i\|:\ P_{\i}\ne 0\big\}\in \N.$$
For $d\in\N$ we let $P_d:=\sum_{\abs{\i}=d} P_{\i} Y^{\i}$ be the {\it homogeneous part of degree $d$}\/ of $P$,
so $P=\sum_{d\in\N} P_d$ where $P_d=0$ for all but finitely many $d\in\N$.
We also use the decomposition $P=\sum_{\bsigma}P_{[\bsigma]}Y^{[\bsigma]}$; here
$\bsigma$ ranges over words $\bsigma=\sigma_1\cdots\sigma_d\in \{0,\dots,r\}^*$, $Y^{[\bsigma]}:=Y^{(\sigma_1)}\cdots Y^{(\sigma_d)}$, all $P_{[\bsigma]}\in E$ and $P_{[\bsigma]}\ne 0$ for only finitely many $\bsigma$, and $P_{[\bsigma]}=P_{[\pi(\bsigma)]}$ for all $\bsigma=\sigma_1\cdots \sigma_d$ and
permutations~$\pi$ of $\{1,\dots,d\}$, with
$\pi(\bsigma)=\sigma_{\pi(1)}\cdots \sigma_{\pi(d)}$. We set
$\|\bsigma\|:=\sigma_1+\cdots +\sigma_d$ for $\bsigma=\sigma_1\cdots \sigma_d$, so $\|\i\|=\|\bsigma\|$ whenever 
$Y^\i=Y^{[\bsigma]}$. We also use for $a\in E$ the 
{\em additive conjugate\/} $P_{+a}:= P(a+Y)\in E\{Y\}$ and the {\em multiplicative conjugate\/} $P_{\times a}:= P(aY)\in E\{Y\}$.  
If $P\notin E$, the {\it complexity}\/ of $P$ is the triple $(r,s,t)\in\N^3$ where $r$ is the order of~$P$, $s$ is the degree of $P$ in $Y^{(r)}$, and  $t$ is the total degree of $P$ (so $s,t\geq 1$).  
For the purpose of comparing complexities of differential polynomials we order~$\N^3$ lexicographically. Thus for $P,Q\in E\{Y\}\setminus E$, the complexity of $P$ and the complexity of $Q$ are less than the complexity of $PQ$.

For a valued differential field $K$ 
we construe the differential fraction field $K\<Y\>$ of $K\{Y\}$ as a valued differential field extension of $K$ by extending
$v\colon K^\times \to \Gamma$ to the valuation $K\<Y\>^\times \to \Gamma$ by requiring $vP=\min vP_{\i}$ for $P\in K\{Y\}^{\ne}$. 

\section{Preliminaries} \label{sec:prelim}

\noindent
We recall some basics about valued differential fields, mainly from Section~4.4 and Chapter~6 of \cite{ADH}, and add further material on compositional conjugation, strict extensions, the set
$\Gamma(\der)\subseteq \Gamma$, the convex subgroup $S(\der)$ of
$\Gamma$, and
coarsening. We finish this preliminary section 
with facts about the dominant degree of a differential polynomial as needed in the next section. 
{\em In this section $\phi$ ranges over $K^\times$}.

\subsection*{Compositional conjugation} The compositional conjugate
$K^\phi$ of $K$ is the valued differential field that has the same underlying valued field as $K$, but with derivation~$\phi^{-1}\der$. Let $L$ be an extension of $K$. Then $L^{\phi}$ extends $K^{\phi}$, and 
$$\text{$L$ strictly extends $K$}\ \Longleftrightarrow\ \text{$L^{\phi}$ strictly extends $K^{\phi}$.}$$
Therefore, $K$ has the Krull property iff $K^\phi$ has the Krull property:
$L$ is a spherically complete immediate strict extension of $K$ iff  $L^{\phi}$ is a spherically complete immediate strict extension of $K^{\phi}$. Moreover, 
$$\der\smallo\subseteq \phi\smallo\ \Longleftrightarrow\
\text{the derivation $\phi^{-1}\der$ of $K^\phi$ is small.}$$ 
Thus for the purpose of showing that $K$ has the Krull property
it suffices to deal with the case that its derivation $\der$ is small. 

\subsection*{Strict extensions} 
Suppose $\der$ is small. Then $\der\mathcal{O}\subseteq \mathcal{O}$ by \cite[Lemma~4.4.2]{ADH}, so $\der$ induces a derivation
$$a+\smallo \mapsto (a+\smallo)':=a' + \smallo$$ on the residue field $\res(K)$; the residue field of $K$ with this derivation is called the {\em differential residue field of $K$} and is denoted by $\res(K)$ as well. Note that the derivation
of $\res(K)$ is trivial iff $\der\mathcal{O}\subseteq \smallo$.

\medskip\noindent
The field $\C\(( t\)) $ of Laurent series with derivation
$\der=d/dt$ and the usual valuation, where $\mathcal{O}=\C[[t]]$ and $\smallo=t\C[[t]]$, is a valued differential field, since $\der\smallo=\mathcal{O}=t^{-1}\smallo$. It is an example of a valued differential field with $\der\mathcal{O}\subseteq \mathcal{O}$, but $\der\smallo\not\subseteq \smallo$.
On the other hand, under a mild assumption on $\Gamma$ we do
have $\der\mathcal{O}\subseteq \mathcal{O}\Rightarrow \der\smallo\subseteq \smallo$:

\begin{lemma}\label{small} Suppose $\der\smallo\subseteq \mathcal{O}$ and
$\Gamma^{>}$ has no least element. Then $\der\smallo\subseteq \smallo$.
\end{lemma}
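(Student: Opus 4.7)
The approach is to exploit the Leibniz rule together with the fact that ``$\Gamma^{>}$ has no least element'' is exactly the condition that lets one split any positive element of $\Gamma$ as a sum of two positive elements, and correspondingly factor any nonzero element of $\smallo$ as a product of two nonzero elements of $\smallo$.

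First I would fix $a \in \smallo^{\ne}$ and reduce the problem to showing $v(\der a) > 0$. Since $v(a) > 0$ and $\Gamma^{>}$ has no least element, I can pick $\beta \in \Gamma$ with $0 < \beta < v(a)$; then $\gamma := v(a)-\beta$ also lies in $\Gamma^{>}$. Next I would pick any $b \in K^\times$ with $v(b)=\beta$ and set $c := a/b \in K^\times$, so that $v(c)=\gamma > 0$ and $a = bc$ with $b,c \in \smallo^{\ne}$.

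Now the Leibniz rule gives $\der a = b' c + b c'$. By the hypothesis $\der\smallo \subseteq \mathcal{O}$ applied to $b$ and $c$, both $b'$ and $c'$ lie in $\mathcal{O}$. Therefore $b'c, bc' \in \mathcal{O}\cdot\smallo \subseteq \smallo$, so $\der a \in \smallo$, as required.

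There is no real obstacle here: the whole argument is a single application of Leibniz, and the only place the hypothesis on $\Gamma$ is used is to produce the factorization $a=bc$ in $\smallo^{\ne}$. Conversely, this shows why the hypothesis is natural: in the example $\C(\!(t)\!)$ mentioned just before the lemma, $\Gamma = \Z$ \emph{does} have a least positive element, and indeed one cannot factor $t \in \smallo$ as a product of two elements of $\smallo$, which is precisely what allows $\der\smallo\subseteq \mathcal{O}$ without $\der\smallo\subseteq\smallo$.
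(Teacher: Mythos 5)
Your proof is correct and takes essentially the same approach as the paper: factor $a=bc$ with $b,c\in\smallo^{\ne}$ (using that $\Gamma^{>}$ has no least element) and apply the Leibniz rule. The paper's proof is a one-liner stating exactly this factorization; you have simply spelled out the choice of $\beta$, $b$, and $c$ that makes it work.
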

\begin{proof} For $f\in \smallo$ we have $f=gh$ with $g,h\in \smallo$, so $f'=g'h+gh'\in \smallo$. 
\end{proof} 

\begin{lemma}\label{triv} Suppose $\der\smallo\subseteq \smallo$ and 
$\der\mathcal{O}\not\subseteq \smallo$. Then for all $\phi$: 
$\ \der\smallo\subseteq \phi\smallo\ \Leftrightarrow\ \phi\succeq 1$.
\end{lemma}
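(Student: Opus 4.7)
The plan is to prove the two implications separately. The reverse implication is immediate: if $\phi \succeq 1$, i.e., $v\phi \leq 0$, then $\phi^{-1}\in\mathcal{O}$, so $\smallo \subseteq \phi\smallo$; together with the standing hypothesis $\der\smallo \subseteq \smallo$ this yields $\der\smallo \subseteq \phi\smallo$.

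For the forward implication I would argue by contradiction. Suppose $\der\smallo \subseteq \phi\smallo$ and $\phi \prec 1$, so that $\phi \in \smallo\setminus\{0\}$. The idea is to exhibit an element $c\in\smallo$ whose derivative satisfies $c' \asymp \phi$; this contradicts the containment, since $c' \in \phi\smallo$ with $c'\neq 0$ forces $v(c') > v\phi$, i.e., $c' \prec \phi$.

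To construct such a $c$, I use the hypothesis $\der\mathcal{O}\not\subseteq\smallo$ to pick $a\in\mathcal{O}$ with $a'\notin\smallo$. Since $\der$ is small, the already-cited Lemma~4.4.2 of \cite{ADH} gives $\der\mathcal{O}\subseteq\mathcal{O}$, whence $a'\in\mathcal{O}\setminus\smallo$, i.e., $a'\asymp 1$. Set $c:=\phi a\in\smallo$, so by Leibniz $c' = \phi' a + \phi a'$. Applying $\der\smallo\subseteq\phi\smallo$ to $\phi\in\smallo$ yields $\phi' \in \phi\smallo$, hence $\phi' \prec \phi$, and therefore $\phi' a \prec \phi$; on the other hand $\phi a' \asymp \phi$ because $a'\asymp 1$. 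Thus $c' \asymp \phi$, which is the sought contradiction.

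There is no real obstacle here: the argument turns entirely on spotting the right $c$, namely $\phi$ times a witness for the nontriviality of the induced derivation on $\mathcal{O}/\smallo$. The factor $\phi$ places $c$ into $\smallo$ (activating the hypothesis), while the factor $a$ with $a'\asymp 1$ produces the dominant term $\phi a' \asymp \phi$ in $c'$, which is exactly what prevents $c'$ from lying in $\phi\smallo$.
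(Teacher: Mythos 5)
Your proof is correct. The reverse direction matches the paper. For the forward direction, the paper takes a cleaner, non-contradiction route: from $\der\smallo\subseteq\phi\smallo$ it observes that $\phi^{-1}\der$ is small and then applies \cite[Lemma~4.4.2]{ADH} to the compositional conjugate $K^\phi$ to get $\phi^{-1}\der\mathcal{O}\subseteq\mathcal{O}$, i.e.\ $\der\mathcal{O}\subseteq\phi\mathcal{O}$; now a witness $a\in\mathcal{O}$ with $a'\asymp 1$ (supplied by $\der\mathcal{O}\not\subseteq\smallo$) gives $1\asymp a'\preceq\phi$ directly. Your argument instead applies Lemma~4.4.2 only to $K$ itself (to conclude $a'\asymp 1$) and then extracts the contradiction by a short Leibniz computation on $c=\phi a$, using $\der\smallo\subseteq\phi\smallo$ twice (once for $\phi'\prec\phi$, once against $c'$). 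Both hinge on the same two ingredients --- the small-derivation fact $\der\mathcal{O}\subseteq\mathcal{O}$ and a witness $a$ with nonzero residue derivative --- but the paper packages the ``$\der\mathcal{O}\subseteq\phi\mathcal{O}$'' bound as one application of the lemma after conjugation, while you re-derive that bound by hand on the single element $\phi a$. The paper's version is shorter and avoids contradiction; yours avoids explicitly invoking compositional conjugation, which some may find more self-contained. Either way the substance is the same.
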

\begin{proof} From $\der\smallo\subseteq \phi\smallo$, we get
$\phi^{-1}\der\smallo\subseteq \smallo$, so the derivation $\phi^{-1}\der$ is small, and thus $\phi^{-1}\der\mathcal{O}\subseteq \mathcal{O}$, hence $\der\mathcal{O}\subseteq \phi\mathcal{O}$, which in view of $\der\mathcal{O}\not\subseteq \smallo$ gives $\phi\succeq 1$. For the converse, note that
if $\phi\succeq 1$, then $\smallo\subseteq \phi\smallo$. 
\end{proof}

\noindent
This leads easily to: 

\begin{lemma}\label{strict} Suppose $\der$ is small and the extension $L$ of $K$ has small derivation. Then the differential residue field
$\res(L)$ of $L$ is an extension of the differential residue field $\res(K)$ of $K$. If in addition $\der\mathcal{O}\not\subseteq \smallo$, then $L$ is a strict extension of $K$.
\end{lemma}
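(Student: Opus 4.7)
The plan is to treat the two assertions separately, both being short once the right observations are in place.

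For the claim that $\res(L)$ extends $\res(K)$ as a differential field, I would first invoke \cite[Lemma~4.4.2]{ADH} to pass from smallness of $\der$ and $\der_L$ to $\der\mathcal{O}\subseteq\mathcal{O}$ and $\der_L\mathcal{O}_L\subseteq\mathcal{O}_L$, so that both induced residue derivations are well-defined. Then I would verify that the natural inclusion $\res(K)\hookrightarrow\res(L)$ (which we are already given as a field embedding) is also differential: for $a\in\mathcal{O}_K$, the $\res(K)$-derivative of $a+\smallo_K$ is $\der a+\smallo_K$, whose image in $\res(L)$ is $\der a+\smallo_L$; since $\der_L$ extends $\der$, this coincides with the $\res(L)$-derivative of the image $a+\smallo_L$. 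This is a routine naturality check.

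For the strictness claim under the additional hypothesis $\der\mathcal{O}\not\subseteq\smallo$, my first step is to transfer that hypothesis to $L$: pick $a\in\mathcal{O}_K$ with $\der a\not\in\smallo_K$; then $\der a$ is a unit of $\mathcal{O}_K$ and hence of $\mathcal{O}_L$, so $\der_L a=\der a\not\in\smallo_L$, giving $\der_L\mathcal{O}_L\not\subseteq\smallo_L$. With this in place Lemma~\ref{triv} applies both in $K$ and in $L$, and the first strictness clause $\der\smallo\subseteq\phi\smallo\Rightarrow\der_L\smallo_L\subseteq\phi\smallo_L$ is automatic, because each side is equivalent to $\phi\succeq 1$.

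For the second strictness clause, a brief calculation is needed. If $\der\mathcal{O}\subseteq\phi\smallo$, choosing again $a\in\mathcal{O}$ with $\der a\not\in\smallo$ and writing $\der a=\phi s$ with $s\in\smallo$ forces $v(\phi)<v(\der a)\leq 0$, i.e.\ $\phi\succ 1$; hence $\phi^{-1}\in\smallo_K\subseteq\smallo_L$, which yields $\mathcal{O}_L\subseteq\phi\smallo_L$. Combined with smallness of $\der_L$ this gives $\der_L\mathcal{O}_L\subseteq\mathcal{O}_L\subseteq\phi\smallo_L$, as required. I do not foresee any real obstacle: the substantive work has been done by Lemma~\ref{triv}; the only thing one must be careful about is that $\der\mathcal{O}\not\subseteq\smallo$ is needed twice, once to make Lemma~\ref{triv} available on the $K$-side and on the $L$-side (via the transfer above), and once to force $\phi\succ 1$ in the second clause.
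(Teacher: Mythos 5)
Your proof is correct and follows exactly the route the paper signals: the sentence ``This leads easily to:'' immediately preceding Lemma~\ref{strict} points to Lemma~\ref{triv} as the key ingredient, and you invoke it precisely where it matters. The transfer of $\der\mathcal{O}\not\subseteq\smallo$ to $\der_L\mathcal{O}_L\not\subseteq\smallo_L$ via the unit $\der a$, the use of Lemma~\ref{triv} on both sides to dispose of the first strictness clause, and the observation that $\der\mathcal{O}\subseteq\phi\smallo$ forces $\phi\succ 1$ and hence $\der_L\mathcal{O}_L\subseteq\mathcal{O}_L\subseteq\phi\smallo_L$, are all sound and are the intended argument.
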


\begin{lemma}\label{strictalg} Let $L$ be an algebraic extension of $K$. Then $L$  strictly extends $K$.
\end{lemma}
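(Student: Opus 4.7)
My plan is to reduce the two implications defining strictness to a single base case via compositional conjugation, and then to establish that base case by combining preservation of smallness under algebraic extensions with uniqueness of derivations on separable algebraic field extensions.

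Fix $\phi\in K^\times$. By the discussion of compositional conjugation above, $L$ strictly extends $K$ if and only if $L^\phi$ strictly extends $K^\phi$, and under passage to $K^\phi$ the hypothesis $\der\smallo_K\subseteq\phi\smallo_K$ becomes ``the derivation $\phi^{-1}\der$ of $K^\phi$ is small'', while $\der\mathcal{O}_K\subseteq\phi\smallo_K$ becomes $\phi^{-1}\der\mathcal{O}_K\subseteq\smallo_K$ (and similarly for the conclusions over $L$). Letting $\phi$ vary, it therefore suffices to treat the case $\phi=1$, namely to prove: (a) if $\der$ is small on $K$, then $\der_L$ is small on $L$; and (b) if $\der\mathcal{O}_K\subseteq\smallo_K$, then $\der_L\mathcal{O}_L\subseteq\smallo_L$.

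Both conditions are tested element by element, so I may assume $L=K(\alpha)$ is finite over $K$. Separability in characteristic zero produces a minimal polynomial $p(Y)=Y^n+c_{n-1}Y^{n-1}+\cdots+c_0\in K[Y]$ with $p'(\alpha)\ne 0$, and differentiation of $p(\alpha)=0$ yields
\[
\alpha'\ =\ -\frac{p^\der(\alpha)}{p'(\alpha)}, \qquad p^\der(Y)\ :=\ \sum_i(\der c_i)Y^i.
\]
For (a), I would argue valuation-theoretically: the standard symmetric-function bounds give $v_K(c_i)\ge(n-i)v_L(\alpha)$, so $c_i\in\smallo_K$ whenever $v_L(\alpha)>0$; by smallness of $\der$ on $K$, $\der c_i\in\smallo_K$ and hence $v_L(p^\der(\alpha))>0$. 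A parallel analysis of $p'(\alpha)=n\alpha^{n-1}+\sum_{i<n}ic_i\alpha^{i-1}$ shows the leading term dominates, yielding $v_L(p'(\alpha))=(n-1)v_L(\alpha)$ and therefore $v_L(\alpha')>0$, as required.

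Granting (a), implication (b) follows from residue-field uniqueness. The hypothesis of (b) forces $\der$ to be small on $K$, so by (a) $\der_L$ is small on $L$; in particular $\der_L\mathcal{O}_L\subseteq\mathcal{O}_L$, and the induced derivation $\overline{\der_L}$ on $\res(L)$ extends the induced derivation $\bar\der$ on $\res(K)$, which is trivial by assumption. Since $L/K$ is algebraic, so is $\res(L)/\res(K)$; characteristic zero makes this extension separable, and a derivation on a field extends uniquely along a separable algebraic extension, so $\overline{\der_L}=0$, i.e., $\der_L\mathcal{O}_L\subseteq\smallo_L$. The main obstacle is the symmetric-function bound in (a), which presupposes that all $K$-conjugates of $\alpha$ share the valuation of $\alpha$ and can fail outside the henselian setting; the standard remedy is to pass first to the henselization $K^h$ of $K$---an immediate, hence strict, algebraic extension---within which the valuation extends uniquely to $K^h(\alpha)$ and the estimates become valid.
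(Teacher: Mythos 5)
Your reduction to $\phi=1$ via compositional conjugation and your argument for part (b) --- the residue derivation extends the trivial derivation and must itself be trivial since $\res(L)/\res(K)$ is algebraic --- match the paper exactly. The gap is in part (a). The paper does not attempt to prove smallness-preservation from scratch; it invokes \cite[Proposition~6.2.1]{ADH}, which is a genuinely nontrivial result. Your attempted direct proof has three problems. First, the leading-term-domination claim $v_L\big(p'(\alpha)\big)=(n-1)v_L(\alpha)$ is false in general, even over a henselian base: the terms $n\alpha^{n-1}$ and $(n-i)c_i\alpha^{i-1}$ all have valuation $\geq (n-1)v_L(\alpha)$ with equality possible, so cancellation can drive $v_L\big(p'(\alpha)\big)$ strictly above $(n-1)v_L(\alpha)$ (think of two nearly-coincident conjugate roots, where $p'(\alpha)=\alpha_1-\alpha_2$). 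Second, even when the bound $v_K(c_i)\geq (n-i)v_L(\alpha)$ holds, smallness of $\der$ gives only $v_K(\der c_i)>0$ with no quantitative lower bound, so you cannot conclude $v_L\big(p^\der(\alpha)\big)>v_L\big(p'(\alpha)\big)$ once $v_L\big(p'(\alpha)\big)$ is allowed to be large; the desired $v_L(\alpha')>0$ simply does not follow from the data you have. Third, the proposed remedy --- pass to the henselization $K^h$, ``an immediate, hence strict, algebraic extension'' --- is a non sequitur: immediate extensions are not automatically strict (that is exactly what this lemma is establishing for algebraic extensions), and since $K^h$ is itself an algebraic extension of $K$, asserting that it is strict is circular here. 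These are the very obstructions that make \cite[Proposition~6.2.1]{ADH} a theorem rather than a formal manipulation with minimal polynomials, and the paper is right to cite it rather than reprove it.
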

\begin{proof} 
Proposition~6.2.1 of \cite{ADH} says that if the derivation of $K$ is small, then so is the derivation
of $L$. Now, if  $\der\smallo\subseteq \phi\smallo$, then  $\phi^{-1}\der$ is small, hence $\phi^{-1}\der_L$ is small,
and thus  $\der_L\smallo_L\subseteq \phi\smallo_L$. 
 Next, assume
$\der\mathcal{O}\subseteq \phi\smallo$. Then $\phi^{-1}\der$
is small and induces the trivial derivation on $\res(K)$. Hence~$\phi^{-1}\der_L$ is small, and the derivation it induces on
$\res(L)$ extends the trivial derivation on~$\res(K)$, so
is itself trivial, as $\res(L)$ is algebraic over $\res(K)$. 
Thus $\phi^{-1}\der_L\mathcal{O}_L\subseteq \smallo_L$, that is,
$\der_L\mathcal{O}_L\subseteq \phi\smallo_L$.   
\end{proof} 

\noindent
In this lemma the derivation of  $L$ is assumed to be continuous for the valuation topology, because of the meaning we assigned to {\em extension of $K$} and to
{\em valued differential field}.  In the proof of the lemma we used Proposition~6.2.1 in \cite{ADH}, but that proposition does not assume this continuity. Thus if we drop the  implicit  assumption that the derivation of $L$ is continuous, then Lemma~\ref{strictalg} goes through, with the continuity of this derivation as a consequence.

For immediate extensions, strictness reduces to a simpler
condition:

\begin{lemma}\label{strictimm} Let $L$ be an immediate extension of $K$ such that for all $\phi$, if $\der\smallo \subseteq \phi\smallo$, then
$\der_L\smallo_L \subseteq \phi\smallo_L$.
Then $L$ is a strict extension of $K$.
\end{lemma}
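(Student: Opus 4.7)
The plan is to reduce the two clauses of strictness to just one, using that $L$ is immediate. The first strictness condition, namely $\der\smallo\subseteq\phi\smallo \Rightarrow \der_L\smallo_L\subseteq\phi\smallo_L$, is precisely the hypothesis, so only the second implication needs work.

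So assume $\der\mathcal{O}\subseteq\phi\smallo$. Since $\smallo\subseteq\mathcal{O}$, this entails in particular $\der\smallo\subseteq\phi\smallo$; hence the hypothesis gives $\der_L\smallo_L\subseteq\phi\smallo_L$. Now let $a\in\mathcal{O}_L$. Because $L$ is an immediate extension of $K$, we have $\res(L)=\res(K)$, and so there exists $c\in\mathcal{O}$ with $a-c\in\smallo_L$. Then
\[
\der_L a \;=\; \der c + \der_L(a-c) \;\in\; \phi\smallo + \phi\smallo_L \;\subseteq\; \phi\smallo_L,
\]
where we used $\der c\in\der\mathcal{O}\subseteq\phi\smallo$ and $\der_L(a-c)\in\der_L\smallo_L\subseteq\phi\smallo_L$. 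This gives $\der_L\mathcal{O}_L\subseteq\phi\smallo_L$, completing the verification of strictness.

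There is no real obstacle here; the point is just that on $\mathcal{O}_L$ one may subtract off a representative from $\mathcal{O}$ (using the equality of residue fields) to reduce the behavior of $\der_L$ on $\mathcal{O}_L$ to its behavior on $\mathcal{O}$ together with its behavior on $\smallo_L$, and the latter is controlled by the hypothesis via the trivial inclusion $\der\smallo\subseteq\der\mathcal{O}$.
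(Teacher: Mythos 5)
Your proof is correct and follows essentially the same strategy as the paper's: use immediateness to compare an element of $\mathcal O_L$ against an element of $\mathcal O$, so that the behavior of $\der_L$ on $\mathcal O_L$ is governed by $\der$ on $\mathcal O$ together with $\der_L$ on $\smallo_L$. The only cosmetic difference is that you decompose additively, writing $a=c+(a-c)$ with $c\in\mathcal O$ and $a-c\in\smallo_L$ (using $\res(L)=\res(K)$), whereas the paper decomposes multiplicatively as $f=g(1+\epsilon)$ and applies the product rule; your version is if anything a touch cleaner, since it needs only the residue field equality and a single application of linearity of $\der_L$.
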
 
\begin{proof} Suppose $\der\mathcal{O}\subseteq \phi\smallo$.
Given $f\in \mathcal{O}_L$ we have $f=g(1+\epsilon)$ with
$g\in \mathcal{O}$ and $\epsilon\in \smallo_L$, hence
$f'=g'(1+\epsilon) + g\epsilon'\in \phi\smallo_L$.
\end{proof} 

\noindent
The following related fact will also be useful:

\begin{lemma}\label{smaller} Suppose $\der$ is small and $L$ is an immediate extension of $K$ such that $\der_L\smallo_L\subseteq \mathcal{O}_L$.
Then $\der_L$ is small.
\end{lemma}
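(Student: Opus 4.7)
The plan is to imitate the argument used in the proof of Lemma~\ref{strictimm}, since the setup is parallel: we want to push smallness through the immediate extension using the fact that every element of $L$ can be written as a $K$-element times a $1$-unit.

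Concretely, let $f \in \smallo_L$; I want to show $f' \in \smallo_L$. Because $L$ is an immediate extension of $K$, we have $\Gamma_L = \Gamma$, so $vf \in \Gamma$, and since $\res(L) = \res(K)$ we can pick $g \in K$ with $vg = vf$ and $f/g - 1 \in \smallo_L$, i.e., $f = g(1+\epsilon)$ with $\epsilon \in \smallo_L$. In particular $g \in \smallo_K$, so by smallness of $\der$ we have $g' \in \smallo_K \subseteq \smallo_L$.

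Now I differentiate: $f' = g'(1+\epsilon) + g\,\epsilon'$. The first summand lies in $\smallo_L \cdot \mathcal{O}_L \subseteq \smallo_L$. For the second, the hypothesis $\der_L\smallo_L \subseteq \mathcal{O}_L$ gives $\epsilon' \in \mathcal{O}_L$, while $g \in \smallo_K \subseteq \smallo_L$, so $g\,\epsilon' \in \smallo_L \cdot \mathcal{O}_L \subseteq \smallo_L$. Therefore $f' \in \smallo_L$, establishing $\der_L \smallo_L \subseteq \smallo_L$.

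There is no real obstacle here; the only point worth checking is the existence of the decomposition $f = g(1+\epsilon)$ with $g \in K$ and $\epsilon \in \smallo_L$, which is the standard consequence of $L$ being immediate over $K$ (and is used in exactly the same way in the proof of Lemma~\ref{strictimm}). Everything else is one line of Leibniz.
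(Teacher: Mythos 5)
Your proof is correct and follows exactly the same route as the paper's: decompose $f = g(1+\epsilon)$ with $g \in \smallo_K$ and $\epsilon \in \smallo_L$, apply Leibniz, and use the hypothesis on $\der_L\smallo_L$ to handle the term $g\epsilon'$. You have simply spelled out the one-line argument in more detail.
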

\begin{proof} If $a\in \smallo_L$, then $a=b(1+\epsilon)$ with
$b\in \smallo$, $\epsilon\in \smallo_L$, so $a'=b'(1+\epsilon) + b\epsilon'\in \smallo_L$. 
\end{proof}

\noindent
Let us record the following observations on extensions $M\supseteq L\supseteq K$:\begin{enumerate}
\item If $M\supseteq K$ is strict, then $L\supseteq K$ is strict.
\item If  $M\supseteq L$ and $L\supseteq K$ are strict, then so is $M\supseteq K$.
\item If $L$ is an elementary extension of $K$, then $L\supseteq K$ is strict.
\item Any divergent pc-sequence in $K$ pseudoconverges in some strict extension of $K$; this is an easy consequence of (3), cf.~\cite[Remark after Lemma~2.2.5]{ADH}.
\end{enumerate}

\subsection*{The set $\Gamma(\der)$}
Note that if $a, b\in K^\times$, $a\preceq b$, and 
$\smallo\subseteq a\smallo$, then $\smallo\subseteq b\smallo$.
The set $\Gamma(\der)\subseteq \Gamma$, denoted also by $\Gamma_K(\der)$ if we need to specify $K$, is defined as follows: 
$$\Gamma(\der)\ :=\ \{v\phi:\  \der\smallo\subseteq \phi\smallo\}. $$
This is a nonempty downward closed subset of $\Gamma$, with an upper bound
in $\Gamma$ if $\der\ne 0$. Moreover, $\Gamma(\der) < v(\der\smallo)$.
Lemma~\ref{triv} has a reformulation: 

\begin{cor}\label{gamder} If $\der\smallo\subseteq \smallo$ and $\der\mathcal{O}\not\subseteq \smallo$, then $\Gamma(\der)=\Gamma^{\le}$. 
\end{cor}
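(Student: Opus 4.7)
The plan is essentially to unwind definitions and invoke Lemma~\ref{triv}, since the corollary is just a translation of that lemma into the language of the set $\Gamma(\der)$. I would begin by recalling that, by definition,
\[
\Gamma(\der)\ =\ \{v\phi:\phi\in K^\times,\ \der\smallo\subseteq\phi\smallo\}.
\]
Under the two hypotheses $\der\smallo\subseteq\smallo$ and $\der\mathcal{O}\not\subseteq\smallo$, Lemma~\ref{triv} applies and gives, for every $\phi\in K^\times$, the equivalence $\der\smallo\subseteq\phi\smallo\Leftrightarrow\phi\succeq 1$.

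Next I would translate $\phi\succeq 1$ into a statement about valuations: by the conventions fixed in the introduction, $\phi\succeq 1$ means $1\preceq\phi$, i.e.\ $v\phi\le v(1)=0$, i.e.\ $v\phi\in\Gamma^{\le}$. Combining this with the displayed description of $\Gamma(\der)$ yields
\[
\Gamma(\der)\ =\ \{v\phi:\phi\in K^\times,\ v\phi\le 0\}\ =\ \Gamma^{\le},
\]
which is the desired equality. There is no genuine obstacle here; the only thing worth being slightly careful about is that the inclusion $\Gamma^{\le}\subseteq\Gamma(\der)$ uses surjectivity of $v\colon K^\times\to\Gamma$ (so that every $\gamma\le 0$ is realised as $v\phi$ for some $\phi\in K^\times$), which is part of the standing convention that $v$ takes values in $\Gamma=v(K^\times)$.
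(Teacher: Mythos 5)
Your proof is correct and matches the paper's, which presents the corollary as nothing more than a reformulation of Lemma~\ref{triv}; translating $\phi\succeq 1$ into $v\phi\le 0$ and using surjectivity of $v$ onto $\Gamma$ is exactly the intended argument.
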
 

\begin{lemma}\label{notmax} If 
$v\phi\in \Gamma(\der)$ is not maximal in $\Gamma(\der)$, then
$\der\mathcal{O}\subseteq \phi\smallo$.
\end{lemma}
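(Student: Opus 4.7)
The plan is to exploit the non-maximality to produce a strictly smaller witness and then transfer from $\smallo$ to $\mathcal{O}$ using the extra room.

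By hypothesis there is some $\psi\in K^\times$ with $v\psi\in\Gamma(\der)$ and $v\psi>v\phi$, i.e., $\psi\prec\phi$, and $\der\smallo\subseteq\psi\smallo$. I would first translate both containments into statements about smallness of conjugate derivations: $\der\smallo\subseteq\phi\smallo$ says exactly that $\phi^{-1}\der$ is small on $K$, and similarly $\psi^{-1}\der$ is small on $K$.

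Next I would apply the basic fact (\cite[Lemma~4.4.2]{ADH}, already cited in this section) that a small derivation maps $\mathcal{O}$ into $\mathcal{O}$, so $\psi^{-1}\der\mathcal{O}\subseteq\mathcal{O}$. Given $f\in\mathcal{O}$, write
\[
\phi^{-1}\der f\ =\ (\phi^{-1}\psi)\,(\psi^{-1}\der f).
\]
From $\psi\prec\phi$ we have $\phi^{-1}\psi\in\smallo$, while $\psi^{-1}\der f\in\mathcal{O}$ by the previous step, so the product lies in $\smallo$. Hence $\der f\in\phi\smallo$, giving $\der\mathcal{O}\subseteq\phi\smallo$ as required.

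There is essentially no obstacle here; the content of the lemma is the simple observation that non-maximality of $v\phi$ in $\Gamma(\der)$ lets us factor through a strictly smaller scaling. The only thing to watch is that the non-maximality is being used in the strict form $v\psi>v\phi$ (not merely $\ne$), which is what ensures $\phi^{-1}\psi\in\smallo$ rather than just $\mathcal{O}$.
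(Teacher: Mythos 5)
Your proof is correct and is essentially identical to the paper's: both take a witness $\psi$ (the paper calls it $a$) with $v\psi > v\phi$ in $\Gamma(\der)$, invoke Lemma~4.4.2 of \cite{ADH} to get $\psi^{-1}\der\mathcal{O}\subseteq\mathcal{O}$, and then use $\psi\prec\phi$ to conclude $\der\mathcal{O}\subseteq\psi\mathcal{O}\subseteq\phi\smallo$.
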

\begin{proof} Let $a\in K^\times$ be such that
$v\phi < va\in \Gamma(\der)$. Then $a^{-1}\der$ is small, so
$a^{-1}\der\mathcal{O}\subseteq \mathcal{O}$, and thus $\der\mathcal{O}\subseteq a\mathcal{O}\subseteq \phi\smallo$.
\end{proof}

\begin{cor} Suppose $\Gamma_K(\der)$ has no largest element, $L$ extends $K$, and for all~$\phi$, if $\der\smallo \subseteq \phi\smallo$, then
$\der_L\smallo_L \subseteq \phi\smallo_L$.
Then $L$ strictly extends $K$.
\end{cor}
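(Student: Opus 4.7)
The first strictness condition is given by hypothesis, so the plan is to verify the second condition: assuming $\der\mathcal{O}\subseteq \phi\smallo$, deduce $\der_L\mathcal{O}_L\subseteq \phi\smallo_L$. The key observation is that $\der\mathcal{O}\subseteq \phi\smallo$ entails $\der\smallo\subseteq \phi\smallo$, so $v\phi\in \Gamma_K(\der)$. Since $\Gamma_K(\der)$ has no largest element, we can interpolate: choose $\psi\in K^\times$ with $v\phi < v\psi$ and $v\psi\in \Gamma_K(\der)$, so that $\der\smallo\subseteq \psi\smallo$.

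Applying the hypothesis to $\psi$ in place of $\phi$ gives $\der_L\smallo_L\subseteq \psi\smallo_L$. This says precisely that the scaled derivation $\psi^{-1}\der_L$ on $L$ is small. The point of the interpolation step is that small derivations automatically send $\mathcal{O}_L$ into $\mathcal{O}_L$ by \cite[Lemma~4.4.2]{ADH}, so $\psi^{-1}\der_L\mathcal{O}_L\subseteq \mathcal{O}_L$, and hence $\der_L\mathcal{O}_L\subseteq \psi\mathcal{O}_L$. Finally, $v\psi > v\phi$ yields $\psi\mathcal{O}_L\subseteq \phi\smallo_L$, giving the desired inclusion $\der_L\mathcal{O}_L\subseteq \phi\smallo_L$.

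Structurally, this is exactly the argument of Lemma~\ref{notmax} pushed up to the extension $L$: there one passes from $\der\smallo\subseteq a\smallo$ with $va\in \Gamma(\der)$ bigger than $v\phi$ to $\der\mathcal{O}\subseteq \phi\smallo$ using smallness of $a^{-1}\der$; here the same reasoning is applied to $\psi^{-1}\der_L$, with the hypothesis on $L$ providing the transfer of smallness. The only step that needs a moment of care is that $\psi^{-1}\der_L$ qualifies for Lemma~4.4.2 of \cite{ADH}, but this is automatic since smallness in the sense $\der_L\smallo_L\subseteq \psi\smallo_L$ is exactly the hypothesis of that lemma for the derivation $\psi^{-1}\der_L$. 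There is no genuine obstacle; the proof is essentially a one-line reduction to Lemma~\ref{notmax} enabled by the no-largest-element hypothesis.
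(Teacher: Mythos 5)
Your proof is correct and is essentially the paper's proof with Lemma~\ref{notmax} unfolded inline: the paper simply notes that the hypothesis propagates $v\phi$ and any larger $v\psi\in\Gamma_K(\der)$ into $\Gamma_L(\der)$, so $v\phi$ is not maximal there, and then invokes Lemma~\ref{notmax} applied to $L$, which is exactly the interpolation-and-smallness argument you spell out. As you yourself observe, there is no difference in content.
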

\begin{proof} If $v\phi\in \Gamma_K(\der)$, then $v\phi\in \Gamma_L(\der)$, but $v\phi$ is not maximal in $\Gamma_L(\der)$, and thus
$\der\mathcal{O}_L\subseteq \phi\smallo_L$ by Lemma~\ref{notmax}.
\end{proof}

\begin{lemma}\label{glgk} If $L$ strictly extends $K$ with $\Gamma_L=\Gamma$, then $\Gamma_L(\der)=\Gamma_K(\der)$.
\end{lemma}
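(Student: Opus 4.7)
The plan is to prove the two inclusions $\Gamma_K(\der) \subseteq \Gamma_L(\der)$ and $\Gamma_L(\der) \subseteq \Gamma_K(\der)$ separately. The first uses strictness of $L\supseteq K$ directly, while the second uses only $\Gamma_L=\Gamma$ together with compatibility of the two valuations. The main (mild) obstacle is to navigate the fact that the definition of $\Gamma(\der)$ quantifies over $\phi$ in the multiplicative group of the ambient field, so for $\Gamma_L(\der)$ one starts with $\phi\in L^\times$ and needs to replace it with an element of $K^\times$.

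For $\Gamma_K(\der)\subseteq \Gamma_L(\der)$: given $v\phi\in \Gamma_K(\der)$ with $\phi\in K^\times$, we have $\der\smallo\subseteq \phi\smallo$ by definition. Since $L$ strictly extends $K$, this yields $\der_L\smallo_L\subseteq \phi\smallo_L$, hence $v\phi\in \Gamma_L(\der)$.

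For the reverse inclusion: let $\gamma\in \Gamma_L(\der)$, so there is $\psi\in L^\times$ with $v\psi=\gamma$ and $\der_L\smallo_L\subseteq \psi\smallo_L$. Since $\Gamma_L=\Gamma$, we can pick $\phi\in K^\times$ with $v\phi=\gamma$; then $\phi/\psi$ is a unit of $\mathcal O_L$, so $\phi\smallo_L=\psi\smallo_L$, and therefore $\der_L\smallo_L\subseteq \phi\smallo_L$. Now for any $f\in \smallo_K\subseteq \smallo_L$ we have $\der f=\der_L f\in \phi\smallo_L$, so $\der f/\phi\in K\cap \smallo_L$. Since the valuation of $L$ extends that of $K$ with the same value group, $K\cap\smallo_L=\smallo_K$, and hence $\der f\in \phi\smallo_K$. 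Thus $\der\smallo\subseteq \phi\smallo$, proving $\gamma=v\phi\in \Gamma_K(\der)$.

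Both inclusions together give the claim. Note that strictness of $L\supseteq K$ is only invoked in the first direction; the second direction is purely valuation-theoretic and would work for any extension $L\supseteq K$ with $\Gamma_L=\Gamma$.
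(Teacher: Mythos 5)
The paper states Lemma~\ref{glgk} without giving a proof, so there is nothing to compare against; your argument fills the gap correctly. Both inclusions check out: for $\Gamma_K(\der)\subseteq\Gamma_L(\der)$ the first half of the strictness condition gives exactly $\der\smallo\subseteq\phi\smallo\Rightarrow\der_L\smallo_L\subseteq\phi\smallo_L$; for the reverse, your replacement of $\psi\in L^\times$ by $\phi\in K^\times$ with the same value (using $\Gamma_L=\Gamma$ and that $\phi/\psi$ is a unit in $\mathcal O_L$), followed by restricting the inclusion $\der_L\smallo_L\subseteq\phi\smallo_L$ to $K$, is exactly right. One small remark: the identity $K\cap\smallo_L=\smallo_K$ that you invoke holds for any valued field extension, not just because $\Gamma_L=\Gamma$; the hypothesis $\Gamma_L=\Gamma$ is needed only to produce $\phi\in K^\times$ with $v\phi=\gamma$. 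Your closing observation that strictness is used only in one direction is a useful clarification of which hypotheses carry the weight.
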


\subsection*{The case of asymptotic fields} In this subsection we assume familiarity with Sections~6.5, 9.1, and the early parts of Section~9.2 in~\cite{ADH}. Recall   that  $K$ is said to be  asymptotic
if for all $f,g\in K$ with $0\neq f,g \prec 1$ we have $f\prec g \Longleftrightarrow f'\prec g'$.
In that case we
put $\Psi:=\big\{ v(f^\dagger):\ f\in K^\times,\ f\nasymp 1 \big\}\subseteq \Gamma$; if we need to make the dependence on~$K$ explicit we denote $\Psi$ by $\Psi_K$. We recall from \cite[Section~9.1]{ADH} that then $\Psi<v(f')$ for all $f\in \smallo$.
An asymptotic field $K$ is said to be {\it grounded}\/ if~$\Psi$ has a largest element, and
{\it ungrounded}\/ otherwise.

\begin{lemma}\label{as1} Suppose $K$ and $L$ are asymptotic fields, and $L$ is an immediate extension of $K$. Then
$L$ is a strict extension of $K$.
\end{lemma}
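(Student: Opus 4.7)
The plan is to verify both strictness conditions of the definition directly, using that $L$ is immediate over $K$ and using the asymptotic axiom to match valuations of derivatives across $K$ and $L$. The central observation, which drives everything, is the following consequence of the asymptotic axiom: for $f,g \in \smallo^{\ne}$ of an asymptotic field with $f \asymp g$, one has $f' \asymp g'$ (and $f'=0 \Leftrightarrow g'=0$). This is obtained by applying $f \prec g \Leftrightarrow f' \prec g'$ in both directions — the negations $f \not\prec g$ and $g \not\prec f$ yield $f' \not\prec g'$ and $g' \not\prec f'$, which collectively force $v(f') = v(g')$ (with the boundary cases $f'=0$ or $g'=0$ handled by reading the axiom with $v(0) = \infty$).

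\textbf{Step 1: The first strictness condition.} Fix $\phi \in K^\times$ with $\der\smallo \subseteq \phi\smallo$. I would take an arbitrary $g \in \smallo_L^{\ne}$ and aim for $g' \in \phi\smallo_L$. Since $L$ is immediate, $v_L(g) \in \Gamma$, so one can pick $f \in K^\times$ with $v(f) = v_L(g)$; from $v_L(g) > 0$ this $f$ lies in $\smallo^{\ne}$. Applying the central observation inside the asymptotic field $L$ to the pair $f \asymp_L g$ yields $f' \asymp_L g'$ (or both zero), and the hypothesis $f' \in \phi\smallo$ gives $v(f') > v(\phi)$. Hence $v_L(g') > v(\phi)$, i.e.\ $g' \in \phi\smallo_L$.

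\textbf{Step 2: The second strictness condition.} Fix $\phi$ with $\der\mathcal{O} \subseteq \phi\smallo$. Since $\smallo \subseteq \mathcal{O}$, Step 1 immediately gives $\der_L\smallo_L \subseteq \phi\smallo_L$. For a general $g \in \mathcal{O}_L$, I would exploit $\res(L) = \res(K)$ (immediateness again) to choose $c \in \mathcal{O}_K$ with $g - c \in \smallo_L$. Then
\[
g' \ =\ c' + (g-c)' \ \in\ \phi\smallo + \phi\smallo_L \ \subseteq\ \phi\smallo_L,
\]
using the hypothesis on $c'$ and Step 1 on $(g-c)'$.

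The main obstacle is really just making sure the transfer across $K$ and $L$ is clean: one needs $v_L(g)$ to be realized by an element of $K$ (given by $\Gamma_L = \Gamma$) and the residue of $g \in \mathcal{O}_L$ to be realized by an element of $\mathcal{O}_K$ (given by $\res(L) = \res(K)$). Both of these are direct from immediateness, so once the central observation about asymptotic fields is identified, no further work is required.
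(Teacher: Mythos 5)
Your proof is correct, and it takes a genuinely different route from the paper's. The paper first invokes Lemma~\ref{strictimm} to reduce strictness to the single condition $\der\smallo \subseteq \phi\smallo \Rightarrow \der_L\smallo_L \subseteq \phi\smallo_L$, then uses compositional conjugation to reduce further to $\phi=1$, and finally cites Lemma~9.2.9 of \cite{ADH} (which characterizes $\der\smallo\subseteq\smallo$ in an asymptotic field by the failure of $\Psi\le\gamma$ for any $\gamma\in\Gamma^<$), transferring across the extension via $\Psi_L=\Psi$. You instead extract the elementary consequence $f\asymp g\Rightarrow f'\asymp g'$ of the asymptotic axiom and use it directly, realizing $v_L(g)$ by an element of $K$ for the first condition and realizing the residue of $g$ by an element of $\mathcal O_K$ for the second. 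Your argument is more self-contained (it does not cite the $\Psi$-set machinery and does not route through Lemma~\ref{strictimm}), at the cost of handling the second strictness condition separately; the paper's is shorter modulo the cited lemmas. One small remark: the ``or both zero'' hedge in your central observation is vacuous, since taking $f=g$ in the asymptotic axiom shows that $f'\neq 0$ for every $f\in\smallo^{\ne}$ in an asymptotic field with $\Gamma\ne\{0\}$; but keeping the clause does no harm.
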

\begin{proof} Assume $\der\smallo\subseteq \smallo$; we show that
$\der\smallo_L\subseteq \smallo_L$. (Using Lemma~\ref{strictimm}, 
apply this to~$\phi^{-1}\der$ in the role of $\der$, for 
$v\phi\in \Gamma(\der)$.) Now 
$\der\smallo\subseteq \smallo$ means that there is no $\gamma\in \Gamma^{<}$ such that $\Psi \le \gamma$, by 
\cite[Lemma~9.2.9]{ADH}. Since $\Gamma_L=\Gamma$, we have 
$\Psi_{L}=\Psi$, and so there is no $\gamma\in \Gamma_L^{<}$ such that $\Psi_L\le \gamma$, which gives $\der\smallo_L\subseteq \smallo_L$. 
\end{proof}

\noindent
Here is a partial converse. (The proof assumes familiarity with asymptotic couples.) 

\begin{lemma}\label{as2} Suppose $K$ is asymptotic and $L$ strictly extends $K$ with $\Gamma_L=\Gamma$. If~$K$ is ungrounded or $\res(K)=\res(L)$, then $L$ is asymptotic.
\end{lemma}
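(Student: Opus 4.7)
The plan is to establish that the $\psi$-function of the asymptotic couple of $K$ extends to $L$; concretely, to show $v(f^{\dagger}) = \psi_K(vf)$ for every $f \in L^\times$ with $f\not\asymp 1$. Since asymptoticity, strictness of an extension, the equality $\Gamma_L = \Gamma$, ungroundedness of $K$, and the equality $\res(K) = \res(L)$ are all preserved under compositional conjugation (replacing $\der$ by $\phi^{-1}\der$, under which $\Psi_K$ merely shifts by $-v\phi$), I would first reduce to the case where $\der$ is small. Then, by strictness with $\phi=1$, $\der_L$ is small as well.

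To verify $v(f^\dagger)=\psi_K(vf)$, I may assume $f \in \smallo_L^{\ne}$ (using $(1/f)^\dagger = -f^\dagger$ otherwise). Set $\alpha := vf \in \Gamma^{>}$; using $\Gamma_L = \Gamma$, pick $a \in K^\times$ with $va = \alpha$ and write $f = ag$ with $g := f/a \in \mathcal{O}_L^{\times}$. Then $f^{\dagger} = a^{\dagger} + g^{\dagger}$ and $v(a^{\dagger}) = \psi_K(\alpha)$, so the task reduces to proving $v(g^{\dagger}) > \psi_K(\alpha)$.

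When $K$ is ungrounded, $\Psi_K$ has no largest element, so I pick $\beta \in \Psi_K$ with $\beta > \psi_K(\alpha)$ and $\phi \in K^\times$ with $v\phi = \beta$. Using the standard asymptotic-couple inequality $v(\der_K\smallo_K^{\ne}) > \Psi_K$ (valid for asymptotic fields with small derivation), one gets $\der_K\smallo_K \subseteq \phi\smallo_K$, hence $v\phi \in \Gamma_K(\der_K)$. Since a further element of $\Psi_K \subseteq \Gamma_K(\der_K)$ exceeds $\beta$, $v\phi$ is not maximal in $\Gamma_K(\der_K)$, so Lemma~\ref{notmax} yields $\der_K\mathcal O_K \subseteq \phi\smallo_K$. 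Strictness then gives $\der_L\mathcal O_L \subseteq \phi\smallo_L$, and since $vg = 0$ I conclude $v(g^{\dagger}) = v(g') > v\phi = \beta > \psi_K(\alpha)$. When instead $\res(K) = \res(L)$, I would adjust $a$ by a unit of $\mathcal{O}_K$ with residue $\overline{f/a}$, arranging $g = 1 + \epsilon$ with $\epsilon \in \smallo_L$; then $v(g^{\dagger}) = v(\epsilon')$, and for $\phi \in K^\times$ with $v\phi = \psi_K(\alpha)$ the same asymptotic-couple inequality gives $\der_K\smallo_K \subseteq \phi\smallo_K$, so strictness yields $v(\epsilon') > v\phi = \psi_K(\alpha)$.

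Once $v(f^{\dagger})=\psi_K(vf)$ is established for all such $f$, asymptoticity of $L$ follows by computing $vf' = vf + \psi_K(vf)$ for $f \in \smallo_L^{\ne}$ and invoking the strict monotonicity of $\gamma \mapsto \gamma + \psi_K\gamma$ on $\Gamma^{>}$ to get $f \prec g \iff f' \prec g'$. The main difficulty lies in the ungrounded case, where Lemma~\ref{notmax} must be used to promote strictness on $\smallo$ to strictness on $\mathcal{O}$, and in separating the two hypotheses cleanly: ungroundedness supplies the ``room'' in $\Psi_K$ above $\psi_K(\alpha)$ needed for Lemma~\ref{notmax}, while $\res(K)=\res(L)$ allows a reduction to $g = 1+\epsilon$, so that only the $\smallo$-part of strictness is invoked.
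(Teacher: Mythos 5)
Your proof is correct and follows essentially the same strategy as the paper's: decompose an element of $L^\times$ as a unit times an element of $K^\times$, reduce to bounding the valuation of the unit's derivative, and split into the ungrounded and $\res(K)=\res(L)$ cases, invoking strictness with $\phi$ taken from $\Psi_K$ in each. The small differences—normalizing first to small derivation, using Lemma~\ref{notmax} where the paper uses [ADH, Lemma~4.4.2] to upgrade from $\smallo$ to $\mathcal{O}$, and closing with the strict monotonicity of $\gamma\mapsto\gamma+\psi_K(\gamma)$ in place of the paper's citation of [ADH, Proposition~9.1.3]—do not change the substance of the argument.
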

\begin{proof} Let $a\in L^\times$, $a\not\asymp 1$. Then $a=bu$
with $b\in K^\times$ and $a\asymp b$, so $u\asymp 1$ and $a^\dagger=b^\dagger+ u^\dagger$. 
Using $\Gamma=\Gamma_L$ and the equivalence of (i) and (ii) in \cite[Proposition~9.1.3]{ADH} applied  to $L$, we see that
for $L$ to be asymptotic it is enough to 
show that $a^\dagger\asymp b^\dagger$, which in turn will follow from 
$u'\prec b^\dagger$ in view of $u'\asymp u^\dagger$. 

Suppose that $\Psi$ has no largest element. Take $\phi$ with $v(b^\dagger)< v(\phi)\in \Psi$.
 Then $v\phi < v(\der\smallo)$, so
$\der\smallo\subseteq \phi\smallo$, hence
$\der\smallo_L\subseteq \phi\smallo_L$, which by  \cite[Lem\-ma~4.4.2]{ADH} gives
$\der\mathcal{O}_L \subseteq \phi\mathcal{O}_L$, and thus
$u'\preceq \phi \prec b^\dagger$.

Next, suppose that $\res(K)=\res(L)$. Then in the above we could have taken $u=1+\varepsilon$ with $\epsilon\prec 1$. Then
$\der\smallo\subseteq b^\dagger\smallo$, so
$\der\smallo_L\subseteq b^\dagger\smallo_L$, hence 
$u'= \varepsilon' \prec  b^\dagger$.
\end{proof}

\noindent
For an example of a non-strict extension $L\supseteq K$ of asymptotic fields,
take $K=\R$ with the trivial valuation and trivial derivation, and
$L=\R\(( t\)) $ with the natural valuation $f\mapsto \order(f)\colon L^\times \to \Z$ given by $\order(f)=k$ for 
$$f\ =\ f_kt^k + f_{k+1}t^{k+1} + \cdots$$ with $f_k\ne 0$ and all coefficients 
$f_{k+n}\in \R$, and derivation $\der=d/dt$ given by
$\der(f)=\sum_{k\ne 0} kf_kt^{k-1}$ for $f=\sum_k f_kt^k$.
Note that $\Gamma_L(\der)=\Z^{<}$, so $0\notin \Gamma_L(\der)$. 
 
\medskip\noindent
The following isn't needed, but is somehow missing in \cite{ADH}.

\begin{lemma} If $K$ is asymptotic and $\Gamma^{>}$ has a least element, then $K$ is grounded.
\end{lemma}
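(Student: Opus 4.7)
The plan is to exhibit an explicit maximum of $\Psi$, namely the value $v(f^\dagger)$ for any $f\in K^\times$ of minimum positive valuation. I would let $\gamma_0:=\min\Gamma^{>}$ and choose $f\in K^\times$ with $vf=\gamma_0$. Since $f\prec 1$ and $f\ne 0$ we have $f\nasymp 1$, so $\psi_0:=v(f^\dagger)\in\Psi$; moreover, from $f'=f\cdot f^\dagger$ one reads off $v(f')=\gamma_0+\psi_0$.

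The crucial input is the bound $\Psi<v(g')$ valid for every nonzero $g\in\smallo$, recalled from \cite[Section~9.1]{ADH} at the start of this subsection. Applied to $g=f$ it gives $\sigma<\gamma_0+\psi_0$ for every $\sigma\in\Psi$.

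To finish I would invoke the minimality of $\gamma_0$: if some $\sigma\in\Psi$ satisfied $\sigma>\psi_0$, then $\sigma-\psi_0$ would be an element of $\Gamma^{>}$ strictly below $\gamma_0$, contradicting the choice of $\gamma_0$. Hence $\sigma\le\psi_0$ for all $\sigma\in\Psi$, so $\psi_0=\max\Psi$ and $K$ is grounded.

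I do not expect any real obstacle; the argument is essentially a one-line observation once the bound $\Psi<v(f')$ is in hand, made to work by the perfect match between the size $\gamma_0$ of the forbidden gap in $\Gamma^{>}$ and the gap of exactly $\gamma_0$ that separates $\psi_0$ from the ceiling $\gamma_0+\psi_0$ for $\Psi$.
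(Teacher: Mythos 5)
Your proof is correct and follows essentially the same idea as the paper's: both exploit the minimality of $\gamma_0=vf$ together with the bound $\Psi<v(f')=vf+v(f^\dagger)$ to pin down $v(f^\dagger)$ as the maximum of $\Psi$. The only difference is cosmetic: the paper first passes to the compositional conjugate $K^\phi$ with $\phi=f^\dagger$ to normalize $v(f^\dagger)=0$, whereas you work directly and subtract $\psi_0$ by hand, which avoids having to note that asymptoticity is preserved under compositional conjugation.
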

\begin{proof} Suppose $f\in \smallo$, $f\ne 0$, and $v(f)=\min (\Gamma^{>})$.  Replacing $K$ by $K^\phi$ where $\phi=f^\dagger$  we arrange $v(f^\dagger)=0$. 
There is no $\gamma\in \Gamma$ with $0<\gamma< v(f)$, which in view of $v(f)=v(f')$ and $\Psi<v(\der\smallo)$ gives
$0=\max \Psi$.
\end{proof}

\noindent
Another class of valued differential fields considered more closely in 
\cite{ADH} is the class of {\em monotone\/} fields: by definition, $K$ is monotone iff $f^\dagger\preceq 1$ for all $f\in K^\times$. If $K$ is monotone, then so is any strict 
extension $L$ of~$K$ with $\Gamma_L=\Gamma$, 
by \cite[Corol\-lary~6.3.6]{ADH}. Note that $K$ has a monotone compositional conjugate iff for some 
$\phi\in K^\times$ we have $f^\dagger\preceq \phi$ for all
$f\in K^\times$. If $K$ has a monotone compositional conjugate, then clearly 
any strict 
extension $L$ of~$K$ with $\Gamma_L=\Gamma$ has as well. 

\subsection*{The stabilizer of $\Gamma(\der)$} By this we mean the convex subgroup
$$S_{K}(\der)\ :=\ \big\{\gamma\in \Gamma:\ \Gamma(\der)+\gamma=\Gamma(\der)\big\}$$
of $\Gamma$, also denoted by $S(\der)$ if $K$ is clear from the context.
Thus $$S(\der)^{\ge}\ =\ \big\{\gamma\in \Gamma^{\ge}:\ \Gamma(\der)+\gamma\subseteq\Gamma(\der)\big\}.$$
Note that $\Gamma(\der)$ is a union of cosets of $S(\der)$.
We have $\Gamma(a\der)=\Gamma(\der)+va$ and $S(\der)=S(a\der)$ for all $a\in K^\times$.  So $S(\der)$ is invariant under
compositional conjugation.  Normalizing $\der$ so that $0\in \Gamma(\der)$
has the effect that $S(\der)\subseteq \Gamma(\der)$.

\begin{lemma}
Suppose $K$ is asymptotic. Then $S(\der)=\{0\}$.
\end{lemma}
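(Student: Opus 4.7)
The plan is to show $\gamma_0\notin S(\der)$ for every $\gamma_0\in\Gamma^{>}$; since $S(\der)$ is a convex subgroup of $\Gamma$, this suffices, the case $\Gamma=\{0\}$ being trivial. First I would unpack $\Gamma(\der)$: $v\phi\in\Gamma(\der)$ iff $v(g')>v\phi$ for every $g\in\smallo$ with $g'\ne 0$. Combined with the already-recalled inequality $\Psi<v(\der\smallo)$ from \cite[Section~9.1]{ADH}, this yields both $\Psi\subseteq\Gamma(\der)$ and $\Gamma(\der)\cap v(\der\smallo)=\emptyset$.

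Fix $\gamma_0\in\Gamma^{>}$ and pick $f\in K^\times$ with $vf=\gamma_0$, so $f\in\smallo$ and $f\ne 0$. A short subargument shows $f'\ne 0$: otherwise $g:=f-f^2$ lies in $\smallo$, is nonzero with $vg=\gamma_0$, and $g/f=1-f\notin\smallo$ gives $g\notin f\smallo$, while $g'=f'-2ff'=0\in f'\smallo=\{0\}$ gives $g'\in f'\smallo$, contradicting the asymptotic equivalence $g\in f\smallo\iff g'\in f'\smallo$. Hence $v(f^\dagger)=v(f')-\gamma_0\in\Psi\subseteq\Gamma(\der)$, whereas $v(f^\dagger)+\gamma_0=v(f')\in v(\der\smallo)$; the disjointness of $\Gamma(\der)$ from $v(\der\smallo)$ then forces $v(f^\dagger)+\gamma_0\notin\Gamma(\der)$, so $\Gamma(\der)+\gamma_0\not\subseteq\Gamma(\der)$ and $\gamma_0\notin S(\der)$.

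I do not foresee a real obstacle; the argument just combines the explicit description of $\Gamma(\der)$, the inequality $\Psi<v(\der\smallo)$, and the use of $f'$ as a concrete witness producing an element of $v(\der\smallo)\setminus\Gamma(\der)$ at the shifted position $\gamma_0+v(f^\dagger)$. The only mildly delicate point is the verification of $f'\ne 0$, handled above by a one-line use of the asymptotic axiom applied to the pair $(f,f-f^2)$.
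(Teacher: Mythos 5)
Your proof is correct and takes essentially the same route as the paper's: both fix $\gamma\in\Gamma^{>}$ with $\gamma=vf$, use $\Psi<v(\der\smallo)$ to get $v(f^\dagger)\in\Psi\subseteq\Gamma(\der)$, and then note that $v(f^\dagger)+\gamma=v(f')$ lies in $v(\der\smallo)$ and hence outside $\Gamma(\der)$. The only difference is that you explicitly verify $f'\neq 0$ via the pair $(f-f^2,f)$, whereas the paper leaves this implicit in the definition of $\Psi$ as a subset of $\Gamma$ (which presupposes $v(f^\dagger)\neq\infty$); this is a reasonable bit of extra care, not a change of method.
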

\begin{proof}
We have $\Psi<v(\der\smallo)$, so $\Psi\subseteq \Gamma(\der)$.  
Therefore, if $\gamma\in\Gamma^>$, say $\gamma=vg$, $g\in K^\times$, then $\beta:=v(g^\dagger)\in \Psi\subseteq\Gamma(\der)$, yet
$\beta+\gamma=v(g')\notin\Gamma(\der)$, hence $\gamma\notin S(\der)$.
\end{proof}

\noindent
The following is also easy to verify. 

\begin{lemma}\label{supremum} If $\Gamma(\der)$ has a supremum in $\Gamma$, 
then $S(\der)=\{0\}$.
\end{lemma}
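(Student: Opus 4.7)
The plan is a short contradiction using the least-upper-bound property of the supremum. Let $\beta\in\Gamma$ denote the supremum of $\Gamma(\der)$, which exists by hypothesis. Assume toward a contradiction that some $\gamma\in S(\der)$ with $\gamma\ne 0$; replacing $\gamma$ by $-\gamma$ if necessary (recall $S(\der)$ is a subgroup of $\Gamma$), we may arrange $\gamma>0$.

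By definition of $S(\der)$ we have $\Gamma(\der)+\gamma=\Gamma(\der)$. So for every $\alpha\in\Gamma(\der)$, the element $\alpha+\gamma$ lies in $\Gamma(\der)$, hence $\alpha+\gamma\leq\beta$, i.e.\ $\alpha\leq\beta-\gamma$. Thus $\beta-\gamma$ is an upper bound for $\Gamma(\der)$ in $\Gamma$, which is strictly smaller than $\beta$ since $\gamma>0$. This contradicts $\beta=\sup\Gamma(\der)$.

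There is no real obstacle here; the proof is one paragraph. The only subtlety is to note that if $\der=0$ then $\Gamma(\der)=\Gamma$, and the existence of a supremum in $\Gamma$ forces $\Gamma=\{0\}$ (so $S(\der)=\{0\}$ trivially), while if $\der\ne 0$ the earlier observation $\Gamma(\der)<v(\der\smallo)$ already guarantees $\Gamma(\der)$ is bounded above, so the hypothesis is a genuine restriction. In either case, the argument above applies (or is vacuous) and yields $S(\der)=\{0\}$.
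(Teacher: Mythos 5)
The paper gives no proof of this lemma, remarking only that it is ``easy to verify,'' so there is nothing to compare against; but your argument is correct and is surely the intended one. Assuming a nonzero $\gamma\in S(\der)$, taking $\gamma>0$, and using $\Gamma(\der)+\gamma\subseteq\Gamma(\der)$ to show $\beta-\gamma$ is an upper bound for $\Gamma(\der)$ strictly below the supremum $\beta$ is exactly the one-line contradiction that makes the claim ``easy.'' The closing remarks about $\der=0$ are a harmless aside (the main argument already covers that case, since it only uses the existence of $\sup\Gamma(\der)$), but they do correctly note that $\Gamma(\der)=\Gamma$ can only have a supremum when $\Gamma=\{0\}$.
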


\noindent
In particular, if $\Gamma(\der)$ has a maximum, then $S(\der)=\{0\}$. If $\der\ne 0$ and $\Gamma$ is archimedean, then clearly also $S(\der)=\{0\}$.
In Section~\ref{secflex} we need the following:  

\begin{lemma}\label{epsgamdel} Suppose $S(\der)=\{0\}$. Then for any
$\epsilon\in \Gamma^{>}$ there are $\gamma\in \Gamma(\der)$ and
$\delta\in \Gamma\setminus\Gamma(\der)$ such that 
$\delta-\gamma\le\epsilon$. 
\end{lemma}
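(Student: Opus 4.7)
The plan is to extract the required $\gamma$ and $\delta$ directly from the definition of the stabilizer. Given $\epsilon\in\Gamma^{>}$, the hypothesis $S(\der)=\{0\}$ gives in particular $\epsilon\notin S(\der)^{\ge}$. Recalling the reformulation
\[
S(\der)^{\ge}\ =\ \big\{\gamma\in\Gamma^{\ge}:\ \Gamma(\der)+\gamma\subseteq\Gamma(\der)\big\}
\]
noted just before the statement of the lemma, this says exactly that $\Gamma(\der)+\epsilon\not\subseteq\Gamma(\der)$.

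Unwinding the failure of inclusion, there must exist some $\gamma\in\Gamma(\der)$ for which the translate $\gamma+\epsilon$ lies outside $\Gamma(\der)$. Setting $\delta:=\gamma+\epsilon$, one immediately has $\gamma\in\Gamma(\der)$, $\delta\in\Gamma\setminus\Gamma(\der)$, and $\delta-\gamma=\epsilon$, so in particular $\delta-\gamma\le\epsilon$.

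There is no real obstacle here: the proof is a one-step definition chase using the reformulation of $S(\der)^{\ge}$ recorded immediately above the lemma, together with the observation that $S(\der)=\{0\}$ forces $S(\der)^{\ge}=\{0\}$ and hence excludes every positive $\epsilon$.
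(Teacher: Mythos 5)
Your proof is correct and is essentially the same one-line argument as the paper's: the paper's proof simply states that $\epsilon\notin S(\der)$ yields $\gamma\in\Gamma(\der)$ with $\delta:=\gamma+\epsilon\notin\Gamma(\der)$, which is exactly what you spell out via the reformulation of $S(\der)^{\ge}$.
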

\begin{proof} Let $\epsilon\in \Gamma^{>}$. Then $\epsilon\notin S(\der)$, so we get $\gamma\in \Gamma(\der)$ with 
$\delta:=\gamma+\epsilon\notin \Gamma(\der)$.  
\end{proof} 

\noindent
For cases where $S(\der)\ne \{0\}$, let $\k$ be a field of characteristic~$0$ with a valuation $w\colon \k\to \Delta=w(\k^\times)$, and let $K=\k\(( t\)) $ be the field of Laurent series over $\k$.
Then we have the valuation $f\mapsto \order(f)\colon K^\times \to \Z$,
where $\order(f)=k$ means that $f=f_kt^k + f_{k+1}t^{k+1} + \cdots$
with $f_k\ne 0$ and all coefficients $f_{k+n}\in \k$. We combine these two valuations into a single valuation $v\colon K^\times \to \Gamma$
extending the valuation $w$ on $\k$,
with $\Gamma$ having $\Delta$ and $\Z$ as ordered subgroups,
$\Delta$ convex in $\Gamma$, and $\Gamma=\Delta+ \Z$;
it is given by $v(f)=w(f_k) + k$, with $k=\order(f)$. 
Next, we equip $K$ with the derivation $\der=t\cdot d/dt$ given by
$\der(f)=\sum_k kf_kt^k$ for $f=\sum_k f_kt^k$. Then $K$ with the valuation $v$ and the derivation $\der$ is a (monotone) valued differential field, with $\der\smallo=\big\{f\in K:\ \order(f)\ge 1\big\}$. It follows easily that
$$\Gamma(\der)\ =\ 
\{\gamma\in \Gamma:\ \gamma\le \delta \text{ for some }\delta\in \Delta\},$$
and thus $S(\der)=\Delta$.

\subsection*{Coarsening} We begin with reminders
about coarsening from \cite[Sections~3.4, 4.4]{ADH}. Let $\Delta$ be a convex subgroup of $\Gamma$. This yields the ordered abelian quotient group $\dot{\Gamma}=\Gamma/\Delta$ of $\Gamma$, with the coarsened valuation 
$$\dot{v}\ =\ v_{\Delta}\ \colon\  K^\times \to \dot{\Gamma}, \qquad \dot{v}(a)\ :=\  v(a)+\Delta$$
on the underlying field of $K$. The $\Delta$-coarsening $K_{\Delta}$ of $K$ is the valued differential field with the same underlying differential field
as $K$, but with valuation $\dot{v}$. Its valuation ring is 
$$\dot{\mathcal{O}}\ =\ \{a\in K:\ va\ge \delta \text{ for some }\delta\in \Delta\}\ \supseteq\ \mathcal{O},$$ with maximal ideal
$$\dot{\smallo}\ =\ \{a\in K:\ va> \Delta\}\ \subseteq\ \smallo.$$
The residue field 
$$\dot{K}\ :=\ \res(K_{\Delta})\ =\ \dot{\mathcal{O}}/\dot{\smallo}$$ is itself a valued field with valuation $v\colon \dot{K}^\times \to \Delta$ given by 
$$v(a+\dot{\smallo})\ :=\ v(a)\qquad\text{ for $a\in \dot{\mathcal{O}}\setminus \dot{\smallo}$,}$$
and with valuation ring $\{a+\dot{\smallo}: a\in \mathcal{O}\}$.
We identify $\res(K)$ with $\res(\dot{K})$ 
via $\res(a)\mapsto \res(a+\dot{\smallo})$ for 
$a\in \mathcal{O}$. 
The following is \cite[Corollary~4.4.4]{ADH}:

\begin{lemma}\label{lem:4.4.4}
If $\der\smallo\subseteq\smallo$ then $\der\dot{\smallo}\subseteq\dot{\smallo}$.
\end{lemma}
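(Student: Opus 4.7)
The lemma asserts compatibility between smallness of the derivation and the coarsening operation: if $\der$ takes $\smallo$ into $\smallo$, the same holds for the (smaller) maximal ideal $\dot{\smallo}$ of the coarsened valuation ring. The plan is to factor an arbitrary $a\in\dot{\smallo}$ as a product $a=bc$ of two elements of $\smallo$ whose valuations both exceed any prescribed $\delta\in\Delta$, and then apply Leibniz.

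First I would dispose of the trivial case $\Delta=\{0\}$, where $\dot{\smallo}=\smallo$ and the conclusion is the hypothesis. So assume $\Delta\ne\{0\}$ and let $a\in\dot{\smallo}$. To show $\der a\in\dot{\smallo}$ it suffices to verify $v(\der a)>\delta$ for each $\delta\in\Delta$. If $\delta\le 0$, this is immediate: since $a\in\smallo$, the hypothesis gives $\der a\in\smallo$, hence $v(\der a)>0\ge\delta$. For $\delta>0$, using that $\Delta$ is a nontrivial ordered abelian group and therefore has no largest element, I would pick $\delta_1\in\Delta$ with $\delta_1>\delta$; then choose $c\in K^\times$ with $vc=\delta_1$ (using surjectivity of $v$), and set $b:=a/c$, so $vb=va-\delta_1$. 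Because $va>\Delta$ and $\delta_1\in\Delta$, one has $vb>\Delta$, and in particular $vb>\delta_1$; also $vc=\delta_1>\delta$, so both $b,c\in\smallo$. By hypothesis, $\der b,\der c\in\smallo$, hence both have positive valuation, and Leibniz gives $\der a=(\der b)c+b(\der c)$ with $v((\der b)c)>\delta_1>\delta$ and $v(b\der c)>\delta_1>\delta$. So $v(\der a)>\delta$.

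The only subtle point is producing the factorization $a=bc$ with both factors in $\smallo$ of sufficiently small valuation; this is the main obstacle, but a mild one — it requires just one element of $\Delta$ strictly above $\delta$, provided by $\Delta\ne\{0\}$, together with the surjectivity of $v$ onto $\Gamma$. Everything else is routine valuation arithmetic.
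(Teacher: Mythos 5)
Your proof is correct. The paper itself gives no argument for this lemma --- it simply cites \cite[Corollary~4.4.4]{ADH} --- so there is no in-paper proof to compare against. Your factorization-plus-Leibniz strategy is the natural one, and indeed it is the same technique the paper uses in the immediately preceding Lemma~\ref{small} (write an element of $\smallo$ as a product of two elements of $\smallo$ of controlled valuation and apply the product rule). One minor streamlining: in the case $\delta>0$ you may simply take $\delta_1:=\delta$ rather than appealing to the absence of a largest element of $\Delta$, since $vc=\delta>0$ already puts $c\in\smallo$ and $vb=va-\delta>\Delta\ni\delta$ still holds; the estimates $v((\der b)c)>\delta$ and $v(b\der c)>\delta$ then follow exactly as you argue. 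Everything else checks out.
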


\noindent
Suppose the derivation $\der$ of $K$ is small. Then $\der$ is also small as a derivation of $K_{\Delta}$, and the derivation on~$\dot{K}$ induced by the derivation of $K_{\Delta}$ is small as well. This derivation on~$\dot{K}$ then induces the same derivation on $\res(\dot{K})$ as $\der$ on $K$
induces on $\res(K)$. The operation of coarsening commutes with
compositional conjugation: $(K^\phi)_{\Delta}$ and $(K_{\Delta})^\phi$ are the same valued differential field, to be denoted by $K_{\Delta}^\phi$.

\medskip\noindent
The next lemma describes the downward closed subset $\dot{\Gamma}(\der)$ of $\dot{\Gamma}$
almost completely in terms of $\Gamma(\der)$
and the canonical map $\pi\colon \Gamma\to \dot{\Gamma}$. Let $\alpha$ range over $\Gamma$.

\begin{lemma}\label{coarseder1} If $\der\dot{\smallo}\subseteq \phi\dot{\smallo}$, then $v\phi\in \bigcap_{\alpha>\Delta}\Gamma(\der)+\alpha$. As a consequence we have either $\dot{\Gamma}(\der)=\pi \Gamma(\der)$, or $\dot{\Gamma}(\der)=\pi\Gamma(\der) \cup \{\dot{\mu}\}$ with $\dot{\mu}=\max \dot{\Gamma}(\der)$.
\end{lemma}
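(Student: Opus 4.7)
The plan is to prove the displayed main implication first via a Leibniz-rule calculation, then analyze the set $U:=\bigcap_{\alpha>\Delta}(\Gamma(\der)+\alpha)\subseteq\Gamma$ to decompose $\pi U$, and finally establish the reverse inclusion $\pi\Gamma(\der)\subseteq\dot\Gamma(\der)$ by a second Leibniz-rule argument.

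For the main implication, fix $\alpha\in\Gamma$ with $\alpha>\Delta$ and pick $a\in K^\times$ with $v(a)=\alpha$; then $a\in\dot\smallo$. For any $s\in\smallo$ the product $as$ also lies in $\dot\smallo$ (as $v(as)=\alpha+v(s)>\Delta$), so the hypothesis applied to $a$ and to $as$ gives $v(a')>v\phi+\Delta$ and $v((as)')>v\phi+\Delta$; taking $0\in\Delta$ in each, both exceed $v\phi$. Together with $v(s)>0$, the identity $(as)'=a's+as'$ forces $v(as')>v\phi$ and hence $v(s')>v\phi-\alpha$. This says $\der\smallo\subseteq(\phi a^{-1})\smallo$, so $v\phi-\alpha\in\Gamma(\der)$.

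For the decomposition of $\pi U$: both $U$ and $\Gamma(\der)+\Delta$ are downward-closed and $\Delta$-invariant, with $\Gamma(\der)+\Delta\subseteq U$. If $\gamma,\gamma'\in U$ satisfy $\gamma-\gamma'>\Delta$, then taking $\alpha:=\gamma-\gamma'$ in the definition of $U$ gives $\gamma'=\gamma-\alpha\in\Gamma(\der)$; hence $U\setminus(\Gamma(\der)+\Delta)$ is contained in a single $\Delta$-coset $\mu+\Delta$ (possibly empty). If nonempty, then $\mu>\Gamma(\der)+\Delta$ by downward-closure of $\Gamma(\der)+\Delta$ (else $\mu\le\gamma_0+\delta_0$ for some $\gamma_0\in\Gamma(\der)$, $\delta_0\in\Delta$ would yield $\mu-\delta_0\in\Gamma(\der)$, whence $\mu\in\Gamma(\der)+\Delta$), so $\dot\mu>\pi\Gamma(\der)$. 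Thus $\pi U$ equals $\pi\Gamma(\der)$ or $\pi\Gamma(\der)\cup\{\dot\mu\}$, and by the first part $\dot\Gamma(\der)\subseteq\pi U$.

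For the reverse inclusion $\pi\Gamma(\der)\subseteq\dot\Gamma(\der)$: given $\gamma=v\phi\in\Gamma(\der)$, I aim to show $v(\der a)>\gamma+\delta$ for every $a\in\dot\smallo$ and $\delta\in\Delta$; this yields $\der\dot\smallo\subseteq\phi\dot\smallo$, hence $\pi\gamma\in\dot\Gamma(\der)$. For $\delta\le 0$ it follows from $v(a')>\gamma$. For $\delta>0$, choose $\psi\in\smallo$ with $v(\psi)=\delta$; then $v(\psi')>\gamma$, and $\psi^{-1}a\in\dot\smallo\subseteq\smallo$ combined with $\der(\psi^{-1}a)=\psi^{-1}(a'-a\psi^\dagger)$ gives $v(a'-a\psi^\dagger)>\gamma+\delta$. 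Separately, $v(a\psi^\dagger)=v(a)+v(\psi')-\delta>\Delta+\gamma-\delta=\gamma+\Delta$ (using $\Delta-\delta=\Delta$), so in particular $v(a\psi^\dagger)>\gamma+\delta$, and therefore $v(a')=v((a'-a\psi^\dagger)+a\psi^\dagger)>\gamma+\delta$. Finally, $\dot\Gamma(\der)$ is itself downward-closed in $\dot\Gamma$ (same argument as for $\Gamma(\der)$), so the sandwich $\pi\Gamma(\der)\subseteq\dot\Gamma(\der)\subseteq\pi U$, together with $\pi U\setminus\pi\Gamma(\der)\subseteq\{\dot\mu\}$ and $\dot\mu>\pi\Gamma(\der)$, forces $\dot\Gamma(\der)$ to equal either $\pi\Gamma(\der)$ or $\pi\Gamma(\der)\cup\{\dot\mu\}$, with $\dot\mu=\max\dot\Gamma(\der)$ in the second case.

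The main obstacle is the reverse inclusion in the third paragraph: unlike the symmetric identity $(as)'=a's+as'$ used for the main implication, the identity $\der(\psi^{-1}a)=\psi^{-1}(a'-a\psi^\dagger)$ does not by itself control $v(a')$, so one must independently bound $v(a\psi^\dagger)$, and this requires the group-theoretic trick $\Delta-\delta=\Delta$ to upgrade $v(a)>\Delta$ to $v(a\psi^\dagger)>\gamma+\Delta$ and thereby avoid a cancellation catastrophe.
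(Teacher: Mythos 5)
Your proof is correct, and it follows a genuinely different route from the paper's. The paper establishes the main implication by applying a quoted lemma (\cite[Lemma~4.4.2]{ADH}, that small derivations satisfy $\der\mathcal O\subseteq\mathcal O$) to the coarsened field $K_\Delta^\phi$ to upgrade $\phi^{-1}\der\dot\smallo\subseteq\dot\smallo$ to $\phi^{-1}\der\dot{\mathcal O}\subseteq\dot{\mathcal O}$, then composes with $a\dot{\mathcal O}\subseteq\smallo$; you instead do the Leibniz computation $(as)'=a's+as'$ by hand, which in effect reproves that quoted lemma inline for the cases you need. Similarly, for $\pi\Gamma(\der)\subseteq\dot\Gamma(\der)$ the paper simply cites Lemma~\ref{lem:4.4.4} (i.e.~\cite[Corollary~4.4.4]{ADH}), whereas your third paragraph reconstructs that fact from scratch via $\der(\psi^{-1}a)=\psi^{-1}(a'-a\psi^\dagger)$ and the group-theoretic observation $\Delta-\delta=\Delta$ to control the $a\psi^\dagger$ term; both work, but you could have shortened this considerably by invoking Lemma~\ref{lem:4.4.4}, which the paper had already stated precisely for this purpose. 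Finally, the decomposition step is organized differently: you first prove that $U:=\bigcap_{\alpha>\Delta}(\Gamma(\der)+\alpha)$ is the union of $\Gamma(\der)+\Delta$ with at most one extra $\Delta$-coset sitting strictly above it, then sandwich $\dot\Gamma(\der)$ between $\pi\Gamma(\der)$ and $\pi U$, while the paper reasons directly about any putative $v\phi>\Gamma(\der)+\Delta$ with $\dot v\phi\in\dot\Gamma(\der)$, showing its image is uniquely determined (either as $\sup\pi\Gamma(\der)$ or as $\max\pi\Gamma(\der)$ plus the least positive element of $\dot\Gamma$). Your structural analysis of $U$ is a clean alternative and arguably more transparent, at the cost of being longer and re-deriving two facts the paper has on the shelf.
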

\begin{proof}  Suppose $\der\dot{\smallo}\subseteq \phi\dot{\smallo}$.
 Then $\phi^{-1}\der\dot{\smallo}\subseteq \dot{\smallo}$, so
 $\phi^{-1}\der\dot{\mathcal{O}}\subseteq \dot{\mathcal{O}}$,
 hence $\der\dot{\mathcal{O}}\subseteq \phi\dot{\mathcal{O}}$.
 For $a\in K^\times$ with $\alpha=va>\Delta$ we have
 $a\dot{\mathcal{O}}\subseteq \smallo$, so
 $\der\dot{\mathcal{O}}\subseteq  \phi\dot{\mathcal{O}}\subseteq \phi a^{-1}\smallo$, and
 thus $\der\smallo\subseteq \phi a^{-1}\smallo$, which gives
 $v\phi-\alpha\in \Gamma(\der)$. We conclude that
 $$v\phi\in \bigcap_{\alpha>\Delta} \Gamma(\der)+\alpha.$$
It follows from Lemma~\ref{lem:4.4.4}  that $\pi\Gamma(\der)\subseteq \dot{\Gamma}(\der)$. 
Suppose that $v\phi> \Gamma(\der)+\Delta$. Then by the above,
 $\dot{v}\phi-\dot{\alpha}\in \pi\Gamma(\der)$ for all $\dot{\alpha}\in \dot{\Gamma}^{>}$. If $\pi\Gamma(\der)$ has no largest element, then we get $\dot{v}\phi= \sup \pi\Gamma(\der)$.
If $\pi \Gamma(\der)$ has a largest element, then
$\dot{v}\phi-\max\pi\Gamma(\der)$ must be the least positive
element of $\dot{\Gamma}^{>}$, and $\dot{\Gamma}(\der)= \pi\Gamma(\der)\cup\{\dot{v}\phi\}$.
\end{proof} 

\noindent
The following will be needed in deriving Proposition~\ref{coarseder3}:

\begin{lemma}\label{coarseder2} Let $L$ be an immediate strict extension of $K$ such that
$\res(L_{\Delta})=\res(K_{\Delta})$. Then $L_{\Delta}$ is a strict extension of $K_{\Delta}$.
\end{lemma}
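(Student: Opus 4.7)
First I would observe that $L_{\Delta}$ is an immediate extension of $K_{\Delta}$: the value group $\Gamma_L/\Delta$ equals $\Gamma/\Delta$ since $\Gamma_L=\Gamma$ by immediacy of $L/K$, and $\res(L_{\Delta})=\res(K_{\Delta})$ by hypothesis. By Lemma~\ref{strictimm} it therefore suffices to verify the small-$\smallo$ part of strictness: for every $\phi\in K^\times$ with $\der\dot{\smallo}\subseteq\phi\dot{\smallo}$, one has $\der_L\dot{\smallo}_L\subseteq\phi\dot{\smallo}_L$.

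Fix such a $\phi$, and let $f\in\dot{\smallo}_L$. The first ingredient is Lemma~\ref{coarseder1}, which yields $v\phi-\alpha\in\Gamma(\der)$ for every $\alpha\in\Gamma$ with $\alpha>\Delta$. The second is a decomposition of $f$. Using $\Gamma_L=\Gamma$, pick $b\in K^\times$ with $vb=v_L f>\Delta$; then $f/b\in\mathcal{O}_L^\times$, and since $\res(L)=\res(K)$, after rescaling $b$ by a unit of $\mathcal{O}_K$ we may write $f=b(1+\epsilon)$ with $\epsilon\in\smallo_L$. Expanding gives
\[ \der_L f\ =\ b'\ +\ b'\epsilon\ +\ b\,\epsilon'. \]

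The first two summands lie in $\phi\dot{\smallo}_L$ almost for free: $b\in\dot{\smallo}_K$ together with $\der\dot{\smallo}\subseteq\phi\dot{\smallo}$ gives $b'\in\phi\dot{\smallo}_K\subseteq\phi\dot{\smallo}_L$, and then $v_L(b'\epsilon)>vb'$ places $b'\epsilon$ there as well. The main obstacle is the third term $b\epsilon'$. Fix an arbitrary $\delta\in\Delta$; the goal is $v_L(b\epsilon')>v\phi+\delta$. Set $\alpha:=vb-\delta$; since $vb>\Delta$ strictly and $\delta\in\Delta$, we have $\alpha>\Delta$, so Lemma~\ref{coarseder1} provides $\psi\in K^\times$ with $v\psi=v\phi-vb+\delta$ and $\der\smallo\subseteq\psi\smallo$. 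Strictness of $L/K$ then gives $\der_L\smallo_L\subseteq\psi\smallo_L$, whence $v_L(\epsilon')>v\psi$ and $v_L(b\epsilon')>v\phi+\delta$. Since $\delta\in\Delta$ was arbitrary, $b\epsilon'\in\phi\dot{\smallo}_L$, completing the proof. The crux of the argument is thus locating, for each $\delta\in\Delta$, a witness $\psi$ of level $v\phi-vb+\delta$ inside $\Gamma(\der)$ so that the original strictness of $L/K$ can be invoked.
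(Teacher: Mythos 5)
Your proof is correct. It reaches the same conclusion by essentially the same two ingredients the paper uses, namely the estimate from the proof of Lemma~\ref{coarseder1} (that $v\phi-\alpha\in\Gamma(\der)$ for every $\alpha>\Delta$) transported across the strict extension $L\supseteq K$, plus Lemma~\ref{strictimm}. The organizational difference is that the paper also invokes Lemma~\ref{smaller}: after Lemma~\ref{strictimm} it suffices to verify the weaker inclusion $\der\smallo_L\subseteq\phi\dot{\mathcal O}_L$, which the paper gets by a one-line estimate $v(f')>v\phi-\alpha$ for all $\alpha>\Delta$ and $f\in\smallo_L$; Lemma~\ref{smaller} then upgrades $\dot{\mathcal O}_L$ to $\dot{\smallo}_L$. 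You instead prove the $\dot{\smallo}_L$-inclusion directly by writing $f=b(1+\epsilon)$ with $b\in K^\times$, $\epsilon\in\smallo_L$, and controlling the three summands of $f'$ separately. That decomposition is exactly what is buried inside the proof of Lemma~\ref{smaller}, so in effect you have inlined that lemma rather than cited it; the trade-off is that your argument is self-contained (no appeal to Lemma~\ref{smaller}) at the cost of a few extra lines of bookkeeping parametrized by $\delta\in\Delta$. One tiny omission: you should dispose of $f=0$ before picking $b$, though the case is trivial. Both proofs crucially use the immediacy hypothesis to obtain the decomposition $f=b(1+\epsilon)$ (yours explicitly, the paper's via Lemma~\ref{smaller}).
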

\begin{proof} Note that $L_{\Delta}$ is an immediate extension of $K_{\Delta}$. Suppose $\der\dot{\smallo}\ \subseteq\ \phi\dot{\smallo}$; applying Lemmas~\ref{strictimm} and ~\ref{smaller} to the extension $L_{\Delta}$ of $K_{\Delta}$, it suffices to derive from this assumption that $\der\dot{\smallo}_L\ \subseteq\ \phi\dot{\mathcal{O}}_L$. The proof of Lemma~\ref{coarseder1} gives $\der\smallo\subseteq \phi a^{-1}\smallo$ for every $a\in K^\times$ with $va>\Delta$, and so $\der\smallo_L\subseteq \phi a^{-1}\smallo_L$ for such $a$. Thus for $f\in \smallo_L$ we have
$v(f')> v\phi-\alpha$ for all $\alpha>\Delta$, so
$v(f'/\phi)> -\alpha$ for all $\alpha> \Delta$, that is,
$f'/\phi\in \dot{\mathcal{O}}_L$, so $f'\in \phi\dot{\mathcal{O}}_L$. This shows $\der\dot{\smallo}_L\subseteq \der \smallo_L\subseteq \phi\dot{\mathcal{O}}_L$, as desired.
\end{proof}

\subsection*{Dominant degree} We summarize here from \cite[Section~6.6]{ADH} what we need about the dominant part
and dominant degree of a differential polynomial and its behavior under additive and multiplicative conjugation. We give the  definitions, but refer to~\cite[Section~6.6]{ADH} for the proofs. {\em In this subsection we assume the derivation $\der$ of $K$ is small, and we choose for every $P\in K\{Y\}^{\ne}$ an element $\fd_P\in K^\times$ with $\fd_P\asymp P$, such that $\fd_P=\fd_Q$
whenever $P\sim Q$, $P,Q\in K\{Y\}^{\ne}$. Let 
$P\in K\{Y\}^{\ne}$}.

\medskip\noindent
We have $\fd_P^{-1}P\asymp 1$, in particular, $\fd_P^{-1}P\in \mathcal{O}\{Y\}$, and we define the {\it dominant part}\/~$D_P\in
\res(K)\{Y\}^{\ne}$ to be the image of $\fd_P^{-1}P$ under the natural differential ring morphism $\mathcal{O}\{Y\} \to \res(K)\{Y\}$.
Note that $\deg D_P\le \deg P$. The 
{\it dominant degree}\/ of $P$ is defined to be the natural number $\ddeg P:=\deg D_P$; unlike $D_P$ it does not depend on the choice of the elements $\fd_P\in K^\times$. Given also  $Q\in K\{Y\}^{\ne}$ we have 
$\ddeg PQ = \ddeg P + \ddeg Q$. 
If $f\preceq 1$ in an extension~$L$ of $K$ with small derivation satisfies 
$P(f)=0$, then $D_P(f+\smallo_L)=0$ and thus $\ddeg P \geq 1$. 

\begin{lemma} If $a\in K$ and $a\preceq 1$, then 
$\ddeg P_{+a}=\ddeg P$.
\end{lemma}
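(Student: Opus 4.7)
The plan is to show directly that $P_{+a}\asymp \fd_P$, and that with the (possibly alternative) choice $\fd_{P_{+a}}:=\fd_P$ the dominant part of $P_{+a}$ is
\[
D_{P_{+a}}\ =\ (D_P)_{+\bar a}\quad\text{in }\res(K)\{Y\},
\]
where $\bar a:=a+\smallo\in\res(K)$ is the residue of $a$. Since $\ddeg$ does not depend on the choice of the elements $\fd$, and since additive conjugation in a differential polynomial ring preserves the total degree (the top-degree homogeneous part $P_d$ of $P$ equals the top-degree homogeneous part of $P_{+a}$, all other $(P_e)_{+a}$ with $e\le d$ contributing only to degrees~$\le e\le d$), this will yield
\[
\ddeg P_{+a}\ =\ \deg(D_P)_{+\bar a}\ =\ \deg D_P\ =\ \ddeg P.
\]

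First I would record the preparatory fact that $\mathcal{O}\{Y\}$ is stable under additive conjugation by elements of $\mathcal{O}$, and that the natural differential ring morphism $\mathcal{O}\{Y\}\to\res(K)\{Y\}$ commutes with such conjugation. Indeed, for $Q\in \mathcal{O}\{Y\}$ one expands
\[
Q_{+a}\ =\ \sum_{\bsigma} Q_{[\bsigma]}\prod_{i}\bigl(a^{(\sigma_i)}+Y^{(\sigma_i)}\bigr),
\]
and all $a^{(\sigma_i)}$ lie in $\mathcal{O}$: this is where smallness of $\der$ enters, via $\der\mathcal{O}\subseteq\mathcal{O}$ from \cite[Lemma~4.4.2]{ADH}. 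Hence $Q_{+a}\in\mathcal{O}\{Y\}$, and reduction modulo $\smallo$ commutes term by term with the above expansion, so the image of $Q_{+a}$ in $\res(K)\{Y\}$ is $(\res Q)_{+\bar a}$.

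I apply this to $Q:=\fd_P^{-1}P\in\mathcal{O}\{Y\}$, which has $\res Q=D_P$. Scalars commute with the substitution $Y\mapsto Y+a$, so
\[
\fd_P^{-1}P_{+a}\ =\ Q_{+a}\ \in\ \mathcal{O}\{Y\},
\]
with residue $(D_P)_{+\bar a}$ in $\res(K)\{Y\}$. By the degree-preservation fact noted above, $(D_P)_{+\bar a}$ has the same degree as $D_P$, hence in particular is nonzero. Therefore $\fd_P^{-1}P_{+a}\asymp 1$, that is, $P_{+a}\asymp\fd_P$, so we may take $\fd_{P_{+a}}:=\fd_P$ and conclude $D_{P_{+a}}=(D_P)_{+\bar a}$, completing the argument as in the first paragraph.

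There is no essential obstacle here; the only delicate point is closure of $\mathcal{O}\{Y\}$ under additive conjugation by elements of $\mathcal{O}$, which relies crucially on the standing hypothesis that the derivation $\der$ of $K$ is small.
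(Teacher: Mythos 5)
Your proof is correct and is the expected argument: the paper itself gives no proof but cites \cite[Lemma~6.6.5(i)]{ADH}, and the route you take — using $\der\mathcal{O}\subseteq\mathcal{O}$ to see that $\mathcal{O}\{Y\}$ is closed under $Q\mapsto Q_{+a}$ for $a\in\mathcal{O}$, that the residue morphism $\mathcal{O}\{Y\}\to\res(K)\{Y\}$ intertwines additive conjugation by $a$ with additive conjugation by $\bar a$, and then reading off $\ddeg P_{+a}=\deg(D_P)_{+\bar a}=\deg D_P$ from the degree-invariance of additive conjugation over the residue field — is exactly the standard one behind that citation. The care you take about the (immaterial) choice of $\mathfrak d_{P_{+a}}$ is appropriate and resolves the only possible bookkeeping quibble.
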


\begin{lemma} Let $a,b\in K$, $g\in K^\times$ be such that $a-b\preceq g$. Then
$$  \ddeg P_{+a,\times g}\ =\ \ddeg P_{+b,\times g}.$$
\end{lemma}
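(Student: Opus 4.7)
The plan is to reduce the statement to the preceding lemma (that $\ddeg$ is invariant under additive conjugation by elements $\preceq 1$) via a suitable change of variables. Set $Q := P_{+a,\times g}$, so $Q(Y)=P(a+gY)$, and put $c := (b-a)/g \in K$. The hypothesis $a-b\preceq g$ is precisely $v(a-b)\ge v(g)$, which rewrites as $c\preceq 1$.

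Next I would compute directly:
$$Q_{+c}(Y)\ =\ Q(c+Y)\ =\ P\bigl(a+g(c+Y)\bigr)\ =\ P(b+gY)\ =\ P_{+b,\times g}(Y),$$
so that $P_{+b,\times g} = (P_{+a,\times g})_{+c}$ with $c\preceq 1$.

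Applying the preceding lemma to $Q\in K\{Y\}^{\ne}$ and $c\in K$ (using $c\preceq 1$) yields $\ddeg Q_{+c} = \ddeg Q$, i.e.\ $\ddeg P_{+b,\times g} = \ddeg P_{+a,\times g}$, as required.

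There is essentially no obstacle here: once the change of variables $Y\mapsto c+Y$ with $c=(b-a)/g$ is spotted, the identity $(P_{+a,\times g})_{+c} = P_{+b,\times g}$ is a formal computation, and the hypothesis $a-b\preceq g$ is exactly what makes $c$ lie in $\mathcal{O}$ so that the previous lemma applies. The only point worth noting is that this reduction requires $g\in K^\times$ (so that division by $g$ makes sense), which is given.
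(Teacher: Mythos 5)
Your proof is correct, and it is the standard argument. The identity $(P_{+a,\times g})_{+c}=P_{+b,\times g}$ with $c=(b-a)/g\preceq 1$ reduces the claim to the preceding lemma exactly as intended; the paper does not spell out a proof here but simply cites \cite[Corollary~6.6.6]{ADH}, where the same reduction to additive conjugation by an element of $\mathcal O$ is used.
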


\begin{lemma} If $g,h\in K^\times$ and $g\preceq h$, then
$\ddeg P_{\times g}\le \ddeg P_{\times h}$.
\end{lemma}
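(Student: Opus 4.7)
The strategy is to reduce to the case $h=1$. Writing $u:=g/h\in\mathcal{O}\setminus\{0\}$ (since $g\preceq h$), we have $P_{\times g}(Y)=P(gY)=P_{\times h}(uY)=(P_{\times h})_{\times u}$, so with $Q:=P_{\times h}$ the statement becomes $\ddeg Q_{\times u}\le\ddeg Q$ for any $Q\in K\{Y\}^{\ne}$ and $u\preceq 1$. Normalising by replacing $Q$ with $\fd_Q^{-1}Q$ (a scalar factor, which preserves both $\ddeg Q$ and $\ddeg Q_{\times u}$) we may further assume $Q\asymp 1$, so that $\bar Q=D_Q\in\res(K)\{Y\}^{\ne}$ has degree $d_0:=\ddeg Q$.

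Since $\der$ is small, the residue map $\mathcal{O}\{Y\}\to\res(K)\{Y\}$ is a differential ring homomorphism, so $\overline{Q_{\times u}}=\bar Q_{\times\bar u}$ in $\res(K)\{Y\}$. If $\bar u\ne 0$ (i.e.\ $u\asymp 1$), the substitution $Y\mapsto\bar u Y$ is a differential automorphism of $\res(K)\{Y\}$ preserving the homogeneous decomposition (its inverse is $Y\mapsto\bar u^{-1}Y$); hence $\deg\bar Q_{\times\bar u}=d_0$ and $\ddeg Q_{\times u}=d_0$. If $\bar u=0$ (i.e.\ $u\prec 1$), then $\bar Q_{\times 0}(Y)=\bar Q(0)$ is the constant part $\bar Q_0\in\res(K)$ of $\bar Q$, and if $\bar Q_0\ne 0$ then $D_{Q_{\times u}}=\bar Q_0$ has degree $0\le d_0$.

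The main obstacle is the remaining subcase $u\prec 1$ and $\bar Q_0=0$, where $Q_{\times u}\prec 1$ and the residue map alone does not identify $D_{Q_{\times u}}$. Here the plan is to work with the homogeneous decomposition $Q=\sum_i Q_i$ and expand each $(Q_i)_{\times u}(Y)=Q_i(uY)$ via $(uY)^{(k)}=\sum_l\binom{k}{l}u^{(k-l)}Y^{(l)}$. Every coefficient of $(Q_i)_{\times u}$ then becomes a finite sum of products of $i$ factors drawn from $\{u^{(k)}\}_k$ (all in $\smallo$, since $u\prec 1$ and $\der\smallo\subseteq\smallo$ iteratively) times a coefficient of $Q_i$, yielding the lower bound $v((Q_i)_{\times u})\ge v(Q_i)+i\alpha$ with $\alpha:=\min_k v(u^{(k)})>0$. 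For $i>d_0$ we have $v(Q_i)>v(Q_{d_0})=0$, so $v((Q_i)_{\times u})>i\alpha\ge(d_0+1)\alpha$. Complementarily, using $\bar Q_{d_0}\ne 0$ one picks $\bsigma_0$ with $Q_{d_0,[\bsigma_0]}\asymp 1$ and exhibits in the expansion of $(Q_{d_0})_{\times u}$ a non-cancelling coefficient of valuation at most $d_0\alpha$, the non-cancellation being verified by inspecting the image of the appropriately rescaled coefficient in $\res(K)$. Combining these bounds gives $v((Q_i)_{\times u})>v((Q_{d_0})_{\times u})$ for all $i>d_0$, and hence $\ddeg Q_{\times u}\le d_0$ as claimed.
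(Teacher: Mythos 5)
Your reduction to $h=1$, the normalisation $Q\asymp 1$, and the first two subcases ($u\asymp 1$, and $u\prec 1$ with $\bar Q(0)\neq 0$) are all fine; they correctly exploit that the residue map $\mathcal O\{Y\}\to\res(K)\{Y\}$ is a differential ring morphism. The remaining subcase ($u\prec 1$, $\bar Q(0)=0$) has a genuine gap. Your lower bound $v\big((Q_i)_{\times u}\big)\geq v(Q_i)+i\alpha$ with $\alpha:=\min_k v(u^{(k)})$ is correct, but the complementary upper bound you claim -- that $(Q_{d_0})_{\times u}$ has a non-cancelling coefficient of valuation at most $d_0\alpha$ -- is false in general. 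The point is that $\alpha$ may be realised by $v(u^{(k_0)})$ with $k_0\geq 1$, and a factor $u^{(k_0)}$ can only arise from expanding $(uY)^{(j)}$ with $j\geq k_0$; if $D_Q$ has no monomial $Y^{[\bsigma]}$ all of whose entries are $\geq k_0$, the product $(u^{(k_0)})^{d_0}$ never appears. Concretely, take $Q=Y^2+Y'$ (so $d_0=2$) in a $K$ with small derivation admitting $u\prec 1$ with $v(u')<v(u)$ (for instance a two-variable Hahn field with $\der(s)=s/t$, $\der(t)=0$, $s,t$ infinitesimal and $v(s)\gg v(t)$); then $\alpha=v(u')$, while the unique coefficient of $(Q_2)_{\times u}=u^2Y^2$ has valuation $2v(u)>2\alpha$. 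The conclusion is still true here, but not for the reason your argument gives.

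The paper itself does not reprove this lemma; it cites \cite[Corollary~6.6.7]{ADH}. The correct replacement for your $\alpha$-estimate is \cite[Corollary~6.1.3]{ADH}: with $\gamma:=vu>0$ one has $v\big((Q_i)_{\times u}\big)=v(Q_i)+i\gamma+o(\gamma)$, so for $i>d_0$
$$v\big((Q_i)_{\times u}\big)-v\big((Q_{d_0})_{\times u}\big)\ =\ v(Q_i)+(i-d_0)\gamma+o(\gamma)\ >\ 0,$$
since $v(Q_i)>0$, $(i-d_0)\gamma\geq\gamma$, and the $o(\gamma)$-term is dominated by $\gamma$. Thus no $(Q_i)_{\times u}$ with $i>d_0$ attains $v(Q_{\times u})$, giving $\ddeg Q_{\times u}\leq d_0$ without any exact control on $v\big((Q_{d_0})_{\times u}\big)$. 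The moral is that you must measure the slope of $\gamma\mapsto v_{Q_i}(\gamma)$ using $\gamma=vu$ itself (with an $o(\gamma)$ error), not using $\min_k v(u^{(k)})$, which can be strictly smaller.
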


\noindent
For these facts, see \cite[Lemma~6.6.5(i), Corollary~6.6.6,  Corollary~6.6.7]{ADH}.

\section{Eventual Behavior}\label{evtbeh}

\noindent
{\em In this section $\Gamma\ne \{0\}$.
We let $\phi$ range over~$K^\times$, and $\bsigma$, $\btau$ over $\N^*$. We also 
fix a differential polynomial $P\in K\{Y\}^{\ne}$}. Here we generalize parts of \cite[Sections~11.1, 11.2]{ADH} by dropping the assumption
there that $K$ is asymptotic. 
The condition $v\phi < (\Gamma^{>})'$ there becomes 
the condition $v\phi\in \Gamma(\der)$ here.

\subsection*{Behavior of $v F^{n}_{k}(\phi)$} The differential
polynomials $F^n_k(X)\in \Q\{X\}\subseteq K\{X\}$ for $0\le k \le n$ were introduced in
\cite[Section 5.7]{ADH} in connection with compositional conjugation: there we considered the
$K$-algebra morphism
$$Q\mapsto Q^\phi\colon K\{Y\}\to K^\phi\{Y\}$$ defined by requiring that $Q(y)=Q^\phi(y)$ for 
$Q\in K\{Y\}$ and all $y$ in all differential field extensions of $K$. The $F^n_k(X)$ ($1\leq k\leq n$) satisfy
$$(Y^{(n)})^\phi = F^n_n(\phi)Y^{(n)}+F^n_{n-1}(\phi)Y^{(n-1)}+\cdots+F_1^n(\phi)Y'$$
and $F^0_0=1$, $F^n_0=0$ for $n\geq 1$. (For example, $F^1_1=X$ and $F^2_2=X^2$, $F^2_1=X'$.)
We also recall from there that
for $\btau=\tau_1\cdots\tau_d\ge \bsigma=\sigma_1\cdots\sigma_d$,
$$F^{\btau}_{\bsigma}\ :=\ F^{\tau_1}_{\sigma_1}\cdots F^{\tau_d}_{\sigma_d}.$$ 
In order to better understand $v(P^{\phi})$ as a function of 
$\phi$
we use from Lemma~5.7.4 in~\cite{ADH} and its proof the identities 
\begin{equation}\label{eq:coeffs of Pphi}
(Y^{[\btau]})^\phi\ =\ \sum_{\bsigma\le \btau}F_{\bsigma}^{\btau}(\phi)Y^{[\bsigma]}, \qquad
(P^\phi)_{[\bsigma]}\  =\ \sum_{\btau \geq \bsigma} F^{\btau}_{\bsigma}(\phi) P_{[\btau]}. 
\end{equation} 
The next two lemmas have the same proof as \cite[Lemmas 11.1.1, 11.1.2]{ADH}.

\begin{lemma}\label{compconjval, general}
If $\der\mathcal{O}\subseteq\mathcal{O}$ and $\phi\preceq 1$, then $v(P^\phi)\geq v(P)$, with equality if $\phi\asymp 1$.
\end{lemma}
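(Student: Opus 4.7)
My plan is to read off both inequalities directly from the conjugation formulas in \eqref{eq:coeffs of Pphi}, using that $\der\mathcal{O}\subseteq\mathcal{O}$ forces each $F^n_k(\phi)$ to land in $\mathcal{O}$ when $\phi\in\mathcal{O}$.

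First I would observe that each $F^n_k(X)$ is a $\Q$-linear combination of monomials in $X,X',X'',\ldots$. Since $\der\mathcal{O}\subseteq\mathcal{O}$ and $\phi\preceq 1$, every iterated derivative $\phi^{(j)}$ lies in $\mathcal{O}$, and hence $F^n_k(\phi)\in\mathcal{O}$ for all $0\le k\le n$. Multiplying, $F^{\btau}_{\bsigma}(\phi)\in\mathcal{O}$ for all $\bsigma\le\btau$, i.e.\ $v(F^{\btau}_{\bsigma}(\phi))\ge 0$.

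Next I would apply \eqref{eq:coeffs of Pphi}: for each $\bsigma$,
\[
v\bigl((P^{\phi})_{[\bsigma]}\bigr)\ \ge\ \min_{\btau\ge\bsigma}\bigl(v(F^{\btau}_{\bsigma}(\phi))+v(P_{[\btau]})\bigr)\ \ge\ \min_{\btau\ge\bsigma}v(P_{[\btau]})\ \ge\ v(P).
\]
Taking the minimum over $\bsigma$ yields $v(P^{\phi})\ge v(P)$, which is the first claim.

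For the equality when $\phi\asymp 1$, I would pass to $K^{\phi}$. Its valuation ring is still $\mathcal{O}$, and its derivation $\phi^{-1}\der$ satisfies $\phi^{-1}\der\mathcal{O}\subseteq\phi^{-1}\mathcal{O}\subseteq\mathcal{O}$ since $\phi^{-1}\asymp 1$. Applying the inequality just proved to $K^{\phi}$, $P^{\phi}$ in place of $K$, $P$ and conjugating by $\phi^{-1}\preceq 1$ gives $v\bigl((P^{\phi})^{\phi^{-1}}\bigr)\ge v(P^{\phi})$; but $(P^{\phi})^{\phi^{-1}}=P$, so $v(P)\ge v(P^{\phi})$, and combined with the previous inequality we obtain equality.

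The only subtle point---and what I would flag as the ``main'' step, though it is still elementary---is verifying that each $F^{n}_{k}(\phi)\in\mathcal{O}$. This rests entirely on the fact that the $F^{n}_{k}$ are built from $X$ and its formal derivatives with rational coefficients (from \cite[Section~5.7]{ADH}), together with $\der\mathcal{O}\subseteq\mathcal{O}$ closing $\mathcal{O}$ under all iterated derivatives. Everything else is bookkeeping with \eqref{eq:coeffs of Pphi}.
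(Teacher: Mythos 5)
Your proof is correct and follows essentially the same route as the paper's (which simply references \cite[Lemma~11.1.1]{ADH}): observe $F^n_k(\phi)\in\mathcal{O}$ from $\der\mathcal{O}\subseteq\mathcal{O}$ and $\phi\preceq 1$, read off $v(P^\phi)\geq v(P)$ from \eqref{eq:coeffs of Pphi}, and get equality for $\phi\asymp 1$ by the compositional-conjugation-by-$\phi^{-1}$ trick. The only point left tacit is that $v(P)=\min_{\bsigma}v(P_{[\bsigma]})$, which holds because the $P_\i$ and $P_{[\bsigma]}$ differ only by multinomial coefficients, units in equicharacteristic zero.
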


\noindent
We set $\derdelta=\phi^{-1}\der$ in the next two results.

\begin{lemma}\label{compconjval} 
Suppose that $\derdelta\smallo\subseteq \smallo$, and let 
$0\leq k\leq n$.
\begin{enumerate}
\item[\textup{(i)}] If $\phi^\dagger\preceq \phi$, then $F^n_k(\phi)\preceq \phi^n$ and $F^n_n(\phi)=\phi^n$.
\item[\textup{(ii)}] If $\phi^\dagger\prec \phi$ and $k<n$, then 
$F^n_k(\phi)\prec\phi^n$.
\end{enumerate}
\end{lemma}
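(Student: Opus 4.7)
The plan is to prove both parts by induction on $n$, using a recursion for $F^n_k$. Applying $\der = \phi\derdelta$ to both sides of $(Y^{(n)})^\phi = \sum_{k=1}^n F^n_k(\phi) Y^{\{k\}}$, where $Y^{\{k\}}$ denotes the $k$-th derivative with respect to $\derdelta$, and comparing coefficients of $Y^{\{k\}}$, one obtains
$$F^{n+1}_{n+1}\ =\ X\cdot F^n_n, \qquad F^{n+1}_k\ =\ (F^n_k)' + X\cdot F^n_{k-1}\quad (1\leq k\leq n),$$
which evaluated at $\phi$ reads $F^{n+1}_k(\phi) = [F^n_k(\phi)]' + \phi\cdot F^n_{k-1}(\phi)$. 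The cases $k=0$ are trivial since $F^n_0 = 0$ for $n\geq 1$.

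For (i), the inductive step is handled as follows. The identity $F^{n+1}_{n+1}(\phi) = \phi^{n+1}$ is immediate from $F^n_n(\phi) = \phi^n$. For $1\leq k\leq n$, the term $\phi F^n_{k-1}(\phi)\preceq \phi\cdot \phi^n = \phi^{n+1}$ by IH, and for $[F^n_k(\phi)]'$ I use IH to write $F^n_k(\phi) = \phi^n u$ with $u\in\mathcal{O}$, then expand using $\der = \phi\derdelta$ and $\derdelta(\phi)=\phi^\dagger$:
$$[F^n_k(\phi)]'\ =\ n\phi^n\phi^\dagger u + \phi^{n+1}\derdelta(u).$$
The hypothesis $\phi^\dagger\preceq \phi$ gives the first summand $\preceq \phi^{n+1}$; smallness of $\derdelta$ yields $\derdelta(\mathcal{O})\subseteq \mathcal{O}$ by \cite[Lemma~4.4.2]{ADH}, so the second summand is $\preceq \phi^{n+1}$ as well.

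For (ii), the same decomposition applies for $0\leq k\leq n$, with the IH now providing strict dominance. The term $\phi F^n_{k-1}(\phi) \prec \phi^{n+1}$ since $k-1 < n$. For $[F^n_k(\phi)]'$, when $k = n$ it becomes $(\phi^n)' = n\phi^n\phi^\dagger \prec \phi^{n+1}$ by $\phi^\dagger\prec \phi$; when $k < n$, I write $F^n_k(\phi) = \phi^n u$ with $u\in\smallo$ (by IH) and observe that $n\phi^n\phi^\dagger u \prec \phi^{n+1}$ via $\phi^\dagger \prec \phi$ while $\phi^{n+1}\derdelta(u) \in \phi^{n+1}\smallo$ since $\derdelta$ is small, so $[F^n_k(\phi)]'\prec \phi^{n+1}$.

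The only real difficulty lies in setting up the recursion correctly; once it is in place, the estimates reduce to bookkeeping with the identities $\der=\phi\derdelta$, $\derdelta(\phi)=\phi^\dagger$, and the routine consequence $\derdelta(\mathcal{O})\subseteq \mathcal{O}$ of smallness. This mirrors the proof of the asymptotic analogue \cite[Lemma~11.1.2]{ADH}.
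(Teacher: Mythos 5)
Your proof is correct and takes essentially the same route as the one the paper refers to (the paper simply cites the recursion-based induction from \cite[Lemma~11.1.2]{ADH}, which you reproduce faithfully). The key points — the recursion $F^{n+1}_k = (F^n_k)' + X F^n_{k-1}$, the decomposition $[F^n_k(\phi)]' = n\phi^n\phi^\dagger u + \phi^{n+1}\derdelta(u)$, and the use of $\derdelta\mathcal{O}\subseteq\mathcal{O}$ from smallness — are exactly what is needed, and you handle the $k=n$ case of the inductive step in (ii) correctly via $F^n_n(\phi)=\phi^n$.
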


\begin{cor}\label{compconjval, cor 2}
Suppose that $\derdelta\smallo\subseteq 
\smallo$ and $\phi^\dagger\preceq \phi$, and  $\btau\geq\bsigma$. Then
$F^\btau_{\bsigma}(\phi)\preceq
\phi^{\|\btau\|}$ and $F^\btau_{\btau}(\phi)=\phi^{\|\btau\|}$. If $\phi^\dagger \prec \phi$ and $\btau >\bsigma$, then $F^\btau_{\bsigma}(\phi)\prec
\phi^{\|\btau\|}$.
\end{cor}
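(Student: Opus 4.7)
The plan is to reduce the multi-index statements to the single-index case already handled by Lemma \ref{compconjval}, using the multiplicative definition
$$F^\btau_{\bsigma}\ =\ F^{\tau_1}_{\sigma_1}\cdots F^{\tau_d}_{\sigma_d}$$
and the compatibility of $\preceq$ and $\prec$ with multiplication.

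First, fix $\btau=\tau_1\cdots\tau_d\ge\bsigma=\sigma_1\cdots\sigma_d$, so $\sigma_i\le\tau_i$ for every $i$. Under the hypotheses $\derdelta\smallo\subseteq\smallo$ and $\phi^\dagger\preceq\phi$, Lemma \ref{compconjval}(i) applied to each factor gives $F^{\tau_i}_{\sigma_i}(\phi)\preceq\phi^{\tau_i}$ for all $i$, with equality whenever $\sigma_i=\tau_i$. Multiplying these $d$ relations—using the elementary fact that in a valued field $a\preceq a'$ and $b\preceq b'$ imply $ab\preceq a'b'$—yields
$$F^\btau_{\bsigma}(\phi)\ \preceq\ \phi^{\tau_1+\cdots+\tau_d}\ =\ \phi^{\|\btau\|},$$
and the case $\btau=\bsigma$ becomes the equality $F^\btau_{\btau}(\phi)=\phi^{\|\btau\|}$ throughout.

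For the strict refinement, assume additionally $\phi^\dagger\prec\phi$ and $\btau>\bsigma$; then $\sigma_{i_0}<\tau_{i_0}$ for some index $i_0$. For that index, Lemma \ref{compconjval}(ii) delivers $F^{\tau_{i_0}}_{\sigma_{i_0}}(\phi)\prec\phi^{\tau_{i_0}}$, while part (i) still gives $F^{\tau_i}_{\sigma_i}(\phi)\preceq\phi^{\tau_i}$ for the remaining indices. Using that $a\prec a'$ together with $b\preceq b'$ (with $a',b'\ne 0$) implies $ab\prec a'b'$, the product of all factors satisfies $F^\btau_{\bsigma}(\phi)\prec\phi^{\|\btau\|}$, as desired.

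There is no substantive obstacle: the corollary is a purely bookkeeping consequence of Lemma \ref{compconjval} via the factorization of $F^\btau_\bsigma$. The only point worth spelling out is the compatibility of the dominance relations with products, which is immediate from the definition of the valuation $v$.
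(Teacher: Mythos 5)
Your proof is correct and is exactly the argument the paper intends: the paper presents this as an immediate corollary of Lemma~\ref{compconjval} via the factorization $F^\btau_{\bsigma}=F^{\tau_1}_{\sigma_1}\cdots F^{\tau_d}_{\sigma_d}$, and your careful treatment of the multiplicativity of $\preceq$ and $\prec$ (including the needed nonvanishing of the dominant factors $\phi^{\tau_i}$) fills in precisely the bookkeeping the paper leaves to the reader.
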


\noindent
Let $P$ have order $\le r$, so $P=\sum_{\i}P_{\i}Y^{\i}$ with $\i$ ranging over $\N^{1+r}$. We define the {\it dominant degree}\/ $\ddeg P\in \N$ and the {\it dominant weight}\/ $\dwt P\in \N$ by
$$\ddeg P= \max\big\{|\i|:\ P_{\i}\asymp P\big\}, \qquad 
\dwt P=\max\big\{\|\i\|:\ P_{\i}\asymp P\big\}.$$
Thus if $K$ has small derivation, then $\ddeg P=\deg D_P$ as in the previous section, and $\dwt P=\wt D_P$, agreeing with the dominant weight from \cite[Sections~4.5, 6.6]{ADH}.  

\begin{lemma}\label{extra} Suppose $\der\mathcal{O}\subseteq \mathcal{O}$ and
$\phi\asymp 1$. Then $\ddeg P^{\phi}=\ddeg P$.
\end{lemma}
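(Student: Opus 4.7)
The plan is to reduce to Lemma~\ref{compconjval, general} by using that compositional conjugation preserves the grading of $K\{Y\}$ by total degree in $(Y,Y',Y'',\dots)$. First, write $P=\sum_{d}P_d$ where $P_d=\sum_{\abs{\i}=d}P_{\i}Y^{\i}$ is the homogeneous part of degree $d$, with $P_d=0$ for all but finitely many $d$. I would then observe that
$$\ddeg P\ =\ \max\{d:\ v(P_d)=v(P)\},$$
since $v(P_d)=\min\{v(P_{\i}):\abs{\i}=d\}$, so $v(P_d)=v(P)$ holds exactly when some $P_{\i}$ with $\abs{\i}=d$ satisfies $P_{\i}\asymp P$. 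The analogous formula applies to $P^\phi$.

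Next I would check that the $K$-algebra morphism $Q\mapsto Q^\phi\colon K\{Y\}\to K^\phi\{Y\}$ preserves the grading by total degree. This is immediate from~\eqref{eq:coeffs of Pphi}, since $(Y^{[\btau]})^\phi=\sum_{\bsigma\leq\btau}F^{\btau}_{\bsigma}(\phi)Y^{[\bsigma]}$ is homogeneous of degree $\abs{\bsigma}=\abs{\btau}$ in $(Y,Y',Y'',\dots)$. Hence $(P_d)^\phi$ is homogeneous of degree $d$, and $P^\phi=\sum_d (P_d)^\phi$, so $(P^\phi)_d=(P_d)^\phi$ for each $d$.

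Now I would apply Lemma~\ref{compconjval, general} \emph{twice}: once to $P$ itself, yielding $v(P^\phi)=v(P)$; and once to each $P_d$, yielding $v\bigl((P_d)^\phi\bigr)=v(P_d)$. Combining these with the identity $(P^\phi)_d=(P_d)^\phi$ gives $v\bigl((P^\phi)_d\bigr)=v(P_d)$ and $v(P^\phi)=v(P)$. Therefore
$$\{d:\ v((P^\phi)_d)=v(P^\phi)\}\ =\ \{d:\ v(P_d)=v(P)\},$$
and taking maxima yields $\ddeg P^\phi=\ddeg P$, as desired.

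No step looks like a serious obstacle; the main content is simply recognizing that compositional conjugation respects the grading by total degree, which is visible in~\eqref{eq:coeffs of Pphi}, and then invoking Lemma~\ref{compconjval, general} componentwise. The only mild subtlety is that the lemma's hypothesis $\der\mathcal{O}\subseteq\mathcal{O}$ (without requiring small derivation) is exactly what Lemma~\ref{compconjval, general} needs when $\phi\asymp 1$, so no additional assumption on $K$ is used.
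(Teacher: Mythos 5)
Your proof is correct. It rests on the same two ingredients as the paper's argument, namely Lemma~\ref{compconjval, general} (preservation of valuation under conjugation by $\phi\asymp 1$) and the fact that $Q\mapsto Q^\phi$ respects total degree, but it organizes them differently. The paper decomposes $P=Q+R+S$ into its dominant part of maximal degree $d=\ddeg P$, its dominant part of degree $<d$, and the non-dominant remainder, then tracks valuations and degrees of the three pieces separately, invoking \cite[Corollary~5.7.5]{ADH} for the degree claim. You instead decompose $P$ into its homogeneous parts $P_d$, read off from \eqref{eq:coeffs of Pphi} that $(P_d)^\phi=(P^\phi)_d$ (since $F^n_0=0$ for $n\geq 1$, so each $(Y^{(n)})^\phi$ is genuinely homogeneous of degree $1$), and apply Lemma~\ref{compconjval, general} to each $P_d$. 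Your reformulation $\ddeg P=\max\big\{d:\ v(P_d)=v(P)\big\}$ is correct, since $v(P_d)=v(P)$ exactly when some $P_\i$ with $\abs{\i}=d$ satisfies $P_\i\asymp P$; combined with $v\big((P^\phi)_d\big)=v\big((P_d)^\phi\big)=v(P_d)$ and $v(P^\phi)=v(P)$, the conclusion is immediate (the case $P_d=0$ is vacuous on both sides). This version is a bit cleaner because the homogeneous decomposition is canonical and the compatibility with conjugation is visible directly, whereas the paper's three-part decomposition requires a moment's thought about which pieces can contribute to the dominant part of $P^\phi$.
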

\begin{proof} Set 
$$d:= \ddeg P, \quad I_d:= \big\{\i:\ P_{\i}\asymp P,\ |\i|=d\big\}, \quad
I_{<d}:=\big\{\i:\ P_{\i}\asymp P,\ |\i|<d\big\}.$$ 
Then
$$P=Q+R+S\qquad\text{ with $Q:=\sum_{\i\in I_d} P_{\i}Y^{\i}$,\quad $R:=\sum_{\i\in I_{<d}} P_{\i}Y^{\i}$,}$$
and so 
$$P^\phi=Q^\phi + R^\phi + S^\phi, \quad P^\phi\asymp P,\quad Q^\phi\asymp Q \asymp P,\quad R^\phi\asymp R,\quad S^\phi\asymp S\prec P,$$
by Lemma~\ref{compconjval, general}, and $R\asymp P$ if $R\ne 0$. Also $\deg Q^\phi=\deg Q =d$ and $\deg R^\phi=\deg R< d$ by \cite[Corollary 5.7.5]{ADH}, and thus $\ddeg P^\phi=d$.
\end{proof}

\noindent
It is convenient to
introduce two operators $\operatorname{D}, \operatorname{W}\colon K\{Y\}^{\ne} \to K\{Y\}^{\ne}$:
\begin{align*} \operatorname{D}(P)\ &:=\ \sum_{\i\in I} P_{\i}Y^{\i}, \qquad 
I\ :=\ \{\i:\ P_{\i}\asymp P\},\\
    \operatorname{W}(P)\ &:=\ \sum_{\i\in J}P_{\i}Y^{\i}, \qquad 
    J\ :=\ \big\{\i\in I:\  \|\i\|=\dwt P\big\}.
\end{align*}
Thus $\operatorname{D}(P)$ and $\operatorname{W}(P)$ are of degree $\ddeg P$, and every monomial $Y^{\i}$ occurring 
in~$\operatorname{W}(P)$ has weight $\|\i\|=\dwt P$. Note that
$P\asymp \operatorname{D}(P)\asymp \operatorname{W}(P)$.  
If $K$ has small derivation, then the nonzero coefficients of $\operatorname{D}(P)$ are~$\asymp\mathfrak d_P$, and the image of~$\mathfrak d_P^{-1}\operatorname{D}(P)$ under the natural
differential ring morphism $\mathcal O\{Y\}\to\res(K)\{Y\}$ equals the dominant part $D_P$ of $P$.

\begin{lemma}\label{dconst} Suppose $\Gamma^{>}$ has no smallest element and
$\der \smallo\subseteq \smallo$. Then there exists an ${\alpha\in \Gamma^{<}}$ such that for $w:=\dwt P$ we have
 $$\operatorname{D}(P^\phi)\ \sim\ \phi^w\operatorname{W}(P)$$ 
for all $\phi$ with $\alpha < v\phi < 0$, so
$\ddeg P^{\phi}=\ddeg P$ and $\dwt P^{\phi}=\dwt P$ 
for such $\phi$.  
\end{lemma}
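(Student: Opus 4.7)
The plan is to expand each $(P^\phi)_{[\bsigma]}$ via formula~\eqref{eq:coeffs of Pphi} and identify precisely which terms survive in $\operatorname{D}(P^\phi)$ once $|v\phi|$ is small enough. Set $\mu:=vP$ and $w:=\dwt P$, and let $J$ be the set of words $\bsigma$ with $P_{[\bsigma]}\asymp P$ and $\|\bsigma\|=w$, so $\operatorname{W}(P)=\sum_{\bsigma\in J}P_{[\bsigma]}Y^{[\bsigma]}$. For any $\phi$ with $v\phi<0$ we have $\phi^{-1}\in\smallo$, hence $\phi^{-1}\der\smallo\subseteq\phi^{-1}\smallo\subseteq\smallo$, so the derivation $\derdelta=\phi^{-1}\der$ of $K^\phi$ is small; moreover $\der(\phi^{-1})\in\smallo$ yields $v(\phi')>2v\phi$, so $\phi^\dagger\prec\phi$. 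Thus Corollary~\ref{compconjval, cor 2} applies: $F^\btau_\bsigma(\phi)\preceq\phi^{\|\btau\|}$ for all $\btau\geq\bsigma$, with $F^\bsigma_\bsigma(\phi)=\phi^{\|\bsigma\|}$, and with strict $F^\btau_\bsigma(\phi)\prec\phi^{\|\btau\|}$ whenever $\btau>\bsigma$.

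Next I want a single $\alpha\in\Gamma^<$ such that, for every $\phi$ with $\alpha<v\phi<0$,
\[
\|\btau\|v\phi+v(P_{[\btau]})\ \geq\ wv\phi+\mu\qquad\text{for all $\btau$ with $P_{[\btau]}\neq 0$,}
\]
with equality iff $\btau\in J$. When $\|\btau\|\leq w$ this is immediate, since both $(\|\btau\|-w)v\phi$ and $v(P_{[\btau]})-\mu$ are $\geq 0$. When $\|\btau\|>w$, maximality of $w$ forces $v(P_{[\btau]})>\mu$, and the inequality reduces to the smallness condition $(\|\btau\|-w)|v\phi|<v(P_{[\btau]})-\mu$. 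Only finitely many $\btau$ arise, and since $\Gamma^>$ has no smallest element $\Gamma$ is densely ordered, so a routine argument (via the H\"older embedding of the convex hull of the relevant positive element into $\R$) yields: for each $n\geq 1$ and each $\delta\in\Gamma^>$ there exists $\epsilon\in\Gamma^>$ with $n\epsilon<\delta$. Applying this to the pairs $(n,\delta)=(\|\btau\|-w,\,v(P_{[\btau]})-\mu)$ and taking the minimum of the resulting $\epsilon$'s gives the required $\alpha$.

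Now fix such $\phi$. In the expansion $(P^\phi)_{[\bsigma]}=\sum_{\btau\geq\bsigma}F^\btau_\bsigma(\phi)P_{[\btau]}$, every nonzero term has valuation $\geq\|\btau\|v\phi+v(P_{[\btau]})\geq wv\phi+\mu$, with strict inequality at one of these two stages unless $\btau=\bsigma\in J$ (strict in the first stage for $\btau>\bsigma$, and strict in the second for $\btau\notin J$). Hence $(P^\phi)_{[\bsigma]}\sim\phi^wP_{[\bsigma]}$ for $\bsigma\in J$, and $v((P^\phi)_{[\bsigma]})>wv\phi+\mu$ otherwise. This gives $v(P^\phi)=wv\phi+\mu$ and
\[
\operatorname{D}(P^\phi)\ =\ \sum_{\bsigma\in J}(P^\phi)_{[\bsigma]}Y^{[\bsigma]}\ \sim\ \phi^w\operatorname{W}(P),
\]
and the equalities $\dwt P^\phi=\dwt P$, $\ddeg P^\phi=\ddeg P$ then follow by reading off the weight and degree of the right-hand side. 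The main obstacle is the second paragraph: packaging the density-type argument into a single $\alpha$ that simultaneously handles all the finitely many obstructing $\btau$, which is precisely where the hypothesis on $\Gamma^>$ enters.
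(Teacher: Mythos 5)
Your proof is correct and follows essentially the same approach as the paper's: expand via \eqref{eq:coeffs of Pphi} and Corollary~\ref{compconjval, cor 2}, identify the dominant coefficients for $v\phi$ negative but small, and use the ``no least positive element'' hypothesis to bound the finitely many obstructing terms. The paper organizes this slightly differently by splitting $P = \operatorname{W}(P) + Q$ and handling $Q$ monomial by monomial, whereas you work directly with the coefficient-level identity $(P^\phi)_{[\bsigma]} = \sum_{\btau\geq\bsigma}F^\btau_\bsigma(\phi)P_{[\btau]}$ and a two-stage valuation estimate; this is cosmetic. Your derivation of $\phi^\dagger\prec\phi$ from $\der(\phi^{-1})\in\smallo$ is a neat shortcut: the paper instead distinguishes $\phi'\preceq\phi$ from $\phi'\succ\phi$ and invokes \cite[Lemma~6.4.1(iii)]{ADH}.

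One small quibble: the parenthetical appeal to ``the H\"older embedding of the convex hull of the relevant positive element'' is misleading, since that convex subgroup need not be archimedean (take $\Gamma = \Q\times\Q$ lexicographically and $\delta = (1,0)$). The fact you need --- for each $n\geq 1$ and $\delta\in\Gamma^>$ there is $\epsilon\in\Gamma^>$ with $n\epsilon<\delta$ --- follows by an elementary argument from $\Gamma^>$ having no least element (reduce to $n=2$: if $0<\gamma<\delta$ but $2\gamma\geq\delta$, then $0<\delta-\gamma\leq\gamma$, and any $\epsilon$ with $0<\epsilon<\delta-\gamma$ satisfies $2\epsilon<\delta$); the paper simply states this without invoking H\"older. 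Everything else is in order, and the final ``reading off'' step for $\ddeg$ and $\dwt$ matches what the paper does.
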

\begin{proof} For any monomial $Y^{\i}=Y^{[\btau]}$ we have $(Y^{[\btau]})^\phi=\sum_{\bsigma\le \btau}F_{\bsigma}^{\btau}(\phi)Y^{[\bsigma]}$ by \eqref{eq:coeffs of Pphi}.
Now let $\phi\succ 1$. Then $\phi^\dagger \prec \phi$: this is clear if $\phi'\preceq \phi$, and follows from \cite[Lem\-ma~6.4.1(iii)]{ADH} when $\phi'\succ \phi$. 
Thus by Corollary~\ref{compconjval, cor 2} and using $\|\i\|=\|\btau\|$:
$$(Y^{\i})^\phi\ \sim\ \phi^{\|\i\|}Y^{\i}.$$ 
Now $P=\operatorname{W}(P)+ Q$ with $Q\in K\{Y\}$, and for each monomial $Y^{\i}$, either
$Q_{\i}\prec P$, or $Q_{\i}=P_{\i}\asymp P$ and $\|\i\|< \dwt P$. Then
$$P^\phi\ =\ \operatorname{W}(P)^\phi + Q^\phi,\qquad 
\operatorname{W}(P)^\phi\ \sim\ \phi^w \operatorname{W}(P) \text{ for $w:=\dwt P$.}$$ 
Now $\Gamma^>$ has no smallest element, so given any $\beta\in\Gamma^>$ and $n\geq 1$ there is
an~$\alpha\in\Gamma^>$ such that $n\gamma<\beta$ whenever $\gamma\in\Gamma$ and $0<\gamma<\alpha$.
Thus by considering the individual monomials in~$Q$ we obtain an $\alpha\in \Gamma^{<}$ such that
$Q^\phi\prec \phi^w \operatorname{W}(P)$ whenever $\alpha<v\phi<0$. Any such $\alpha$ witnesses the property stated in the lemma.
\end{proof}
    
\begin{cor}\label{cordconst} If $\Gamma^{>}$ has no least element, 
$\phi_0\in K^\times$ and $v(\phi_0)\in \Gamma(\der)$, then
there exists $\alpha<v(\phi_0)$ such that
$\ddeg P^{\phi_0}=\ddeg P^{\phi}$ whenever
$\alpha < v(\phi) < v(\phi_0)$. 
\end{cor}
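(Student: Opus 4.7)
\medskip

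The plan is to reduce Corollary~\ref{cordconst} to Lemma~\ref{dconst} by passing to the compositional conjugate $K^{\phi_0}$. Since $v(\phi_0)\in\Gamma(\der)$ means $\der\smallo\subseteq\phi_0\smallo$, the derivation $\phi_0^{-1}\der$ of $K^{\phi_0}$ is small. The underlying valued field (and hence the value group~$\Gamma$) is unchanged by compositional conjugation, so $\Gamma^{>}$ still has no least element in $K^{\phi_0}$. Thus the hypotheses of Lemma~\ref{dconst} are satisfied in $K^{\phi_0}$.

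Applying Lemma~\ref{dconst} to the valued differential field $K^{\phi_0}$ and to the differential polynomial $P^{\phi_0}\in K^{\phi_0}\{Y\}^{\ne}$ yields some $\alpha'\in\Gamma^{<}$ such that
$$\ddeg\,(P^{\phi_0})^{\psi}\ =\ \ddeg P^{\phi_0}$$
for every $\psi\in K^\times$ with $\alpha'<v\psi<0$. Here the dominant degree is computed with respect to the valuation on $K\{Y\}^{\ne}$, which depends only on the valued field structure and not on the choice of derivation, so $\ddeg$ means the same thing in $K$ and in $K^{\phi_0}$.

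Next I would invoke the fact that compositional conjugation composes multiplicatively: $(P^{\phi_0})^{\psi}=P^{\phi_0\psi}$, since conjugating $\der$ first by $\phi_0$ and then by $\psi$ yields the derivation $(\phi_0\psi)^{-1}\der$. Setting $\phi:=\phi_0\psi$, the condition $\alpha'<v\psi<0$ translates into $\alpha'+v\phi_0<v\phi<v\phi_0$. So with $\alpha:=\alpha'+v\phi_0<v\phi_0$, we obtain $\ddeg P^{\phi}=\ddeg P^{\phi_0}$ whenever $\alpha<v\phi<v\phi_0$, as required.

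No step here is really an obstacle: the only things to be a bit careful about are (a) that $\ddeg$ is a purely valuation-theoretic invariant so its value does not depend on which derivation we equip $K$ with, and (b) the multiplicativity of compositional conjugation, which is immediate from the definition of $K^\phi$.
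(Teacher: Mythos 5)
Your proof is correct and is exactly the paper's argument: the paper's own proof of Corollary~\ref{cordconst} is the single line ``Apply Lemma~\ref{dconst} to $K^{\phi_0}$ and $P^{\phi_0}$ in the role of $K$ and $P$,'' and your write-up simply spells out the details of that reduction (verifying that the hypotheses of Lemma~\ref{dconst} transfer to $K^{\phi_0}$, using $(P^{\phi_0})^{\psi}=P^{\phi_0\psi}$, and noting $\ddeg$ is purely valuation-theoretic).
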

\begin{proof} Apply Lemma~\ref{dconst} to $K^{\phi_0}$ and $P^{\phi_0}$ in the role of $K$ and $P$.
\end{proof}

\subsection*{Newton degree} 
{\em In this subsection 
we assume $\Gamma^{>}$ has no least element}. 
Let $P\in K\{Y\}^{\ne}$ have order 
$\le r\in \N$. For $d\le \deg P$ we define
$$\Gamma(P,d)\ :=\ \big\{\gamma\in \Gamma(\der):\ \text{$\ddeg P^{\phi}=d $ for some $\phi$ with $v\phi=\gamma$}\big\}.$$
Note that in this definition of $\Gamma(P,d)$ we can replace
``some'' by ``all'' in view of Lemma~\ref{extra}, and hence the nonempty sets
among the $\Gamma(P,d)$ with $d\leq\deg P$ partition $\Gamma(\der)$. 
Note also that if $\gamma\in \Gamma(P,d)$, then 
$(\gamma-\alpha,\gamma]\subseteq \Gamma(P,d)$ for some $\alpha\in \Gamma^{>}$ by Corollary~\ref{cordconst}, so
each convex component of $\Gamma(P,d)$ in $\Gamma$ is infinite.

\begin{lemma}\label{convex} The set $\Gamma(P,d)$ has only finitely many convex components in $\Gamma$. 
\end{lemma}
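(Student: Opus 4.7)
The plan is to express each level set $\Gamma(P,d)$ as a finite Boolean combination of simpler ``comparison sets'' in $\Gamma(\der)$, and to bound the number of convex components of each such comparison set.

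First, for each pair of multi-indices $\bsigma,\bsigma'$ appearing in the expansion of $P^\phi$, introduce
$$C(\bsigma,\bsigma')\ :=\ \big\{v\phi\in\Gamma(\der):\ v\big((P^\phi)_{[\bsigma]}\big)\le v\big((P^\phi)_{[\bsigma']}\big)\big\}.$$
Since $\ddeg P^\phi=\max\{|\bsigma|:v((P^\phi)_{[\bsigma]})=v(P^\phi)\}$ and only finitely many $\bsigma$ occur in $P^\phi$ (the order of $P$ being bounded by the order of $P$ via the compositional-conjugation formulas in Section~5.7 of \cite{ADH}), each $\Gamma(P,d)$ is a finite Boolean combination of the $C(\bsigma,\bsigma')$. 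Hence it suffices to show each $C(\bsigma,\bsigma')$ has only finitely many convex components in $\Gamma(\der)$.

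To analyze a given $C(\bsigma,\bsigma')$, I would use the explicit expansion $(P^\phi)_{[\bsigma]}=\sum_{\btau\ge\bsigma}F^\btau_\bsigma(\phi)P_{[\btau]}$ from \eqref{eq:coeffs of Pphi} and split $\Gamma(\der)$ into the pieces $v\phi<0$, $v\phi=0$, and $v\phi>0$. For $v\phi=0$, $\phi\asymp 1$, and Lemma~\ref{extra} gives $\ddeg P^\phi=\ddeg P$, a single value, so no components arise here. For $v\phi<0$, one has $\phi\succ 1$ and hence $\phi^\dagger\prec\phi$ (as shown in the proof of Lemma~\ref{dconst}); Corollary~\ref{compconjval, cor 2} then yields $F^\btau_\btau(\phi)=\phi^{\|\btau\|}$ together with $F^\btau_\bsigma(\phi)\prec\phi^{\|\btau\|}$ for $\btau>\bsigma$. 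Consequently $v((P^\phi)_{[\bsigma]})$ equals the minimum of finitely many expressions of the form $\|\btau\|\cdot v\phi+v(P_{[\btau]})$, so the comparison $v((P^\phi)_{[\bsigma]})\le v((P^\phi)_{[\bsigma']})$ on this piece is determined by a Boolean combination of linear inequalities in $v\phi$. Since the intersection of a downward-closed subset of an ordered abelian group with finitely many such linear half-spaces has only finitely many convex components, $C(\bsigma,\bsigma')\cap\{v\phi<0\}$ does too.

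The main obstacle is the piece $v\phi>0$, where $\phi\prec 1$ and the dominant-term estimates for $F^\btau_\bsigma(\phi)$ from Corollary~\ref{compconjval, cor 2} lose their sharpness; the expression $(P^\phi)_{[\bsigma]}$ need no longer be controlled by a single term. Here I would rely on Corollary~\ref{cordconst}, which guarantees that every convex component of $\Gamma(P,d)$ contains a half-open interval $(\gamma-\alpha,\gamma]$ to the left of each of its points, so any two distinct convex components are separated by a convex block lying in a different level set. Combined with an induction on the complexity of $P$ (using the multiplicativity $\ddeg PQ=\ddeg P+\ddeg Q$ and the additive/multiplicative conjugation lemmas recorded at the end of Section~\ref{sec:prelim}), this local structure should reduce the analysis on $\{v\phi>0\}$ to cases of strictly smaller complexity, yielding finiteness in that regime as well and thereby completing the proof.
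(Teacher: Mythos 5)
Your analysis of the piece $\{v\phi<0\}$ is essentially the paper's argument: on that region the valuations $v\big((P^\phi)_{[\bsigma]}\big)$ are controlled by the leading terms $\phi^{\|\btau\|}P_{[\btau]}$, so $\ddeg P^\phi$ is determined by comparing finitely many affine functions $\gamma\mapsto vP_\i+\|\i\|\gamma$ of $\gamma=v\phi$, and the crossing points partition $\{v\phi<0\}$ into finitely many intervals on each of which $\ddeg P^\phi$ is constant. So far so good.

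The real gap is your handling of $\{v\phi>0\}$. You correctly observe that the dominant-term estimates from Corollary~\ref{compconjval, cor 2} fail there, and you propose to fall back on Corollary~\ref{cordconst} plus an induction on complexity using $\ddeg PQ=\ddeg P+\ddeg Q$ and the conjugation lemmas. But this is not an argument: Corollary~\ref{cordconst} only tells you that each component of $\Gamma(P,d)$ has no isolated points on its left, which is compatible with infinitely many components; and the complexity-induction step is not spelled out (a general $P$ need not factor, and additive/multiplicative conjugation preserve complexity rather than lowering it). As written, ``should reduce the analysis'' does not establish anything.

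The paper avoids $\{v\phi>0\}$ entirely with a compositional-conjugation trick that you have not used. Fix any $\phi_0\in K^\times$ with $v\phi_0\in\Gamma(\der)$, and replace $K$ and $P$ by $K^{\phi_0}$ and $P^{\phi_0}$. Then $\{v\phi\le v\phi_0\}$ becomes $\{v\phi\le 0\}$ in the new coordinates, where your piecewise-affine analysis applies and gives at most $N+1$ convex components (with $N$ the number of ordered pairs of distinct exponent tuples $\i\ne\j$ with $|\i|,|\j|\le\deg P$). Crucially, this bound $N+1$ is \emph{uniform in $\phi_0$}, because $P^{\phi_0}$ has the same degree and order as $P$. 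Since every element of $\Gamma(P,d)$ is $\le v\phi_0$ for a suitable $\phi_0\in\Gamma(\der)$, a uniform bound on components below each cutoff bounds the total number of components. That is the missing idea: you do not need to analyze $\{v\phi>0\}$ at all — a uniform ``bound from the left'' suffices.
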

\begin{proof} Let $\i$ range over the tuples $(i_0,\dots, i_r)\in \N^{1+r}$ with $|\i|\le \deg P$, and likewise for $\j$. Let
$N$ be the number of pairs $(\i, \j)$ with $\i\ne \j$.
We claim that for every~${\phi_0\in K^{\times}}$ with $v\phi_0\in \Gamma(\der)$ the set $\Gamma(P,d)$ has at most $N+1$ convex components with an element $\le v\phi_0$. (It follows easily from this claim that $\Gamma(P,d)$ has at most~$N+1$ convex components.) By renaming
$K^{\phi_0}$ and $P^{\phi_0}$ as $K$ and $P$ 
it suffices to prove the claim for $\phi_0=1$.  So we assume that
$\der\smallo\subseteq \smallo$ and have to show that $\Gamma(P,d)$ has at most $N+1$ components with an element $\le 0$.
We now restrict $\i$ further by the requirement that $P_{\i}\ne 0$, and likewise for $\j$. By the proof of Lem\-ma~\ref{dconst},
$$\ddeg P^\phi\ =\ \max\left\{|\i|:\ vP_{\i}+\|\i\|v\phi=\min_{\j}vP_{\j}+\|\j\|v\phi\right\}\ \text{ for $v\phi<0$.}$$
For each $\i$ we have the function $f_{\i}: \Q\Gamma\to \Q\Gamma$ given by $f_{\i}(\gamma)=vP_{\i}+\|\i\|\gamma$.
For any $\i$,~$\j$, either $f_{\i}=f_{\j}$ or we have a unique
$\gamma=\gamma_{\i,\j}\in \Q\Gamma$ with $f_{\i}(\gamma)=f_{\j}(\gamma)$. Let $\gamma_1< \dots < \gamma_M$ with $M\le N$ be the distinct values of $\gamma_{\i,\j}<0$ obtained in this way, and set $\gamma_0:= -\infty$ and $\gamma_{M+1}:= 0$. Then on each interval $(\gamma_m, \gamma_{m+1})$ with $0\le m \le M$, the
functions $f_{\i}-f_{\j}$ have constant sign: $-$, $0$, or $+$.
In view of the above identity for $\ddeg P^{\phi}$ it follows easily that for each 
$m$ with $0\le m\le M$ the value of $\ddeg P^\phi$ is constant
as $v\phi$ ranges over $(\gamma_m, \gamma_{m+1})\cap \Gamma$.
Thus $\Gamma(P,d)$ has at most $M+1$ convex components.
\end{proof}

\noindent
It follows from Lemmas~\ref{dconst} and~\ref{convex} that 
there exists 
$d\le \deg P$ and a $\phi_0\in K^\times$ such that 
$v\phi_0\in \Gamma(\der)$, $v\phi_0$ is not
maximal in $\Gamma(\der)$, and $\ddeg P^\phi=d$ for all $\phi\preceq \phi_0$
with $v\phi\in \Gamma(\der)$. 
We now define the {\it Newton degree}\/ $\ndeg P$ of $P$ to be this eventual value $d\in \N$ of $\ddeg P^\phi$. Note that if $\Gamma(\der)$
does have a maximal element $v\phi$, then $$\ndeg P\ =\ \ddeg P^\phi.$$ 
Also, for $f\in K^\times$ and  $Q\in K\{Y\}^{\ne}$ we have
$$\ndeg P^f\ =\ \ndeg P, \qquad \ndeg PQ\ =\ \ndeg P + \ndeg Q.$$ 

\subsection*{Newton degree and multiplicative conjugation}
{\it In this subsection $\Gamma^{>}$ has no least element.}\/ Here we
consider the behavior of $\ndeg P_{\times g}$ as a function of~$g\in K^\times$.
Indeed, $\ndeg P_{\times g}\ge 1$ is a useful necessary
condition for the existence of a
zero~$f\preceq g$ of $P$ in a strict extension of $K$, as stated in
the following generalization of \cite[Lem\-ma~11.2.1]{ADH}:

\begin{lemma}\label{newtonzero}
Let $g\in K^\times$ and suppose some $f\preceq g$ in a strict extension of 
$K$ satisfies $P(f)=0$. Then $\ndeg P_{\times g} \geq 1$.
\end{lemma}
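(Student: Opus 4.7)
The plan is to reduce to the case $g=1$ by multiplicative conjugation, and then apply to a suitable compositional conjugate $K^\phi$ the fact recalled in the Dominant Degree subsection: if $f\preceq 1$ in a small-derivation extension is a zero of a nonzero differential polynomial $Q$, then $\ddeg Q\ge 1$.

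First, replace $P$ by $P_{\times g}$ and $f$ by $f/g$. Since $f\preceq g$ in the strict extension $L$ of $K$, we have $f/g\preceq 1$ in $L$, and $P_{\times g}(f/g)=P(f)=0$. Thus it suffices to show $\ndeg P_{\times g}\ge 1$ under the assumption that the polynomial $P_{\times g}$ (which we now rename back to $P$) vanishes on some $h\preceq 1$ in a strict extension $L$ of $K$.

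Next, pick any $\phi\in K^\times$ with $v\phi\in\Gamma(\der)$, that is, with $\der\smallo\subseteq\phi\smallo$. Then the derivation $\phi^{-1}\der$ of $K^\phi$ is small. Crucially, because $L$ is a \emph{strict} extension of $K$, the implication $\der\smallo\subseteq\phi\smallo\ \Rightarrow\ \der_L\smallo_L\subseteq\phi\smallo_L$ holds, so $\phi^{-1}\der_L$ is small as well, i.e.\ $L^\phi$ has small derivation. In the valued differential field $L^\phi$, which extends $K^\phi$, we still have $h\preceq 1$, and $P^\phi(h)=P(h)=0$. Applying the fact from the Dominant Degree subsection to $P^\phi\in K^\phi\{Y\}^{\ne}$ and the zero $h\in L^\phi$, we obtain $\ddeg P^\phi\ge 1$.

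Since the previous step applies to \emph{every} $\phi$ with $v\phi\in\Gamma(\der)$, it applies in particular to those $\phi$ with $v\phi$ non-maximal in $\Gamma(\der)$ and small enough that $\ddeg P^\phi$ has stabilized at its eventual value $\ndeg P$ (such $\phi$ exist by the discussion following Lemma~\ref{convex}). Therefore $\ndeg P\ge 1$, as required. The only genuine subtlety is the appeal to strictness, which is exactly what is needed to transfer smallness of the conjugated derivation from $K^\phi$ to $L^\phi$; without it, the fact from the Dominant Degree subsection would not be available in $L^\phi$.
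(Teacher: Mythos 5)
Your proof is correct and takes essentially the same approach as the paper: reduce to $g=1$ via multiplicative conjugation, then for each $\phi$ with $v\phi\in\Gamma(\der)$ use strictness to ensure $L^\phi$ has small derivation so that the dominant-degree fact gives $\ddeg P^\phi\geq 1$, and conclude by definition of Newton degree. The paper's proof is more compact and leaves the role of strictness implicit; you spell it out, but the argument is the same.
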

\begin{proof} For such $f$ 
we have $f=ag$ with $a\preceq 1$, and $Q(a)=0$ for $Q:=P_{\times g}$. So  $Q^\phi(a)=0$ for all 
$\phi$ with $v\phi\in\Gamma(\der)$, hence 
$\ddeg Q^\phi\geq 1$ for those $\phi$, and thus $\ndeg Q\geq 1$.
\end{proof}

\noindent
Next some results on {\em Newton degree\/} that follow easily 
from corresponding facts at the end of Section~\ref{sec:prelim} on {\em dominant degree}, using also that compositional conjugation commutes with additive and 
multiplicative conjugation by \cite[Lemma~5.7.1]{ADH}.

\begin{lemma} If $a\in K$ and $a\preceq 1$, then $\ndeg {P_{+a}}=\ndeg{P}$.
\end{lemma}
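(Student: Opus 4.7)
The plan is to reduce this to the dominant-degree analogue proved at the end of Section~\ref{sec:prelim}, namely that $\ddeg P_{+a}=\ddeg P$ whenever $a\preceq 1$ and the derivation is small.

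First I would fix $\phi\in K^\times$ with $v\phi\in\Gamma(\der)$. By definition of $\Gamma(\der)$ we have $\der\smallo\subseteq\phi\smallo$, so $\phi^{-1}\der\smallo\subseteq\smallo$, i.e., the derivation of $K^\phi$ is small. Since $K^\phi$ has the same underlying valued field as $K$, the element $a$ still satisfies $a\preceq 1$ in $K^\phi$. Applying the dominant-degree lemma to $K^\phi$ and the differential polynomial $P^\phi$ in place of $K$ and $P$ yields
\[
\ddeg (P^\phi)_{+a}\ =\ \ddeg P^\phi.
\]
By \cite[Lemma~5.7.1]{ADH}, compositional conjugation commutes with additive conjugation, so $(P^\phi)_{+a}=(P_{+a})^\phi$. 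Hence
\[
\ddeg (P_{+a})^\phi\ =\ \ddeg P^\phi\qquad\text{for every $\phi$ with $v\phi\in\Gamma(\der)$.}
\]

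Next I would invoke the definition of Newton degree: since $\Gamma^{>}$ has no least element, there exists $\phi_0\in K^\times$ with $v\phi_0\in\Gamma(\der)$ but not maximal in $\Gamma(\der)$ such that $\ddeg P^\phi=\ndeg P$ and $\ddeg (P_{+a})^\phi=\ndeg P_{+a}$ for all $\phi\preceq\phi_0$ with $v\phi\in\Gamma(\der)$ (take $\phi_0$ to witness the eventual value for both $P$ and $P_{+a}$ simultaneously, which is possible by Corollary~\ref{cordconst}). Combining with the displayed equality gives $\ndeg P_{+a}=\ndeg P$.

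There is essentially no obstacle once the groundwork of Section~\ref{sec:prelim} and the first half of Section~\ref{evtbeh} is in hand; the only thing to be careful about is that one needs $v\phi$ to be non-maximal in $\Gamma(\der)$ so that the ``eventual'' value $\ndeg$ is actually reached (or rather, cofinally attained) within $\Gamma(\der)$, but this is guaranteed by the assumption that $\Gamma^{>}$ has no least element together with Corollary~\ref{cordconst}.
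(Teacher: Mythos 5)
Your proof is correct and is exactly the argument the paper intends; the text says only that these Newton-degree facts "follow easily from corresponding facts at the end of Section~1 on dominant degree, using also that compositional conjugation commutes with additive and multiplicative conjugation," which is precisely your reduction via $K^\phi$, the identity $(P^\phi)_{+a}=(P_{+a})^\phi$, and then passing to eventual values. (One small simplification: since you have shown $\ddeg(P_{+a})^\phi=\ddeg P^\phi$ for \emph{every} $\phi$ with $v\phi\in\Gamma(\der)$, the two functions of $\phi$ agree identically on $\Gamma(\der)$, so their eventual values automatically coincide; there is no need to appeal to Corollary~\ref{cordconst} to find a common witness $\phi_0$.)
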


\begin{lemma} \label{aftranew} Let $a,b\in K$, $g\in K^\times$ 
be such that $a-b\preceq g$. Then 
$$\ndeg P_{+a,\times g}\ =\ \ndeg P_{+b,\times g}.$$
\end{lemma}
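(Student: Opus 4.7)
The plan is to reduce the Newton-degree statement directly to its dominant-degree counterpart (the second lemma of the ``Dominant degree'' subsection in Section~\ref{sec:prelim}), by exploiting the fact that compositional conjugation commutes with additive and multiplicative conjugation.

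Concretely, by \cite[Lemma~5.7.1]{ADH}, for every $\phi\in K^\times$ we have
\[
(P_{+a,\times g})^\phi\ =\ (P^\phi)_{+a,\times g}, \qquad (P_{+b,\times g})^\phi\ =\ (P^\phi)_{+b,\times g}.
\]
Now fix any $\phi\in K^\times$ with $v\phi\in\Gamma(\der)$. Then the derivation $\phi^{-1}\der$ of $K^\phi$ is small, while the valued field underlying $K^\phi$ coincides with that of $K$ (so $\preceq$, $\asymp$, and $\ddeg$ are unaffected). Applying the dominant-degree lemma (with $K^\phi$ and $P^\phi$ in place of $K$ and $P$) to the hypothesis $a-b\preceq g$, we obtain
\[
\ddeg (P^\phi)_{+a,\times g}\ =\ \ddeg (P^\phi)_{+b,\times g},
\]
and the displayed identities above rewrite this as
\[
\ddeg (P_{+a,\times g})^\phi\ =\ \ddeg (P_{+b,\times g})^\phi.
\]

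By Corollary~\ref{cordconst} together with Lemma~\ref{convex}, for $\phi$ with $v\phi\in\Gamma(\der)$ sufficiently far down in $\Gamma(\der)$ the values $\ddeg (P_{+a,\times g})^\phi$ and $\ddeg (P_{+b,\times g})^\phi$ stabilize, by definition, at $\ndeg P_{+a,\times g}$ and $\ndeg P_{+b,\times g}$ respectively. Choosing a common such $\phi$ and invoking the equality above yields $\ndeg P_{+a,\times g}=\ndeg P_{+b,\times g}$, as required. There is no substantive obstacle: the entire argument is a one-line transfer via compositional conjugation from the dominant-degree setting, which is exactly what the remark preceding the lemma already suggests.
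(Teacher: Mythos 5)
Your proof is correct and follows precisely the route the paper intends: the paper states that this lemma (and its neighbors) ``follow easily from corresponding facts at the end of Section~\ref{sec:prelim} on dominant degree, using also that compositional conjugation commutes with additive and multiplicative conjugation by \cite[Lemma~5.7.1]{ADH},'' and your argument is exactly that transfer, with the stabilization step via Corollary~\ref{cordconst} spelled out.
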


\begin{lemma}\label{ndegdecr} If $g,h\in K^\times$ and $g\preceq h$, then
$\ndeg P_{\times g}\le \ndeg P_{\times h}$.
\end{lemma}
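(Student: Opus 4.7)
The plan is to reduce the Newton-degree inequality to the already-established dominant-degree version from Section~\ref{sec:prelim}, which says that if the derivation is small and $g\preceq h$, then $\ddeg Q_{\times g}\le \ddeg Q_{\times h}$ for any $Q\in K\{Y\}^{\ne}$. To invoke it, I pass to a compositional conjugate $K^\phi$ chosen so that (a) its derivation $\phi^{-1}\der$ is small, and (b) $\phi$ already witnesses the eventual values defining both $\ndeg P_{\times g}$ and $\ndeg P_{\times h}$.

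More precisely, by the definition of Newton degree applied to $P_{\times g}$ and to $P_{\times h}$, there exist $\phi_0,\phi_1\in K^\times$ with $v\phi_0,v\phi_1\in \Gamma(\der)$, each nonmaximal in $\Gamma(\der)$, such that
\[
\ddeg(P_{\times g})^{\phi}\ =\ \ndeg P_{\times g},\qquad
\ddeg(P_{\times h})^{\phi}\ =\ \ndeg P_{\times h}
\]
for every $\phi\in K^\times$ with $v\phi\in \Gamma(\der)$ and $\phi\preceq\phi_0,\phi_1$. Since $\Gamma(\der)$ is downward closed and nonempty, I can pick a single such $\phi$. Then $v\phi\in \Gamma(\der)$ means $\phi^{-1}\der\smallo\subseteq\smallo$, so the derivation of $K^\phi$ is small.

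Now by \cite[Lemma~5.7.1]{ADH}, compositional conjugation commutes with multiplicative conjugation, so $(P_{\times g})^{\phi}=(P^{\phi})_{\times g}$ and $(P_{\times h})^{\phi}=(P^{\phi})_{\times h}$. The underlying valued field of $K^\phi$ coincides with that of $K$, hence $g\preceq h$ holds in $K^\phi$ as well. Applying the dominant-degree lemma from Section~\ref{sec:prelim} to $K^\phi$, $P^\phi$, $g$, $h$ yields
\[
\ddeg(P^{\phi})_{\times g}\ \le\ \ddeg(P^{\phi})_{\times h},
\]
which by the choice of $\phi$ is exactly $\ndeg P_{\times g}\le \ndeg P_{\times h}$.

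The only potential snag is verifying that a single $\phi$ can be found simultaneously small enough to compute both Newton degrees and with $v\phi\in \Gamma(\der)$, but this is immediate from downward closedness of $\Gamma(\der)$; after that, the proof is a direct transfer along the compositional conjugate.
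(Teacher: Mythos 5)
Your proposal is correct and follows exactly the route the paper intends: the paper remarks that Lemma~\ref{ndegdecr} (and its neighbors) "follow easily from corresponding facts at the end of Section~\ref{sec:prelim} on dominant degree, using also that compositional conjugation commutes with additive and multiplicative conjugation." Your argument fills in those steps — choose a single $\phi$ with $v\phi\in\Gamma(\der)$ large enough to witness both Newton degrees (taking $v\phi=\max(v\phi_0,v\phi_1)$ does it, since $\Gamma(\der)$ is downward closed), note that $K^\phi$ has small derivation, and transfer the dominant-degree inequality through the identity $(P_{\times g})^\phi=(P^\phi)_{\times g}$ — and matches the paper's intent.
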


\noindent
For $g\in K^\times$ we set $\ndeg_{\prec g}P\ :=\ \max\{\ndeg P_{\times f}:\ f\prec g\}$.

\begin{lemma}\label{ndeg} For $a,g\in K$ with $a\prec g$ we have
$\ndeg_{\prec g} P_{+a}=\ndeg_{\prec g}P$.
\end{lemma}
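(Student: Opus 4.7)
The plan is to prove both inequalities $\ndeg_{\prec g} P_{+a}\le \ndeg_{\prec g} P$ and $\ndeg_{\prec g} P\le \ndeg_{\prec g} P_{+a}$ using only Lemmas~\ref{aftranew} and~\ref{ndegdecr}, by unfolding the definition of $\ndeg_{\prec g}$ and doing a short case split according to whether the multiplicative factor $f\prec g$ dominates $a$ or not.

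First I would unfold: $\ndeg_{\prec g} P_{+a}=\max\{\ndeg P_{+a,\times f}:f\prec g\}$ and $\ndeg_{\prec g} P=\max\{\ndeg P_{+0,\times f}:f\prec g\}$. For the $\le$ direction, fix $f\in K^\times$ with $f\prec g$; I want to bound $\ndeg P_{+a,\times f}$ by $\ndeg_{\prec g} P$. If $a\preceq f$, then $a-0\preceq f$, so Lemma~\ref{aftranew} gives $\ndeg P_{+a,\times f}=\ndeg P_{\times f}\le \ndeg_{\prec g} P$. If $f\prec a$, then $a\prec g$ too, and Lemma~\ref{ndegdecr} applied to $P_{+a}$ yields $\ndeg P_{+a,\times f}\le \ndeg P_{+a,\times a}$; now Lemma~\ref{aftranew} with $b=0$ and the witness $h=a$ (using $a-0\preceq a$) gives $\ndeg P_{+a,\times a}=\ndeg P_{\times a}\le \ndeg_{\prec g} P$ since $a\prec g$.

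For the $\ge$ direction the argument is symmetric: given $f\prec g$, if $a\preceq f$ then Lemma~\ref{aftranew} gives $\ndeg P_{\times f}=\ndeg P_{+a,\times f}\le \ndeg_{\prec g} P_{+a}$; and if $f\prec a$ then Lemma~\ref{ndegdecr} gives $\ndeg P_{\times f}\le \ndeg P_{\times a}$, and Lemma~\ref{aftranew} identifies the latter with $\ndeg P_{+a,\times a}\le \ndeg_{\prec g} P_{+a}$.

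I do not foresee a real obstacle: the role of the hypothesis $a\prec g$ is precisely to ensure that the ``shifted'' multiplicative factor $a$ is still among the $f\prec g$ over which the maximum is taken, so that the two auxiliary lemmas combine cleanly. The only subtlety is remembering to use Lemma~\ref{ndegdecr} on $P_{+a}$ (respectively $P$) in the case where $f$ fails to dominate $a$, in order to enlarge $f$ up to $a$ before invoking Lemma~\ref{aftranew}.
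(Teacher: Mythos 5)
Your proof is correct and is essentially the paper's argument spelled out in full: the paper's one-line proof invokes Lemma~\ref{aftranew} to get $\ndeg P_{+a,\times f}=\ndeg P_{\times f}$ for $a\preceq f\prec g$ and leaves implicit the monotonicity step (Lemma~\ref{ndegdecr}) that you make explicit via the case split on whether $f\prec a$ or $a\preceq f$.
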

\begin{proof} Use that $\ndeg P_{+a,\times f}=
\ndeg P_{\times f}$ for $a\preceq f\prec g$, by Lemma~\ref{aftranew}.
\end{proof}

\noindent
It will also be convenient to define for $\gamma\in \Gamma$,
 $$\ndeg_{\geq \gamma} P\ :=\ \max\{\ndeg P_{\times g}:\  g\in K^\times, vg\ge \gamma\}.$$
By Lemma~\ref{ndegdecr}, $\ndeg_{\geq\gamma} P=\ndeg P_{\times g}$ for any $g\in K^\times$ with $\gamma=vg$.
From Lemmas~\ref{ndegdecr} and \ref{aftranew} we easily obtain: 

\begin{cor}\label{cor:ndegdecr}
Let $a,b\in K$ and $\alpha,\beta\in\Gamma$ be such that $v(b-a)\geq\alpha$ and $\beta\geq\alpha$. Then $\ndeg_{\geq\beta} P_{+b} \leq \ndeg_{\geq\alpha} P_{+a}$.
\end{cor}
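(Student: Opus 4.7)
The plan is to fix $g\in K^{\times}$ with $vg\geq\beta$ that realizes the maximum defining $\ndeg_{\geq\beta}P_{+b}$, so $\ndeg_{\geq\beta}P_{+b}=\ndeg P_{+b,\times g}$, and then to produce some $h\in K^{\times}$ with $vh\geq\alpha$ satisfying $\ndeg P_{+b,\times g}\leq\ndeg P_{+a,\times h}$; this gives the desired inequality. If $a=b$, the conclusion is immediate from $\beta\geq\alpha$, since then the maximum on the left is taken over a subset of the one on the right. So I assume $c:=b-a\in K^{\times}$, note $vc\geq\alpha$, and split on the comparison of $vc$ and $vg$.

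If $vc\geq vg$, then $v(a-b)=vc\geq vg$, so $a-b\preceq g$, and Lemma~\ref{aftranew} gives $\ndeg P_{+b,\times g}=\ndeg P_{+a,\times g}$; since $vg\geq\beta\geq\alpha$, this is $\leq\ndeg_{\geq\alpha}P_{+a}$, and we take $h=g$. If instead $vc<vg$, then $g\preceq c$, so Lemma~\ref{ndegdecr} applied to $P_{+b}$ gives $\ndeg P_{+b,\times g}\leq\ndeg P_{+b,\times c}$; and since $v(b-a)=vc$, we have $b-a\preceq c$, so a second application of Lemma~\ref{aftranew} (with $b$, $a$, $c$ in place of $a$, $b$, $g$) yields $\ndeg P_{+b,\times c}=\ndeg P_{+a,\times c}$, which is $\leq\ndeg_{\geq\alpha}P_{+a}$ since $vc\geq\alpha$; here we take $h=c$.

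There is no serious obstacle here: the result is a mechanical combination of the translation-invariance Lemma~\ref{aftranew} (which absorbs an additive shift that is $\preceq$ the current scaling) with the monotonicity Lemma~\ref{ndegdecr} (which lets scaling inequalities pass through $\ndeg$). The only creative choice is, in the second case, taking the scaling $h=c$ to match the translation error $b-a$ exactly, so that Lemma~\ref{aftranew} can be invoked to swap $b$ for $a$ at that scale.
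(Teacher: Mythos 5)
Your argument is correct and uses exactly the two lemmas (\ref{ndegdecr} and \ref{aftranew}) that the paper cites for this corollary, so it is essentially the paper's intended route. One small simplification, in the spirit of the paper's remark that $\ndeg_{\geq\gamma} P = \ndeg P_{\times g}$ for \emph{any} $g$ with $vg=\gamma$: instead of taking $g$ realizing the maximum (which forces the case split on $vc$ versus $vg$), choose $g,h\in K^\times$ with $vg=\beta$ and $vh=\alpha$; then $g\preceq h$ gives $\ndeg P_{+b,\times g}\le\ndeg P_{+b,\times h}$ by Lemma~\ref{ndegdecr}, and $b-a\preceq h$ gives $\ndeg P_{+b,\times h}=\ndeg P_{+a,\times h}$ by Lemma~\ref{aftranew}, yielding the claim in one chain with no cases.
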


\subsection*{Newton degree in a cut}
{\it In this subsection $\Gamma^{>}$ has no least element.}\/
We do not need the material here to obtain the main theorem. It
is only used in proving Corollaries~\ref{ppkell} and \ref{dpkell}, which 
are of interest for other reasons. 
 
Let $(a_{\rho})$ be a pc-sequence in~$K$, and
put $\gamma_{\rho}=v(a_{s(\rho)}-a_{\rho})\in \Gamma_{\infty}$, where $s(\rho)$ is the immediate successor of $\rho$.   
Using Corollary~\ref{cor:ndegdecr} in place of
\cite[Corollary~11.2.8]{ADH} we generalize \cite[Lemma~11.2.11]{ADH}:

\begin{lemma}\label{ndegeq}  
There is an index $\rho_0$ and $d\in \N$ such that for all $\rho > \rho_0$ 
we have
$\gamma_{\rho}\in \Gamma$ and $\ndeg_{\geq\gamma_{\rho}}P_{+a_{\rho}} = d$.
Denoting this number $d$ by $d\big(P,(a_{\rho})\big)$, we have $d\big(P,(a_{\rho})\big)=d\big(P,(b_{\sigma})\big)$
whenever $(b_{\sigma})$ is a pc-sequence in~$K$ equivalent to $(a_{\rho})$.
\end{lemma}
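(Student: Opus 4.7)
The plan is to mimic the proof of \cite[Lemma~11.2.11]{ADH}, using Corollary~\ref{cor:ndegdecr} in place of \cite[Corollary~11.2.8]{ADH}.

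First, I would dispatch the trivial preliminary: by the defining property of a pc-sequence, the values $\gamma_\rho$ are eventually strictly increasing in $\Gamma_\infty$, so at most one can equal $\infty$, and hence $\gamma_\rho\in\Gamma$ for all $\rho$ past some index (we discard the trivial case where $(a_\rho)$ is eventually constant, for which the statement is vacuous or immediate). Standard pc-sequence computations then give $v(a_{\rho'}-a_\rho)=\gamma_\rho$ for all sufficiently large $\rho<\rho'$.

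Next I would show that the $\N$-valued function $\rho\mapsto\ndeg_{\geq\gamma_\rho} P_{+a_\rho}$ is eventually weakly decreasing, hence eventually constant. For large $\rho<\rho'$ we have $v(a_{\rho'}-a_\rho)\geq\gamma_\rho$ and $\gamma_{\rho'}\geq\gamma_\rho$, so Corollary~\ref{cor:ndegdecr} applied with $a=a_\rho$, $b=a_{\rho'}$, $\alpha=\gamma_\rho$, $\beta=\gamma_{\rho'}$ yields
$$\ndeg_{\geq\gamma_{\rho'}} P_{+a_{\rho'}}\ \leq\ \ndeg_{\geq\gamma_\rho} P_{+a_\rho}.$$
Since $\N$ is well-ordered, the sequence stabilizes at some $d\in\N$, which we name $d\big(P,(a_\rho)\big)$.

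For the invariance clause, let $(b_\sigma)$ be a pc-sequence in $K$ equivalent to $(a_\rho)$, put $\delta_\sigma:=v(b_{s(\sigma)}-b_\sigma)$, and set $d':=d\big(P,(b_\sigma)\big)$. Fix $\rho_*$ large enough that $\ndeg_{\geq\gamma_{\rho_*}} P_{+a_{\rho_*}}=d$. By equivalence, for all sufficiently large $\sigma$ we have $v(b_\sigma-a_{\rho_*})\geq\gamma_{\rho_*}$; in particular, for such $\sigma$ both $v(b_\sigma-a_{\rho_*})\geq\gamma_{\rho_*}$ and $v(b_{s(\sigma)}-a_{\rho_*})\geq\gamma_{\rho_*}$ hold, and we may additionally assume $\ndeg_{\geq\delta_\sigma} P_{+b_\sigma}=d'$. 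The ultrametric inequality then gives
$$\delta_\sigma\ =\ v\bigl((b_{s(\sigma)}-a_{\rho_*})-(b_\sigma-a_{\rho_*})\bigr)\ \geq\ \gamma_{\rho_*}.$$
Applying Corollary~\ref{cor:ndegdecr} with $a=a_{\rho_*}$, $b=b_\sigma$, $\alpha=\gamma_{\rho_*}$, $\beta=\delta_\sigma$ yields $d'\leq d$. By symmetry (swapping the roles of the two sequences) we also get $d\leq d'$, so $d=d'$.

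The only slightly delicate step is securing $\delta_\sigma\geq\gamma_{\rho_*}$: a priori equivalence of the sequences only gives approximations of the $a_\rho$ by the $b_\sigma$, not a direct comparison of the widths $\gamma_\rho$ and $\delta_\sigma$. The ``two-index'' trick above --- passing from $\sigma$ to $s(\sigma)$ and subtracting --- converts the equivalence hypothesis into the needed bound on $\delta_\sigma$, and this is where the proof genuinely uses the strong triangle inequality rather than only Corollary~\ref{cor:ndegdecr}.
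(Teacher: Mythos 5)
Your proof is correct and matches the paper's route: the paper gives no inline proof but explicitly says the argument is that of \cite[Lemma~11.2.11]{ADH} with Corollary~\ref{cor:ndegdecr} substituted for \cite[Corollary~11.2.8]{ADH}, and your write-up is precisely that adaptation (eventual weak decrease in $\N$ for stabilization; equivalence plus the strong triangle inequality to compare widths and additive conjugates for the invariance clause, with symmetry finishing).
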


\noindent
As in \cite[Section~11.2]{ADH},
we now associate to each pc-sequence $(a_{\rho})$ in $K$ an object~$c_K(a_{\rho})$,
the {\it cut}\/ defined by~$(a_{\rho})$ in $K$, such that if $(b_{\sigma})$ is 
also a pc-sequence in~$K$, then 
$$c_K(a_{\rho})=c_K(b_{\sigma}) \ \Longleftrightarrow\ \text{$(a_{\rho})$ and $(b_{\sigma})$ are equivalent.}$$
We do this in such a way that the cuts $c_K(a_{\rho})$, with $(a_{\rho})$ a
pc-sequence in $K$, are the elements of a set $c(K)$.  Using Lem\-ma~\ref{ndegeq} we define for ${\bf a}\in c(K)$ the {\it Newton degree of $P$ in the cut $\bf a$}\/ as
$$\ndeg_{\bf{a}}P\ :=\ d\big(P,(a_\rho)\big)\ =\ \text{eventual value of $\ndeg_{\geq\gamma_{\rho}}P_{+a_{\rho}}$,}$$
where $(a_{\rho})$ is any pc-sequence in $K$ with ${\bf a}=c_K(a_{\rho})$. 
Let~$(a_{\rho})$ be a pc-sequence in~$K$ and ${\bf a}=c_K(a_{\rho})$.
For $y\in K$ the cut $c_K(a_{\rho} + y)$ depends only on~$({\bf a}, y)$, and so we can set ${\bf a}+y:=c_K(a_\rho+y)$. Likewise, for $y\in K^\times$ the cut~$c_K(a_{\rho}y)$ depends only on $({\bf a}, y)$, and so we can set 
${\bf a}\cdot y:=c_K(a_\rho y)$. We record some basic facts about~$\ndeg_{\bf a} P$: 

\begin{lemma} \label{lem:basic facts on deg_a}
Let $(a_{\rho})$ be a pc-sequence in $K$, ${\bf a}=c_K(a_\rho)$. Then
\begin{enumerate}
\item[\textup{(i)}] $\ndeg_{\bf a} P \leq \deg P$;
\item[\textup{(ii)}] $\ndeg_{\bf a} P^f = \ndeg_{\bf a} P$ for   $f\in K^\times$;
\item[\textup{(iii)}] $\ndeg_{\bf a} P_{+y} = \ndeg_{{\bf a}+y} P$  for $y\in K$;
\item[\textup{(iv)}] if $y\in K$ and $vy$ is in the width of $(a_\rho)$, then $\ndeg_{\bf a} P_{+y} = \ndeg_{\bf a} P$;
\item[\textup{(v)}] $\ndeg_{\bf a} P_{\times y} = \ndeg_{{\bf a}\cdot y} P$ for $y\in K^\times$;
\item[\textup{(vi)}] if $Q\in K\{Y\}^{\neq}$, then $\ndeg_{\bf a} PQ = \ndeg_{\bf a} P + \ndeg_{\bf a} Q$;
\item[\textup{(vii)}] if $P(\ell)=0$ for some pseudolimit $\ell$ of $(a_{\rho})$ in a strict   extension of~$K$, then $\ndeg_{\bf a} P \ge 1$;
\end{enumerate}
\end{lemma}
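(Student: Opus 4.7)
The plan is to unwind each item using the definition
$\ndeg_{\bf a} P =$ eventual value of $\ndeg_{\ge\gamma_\rho} P_{+a_\rho}$ from Lemma~\ref{ndegeq}, together with the transformation properties of Newton degree recorded in the previous subsection, the multiplicativity $\ndeg(PQ)=\ndeg P+\ndeg Q$, and the pairwise commutation of compositional, additive, and multiplicative conjugation (\cite[Lemma~5.7.1]{ADH}). Throughout I use that if $vg=\gamma_\rho$, then $\ndeg_{\ge\gamma_\rho}P_{+a_\rho}=\ndeg(P_{+a_\rho})_{\times g}$.

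For (i), pick any $g\in K^\times$ with $vg=\gamma_\rho$; then eventually $\ndeg_{\bf a}P=\ndeg(P_{+a_\rho})_{\times g}\le\deg(P_{+a_\rho})_{\times g}=\deg P$, since additive and multiplicative conjugation preserve total degree and $\ndeg\le\deg$ always holds. Item (ii) follows because $P\mapsto P^f$ leaves $\ndeg$ invariant and commutes with the operations $+a_\rho$ and $\times g$. For (iii), the identity $(P_{+y})_{+a_\rho}=P_{+(a_\rho+y)}$, combined with the fact that $(a_\rho+y)$ is a pc-sequence representing ${\bf a}+y$ with the same increments $\gamma_\rho$, yields equality of the eventual values. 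For (iv), suppose $vy$ is in the width, so that $vy\ge\gamma_\rho$ eventually; Corollary~\ref{cor:ndegdecr} applied with $(a,b)=(a_\rho,a_\rho+y)$ and $\alpha=\beta=\gamma_\rho$ gives one inequality, and swapping $a$ and $b$ gives the reverse, so $\ndeg_{\ge\gamma_\rho}P_{+(a_\rho+y)}=\ndeg_{\ge\gamma_\rho}P_{+a_\rho}$ eventually, which combined with (iii) proves (iv). For (v), I exploit the identity $(P_{\times y})_{+a_\rho}=(P_{+a_\rho y})_{\times y}$: multiplicatively conjugating by $g$ with $vg=\gamma_\rho$ produces $(P_{+a_\rho y})_{\times yg}$, and $v(yg)=\gamma_\rho+vy$ is precisely the $\rho$-th increment of the pc-sequence $(a_\rho y)$ representing ${\bf a}\cdot y$. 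Item (vi) is immediate from $(PQ)_{+a_\rho}=P_{+a_\rho}\,Q_{+a_\rho}$ together with multiplicativity of $\ndeg$.

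For (vii), a pseudolimit $\ell$ of $(a_\rho)$ in the strict extension $L$ satisfies $\ell=a_\rho+z_\rho$ with $vz_\rho=\gamma_\rho$ eventually; then $P_{+a_\rho}(z_\rho)=P(\ell)=0$, and for any $g_\rho\in K^\times$ with $vg_\rho=\gamma_\rho$ we have $z_\rho\asymp g_\rho$, hence $z_\rho\preceq g_\rho$ in $L$. Since $L$ is a strict extension of $K$, Lemma~\ref{newtonzero} applied to $P_{+a_\rho}\in K\{Y\}^{\ne}$ and $g_\rho$ yields $\ndeg(P_{+a_\rho})_{\times g_\rho}\ge 1$ eventually, i.e.\ $\ndeg_{\bf a}P\ge 1$. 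The only step beyond formal identities is (vii), which is the sole item using the \emph{strictness} hypothesis on the extension (via Lemma~\ref{newtonzero}); this is the main point to watch.
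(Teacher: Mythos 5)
Your proof is correct and fills in exactly the routine computations the paper waves at: the paper's own proof consists of the single line "Most of these items are routine or follow easily from earlier facts. Item (iv) follows from (iii), and (vii) from Lemma~\ref{newtonzero}," which is precisely the structure you reproduce (for (iv) you add an explicit appeal to Corollary~\ref{cor:ndegdecr} in place of the equally quick observation that $vy$ in the width forces ${\bf a}+y={\bf a}$, but the two routes are interchangeable). Your identification of (vii) as the one item that genuinely needs the strictness hypothesis via Lemma~\ref{newtonzero} matches the paper's emphasis.
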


\begin{proof} Most of these items are routine or follow easily from earlier facts.   
Item (iv) follows from (iii), and (vii) from  Lemma~\ref{newtonzero}. 
\end{proof}

\section{Flexibility}\label{secflex}

\noindent
We assume in this section about our valued differential field $K$
that $$\Gamma\ne \{0\}, \qquad\der\ne 0.$$ 
After the first three lemmas 
we introduce the useful condition of {\em flexibility\/}, which plays a key role in the rest of the story.  

\begin{lemma}
Let $P\in K\{Y\}^{\ne}$ be such that
$\deg P\ge 1$. Suppose $\der$ is small and the derivation of $\res(K)$ is nontrivial.
Then the set $$\big\{vP(y):\ y\in K,\ P(y)\neq 0\big\}\ \subseteq\ \Gamma$$
is coinitial in $\Gamma$.
\end{lemma}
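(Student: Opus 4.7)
The plan is: given $\gamma\in\Gamma$, to produce $y\in K$ with $P(y)\ne 0$ and $vP(y)\le\gamma$. From the nontriviality of the residue derivation, pick $a\in\mathcal{O}^\times$ with $a\asymp 1$ and $a'\asymp 1$, so in particular $a^\dagger\asymp 1$. From $\Gamma\ne\{0\}$, pick $h\in K^\times$ with $vh<0$. For $k\in\Z$ to be chosen, set $g:=ha^k$ and consider $y_n:=g^n$ for $n\in\N$; note that $vy_n=n\,vh\to-\infty$.

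Let $\phi:=g^\dagger=h^\dagger+ka^\dagger$. Induction on $j$ gives $y_n^{(j)}=y_n\cdot T_j(n)$, where $T_0=1$, $T_1=n\phi$, and $T_{j+1}=T_jT_1+T_j'$; each $T_j$ is a polynomial in $n$ of degree $j$ with leading coefficient $\phi^j$. Hence $y_n^{\i}=y_n^{|\i|}\prod_j T_j(n)^{i_j}$ for each multi-index $\i=(i_0,\dots,i_r)$, so
\[
P(y_n)\ =\ \sum_d y_n^d\,S_d(n),\quad S_d(n)\ :=\ \sum_{|\i|=d}P_{\i}\prod_j T_j(n)^{i_j}\ \in\ K[n].
\]
Since $vy_n\to-\infty$ and $d^*:=\deg P\ge 1$, the $y_n^{d^*}$ term dominates all lower-degree terms as $n\to\infty$, so provided $S_{d^*}$ is nonzero as a polynomial in $n$, we obtain $vP(y_n)=d^*n\,vh+O(1)\to-\infty$, completing the coinitiality.

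The main obstacle is to choose $k$ (and possibly refine $a$) so that $S_{d^*}\not\equiv 0$ in $K[n]$. The top homogeneous part $P_{d^*}$ of $P$ corresponds uniquely to a nonzero polynomial $R:=R_{P_{d^*}}\in K[\eta,\eta',\dots]$ via $P_{d^*}(y)=y^{d^*}R(y^\dagger,(y^\dagger)',\dots)$ for $y\in K^\times$, and then $S_{d^*}(n)=R(n\phi,n\phi',n\phi'',\dots)$. Splitting $R=\sum_j R^{[j]}$ by total degree in the $\eta^{(\cdot)}$ variables gives $S_{d^*}(n)=\sum_j n^j R^{[j]}(\phi,\phi',\dots)$, so $S_{d^*}\equiv 0$ in $n$ forces $R^{[j]}(\phi,\phi',\dots)=0$ for every $j$. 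Since $\phi^{(m)}=(h^\dagger)^{(m)}+k(a^\dagger)^{(m)}$ is linear in $k$, the expression $R^{[j]}(\phi,\phi',\dots)$ is a polynomial in $k$ with leading coefficient $R^{[j]}(a^\dagger,(a^\dagger)',\dots)$; if $S_{d^*}$ vanishes for all $k\in\Z$, each such polynomial in $k$ vanishes identically, so $R^{[j]}$ vanishes at both $(h^\dagger,(h^\dagger)',\dots)$ and $(a^\dagger,(a^\dagger)',\dots)$ and on the full line interpolating them. By further perturbing $a$ (replacing $a$ by $a+c$ for suitable $c\in\mathcal{O}$, noting that the class of admissible $a$ remains broad because of the nontrivial residue derivation) and rerunning the same analysis, one generates enough distinct vanishing conditions on each $R^{[j]}$ to force $R^{[j]}\equiv 0$ for all $j$, contradicting $R\ne 0$. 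Hence for some admissible choices of $a$ and $k$, $S_{d^*}\not\equiv 0$, and the proof is complete.
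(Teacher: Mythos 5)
Your approach is genuinely different from the paper's. The paper works with the \emph{gaussian valuation} of multiplicative conjugates: $v(P_{\times f})$ depends only on $vf$, giving a function $v_P\colon\Gamma\to\Gamma$, and then \cite[Corollary~6.1.3]{ADH} gives $v_{P_d}(\gamma)=v(P_d)+d\gamma+o(\gamma)$, so $v_P(\Gamma)$ is coinitial; finally \cite[Lemma~4.5.2]{ADH} realizes each value $v_P(\gamma)$ by some $y\in K$. You instead try to build explicit witnesses $y_n=g^n$ with $g=ha^k$ and compute $P(y_n)$ directly via the Riccati-type decomposition. The algebraic bookkeeping (the $T_j$, the identity $T_j(n)=U_j(n\phi,n\phi',\dots)$, the split $S_{d^*}(n)=\sum_j n^j R^{[j]}(\phi,\phi',\dots)$) is correct, and the observation that $R=R_{P_{d^*}}\ne 0$ is right.

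However, there is a genuine gap where you need it most: the claim that some admissible choice of $a$ and $k$ makes $S_{d^*}\not\equiv 0$ as a polynomial in $n$. Your argument shows that identical vanishing for all $k$ forces $R^{[j]}$ to vanish at the two points $(h^\dagger,(h^\dagger)',\dots)$ and $(a^\dagger,(a^\dagger)',\dots)$; but a nonzero (differential) polynomial certainly can vanish at two points. The crucial sentence --- that perturbing $a$ by elements of $\mathcal O$ ``generates enough distinct vanishing conditions \dots\ to force $R^{[j]}\equiv 0$'' --- is an assertion, not a proof. The hypothesis guarantees only that \emph{one} $a$ with $a\asymp 1\asymp a'$ exists; it says nothing about how the associated jets $(a^\dagger,(a^\dagger)',\dots)$ vary. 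The step you are glossing over is precisely the content of \cite[Lemma~4.5.2]{ADH} (that a nontrivial residue derivation lets one realize the gaussian valuation by an actual point), whose proof uses a nontrivial fact about differential polynomials being nonzero somewhere on the image of $y\mapsto(y,y',\dots,y^{(r)})$ in $\res(K)^{r+1}$.

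There is a secondary issue in the final inference ``$vP(y_n)=d^*n\,vh+O(1)\to-\infty$, completing the coinitiality.'' With $h$ fixed, the values $d^*n\,vh$ need not be coinitial in $\Gamma$ when $\Gamma$ is not archimedean: $\Z\,vh$ can be bounded below by some $\gamma\in\Gamma$. Varying $h$ does not trivially fix this, since the implicit $O(1)$ constant depends on $h^\dagger$. The paper's reduction via $v_P(\gamma)=v(P_d)+d\gamma+o(\gamma)$ sidesteps this cleanly: it works with an arbitrary $\gamma\in\Gamma^<$, not with multiples of a fixed one.
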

\begin{proof}
Given $Q\in K\{Y\}$, the gaussian valuation~$v(Q_{\times f})$ of $Q_{\times f}$ for $f\in K$ depends only on $v(f)$ by \cite[Lemma~4.5.1(ii)]{ADH}, and so we obtain a function $v_Q\colon \Gamma_\infty\to\Gamma_\infty$ with $v_Q(vf)=v(Q_{\times f})$ for $f\in K$.  We have
$v_P(\gamma)=\min_d v_{P_d}(\gamma)\in \Gamma$ for $\gamma\in\Gamma$, and by~\cite[Corollary~6.1.3]{ADH}, $v_{P_d}(\gamma)=v(P_d)+d\gamma+o(\gamma)$ if $P_d\neq 0$ and  $\gamma\in \Gamma^{\ne}$.
Using also $\deg P\ge 1$, it follows that $v_P(\Gamma)$ is a coinitial subset of $\Gamma$. 
By \cite[Lemma~4.5.2]{ADH}, there is for each $\beta\in v_P(\Gamma)$ a $y\in K$ with $vP(y)=\beta$.
\end{proof}

\begin{lemma}\label{flex} Let $P\in K\{Y\}^{\ne}$ be such that
$\deg P\ge 1$. Then
the set $$\big\{vP(y):\ y\in K\big\}\ \subseteq\ \Gamma_{\infty}$$ is infinite.
\end{lemma}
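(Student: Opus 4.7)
The plan is to reduce to the preceding lemma via compositional conjugation, with a case analysis depending on whether the induced derivation on $\res(K)$ is trivial. Since $\der\neq 0$, the downward-closed subset $\Gamma(\der)\subseteq\Gamma$ is nonempty and bounded above, so we can choose $\phi\in K^\times$ with $v\phi\in\Gamma(\der)$. Passing to the compositional conjugate $K^\phi$ does not change the set $\{vP(y):y\in K\}$, because $P(y)=P^\phi(y)$ for $y\in K$, and it makes the derivation of $K^\phi$ small. So we may assume $\der\smallo\subseteq\smallo$ from the outset.

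If the induced derivation on $\res(K)$ is nontrivial, the preceding lemma applies directly and gives that $\{vP(y):y\in K,\,P(y)\neq 0\}$ is coinitial in $\Gamma$. Since $\Gamma\neq\{0\}$ is a nontrivial totally ordered abelian group it has no least element, so every coinitial subset of $\Gamma$ is infinite, which gives the desired conclusion in this case.

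In the remaining case, $\der\mathcal{O}\subseteq\smallo$ (trivial residue derivation), and the preceding lemma does not apply directly. I plan to argue using the gaussian-valuation function $v_P\colon\Gamma\to\Gamma$ defined by $v_P(vf)=v(P_{\times f})$ for $f\in K^\times$. By the identity $v_P(\gamma)=\min_d v_{P_d}(\gamma)$ together with the expansion $v_{P_d}(\gamma)=v(P_d)+d\gamma+o(\gamma)$ for nonzero homogeneous parts $P_d$ and $\gamma\in\Gamma^{\neq}$, the assumption $\deg P\geq 1$ implies that $v_P(\Gamma)$ is already coinitial, hence infinite, in $\Gamma$. The inequality $v(P(y))\geq v_P(vy)$ always holds; to convert this into infinitely many distinct values of $v(P(y))$, one selects, for each of infinitely many $\gamma$, an element $y\in K$ with $vy=\gamma$ at which the dominant part of $P_{\times y}$ in $\res(K)\{Y\}$ does not vanish on a chosen lift, thereby forcing $v(P(y))=v_P(vy)$.

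The main obstacle is exactly this last step in the trivial-residue case: the dominant part of $P_{\times y}$, viewed in $\res(K)\{Y\}$, is a differential polynomial over a field whose derivation is trivial, so upon evaluation at constants of $\res(K)$ it collapses to an ordinary polynomial in $Y$ that could vanish identically. To sidestep this, I would additively conjugate by an element $a\in K$ with $a'\neq 0$ (which exists since $\der\neq 0$), observing that $P_{+a}$ has the same value set $\{vP_{+a}(y):y\in K\}=\{vP(y):y\in K\}$ and can be arranged to have a nonzero pure-$Y$ contribution in its dominant part; the argument then goes through as sketched.
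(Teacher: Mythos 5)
You take a genuinely different route from the paper's: after reducing to small derivation, you split on whether the induced derivation on $\res(K)$ is trivial. The nontrivial case indeed reduces to the preceding lemma, but the trivial-residue case has a real gap, and the proposed fix via additive conjugation does not close it. What the argument actually requires is that for infinitely many $\gamma\in\Gamma$ one can find $y$ with $vy=\gamma$ such that the dominant part $D_{P_{+a,\times y}}\in\res(K)\{Y\}$ --- a differential polynomial over a field with \emph{trivial} derivation --- does not vanish identically after substituting $Y'=Y''=\cdots=0$. This condition depends on $y$ through the multiplicative conjugation, not just on the fixed $a$; controlling it simultaneously for infinitely many $\gamma$ is precisely the hard part, and nothing in the proposal addresses it. Worse, additive conjugation by a single $a$ with $a'\ne 0$ need not give even $P_{+a}$ itself any pure-$Y$ content: for $P=Y'-a'$ one has $P_{+a}=Y'$, exactly the shape you were trying to rule out, so the fix fails already on degree-$1$, order-$1$ examples.

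The paper avoids the case split altogether by a coarsening trick. Pass to an elementary extension $L$ of $K$ whose value group has an element strictly above $\Gamma$, let $\Delta$ be the convex hull of $\Gamma$ in $\Gamma_L$, and coarsen $L$ by $\Delta$. Since $\Gamma\subseteq\Delta$, every nonzero element of $K$ is a unit of $\dot{\mathcal{O}}_L$, so $K$ embeds with its derivation into $\res(L_\Delta)$; hence $\der\ne 0$ forces the residue derivation of $L_\Delta$ to be nontrivial, no matter what happens on $\res(K)$. Applying the preceding lemma to $L_\Delta$ and $P$ gives coinitiality of $\big\{\dot{v}P(y):y\in L,\ P(y)\ne 0\big\}$ in $\dot{\Gamma}_L$, hence of $\big\{vP(y):y\in L,\ P(y)\ne 0\big\}$ in $\Gamma_L$ by convexity of $\Delta$, and elementarity brings coinitiality in $\Gamma$ back down to $K$. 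This uniform reduction to the nontrivial-residue case is the idea your proposal is missing, and without something like it the trivial-residue case of your argument does not go through.
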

\begin{proof} By compositional conjugation we arrange that
$\der$ is small. Take an elementary extension $L$ of $K$ such that
$\Gamma_L$ contains an element $>\Gamma$. Let $\Delta$ be the convex hull of~$\Gamma$ in $\Gamma_L$, and let $L_{\Delta}$ be the
$\Delta$-coarsening of $L$ with valuation $\dot{v}$ and
(nontrivial) value group $\dot{\Gamma}_L=\Gamma_L/\Delta$. 
By Lemma~\ref{lem:4.4.4}, the derivation of $L_\Delta$ remains small, and since $\der\ne 0$, the derivation of $\res(L_{\Delta})$ is nontrivial.
So by  the preceding lemma, the set
$\big\{\dot{v}P(y):\ y\in L,\ P(y)\ne 0\big\}$ is coinitial in 
$\dot{\Gamma}_L$.
Hence the set $\big\{vP(y):\ y\in L,\ P(y)\ne 0\big\}$ is coinitial in $\Gamma_L$. Thus
$\big\{vP(y):\ y\in K,\ P(y)\ne 0\big\}$ is coinitial in $\Gamma$, and  hence infinite. 
\end{proof}

\begin{lemma}\label{flex0} Suppose $\Gamma^{>}$ has no least element and $S(\der)=\{0\}$. Let $P\in K\{Y\}^{\ne}$ be such that 
$\ndeg P\ge 1$, 
and let $\beta\in \Gamma^{>}$. Then the set
$$\big\{vP(y):\ y\in K,\ |vy|<\beta\big\}\ \subseteq\ \Gamma_{\infty}$$
is infinite. 
\end{lemma}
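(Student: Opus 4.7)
The approach parallels that of Lemma~\ref{flex}. First, by compositional conjugation (picking $\phi_0\in K^\times$ with $v\phi_0\in\Gamma(\der)$ small enough that $\ddeg P^{\phi_0}=\ndeg P\ge 1$, and replacing $K,\,P$ by $K^{\phi_0},\,P^{\phi_0}$), I reduce to the case $\der$ small and $\ddeg P\ge 1$; this preserves the valuation, the hypotheses on $\Gamma^{>}$ and $S(\der)$, and the set $\{vP(y):y\in K,\,|vy|<\beta\}$.

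Next, examine the gaussian valuation function $v_P\colon\Gamma\to\Gamma_\infty$ given by $v_P(vf)=v(P_{\times f})$. By \cite[Corollary~6.1.3]{ADH},
\[
v_P(\gamma)\ =\ \min_{d\le\deg P}\bigl(vP_d+d\gamma+o(\gamma)\bigr)\qquad\text{for }\gamma\in\Gamma^{\ne},
\]
where $P_d$ is the homogeneous part of $P$ of degree $d$. Since $\ddeg P\ge 1$, pick $d_*\ge 1$ with $vP_{d_*}=vP$; then for $\gamma<0$, $v_{P_{d_*}}(\gamma)=vP+d_*\gamma+o(\gamma)<vP\le v_{P_0}(\gamma)$, so the minimum is attained at some index $d\ge 1$. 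Since $\Gamma^{>}$ has no least element, $\Gamma$ is densely ordered and $(-\beta,0)\cap\Gamma$ is infinite; partitioning this set by the index attaining the minimum (finitely many possibilities), one part is infinite, on which $v_P$ has the shape $vP_d+d\gamma+o(\gamma)$ with a fixed $d\ge 1$. A pigeonhole argument—using that $o(\gamma)$ is archimedean-smaller than $\gamma$—shows that $v_P$ takes infinitely many values there.

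If the derivation induced on $\res(K)$ is nontrivial, then by \cite[Lemma~4.5.2]{ADH} (as used in the first unlabeled lemma of this section) each value $v_P(\gamma)$ is realized by some $y\in K$ with $vy=\gamma$ and $vP(y)=v_P(\gamma)$, giving the conclusion. In the general case, pass to a sufficiently saturated elementary extension $L\succeq K$; by elementarity it suffices to conclude in $L$. Using the first-order property $S_L(\der)=\{0\}$ (inherited from $K$), Lemma~\ref{epsgamdel}, and the saturation of $L$, produce $\phi\in L^\times$ with $v\phi\in\Gamma_L(\der)$ and $f\in\smallo_L^{\ne}$ such that $v(\phi^{-1}\der f)$ lies in a strictly smaller archimedean class than $\beta$. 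Coarsening the conjugate $L^\phi$ by the convex subgroup $\Delta\subset\Gamma_L$ generated by $v(\phi^{-1}\der f)$ yields $(L^\phi)_\Delta$ with small derivation (Lemma~\ref{lem:4.4.4}), nontrivial residue derivation, $\dot\beta>0$ in $\dot\Gamma_L$, and $\ddeg P\ge 1$ preserved. Applying the previous argument in $(L^\phi)_\Delta$ and lifting produces infinitely many $y\in L$ with $|vy|<\beta$ and distinct $vP(y)$.

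The main obstacle is the general case: arranging a coarsening $\Delta$ that simultaneously makes the residue derivation nontrivial and satisfies $\beta\notin\Delta$ (so that $\dot\beta>0$). This is exactly where the hypothesis $S(\der)=\{0\}$ is essential: via Lemma~\ref{epsgamdel} combined with saturation of $L$, it delivers $v(\der f)$ in an archimedean class strictly smaller than that of $\beta$, which is impossible in $K$ itself whenever $\Gamma$ is archimedean.
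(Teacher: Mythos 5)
Your proposal follows the same broad strategy as the paper (pass to a saturated elementary extension, coarsen by a convex subgroup $\Delta$, arrange a nontrivial residue derivation and nontrivial value group, and invoke Lemma~\ref{flex}), but there is a genuine gap in your Case~1 (``residue derivation nontrivial'') argument, and you also apply that same argument inside your general case, so the gap propagates.

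The problem is the ``pigeonhole argument.'' You want $v_P$ to take infinitely many values on some infinite subset $S$ of $(-\beta,0)\cap\Gamma$ where $v_P(\gamma)=vP_d+d\gamma+o(\gamma)$ for a fixed $d\ge 1$, citing that $o(\gamma)$ is archimedean-smaller than $\gamma$. This does not suffice when $\Gamma$ is nonarchimedean. If $\gamma_1\ne\gamma_2$ lie in a level set $\{v_P=c\}$, then $d(\gamma_1-\gamma_2)=e(\gamma_2)-e(\gamma_1)$ where $e(\gamma)$ denotes the $o(\gamma)$-term. Each $|e(\gamma_i)|$ is archimedean-smaller than $|\gamma_i|$, but their difference can be of the same archimedean class as $\gamma_1-\gamma_2$ (which may itself be archimedean-smaller than $\beta$). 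Concretely, the $\gamma$ in a level set could all cluster around a fixed $-\mu\in(-\beta,0)$ with pairwise differences $o(\mu)$, so a level set can be infinite. Note that the first (unnumbered) lemma of Section~3, which you follow, establishes coinitiality of $v_P(\Gamma)$ by letting $\gamma\to-\infty$; it says nothing about infinitude on the bounded interval $(-\beta,0)$. So you cannot conclude what you need from it.

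The paper's proof avoids this entirely: it never argues on a bounded interval. It passes to a saturated elementary extension $L$, produces (via the helper fact and Lemma~\ref{epsgamdel}) elements $\phi\in L^\times$ with $v\phi\ge\Gamma(\der)$ (beyond $\Gamma(\der)$, not merely ``deep in'' it) and $a\in\smallo_L$ with $0<v(\phi^{-1}a')<\Gamma^{>}$, coarsens $L^\phi$ by the convex subgroup $\Delta$ of \emph{all} $\Gamma$-infinitesimals in $\Gamma_L$, and then applies Lemma~\ref{flex} (coinitiality) to $\res(L^\phi_\Delta)$, whose value group is $\Delta$. The resulting $y$ automatically satisfy $|vy|<\Gamma^>$, hence $|vy|<\beta$, so no bounded-interval estimate is ever needed; the desired first-order statement then transfers back to $K$. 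Your $\Delta$ (generated by a single infinitesimal relative to $\beta$) still leaves a large $\dot\Gamma_L$, and you propose to run your Case~1 argument inside $(L^\phi)_\Delta$, which reintroduces the same gap.

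Two secondary points worth flagging. First, you conjugate by $\phi_0\in K$ at the start and again by $\phi\in L$ in the general case; it is not automatic that $\ddeg$ of the doubly conjugated polynomial remains $\ge 1$, since Lemma~\ref{extra} only guarantees this when $\phi\asymp 1$. The paper sidesteps this by working with the conjugation-invariant $\ndeg P\ge 1$ and a single $\phi$ with $v\phi\ge\Gamma(\der)$. Second, the claim ``$\ddeg P\ge 1$ preserved'' under coarsening is true (coarsening can only enlarge the dominant part), but the key point in the paper is the more delicate implication $\ndeg P\ge 1\Rightarrow\deg P_\Delta\ge 1$, which uses $v\phi\ge\Gamma(\der)$ and which your proof does not address.
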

\begin{proof} Let $\gamma\in \Gamma(\der)$ and
$\delta\in \Gamma\setminus \Gamma(\der)$; then there are $a,g\in K$ such that 
$$a\prec 1, \quad vg=\gamma, \quad 0\ <\  v(g^{-1}a')\ \le\ \delta-\gamma.$$
To see this, take $a,d\in K$ such that $a\prec 1$, $vd=\delta$, and $d^{-1}a'\succeq 1$. Take $g\in K$ with $vg=\gamma$. Then $a'\succeq d$, and
so $g^{-1}a'\succeq g^{-1}d$. It remains to note that $g^{-1}a'\prec 1$. 
 
 This fact and Lemma~\ref{epsgamdel} yield an elementary extension $L$ of $K$, with
 elements~$\phi\in L^\times$ and $a\in \smallo_L$ such that $v\phi\in \Gamma_L(\der_L)$, $v\phi\ge \Gamma(\der)$ and
$0<v(\phi^{-1} a')<\Gamma^{>}$. Let $\Delta$ be the convex subgroup
of $\Gamma_L$ consisting of the $\epsilon\in \Gamma_L$ with
$|\epsilon|< \Gamma^{>}$. Then $\res(L^{\phi}_{\Delta})$ has nontrivial derivation with value group $\Delta\ne \{0\}$.
Take a nonzero $f\in L$ such that $f^{-1}P^\phi\asymp 1$ in
$L^{\phi}\{Y\}$. Let $P_{\Delta}\in \res(L^{\phi}_{\Delta})\{Y\}$
be the image of $f^{-1}P^\phi\in \mathcal{O}_{L^\phi}\{Y\}$
under the natural map $\mathcal{O}_{L^\phi}\{Y\}\to \res(L^{\phi}_{\Delta})\{Y\}$. From $\ndeg P\ge 1$ it follows that
$\deg P_{\Delta}\ge 1$. Now apply Lemma~\ref{flex} to 
$\res(L^{\phi}_{\Delta})$ and~$P_{\Delta}$ in the role of~$K$ and~$P$.  
\end{proof}

\noindent
Recall that $a^\sim$ is the equivalence class of $a\in K^\times$ with respect to the equivalence relation $\sim$ on $K^\times$. We define $K$ to be {\it flexible\/} if $\Gamma^{>}$ has no least element and for all $P\in K\{Y\}^{\ne}$ with
$\ndeg P\ge 1$ and all $\beta\in \Gamma^{>}$ the set
$$\big\{P(y)^{\sim}:\ y\in K,\ |vy|<\beta,\ P(y)\ne 0\big\}$$
is infinite.
Flexibility is an elementary condition on valued 
differential fields, in the sense of being 
expressible by a set of sentences
in the natural first-order language for these structures. 
Flexibility is invariant under compositional conjugation. By Lemma~\ref{flex0} we have:

\begin{cor} If $\Gamma^{>}$ has no least element and $S(\der)=\{0\}$, then $K$ is flexible.
\end{cor}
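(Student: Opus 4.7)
The plan is to deduce the corollary almost immediately from Lemma~\ref{flex0}, exploiting the fact that the equivalence relation $\sim$ on $K^\times$ refines the fibers of the valuation $v$. The hypothesis that $\Gamma^{>}$ has no least element is already part of the definition of flexibility, so the only thing to verify is the infiniteness of the set of $\sim$-classes $\{P(y)^{\sim}:y\in K,\ |vy|<\beta,\ P(y)\ne 0\}$ for any $P\in K\{Y\}^{\ne}$ with $\ndeg P\geq 1$ and any $\beta\in \Gamma^{>}$.

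First I would apply Lemma~\ref{flex0} directly with this $P$ and $\beta$, whose hypotheses (no least element in $\Gamma^{>}$ and $S(\der)=\{0\}$) are precisely what we are assuming, to conclude that the set $\{vP(y):y\in K,\ |vy|<\beta\}\subseteq \Gamma_{\infty}$ is infinite. Since $\Gamma_{\infty}\setminus \Gamma=\{\infty\}$ is a single point, corresponding to the condition $P(y)=0$, after removing this point the set $\{vP(y):y\in K,\ |vy|<\beta,\ P(y)\ne 0\}\subseteq \Gamma$ is still infinite.

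The second and final step is the observation that $\sim$ refines $v$, i.e.\ $a\sim b$ in $K^\times$ implies $va=vb$: indeed $a-b\prec a$ gives $v(a-b)>va$, so $vb=v(a-(a-b))=\min(va,v(a-b))=va$. Consequently, any two elements of $K^\times$ with distinct valuations lie in distinct $\sim$-classes, so there is a well-defined surjection from $\{P(y)^{\sim}:y\in K,\ |vy|<\beta,\ P(y)\ne 0\}$ onto the infinite set $\{vP(y):y\in K,\ |vy|<\beta,\ P(y)\ne 0\}$, which forces the former to be infinite.

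I expect no real obstacle here: all the substance is packaged into Lemma~\ref{flex0}, and this corollary is simply a repackaging of its conclusion into the form needed in the sequel. The only subtlety worth flagging in the write-up is verifying that $\sim$ refines $v$, which is standard but deserves one line.
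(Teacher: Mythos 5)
Your proposal is correct and is precisely the intended reading of the paper, which simply cites Lemma~\ref{flex0} and leaves the elementary bookkeeping implicit. You have spelled out the two small missing steps — discarding the possible value $\infty$, and noting that $a\sim b$ implies $va=vb$ so that the $\sim$-class set surjects onto the (infinite) value set — and both are verified correctly.
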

  
\noindent
Combined with
earlier results on $S(\der)$ this gives large
classes of valued differential fields that are flexible. For example, if $\Gamma^{>}$ has no least element, then $K$ is flexible whenever $K$ is asymptotic or $\Gamma$ is archimedean.
If $\Gamma^{>}$ has no least element and $K$ is flexible, does it
follow that $S(\der)=\{0\}$? We don't know.  

\begin{remark} In \cite[p.~292]{ADHO} we defined a less ``flexible'' notion of flexibility. We stated there as 
Theorem~4.1, without proof, that every real closed $H$-field has a spherically complete immediate $H$-field extension, and mentioned that we used flexibility in handling the case
where the real closed $H$-field has no asymptotic integration. It turned out that for that case ``Theorem~4.1'' was not needed in \cite{ADH}, and so it was not included there. As we saw in the introduction,
Theorem~4.1 from \cite{ADHO} is now available, even without the {\em real closed\/} assumption, as a special case of the main theorem  of the
present paper. 
\end{remark}

\section{Lemmas on Flexible Valued Differential Fields} \label{sec:flexlemmas}

\noindent
{\em In this section we assume about $K$ that $\der\ne 0$, $\Gamma\ne \{0\}$, and
$\Gamma^{>}$ has no least element}. (Flexibility is only assumed in Lemmas~\ref{suplim4} and~\ref{Zw}.)
We let $a$, $b$, $y$ range over $K$
and $\fm$, $\fn$, $\fd$, $\fv$, $\fw$ over $K^\times$.
Also, $P$ and $Q$ range over $K\{Y\}^{\ne}$. 

Using strict extensions and flexibility we now adapt the subsection ``Vanishing'' of \cite[Section 11.4]{ADH} to our more general setting.   

\medskip\noindent
Let $\ell$ be an element in an extension $L$ of $K$ such that $\ell\notin K$ and $v(K-\ell):=\big\{v(a-\ell):\ a\in K\big\}$ has 
no largest element. Recall that then $\ell$ is a pseudolimit of a divergent 
pc-sequence in $K$ and
$v(K-\ell)\subseteq \Gamma$.

We say that {\it $P$ vanishes at $(K,\ell)$}\/ if for all $a$ and $\fv$ with $a-\ell \prec \fv$ we have 
$\ndeg_{\prec \fv} P_{+a} \ge 1$, that is $\ndeg P_{+a,\times b}\geq 1$ for some $b\prec\mathfrak v$. 
By Lemma~\ref{newtonzero}, if $L$ is an immediate strict extension of $K$ and $P(\ell)=0$, then $\ndeg P_{+a,\times b}\geq 1$ whenever $\ell-a\preceq b$, hence $P$ vanishes at $(K,\ell)$.
Let $Z(K,\ell)$ be the set of all $P$ that vanish at $(K,\ell)$. Here are some frequently used basic facts: \begin{enumerate}
\item $P\in Z(K,\ell)\ \Longleftrightarrow\ P_{+b}\in Z(K, \ell-b)$;
\item $P\in Z(K,\ell)\ \Longleftrightarrow\ P_{\times \fm}\in Z(K,\ell/\fm)$;
\item $P\in Z(K,\ell)\ \Longrightarrow\ PQ\in Z(K,\ell) \text{  for all $Q$}$;  
\item $P\in K^\times\ \Longrightarrow\ P\notin Z(K,\ell)$.
\end{enumerate}
(In general, $Z(K,\ell)\cup\{0\}$ is not closed under addition, see the remark following the proof of Corollary~\ref{ppkell} below.)
Moreover, if $P\notin Z(K,\ell)$, we have
$a$, $\fv$ with $a-\ell \prec \fv$ and $\ndeg_{\prec \fv}P_{+a} =0$, and then
also $\ndeg_{\prec \fv} P_{+b} =0$ for any $b$ with $b-\ell \prec \fv$,
by Lemma~\ref{ndeg}. 

\begin{lemma}\label{suplim1} $Y-b\notin Z(K,\ell)$.
\end{lemma}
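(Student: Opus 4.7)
The goal is to show $P := Y - b$ does not vanish at $(K,\ell)$, i.e., to exhibit $a \in K$ and $\mathfrak v \in K^\times$ with $a - \ell \prec \mathfrak v$ and $\ndeg_{\prec \mathfrak v} P_{+a} = 0$. The starting point is the observation that $P$ has order $0$: for any $a$ and $g$, the polynomial $P_{+a,\times g} = gY + (a - b)$ involves no derivatives, so it equals its own compositional conjugate $(P_{+a,\times g})^{\phi}$ for every $\phi \in K^\times$. Consequently $\ndeg P_{+a,\times g}$ coincides with $\ddeg P_{+a,\times g}$, and for such a degree-$1$ polynomial with nonzero constant term $c := a - b$, the dominant degree is $0$ precisely when $g \prec c$.

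It therefore suffices to find $a$ and $\mathfrak v$ with $a \neq b$, $a - \ell \prec \mathfrak v$, and $\mathfrak v \preceq a - b$, since then every $g \prec \mathfrak v$ will satisfy $g \prec a - b$, forcing $\ndeg P_{+a,\times g} = 0$ and hence $\ndeg_{\prec \mathfrak v} P_{+a} = 0$.

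To construct such $a$ and $\mathfrak v$: since $b \in K$ and $\ell \notin K$ we have $b - \ell \neq 0$, so $v(b - \ell) \in v(K - \ell) \subseteq \Gamma$; as this set has no largest element, there exists $a \in K$ with $v(a - \ell) > v(b - \ell)$. For such an $a$, the identity $a - b = (a - \ell) - (b - \ell)$ together with the strict inequality of valuations yields $v(a - b) = v(b - \ell)$, so $a \neq b$ and the choice $\mathfrak v := a - b \in K^\times$ satisfies $a - \ell \prec \mathfrak v$ (since $v(a - \ell) > v\mathfrak v$) and $\mathfrak v \preceq a - b$ (with equality). Plugging in gives the desired inequality $\ndeg_{\prec \mathfrak v} P_{+a} = 0$.

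There is no substantial obstacle here; the lemma is essentially a direct consequence of the definition of vanishing, once one observes that polynomials of order $0$ are fixed by compositional conjugation and that the cut data provides enough room to choose $a$ strictly closer to $\ell$ than $b$ is.
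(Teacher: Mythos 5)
Your proof is correct and is essentially the same as the paper's: both pick $a$ with $a-\ell\prec b-\ell$, take $\mathfrak v\asymp b-\ell$ (the paper chooses such a $\mathfrak v$ directly; you take $\mathfrak v:=a-b$, which is $\asymp b-\ell$), and observe that for $\mathfrak m\prec\mathfrak v$ the polynomial $P_{+a,\times\mathfrak m}=\mathfrak m Y+(a-b)$ has dominant degree $0$. Your additional remark that order-$0$ polynomials are fixed by compositional conjugation, so that $\ndeg$ coincides with $\ddeg$ here, is a slightly more explicit version of the paper's step from $D_{P_{+a,\times\mathfrak m}}\in\res(K)^\times$ to $\ndeg_{\prec\mathfrak v}P_{+a}=0$.
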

\begin{proof} Take $a$ and $\fv$ such that $a-\ell \prec \fv \asymp b-\ell$.
Then for $P:= Y-b$ and $\fm \prec \fv$ we have $P_{+a, \times \fm}=\fm Y + (a-b)$ and $\fm\prec a-b$, so $D_{P_{+a,\times \fm}} \in \res(K)^\times$. It follows that
$\ndeg_{\prec \fv}P_{+a} =0$.
\end{proof}

\begin{lemma}\label{Z1} Suppose $P\notin Z(K,\ell)$, and let $a$,~$\fv$ be such that 
$a-\ell \prec \fv$ and
$\ndeg_{\prec \fv}P_{+a} =0$. Then $P(f)\sim P(a)$ for all 
$f$ in all strict extensions of~$K$ with 
$f-a\asymp \fm \prec \fv$ for some $\fm$. \textup{(}Recall: $\fm\in K^\times$ by convention.\textup{)}
\end{lemma}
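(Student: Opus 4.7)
I will reduce the problem to an evaluation inside a suitable compositional conjugate of $L$, chosen so that a multiplicative conjugate of $P_{+a}$ has dominant degree $0$. Write $f = a + \mathfrak m u$ with $u\in\mathcal O_L\setminus\smallo_L$, so $u\asymp 1$. Setting $R := P_{+a,\times\mathfrak m} \in K\{Y\}^{\ne}$, we have the fundamental identity $P(f) = P(a+\mathfrak m u) = R(u)$, and the constant term of $R$ (preserved by both additive-at-$0$ and multiplicative conjugation, and also by compositional conjugation) is $R(0) = P(a)$. The hypothesis $\ndeg_{\prec\mathfrak v}P_{+a} = 0$ applied to $\mathfrak m\prec\mathfrak v$ gives $\ndeg R = 0$.

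By the definition of Newton degree I may choose $\phi\in K^\times$ with $v\phi\in\Gamma(\der)$ such that $\ddeg R^\phi = 0$. This exactly means that $R^\phi_0$ is the unique coefficient of $R^\phi$ that is $\asymp R^\phi$; hence $R^\phi\asymp R^\phi_0 = P(a)$ (so $P(a)\neq 0$) and $R^\phi_{\i}\prec P(a)$ for every $\i$ with $|\i|\geq 1$.

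Now strictness of $L$ over $K$ enters: from $v\phi\in\Gamma(\der)$, i.e., $\der\smallo\subseteq\phi\smallo$, strictness gives $\der_L\smallo_L\subseteq\phi\smallo_L$, so $\phi^{-1}\der_L$ is a small derivation on $L$, which by \cite[Lemma~4.4.2]{ADH} implies $\phi^{-1}\der_L\mathcal O_L\subseteq\mathcal O_L$. Since $u\in\mathcal O_L$, all iterated $L^\phi$-derivatives of $u$ lie in $\mathcal O_L$, so $u^{\i}_{L^\phi}\preceq 1$ for every $\i$. Expanding,
$$R^\phi(u)\ =\ R^\phi_0\ +\ \sum_{|\i|\geq 1}R^\phi_{\i}\, u^{\i}_{L^\phi},$$
each summand with $|\i|\geq 1$ being $\prec P(a)$; therefore $R^\phi(u)\sim P(a)$. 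Since $R^\phi(u)$ computed in $L^\phi$ equals $R(u)$ computed in $L$ (which equals $P(f)$), we conclude $P(f)\sim P(a)$.

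\textbf{Main obstacle.} The one delicate point is that the valuation bounds on the coefficients $R^\phi_{\i}$ are obtained inside $K$, yet must be used to dominate quantities involving iterated $L^\phi$-derivatives of $u\in L$. This is exactly where strictness is indispensable: it propagates the condition $v\phi\in\Gamma(\der)$ from $K$ to $L$, so that $\phi^{-1}\der_L$ is small on $L$ and the needed bound $u^{\i}_{L^\phi}\preceq 1$ holds.
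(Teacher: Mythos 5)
Your proof is correct and follows essentially the same route as the paper's: both pass to a compositional conjugate $\phi$ with $\ddeg (P_{+a,\times\fm})^\phi=0$ so that the constant term $P(a)$ dominates the other coefficients, and both use strictness to propagate $v\phi\in\Gamma(\der)$ to $L$ so that $\phi^{-1}\der_L$ is small and hence the iterated derivatives of $u$ stay in $\mathcal O_L$. The only cosmetic difference is that the paper first subtracts off the constant term, writing $P_{+a,\times\fm}=P(a)+R$ with $R(0)=0$ and bounding $R^\phi\prec P(a)$ as a single step, whereas you keep $R=P_{+a,\times\fm}$ and bound its nonconstant coefficients one by one; the content is identical.
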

\begin{proof} Let $f$ in a strict extension $E$ of $K$
satisfy $f-a\asymp \fm \prec \fv$, so
$f=a+\fm u$ with $u\asymp 1$ in $E$. Now 
$$P_{+a,\times \fm}\ =\ P(a) + R\qquad\text{ with $R\in K\{Y\}$, $R(0)=0$,}$$ 
so for $\phi\in K^\times$ we have
$$ P^\phi_{+a,\times\fm}\ =\ P(a) +R^{\phi}.$$
From $\ndeg P_{+a,\times \fm}=0$ we get $\phi\in K^\times$ with
 $\der\smallo\subseteq \phi\smallo$ and $R^{\phi}\prec P(a)$.
Thus 
$$P(f)\ =\ P_{+a,\times\fm}(u)\ =\ P^\phi_{+a,\times\fm}(u)\ =\ P(a) + R^{\phi}(u)\quad \text{in $E^{\phi}$,}$$ with $R^{\phi}(u)\preceq R^{\phi}\prec P(a)$ in $E^{\phi}$, so $P(f)\sim P(a)$.  
\end{proof}

\noindent
Suppose $L$ is a strict extension of $K$. Then the conclusion applies to $f=\ell$, and so for $P$ and $a$, $\fv$ as in the lemma we have $P(\ell) \sim P(a)$, hence $P(\ell)\ne 0$. Thus for $P$, $a$, $\fv$ as in the lemma we have $P(f) \sim P(a) \sim P(\ell)$ for all $f\in K$ with $f-\ell \prec \fv$. 

\begin{lemma}\label{suplim3} Suppose that $P,Q\notin Z(K,\ell)$. Then $PQ\notin Z(K,\ell)$.
\end{lemma}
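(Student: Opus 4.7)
My plan is to produce a single pair $(a,\fv)$ that simultaneously witnesses $P\notin Z(K,\ell)$ and $Q\notin Z(K,\ell)$, and then to conclude via additivity of Newton degree under products. To set this up, I would first pick witnesses $(a_P,\fv_P)$ for $P$ and $(a_Q,\fv_Q)$ for $Q$, i.e.\ pairs such that $a_P-\ell\prec\fv_P$, $\ndeg_{\prec\fv_P}P_{+a_P}=0$, and similarly for $Q$.

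Next I would choose $\fv\in K^\times$ with $\fv\preceq\fv_P$ and $\fv\preceq\fv_Q$ (take $\fv$ to be whichever of $\fv_P,\fv_Q$ has the larger valuation), and then invoke the fact that $v(K-\ell)$ has no largest element to pick $a\in K$ with $a-\ell\prec\fv$. The uniformity remark stated just before Lemma~\ref{suplim1} (an application of Lemma~\ref{ndeg}) lets me replace $a_P$ by $a$ without changing the fact that $\ndeg_{\prec\fv_P}P_{+a}=0$, and likewise for $Q$. Shrinking $\fv_P,\fv_Q$ to $\fv$ only restricts the set over which $\ndeg_{\prec\fv}$ is a maximum, so both $\ndeg_{\prec\fv}P_{+a}=0$ and $\ndeg_{\prec\fv}Q_{+a}=0$.

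The final step is to combine these. Because Newton degree is always a natural number, ``$\max$ equals $0$'' forces every individual Newton degree $\ndeg P_{+a,\times f}$ (with $f\prec\fv$) to be $0$, and similarly for $Q$. Then, using $(PQ)_{+a,\times f}=P_{+a,\times f}\cdot Q_{+a,\times f}$ together with the additivity $\ndeg AB=\ndeg A+\ndeg B$ from Section~\ref{evtbeh}, one obtains $\ndeg(PQ)_{+a,\times f}=0$ for every such $f$; taking the max yields $\ndeg_{\prec\fv}(PQ)_{+a}=0$, and with $a-\ell\prec\fv$ this is exactly $PQ\notin Z(K,\ell)$.

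No step looks genuinely hard. The only thing I would be careful about is the passage from ``the max vanishes'' to ``every $\ndeg P_{+a,\times f}$ vanishes'': it depends on $\ndeg$ taking values in $\N$, which is the pivot that turns $\ndeg_{\prec\fv}P_{+a}=0$ and $\ndeg_{\prec\fv}Q_{+a}=0$ into the additive identity for $PQ$ inside the maximum.
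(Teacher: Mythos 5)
Your proof is correct and follows essentially the same strategy as the paper's: both find a single common witness pair $(a,\fv)$ (the paper uses $\fn\asymp a-\ell$ with $a$ the closer of the two original witnesses to $\ell$, you use the smaller of $\fv_P,\fv_Q$), appeal to the uniformity remark before Lemma~\ref{suplim1} to translate both witnesses to the common $a$, shrink the radius, and then conclude by additivity of $\ndeg$ under products.
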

\begin{proof} Take $a$, $b$, $\fv$, $\fw$ such that $a-\ell \prec \fv$, $b-\ell \prec \fw$ and
$$ \ndeg_{\prec \fv} P_{+a}\ =\ \ndeg_{\prec \fw} Q_{+b}\ =\ 0.$$
We can assume $a-\ell \preceq b-\ell$. Take $\fn\asymp a-\ell$ and $d\in K$
with $d-\ell \prec \fn$. Then $d-\ell \prec \fv$ and $d-\ell \prec \fw$, so
$\ndeg_{\prec \fv}P_{+a} = \ndeg_{\prec \fv}P_{+d}=0$, and so $\ndeg_{\prec\fn}P_{+d}=0$. 
Likewise, $\ndeg_{\prec \fn}Q_{+d}=0$, so
$\ndeg_{\prec \fn}(PQ)_{+d}=0$.
\end{proof}

\begin{lemma}\label{suplim4} Assume $K$ is flexible. Let $P\in Z(K,\ell)$, and 
let any $b$ be given. Then there exists an $a$ such that $a-\ell \prec b-\ell$ and $P(a)\ne 0$, $P(a)\not\sim P(b)$. 
\end{lemma}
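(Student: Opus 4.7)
The plan is to reduce the conclusion to a single application of flexibility applied to a suitable additive and multiplicative conjugate of $P$. Since $b\in K$ we have $v(b-\ell)\in v(K-\ell)\subseteq\Gamma$, and since $v(K-\ell)$ has no largest element there exists $a_0\in K$ with $v(a_0-\ell)>v(b-\ell)$. Fix $\fn\in K^\times$ with $v\fn=v(b-\ell)$; then $a_0-\ell\prec\fn$, and so the hypothesis $P\in Z(K,\ell)$ yields $\ndeg_{\prec\fn}P_{+a_0}\ge 1$. Accordingly, there exists $\fd\in K^\times$ with $\fd\prec\fn$ and $\ndeg P_{+a_0,\times\fd}\ge 1$.

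Set $Q:=P_{+a_0,\times\fd}\in K\{Y\}^{\ne}$ and $\beta:=v\fd-v\fn\in\Gamma^{>}$ (positive since $\fd\prec\fn$). Flexibility of $K$ applied to $Q$ and $\beta$ then gives that
\[
\big\{Q(y)^{\sim}:\ y\in K,\ |vy|<\beta,\ Q(y)\ne 0\big\}
\]
is infinite. For each such $y$ put $a:=a_0+\fd y\in K$, so that $P(a)=Q(y)\ne 0$. From $|vy|<\beta$ we get $v(\fd y)=v\fd+vy>v\fd-\beta=v\fn$, which together with $v(a_0-\ell)>v\fn$ gives $v(a-\ell)>v\fn=v(b-\ell)$, that is, $a-\ell\prec b-\ell$. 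As $y$ varies the $\sim$-classes of the nonzero values $P(a)=Q(y)$ are infinite in number, so we may pick one with $P(a)\not\sim P(b)$ (this holds automatically if $P(b)=0$), which proves the lemma.

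I do not anticipate any hidden obstacle: the whole argument is a direct unwinding of the definition of $Z(K,\ell)$ together with flexibility. The only point requiring any care is the calibration of $\beta$, which must be small enough that the perturbation $\fd y$ stays dominated by $\fn$ so that $a-\ell$ remains strictly sharper than $b-\ell$; the choice $\beta=v\fd-v\fn$ achieves this exactly.
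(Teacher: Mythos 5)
Your proof is correct and follows essentially the same route as the paper's: additive conjugation by a point $a_0$ sharper than $b$, multiplicative conjugation by $\fd\prec\fn\asymp b-\ell$ with $\ndeg P_{+a_0,\times\fd}\ge 1$, and then flexibility applied to that conjugate. The only cosmetic difference is that you make the calibration of $\beta$ explicit as $v\fd-v\fn$, where the paper invokes flexibility for all $\beta\in\Gamma^{>}$ and leaves the appropriate choice implicit.
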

\begin{proof} Take $\fv\asymp b-\ell$ and $a_1\in K$ with $a_1-\ell \prec \fv$, so
$\ndeg_{\prec \fv}P_{+a_1} \ge 1$, which gives $\fm\prec \fv$ with 
$\ndeg P_{+a_1,\times \fm} \ge 1$. 
By flexibility of $K$,
the set $$\big\{P(a_1+\fm y)^{\sim}:\  |vy|<\beta,\ P(a_1+\fm y)\ne 0 \big\}$$ is infinite,  for each $\beta\in \Gamma^{>}$,
so we can take $y$ such that $a_1+\fm y-\ell \prec \fv$ and 
$0\ne P(a_1+\fm y)\not\sim P(b)$. Then
$a:= a_1+\fm y$ has the desired property. 
\end{proof}

\begin{lemma}\label{Zw} Assume $K$ is flexible, $L$ is a strict extension of $K$, $P,Q\notin Z(K,\ell)$ and $P-Q\in Z(K,\ell)$.
Then $P(\ell)\sim Q(\ell)$.
\end{lemma}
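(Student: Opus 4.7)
The strategy is to argue by contradiction: assume $P(\ell)\not\sim Q(\ell)$, show that $R:=P-Q$ then lies in a single $\sim$-class at every $a\in K$ sufficiently close to $\ell$, and contradict this by applying Lemma~\ref{suplim4} to $R\in Z(K,\ell)$. Flexibility of $K$ enters only through Lemma~\ref{suplim4}.

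\textbf{Setup.} From $P,Q\notin Z(K,\ell)$ pick witnesses $(a_P,\fv_P)$ and $(a_Q,\fv_Q)$ as in the definition of $Z(K,\ell)$, and choose $\fv\in K^\times$ with $\fv\preceq\fv_P$ and $\fv\preceq\fv_Q$. The remark following Lemma~\ref{Z1} (a consequence of Lemma~\ref{ndeg}) then yields $\ndeg_{\prec\fv}P_{+b}=\ndeg_{\prec\fv}Q_{+b}=0$ for every $b\in K$ with $b-\ell\prec\fv$. Since $v(K-\ell)\subseteq\Gamma$, for any such $b$ there is $\fm\in K^\times$ with $\fm\asymp\ell-b\prec\fv$; applying Lemma~\ref{Z1} in the strict extension $L$ with $f=\ell$ gives $P(\ell)\sim P(b)$ and $Q(\ell)\sim Q(b)$, and applying it with any $f=a\in K$ satisfying $a-\ell\prec\fv$ (so that $a-b\prec\fv$ is represented in $K^\times$) gives $P(a)\sim P(b)\sim P(\ell)$ and $Q(a)\sim Q(\ell)$.

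\textbf{Core calculation and contradiction.} For such $a$, write $P(a)=P(\ell)+\delta_P$, $Q(a)=Q(\ell)+\delta_Q$ with $\delta_P\prec P(\ell)$ and $\delta_Q\prec Q(\ell)$. The contradiction hypothesis $P(\ell)\not\sim Q(\ell)$, together with symmetry of $\sim$, forces $P(\ell)-Q(\ell)\succeq P(\ell)$ and $P(\ell)-Q(\ell)\succeq Q(\ell)$, and hence $\delta_P-\delta_Q\prec P(\ell)-Q(\ell)$. Therefore
\[
R(a)\ =\ \bigl(P(\ell)-Q(\ell)\bigr)+(\delta_P-\delta_Q)\ \sim\ P(\ell)-Q(\ell)
\]
for every $a\in K$ with $a-\ell\prec\fv$; in particular $R(a)\sim R(b)$ for any two such $a,b$. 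Fixing any $b$ with $b-\ell\prec\fv$ and invoking Lemma~\ref{suplim4} on $R\in Z(K,\ell)$ then produces $a\in K$ with $a-\ell\prec b-\ell$, $R(a)\ne 0$, and $R(a)\not\sim R(b)$ — contradicting the previous sentence. The principal obstacle is arranging a single $\fv$ that simultaneously pins down both $\ndeg_{\prec\fv}P_{+b}$ and $\ndeg_{\prec\fv}Q_{+b}$ for all $b$ near $\ell$; once that is done, the argument reduces to clean appeals to Lemmas~\ref{Z1} and~\ref{suplim4}.
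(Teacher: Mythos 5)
Your proof is correct and follows essentially the same route as the paper: reduce to a common $\fv$ for which both $\ndeg_{\prec\fv}P_{+b}$ and $\ndeg_{\prec\fv}Q_{+b}$ vanish for $b$ near $\ell$, use Lemma~\ref{Z1} to pin down all of $P(a)\sim P(\ell)$, $Q(a)\sim Q(\ell)$ in that range, deduce $(P-Q)(a)\sim(P-Q)(b)$ from $P(\ell)\not\sim Q(\ell)$ by the same valuation estimate, and then contradict $P-Q\in Z(K,\ell)$ via Lemma~\ref{suplim4}. The only difference is cosmetic: where the paper invokes Lemma~\ref{suplim3} to produce the common witness pair, you build it directly from the two separate witnesses by shrinking $\fv$ and using monotonicity of $\ndeg_{\prec\fv}$ in $\fv$ — equally valid, and no harder.
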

\begin{proof} By Lemma~\ref{suplim3} we have $b$ and $\fv$ such that
$$\ell -b \prec \fv, \quad \ndeg_{\prec \fv}P_{+b}=\ndeg_{\prec \fv}Q_{+b}=0.$$
Replacing $\ell$ by $\ell-b$ and $P, Q$ by $P_{+b}, Q_{+b}$ we arrange $b=0$, that is,
$$\ell \prec \fv, \quad \ndeg_{\prec \fv}P=\ndeg_{\prec \fv}Q=0, $$
so $P(0)\ne 0$ and $Q(0)\ne 0$. If $a\prec \fv$, then
by the remark preceding Lemma~\ref{suplim3},
$$P(a)\sim P(0)\sim P(\ell), \qquad Q(a) \sim Q(0) \sim Q(\ell).$$
If $P(\ell)\not\sim Q(\ell)$, then $P(0)\not\sim Q(0)$, so $(P-Q)(a)\sim (P-Q)(0)$
for all $a\prec \fv$, contradicting $P-Q\in Z(K, \ell)$ by Lemma~\ref{suplim4}.
Thus $P(\ell)\sim Q(\ell)$. 
\end{proof}

\subsection*{Relation to the Newton degree in a cut}
Let $(a_\rho)$ be a divergent pc-sequence in $K$ with pseudolimit $\ell$.
The following generalizes \cite[Lemma~11.4.11]{ADH}, with the same proof
except for using
Lemma~\ref{Z1} instead of \cite[Lemma~11.4.3]{ADH}. 

\begin{cor}\label{ppkell} If $P(a_{\rho}) \leadsto 0$, then $P\in Z(K,\ell)$.
\end{cor}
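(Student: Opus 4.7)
The plan is to argue by contradiction: assume $P\notin Z(K,\ell)$ and deduce that $P(a_\rho)$ cannot pseudoconverge to $0$. By the negation of ``$P\in Z(K,\ell)$'' we obtain $a\in K$ and $\mathfrak{v}\in K^\times$ with $a-\ell\prec\mathfrak{v}$ and $\ndeg_{\prec\mathfrak{v}}P_{+a}=0$. This is the only ``global'' information we have about $P$; the whole strategy is to propagate it along the pc-sequence.

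First I would transfer this vanishing statement from $a$ to the sequence. Since $(a_\rho)$ is a pc-sequence with pseudolimit $\ell$, for all sufficiently large $\rho$ we have $a_\rho-\ell\prec\mathfrak{v}$, hence $a_\rho-a=(a_\rho-\ell)+(\ell-a)\prec\mathfrak{v}$. Applying Lemma~\ref{ndeg} to $P_{+a}$ with shift $a_\rho-a$ and bound $\mathfrak{v}$ yields $\ndeg_{\prec\mathfrak{v}}P_{+a_\rho}=\ndeg_{\prec\mathfrak{v}}P_{+a}=0$ for large $\rho$.

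Now fix such a large $\rho$ and let $\rho'>\rho$, also large. Because $(a_\sigma)$ is a pc-sequence, $v(a_{\rho'}-a_\rho)=v(\ell-a_\rho)$, so $a_{\rho'}-a_\rho\asymp\ell-a_\rho\prec\mathfrak{v}$. Setting $\mathfrak{m}:=a_{\rho'}-a_\rho\in K^\times$, we are in a position to apply Lemma~\ref{Z1} to $P$ with $a_\rho$ in place of $a$, the bound $\mathfrak{v}$, and $f:=a_{\rho'}$ in the strict extension $K$ of $K$. The conclusion is $P(a_{\rho'})\sim P(a_\rho)$, hence $v\bigl(P(a_{\rho'})\bigr)=v\bigl(P(a_\rho)\bigr)$ for all sufficiently large $\rho<\rho'$.

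But $P(a_\rho)\leadsto 0$ forces $v\bigl(P(a_\rho)\bigr)$ to be eventually strictly increasing (and in particular $P(a_\rho)\ne 0$ eventually, so that $\sim$ is meaningful on both sides), contradicting what we just obtained. This contradiction shows $P\in Z(K,\ell)$. The only mildly delicate point, and likely the main thing to get right in writing the proof out, is verifying the hypothesis $a_{\rho'}-a_\rho\asymp\mathfrak{m}\prec\mathfrak{v}$ for some $\mathfrak{m}\in K^\times$ required by Lemma~\ref{Z1}; this is immediate from the definition of pc-sequence and of pseudolimit once one has arranged, by passing to a cofinal subsequence of indices if necessary, that consecutive terms of $(a_\rho)$ are distinct.
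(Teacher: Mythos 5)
Your proof is correct and uses the same key ingredient as the paper, namely Lemma~\ref{Z1} to show that the valuation $v\bigl(P(a_\rho)\bigr)$ is eventually constant, contradicting $P(a_\rho)\leadsto 0$. The only cosmetic difference is that the paper applies Lemma~\ref{Z1} directly with the fixed base point $a$ and $f:=a_\rho$ (using that $v(a-a_\rho)=v(a-\ell)$ eventually, so $a_\rho-a\asymp\mathfrak m\prec\mathfrak v$ for a suitable $\mathfrak m\in K^\times$), obtaining $P(a_\rho)\sim P(a)$ eventually, whereas you first transfer the condition $\ndeg_{\prec\mathfrak v}P_{+a}=0$ to $a_\rho$ via Lemma~\ref{ndeg} and then compare consecutive terms; the paper's route skips that intermediate step but is otherwise the same argument.
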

\begin{proof} Suppose $P\notin Z(K,\ell)$. Take $a$ and $\fv$ such that $a-\ell\prec \fv$ and $\ndeg_{\prec \fv}P_{+a}=0$. Now  $v(a-a_{\rho})=v(a-\ell)$, eventually, so by Lemma~\ref{Z1} we have $P(a_{\rho}) \sim P(a)$ eventually, so $v\big(P(a_{\rho})\big)= v\big(P(a)\big)\ne \infty$ eventually. 
\end{proof}

\noindent
In particular, if $P(a_{\rho}) \leadsto 0$,
then $P(Y)+\varepsilon\in Z(K,\ell)$ for all  $\varepsilon\in K$ such that $\varepsilon\prec P(a_\rho)$ eventually.
We now connect the notion of $P$ vanishing at $(K,\ell)$ with the Newton degree~$\ndeg_{\bf a} P$ of $P$ in the cut ${\bf a}=c_K(a_\rho)$ (introduced in the last subsection of Section~\ref{evtbeh}) generalizing \cite[Lemma~11.4.12]{ADH}. The proof is the same, except for using Lemma~\ref{ndeg} and Corollary~\ref{cor:ndegdecr} above 
instead of
\cite[Lemma~11.2.7]{ADH}:

\begin{cor}\label{dpkell} $\ndeg_{\bf a} P \ge 1\ \Longleftrightarrow\ 
P\in Z(K,\ell)$. More precisely,
$$\ndeg_{\bf a} P\  =\ 
\min\!\big\{\!\ndeg_{\prec \fv}P_{+a}:a-\ell\prec\fv\big\}.$$ 
\end{cor}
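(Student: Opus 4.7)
The plan is to prove the sharper formula
$$\ndeg_{\bf a} P\ =\ \min\!\big\{\ndeg_{\prec \fv} P_{+a}:a-\ell\prec\fv\big\},$$
from which the biconditional follows immediately, since $P \in Z(K,\ell)$ means by definition that $\ndeg_{\prec \fv} P_{+a} \geq 1$ for every $(a, \fv)$ with $a - \ell \prec \fv$, i.e., that the minimum is at least $1$. I would follow the proof of \cite[Lemma~11.4.12]{ADH}, using Lemma~\ref{ndeg} and Corollary~\ref{cor:ndegdecr} in place of \cite[Lemma~11.2.7]{ADH}.

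For the upper bound $\ndeg_{\bf a} P \le \min$, fix any $(a,\fv)$ with $a-\ell\prec\fv$. Since $(a_\rho)$ pseudoconverges to~$\ell$ and $v(K-\ell)$ has no largest element, $\gamma_\rho=v(a_\rho-\ell)$ eventually exceeds $v(a-\ell)$ and hence also $v\fv$; then $v(a_\rho-a)=v(a-\ell)>v\fv$, so $a_\rho-a\prec\fv$. Lemma~\ref{ndeg} then yields $\ndeg_{\prec\fv}P_{+a}=\ndeg_{\prec\fv}P_{+a_\rho}$. For $g\in K^\times$ with $vg=\gamma_\rho$, $g\prec\fv$, so
$$\ndeg_{\geq\gamma_\rho}P_{+a_\rho}\ =\ \ndeg P_{+a_\rho,\times g}\ \le\ \ndeg_{\prec\fv}P_{+a_\rho}.$$
Taking $\rho$ large makes the left-hand side equal to $d:=\ndeg_{\bf a}P$.

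For the reverse inequality, I would exhibit $(a,\fv)$ with $\ndeg_{\prec\fv}P_{+a}\le d$. Picking $\rho_0$ so that $\ndeg_{\geq\gamma_\sigma}P_{+a_\sigma}=d$ for $\sigma\ge\rho_0$, set $\rho_1:=\rho_0$, $\rho_2:=s(\rho_1)$, $a:=a_{\rho_2}$, and $\fv:=a_{\rho_2}-a_{\rho_1}$; then $v\fv=\gamma_{\rho_1}<\gamma_{\rho_2}=v(a-\ell)$, so $a-\ell\prec\fv$. Lemma~\ref{aftranew} (using $v(a_{\rho_2}-a_{\rho_1})=\gamma_{\rho_1}$) yields $\ndeg P_{+a_{\rho_1},\times g}=\ndeg P_{+a_{\rho_2},\times g}$ for $vg\le\gamma_{\rho_1}$, and in particular $\ndeg_{\geq\gamma_{\rho_1}}P_{+a_{\rho_2}}=d$. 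The function $\beta\mapsto\ndeg_{\geq\beta}P_{+a_{\rho_2}}$ is weakly decreasing on $[\gamma_{\rho_1},\gamma_{\rho_2}]$ by Corollary~\ref{cor:ndegdecr} and takes the value $d$ at both endpoints, so it is identically $d$ on this interval. For any $\fm\prec\fv$: if $\gamma_{\rho_1}<v\fm\le\gamma_{\rho_2}$ then $\ndeg P_{+a_{\rho_2},\times\fm}=d$; and if $v\fm>\gamma_{\rho_2}$ then $\ndeg P_{+a_{\rho_2},\times\fm}\le d$ by Corollary~\ref{cor:ndegdecr} again. Hence $\ndeg_{\prec\fv}P_{+a}\le d$, completing the proof.

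The delicate step will be this sandwich argument that pins down $\beta\mapsto\ndeg_{\geq\beta}P_{+a_{\rho_2}}$ to the constant value $d$ on $[\gamma_{\rho_1},\gamma_{\rho_2}]$; its viability rests on the interplay of Lemma~\ref{aftranew} (for the stability of Newton degree under additive conjugation along the pc-sequence, via $v(a_{\rho_2}-a_{\rho_1})=\gamma_{\rho_1}$) with Corollary~\ref{cor:ndegdecr} (for the required monotonicity).
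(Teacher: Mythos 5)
Your proof is correct and follows essentially the same route as the paper's: both directions compare $\ndeg_{\prec\fv}P_{+a}$ with the eventual value $\ndeg_{\ge\gamma_\rho}P_{+a_\rho}$ via Lemma~\ref{ndeg}, Lemma~\ref{aftranew}/Corollary~\ref{cor:ndegdecr}, and monotonicity of $\beta\mapsto\ndeg_{\ge\beta}$. One small simplification: the sandwich argument pinning the function to $d$ on all of $[\gamma_{\rho_1},\gamma_{\rho_2}]$ is not needed — once you have $\ndeg_{\ge\gamma_{\rho_1}}P_{+a_{\rho_2}}=d$, monotonicity alone gives $\ndeg P_{+a_{\rho_2},\times\fm}\le d$ for every $\fm\prec\fv$, which is exactly how the paper concludes (using Corollary~\ref{cor:ndegdecr} in place of your Lemma~\ref{aftranew} step to get $\le d$ rather than $=d$).
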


\begin{proof} We may assume $v(\ell- a_{\rho})$ is strictly increasing with $\rho$.  Given any index~$\rho$, take $\fv\asymp \ell -a_{\rho}$, take $\rho'>\rho$, and set $a:= a_{\rho'}$. Then $a-\ell \prec \fv$. Now $\gamma_{\rho} := v(\ell - a_{\rho})=v(\fv)=v(a-a_{\rho})$, and thus (using Corollary~\ref{cor:ndegdecr} for the last inequality):
$$\ndeg_{\prec \fv}P_{+a}\ \le\ \ndeg_{\preceq \fv}P_{+a}\  =\  \ndeg_{\geq \gamma_{\rho}}P_{+a}\ \le\ \ndeg_{\geq \gamma_{\rho}}P_{+a_{\rho}}.$$
It follows that $\min\!\big\{\!\ndeg_{\prec \fv}P_{+a}:a-\ell\prec\fv\big\}\leq \ndeg_{\bf a} P$. For the reverse inequality,  let $a$ and $\fv$ be such that $a-\ell \prec \fv$. Let $\rho$ be such that $\ell - a_{\rho}\preceq \ell-a$.  Then $a_{\rho}-a \prec \fv$ and  $\gamma_{\rho} = v(\ell -a_{\rho})> v(\fv)$, so by  Lemma~\ref{ndeg}:
$$\ndeg_{\geq\gamma_{\rho}}P_{+a_{\rho}}\ \le\  \ndeg_{\prec \fv}P_{+a_{\rho}}\ =\ \ndeg_{\prec \fv} P_{+a}.$$
Therefore $\ndeg_{\bf a} P\leq \min\!\big\{\!\ndeg_{\prec \fv}P_{+a}:a-\ell\prec\fv\big\}$.
\end{proof}

\section{Constructing Immediate Extensions} \label{sec:immext}

\noindent
Our goal in this section is to establish the following:

\begin{theorem}\label{thimm} Suppose $\der\ne 0$, $\Gamma\ne \{0\}$, $\Gamma^{>}$ has no least element, and $S(\der)=\{0\}$.  Then $K$ has the Krull property. 
\end{theorem}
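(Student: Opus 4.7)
The plan is to produce a maximal immediate strict extension $M\supseteq K$ by a Zorn argument and then show that any such $M$ must be spherically complete. Since both the Krull property and all the hypotheses are preserved by compositional conjugation, I first replace $K$ by $K^\phi$ (some suitable $\phi\in K^\times$) to arrange $\der$ small. By the corollary following Lemma~\ref{flex0}, $K$ is then flexible. Using standard cardinality bounds for immediate extensions together with items (1) and (2) recorded after Lemma~\ref{smaller} (immediate strictness is preserved under sub-extensions and composed extensions), the class of immediate strict extensions of $K$ inside a sufficiently large ambient field is a set closed under unions of chains; Zorn yields a maximal element $M$. Note that $\Gamma_M=\Gamma$, $\res(M)=\res(K)$, $S_M(\der)=\{0\}$ by Lemma~\ref{glgk}, and $M$ is again flexible since flexibility is an elementary property.

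Suppose for contradiction $M$ is not spherically complete, and fix a divergent pc-sequence $(a_\rho)$ in $M$ with no pseudolimit in $M$. I split into two cases following the pattern of \cite[Chapter~11]{ADH}. In the \emph{differentially transcendental} case (no $P\in M\{Y\}^{\ne}$ with $P(a_\rho)\leadsto 0$), I adjoin a differentially transcendental element $\ell$ and extend the valuation on $M\<\ell\>$ by setting $v(Q(\ell))$ to be the eventual value of $v(Q(a_\rho))$ for $Q\in M\{Y\}^{\ne}$; this is well defined, multiplicative, and gives an immediate extension containing $\ell$ as pseudolimit of $(a_\rho)$. In the \emph{differentially algebraic} case, I pick $P\in M\{Y\}^{\ne}$ with $P(a_\rho)\leadsto 0$ of minimal complexity, adjoin a differential-algebraic root $\ell$ of $P$, and define the valuation on $M\<\ell\>$ by matching $v(Q(\ell))$ with the eventual value of $v(Q(a_\rho))$ for $Q$ of complexity strictly less than that of $P$; here minimality of $P$ together with Lemma~\ref{Zw} (the point where flexibility is used decisively) ensures well-definedness, and Corollaries~\ref{ppkell} and \ref{dpkell} together with Lemma~\ref{suplim1} handle immediacy and that $\ell$ is indeed a pseudolimit. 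In both cases the constructed extension is a proper immediate extension of $M$, so contradicting the maximality of $M$ reduces to verifying that the extension is \emph{strict}.

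The main obstacle is exactly this verification of strictness, which is the genuinely new ingredient compared to Kaplansky's theorem and the special cases \cite[Cor.~6.9.5, Cor.~11.4.10]{ADH}. By Lemma~\ref{strictimm}, strictness of an immediate extension reduces to the single implication $\der\smallo\subseteq\phi\smallo \Rightarrow \der_L\smallo_L\subseteq\phi\smallo_L$ for all $\phi\in M^\times$. The hypothesis $S(\der)=\{0\}$ enters here via Lemma~\ref{epsgamdel}: any $v\phi\in\Gamma(\der)$ can be ``thickened'' to some $v\phi'$ arbitrarily close above it with $v\phi'\in\Gamma(\der)$ as well, giving a small amount of room. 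To control $\der_L$ on $\smallo_L$, one writes a typical element of $\smallo_L$ in the transcendental case as $Q(\ell)$ with $v(Q(\ell))>0$, differentiates to obtain $Q(\ell)' = Q^\partial(\ell) + \sum_i Q_i(\ell)\,\ell^{(i+1)}$ (with $Q_i=\partial Q/\partial Y^{(i)}$), and uses the approximations $\ell-a_\rho$ eventually arbitrarily small to compare with $v(Q(a_\rho)')$; flexibility of $M$ controls the latter up to $\sim$-equivalence. The algebraic case is similar, substituting the relation $P(\ell)=0$ to eliminate the top derivative of $\ell$ and bound lower ones. These estimates, together with the ``$\phi'$ thickening'' afforded by $S(\der)=\{0\}$, should deliver the inclusion $\der_L\smallo_L\subseteq\phi\smallo_L$ and complete the contradiction.
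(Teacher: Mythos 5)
Your high-level architecture is reasonable — reduce to producing a proper immediate strict extension of a non-maximal $K$ (the paper does exactly this, bypassing Zorn on $K$ itself since $S(\der)=\{0\}$ is preserved by immediate strict extensions via Lemma~\ref{glgk}) — and you correctly identify that verifying strictness of the constructed extension is the genuinely new content. But that verification is precisely what you don't prove, and the sketch you give doesn't line up with what actually makes it work.

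Three concrete issues. First, your dichotomy is ``some $P(a_\rho)\leadsto 0$ vs.\ not,'' whereas the paper's is ``$Z(K,\ell)=\emptyset$ vs.\ $Z(K,\ell)\ne\emptyset$'' for a pseudolimit $\ell$ fixed in advance inside some strict extension. These are not the same: by Corollary~\ref{ppkell} the set of $P$ with $P(a_\rho)\leadsto 0$ is contained in $Z(K,\ell)$ but need not equal it. The paper's dichotomy is tailored to the strictness argument: in the case $Z(K,\ell)=\emptyset$, strictness is free because $K\<\ell\>$ sits inside the strict extension $L$ (observation (1) after Lemma~\ref{smaller}), and in the other case one takes $P\in Z(K,\ell)$ of minimal complexity — not of minimal complexity among those with $P(a_\rho)\leadsto 0$. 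If you go down your ``transcendental'' branch while $Z(K,\ell)\ne\emptyset$, the mechanism that establishes immediacy and strictness (Lemmas~\ref{Z1}, \ref{suplim3}, \ref{Zw}) is unavailable.

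Second, your claim that ``flexibility of $M$ controls the latter up to $\sim$-equivalence'' is not a proof. The paper's mechanism is specific: inside Lemma~\ref{zda}, the hard case is when $P$ has degree $1$ in $Y^{(r)}$ and differentiating an element $H(f)$ produces a term $H_2(f)f^{(r)}$ which must be eliminated using $P(f)=0$; one then ends up needing a bound on $G_1(a)P(a)+G_2(a)$, and the only way to get it is to perturb $a$ — using Lemma~\ref{suplim4}, which is where flexibility enters — so that no cancellation occurs, i.e.\ so that $v(G_1(a)P(a)+G_2(a))=\min\{v(G_1(a)P(a)),v(G_2(a))\}$. That is a precise non-cancellation argument, not a generic ``flexibility controls things'' statement.

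Third, your ``$\phi$-thickening via Lemma~\ref{epsgamdel}'' does not play the role you assign to it. In the paper $S(\der)=\{0\}$ enters through Lemma~\ref{flex0} and its corollary to establish flexibility of $K$ (and, via Lemma~\ref{glgk}, of any immediate strict extension); the strictness estimate itself is then carried out with flexibility alone, not with a direct use of $S(\der)=\{0\}$ to nudge $v\phi$. Your plan conflates the source of flexibility with the way it's used. In short, the proposal correctly locates the obstacle but does not overcome it; the words ``should deliver the inclusion'' mark the place where the actual proof lives.
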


\noindent
Much of this section is very similar to the subsection ``Constructing immediate extensions'' of  \cite[Section 11.4]{ADH}, but there are some differences that make it convenient to give 
all details.
In the next section we show how to derive our main theorem  from Theorem~\ref{thimm} by constructions involving coarsening by $S(\der)$.

\medskip\noindent
{\em In the rest of this section we assume about $K$ that $\der\ne 0$, $\Gamma\ne \{0\}$, and $\Gamma^{>}$ has no least element}. 
We also keep the notational 
conventions of the previous section, and assume that $\ell$ is an element of a {\em strict\/} extension $L$ of $K$.

\begin{lemma}\label{zdf} Suppose $Z(K,\ell)=\emptyset$. Then $P(\ell)\ne 0$ for all $P$, 
and $K\<\ell\>$ is an immediate strict extension of $K$.
Suppose also that $M$ is a strict extension of $K$ and $g\in M$ satisfies
$v(a-g)=v(a-\ell)$ for all $a$. Then there is a unique valued differential field
embedding $K\<\ell \> \to M$ over $K$ that sends $\ell$ to $g$. 
\end{lemma}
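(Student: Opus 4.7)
The plan is a systematic application of Lemma~\ref{Z1} and the remark following it. Fix any $P\in K\{Y\}^{\ne}$. Since $Z(K,\ell)=\emptyset$, we have $P\notin Z(K,\ell)$, so there exist $a\in K$ and $\fv\in K^\times$ with $a-\ell\prec\fv$ and $\ndeg_{\prec\fv}P_{+a}=0$. Because $L$ is a strict extension of $K$, applying Lemma~\ref{Z1} with $f=\ell$ yields $P(\ell)\sim P(a)$; in particular $P(\ell)\ne 0$ and $v(P(\ell))=v(P(a))\in\Gamma$. This gives the first assertion.

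For immediacy of $K\<\ell\>/K$, write an arbitrary nonzero element of $K\<\ell\>$ as $P(\ell)/Q(\ell)$ with $P,Q\in K\{Y\}^{\ne}$. Applying the first paragraph to both $P$ and $Q$ produces $a,b\in K$ with $v(P(\ell))=v(P(a))$ and $v(Q(\ell))=v(Q(b))$, whence $v(P(\ell)/Q(\ell))=v(P(a))-v(Q(b))\in\Gamma$, so $\Gamma_{K\<\ell\>}=\Gamma$. If this valuation is zero, then $v(P(a))=v(Q(b))$ and
$$\frac{P(\ell)}{Q(\ell)}\ =\ \frac{P(\ell)}{P(a)}\cdot\frac{P(a)}{Q(b)}\cdot\frac{Q(b)}{Q(\ell)},$$
where the two outer factors are $\sim 1$ (so have residue $1$) and the middle factor lies in $\mathcal O^\times$; therefore the residue of $P(\ell)/Q(\ell)$ belongs to $\res(K)$, showing $\res(K\<\ell\>)=\res(K)$. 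Strictness of $K\<\ell\>/K$ now follows from observation~(1) in Section~\ref{sec:prelim} applied to the tower $K\subseteq K\<\ell\>\subseteq L$, using that $L/K$ is strict.

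For the final statement, the key observation is that $Z(K,\cdot)$ and the numbers $\ndeg_{\prec\fv}P_{+a}$ depend on $\ell$ only through the cut $a\mapsto v(a-\ell)$: the clause ``$a-\ell\prec\fv$'' unfolds as ``$v(a-\ell)>v(\fv)$'', which by hypothesis coincides with ``$v(a-g)>v(\fv)$''. Hence $Z(K,g)=Z(K,\ell)=\emptyset$, and the first part applied with $g$ in place of $\ell$ (inside the strict extension $M$) gives $P(g)\ne 0$ for every $P\in K\{Y\}^{\ne}$. The assignments $P\mapsto P(\ell)$ and $P\mapsto P(g)$ are therefore injective differential ring morphisms $K\{Y\}\to L$ and $K\{Y\}\to M$, and $P(\ell)/Q(\ell)\mapsto P(g)/Q(g)$ defines the unique differential field embedding $\iota\colon K\<\ell\>\to M$ over $K$ sending $\ell$ to $g$. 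To verify that $\iota$ preserves the valuation, fix $P\in K\{Y\}^{\ne}$ and pick $a,\fv$ as in the first paragraph so that $P(\ell)\sim P(a)$; the same $a,\fv$ satisfy $a-g\prec\fv$, so Lemma~\ref{Z1} applied in the strict extension $M/K$ yields $P(g)\sim P(a)$, hence $v(P(g))=v(P(a))=v(P(\ell))$. The main obstacle is really just this \emph{cut-invariance} of $Z(K,\cdot)$, without which the transfer of the first two conclusions from $\ell$ to $g$ would be unavailable.
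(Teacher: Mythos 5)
Your proof is correct and follows essentially the same route as the paper: reduce every assertion to Lemma~\ref{Z1} applied to $\ell$ (inside $L$) and to $g$ (inside $M$), using that $v(K-\ell)\subseteq\Gamma$ so that the required $\fm\in K^\times$ with $f-a\asymp\fm\prec\fv$ always exists. The only small deviation is in the immediacy step: the paper invokes Lemma~\ref{suplim3} to find a single pair $(a,\fv)$ working simultaneously for $P$ and $Q$, which gives $P(\ell)/Q(\ell)\sim P(a)/Q(a)\in K^\times$ in one stroke, whereas you take separate witnesses $a$ for $P$ and $b$ for $Q$ and then patch the residue computation together with a three-factor decomposition; both are fine. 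You are also slightly more explicit than the paper on two points the paper leaves tacit (that strictness of $K\<\ell\>\supseteq K$ follows from observation~(1) since $K\<\ell\>\subseteq L$, and that $Z(K,\cdot)$ depends only on the cut $a\mapsto v(a-\ell)$), which does no harm.
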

\begin{proof} Clearly $P(\ell)\ne 0$ for all $P$. Let any nonzero element $f=P(\ell)/Q(\ell)$ of the extension $K\<\ell\>$ of $K$ be given.
Lemma~\ref{suplim3} gives $a$ and $\fv$ such that
$$a-\ell\ \prec\ \fv,\qquad \dd_{\prec \fv} P_{+a}\ =\ \dd_{\prec \fv} Q_{+a}\ =\ 0,$$
and so $P(\ell) \sim P(a)$ and $Q(\ell) \sim Q(a)$ by Lemma~\ref{Z1}, and thus $f\sim P(a)/Q(a)$.
It follows that $K\<\ell\>$ is an immediate extension of $K$.

It is clear that $Z(K,g)=Z(K,\ell)=\emptyset$, so $g$ is differentially transcendental over~$K$ and
$K\<g\>$ is an immediate extension of $K$, by the first part of the proof.
Given any~$P$ we take $a$ and $\fv$ such that $a-\ell \prec \fv$ and
$\dd_{\prec \fv} P_{+a} =0$. Then $P(a)\sim P(g)$ and $P(a) \sim P(\ell)$, and
thus $vP(g)=vP(\ell)$.  Hence the unique differential field embedding $K\<\ell\> \to M$ over $K$ that sends $\ell$ to $g$ is also a valued field embedding. 
\end{proof}

\begin{lemma}\label{zda} Suppose $K$ is flexible, $Z(K,\ell) \ne \emptyset$, and $P$ is an element of
$Z(K,\ell)$ of minimal complexity. Then $K$ has an immediate strict extension $K\<f\>$ such that $P(f)=0$ and
$v(a-f)=v(a-\ell)$ for all $a$, and such that if $M$ is any
strict extension of $K$ and $s\in M$ satisfies $P(s)=0$ and
$v(a-s)=v(a-\ell)$ for all $a$, then there is a unique valued differential field
embedding $K\<f \> \to M$ over $K$ that sends $f$ to $s$.
\end{lemma}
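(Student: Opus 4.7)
My plan follows the template of Lemma \ref{zdf}, but now with $f$ differentially algebraic over $K$. A first observation is that by (the contrapositive of) Lemma \ref{suplim3}, the minimal complexity of $P$ in $Z(K,\ell)$ forces $P$ to be irreducible in $K\{Y\}$, and moreover every $Q\in K\{Y\}^{\ne}$ of complexity strictly less than that of $P$ lies outside $Z(K,\ell)$. In particular, the initial $I_P$ and separant $S_P$ of $P$ do not vanish at $(K,\ell)$.

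I would then construct the abstract differential field $K\<f\>$ with $P(f)=0$ and $S_P(f)\ne 0$ in the standard way: take $K\<f\>:=\operatorname{Frac}(K\{Y\}/\mathfrak p)$ with $f$ the image of $Y$, where $\mathfrak p$ is a minimal differential prime ideal of $K\{Y\}$ that contains $P$ but misses $S_P$ (irreducibility of $P$ ensures $\mathfrak p$ exists). Classical Ritt pseudo-reduction then writes every $Q\in K\{Y\}$ in the form $I_P^{m}S_P^{n}Q\equiv R\pmod{[P]}$ with $m,n\in\N$ and $R\in K\{Y\}$ reduced with respect to $P$; thus either $R=0$ or $R$ has complexity strictly less than $P$, whence $R\notin Z(K,\ell)$.

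I extend the valuation to $K\<f\>$ as follows. For any $Q\in K\{Y\}^{\ne}$ with $Q\notin Z(K,\ell)$, choose $a\in K$ and $\mathfrak v\in K^{\times}$ with $a-\ell\prec\mathfrak v$ and $\ndeg_{\prec\mathfrak v}Q_{+a}=0$, and put $\tilde v(Q):=v(Q(a))$; Lemma \ref{Z1} together with the remark after it shows this is independent of the choice, and equals the value of $Q(\ell)$ inside $L$. For a nonzero $Q(f)\in K\<f\>$, use the Ritt reduction to obtain $I_P(f)^{m}S_P(f)^{n}Q(f)=R(f)$ and set $v(Q(f)):=\tilde v(R)-m\,\tilde v(I_P)-n\,\tilde v(S_P)$. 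Once well-definedness, multiplicativity, and the ultrametric inequality are established, the identity $v(f-a)=v(\ell-a)$ for $a\in K$ is immediate from Lemma \ref{suplim1}, which says $Y-a\notin Z(K,\ell)$. Immediacy of $K\<f\>/K$ then follows because every nonzero element of $K\<f\>$ has the same value and residue as some element of $K$; and strictness follows from immediacy together with Lemma \ref{strictimm} by lifting the corresponding property from $L\supseteq K$.

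The main technical obstacle is verifying the ultrametric inequality: when none of $Q_1$, $Q_2$, $Q_1+Q_2$ lies in $Z(K,\ell)$, this is automatic since the defined values coincide with values in $L$; but when $Q_1+Q_2\in Z(K,\ell)$ while $Q_1,Q_2\notin Z(K,\ell)$, one must appeal to Lemma \ref{Zw} (which is precisely where the flexibility hypothesis on $K$ is essential), which gives $Q_1(\ell)\sim-Q_2(\ell)$ and hence the required strict inequality for the sum computed via Ritt reduction. Once this is dealt with, the universal property is verified as in Lemma \ref{zdf}: for any strict $M\supseteq K$ and $s\in M$ with $P(s)=0$ and $v(a-s)=v(a-\ell)$ for all $a\in K$, a differential-field embedding $K\<f\>\to M$ sending $f\mapsto s$ exists and is unique (since $f$ is differentially algebraic over $K$ annihilated by the irreducible $P$ and $S_P(s)\ne 0$); for $Q\notin Z(K,\ell)$, Lemma \ref{Z1} applied inside the strict $M$ yields $v_M(Q(s))=v(Q(a))=v(Q(f))$, and the general case reduces to this via the Ritt reduction, so the embedding preserves valuations.
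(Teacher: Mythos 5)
Your construction of the differential field $K\langle f\rangle$ as the fraction field of $K\{Y\}/\mathfrak p$ with $\mathfrak p$ the generic component of $\{P\}$ is equivalent to the paper's, which works instead with the ordinary polynomial ring $K[Y_0,\dots,Y_r]/(p)$; and your idea of defining the value of $Q(f)$ by first reducing $Q$ modulo the ideal to something outside $Z(K,\ell)$ and then reading off the value via Lemma~\ref{Z1} is also essentially what the paper does (the paper gains some simplicity by reducing only in the $Y_r$-variable, so numerators and denominators always have complexity below $P$, and the ultrametric inequality is then inherited directly from the valued field $K\langle\ell\rangle$ without a separate appeal to Lemma~\ref{Zw}; your version, going through Ritt pseudo-reduction with initials and separants, needs a bit more care since reduction does not commute with addition, and well-definedness across different reductions requires the Lemma~\ref{Zw} argument which you do not spell out).

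The genuine gap is your treatment of strictness. You write that ``strictness follows from immediacy together with Lemma~\ref{strictimm} by lifting the corresponding property from $L\supseteq K$.'' This does not work: $K\langle f\rangle$ is built abstractly and there is, in general, no embedding of $K\langle f\rangle$ into $L$ (nothing forces $L$ to contain an element $s$ with $P(s)=0$ and $v(a-s)=v(a-\ell)$ for all $a\in K$). Lemma~\ref{strictimm} only reduces strictness to showing that $\der\smallo_{K\langle f\rangle}\subseteq\phi\smallo_{K\langle f\rangle}$ whenever $\der\smallo\subseteq\phi\smallo$; this hypothesis still has to be verified directly for $K\langle f\rangle$ and is in fact the hardest part of the proof. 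The paper does this by bounding, for $H\in K[Y,\dots,Y^{(r-1)}]$ with $H(f)\preceq 1$, the value of $H(f)'=H_1(f)+H_2(f)f^{(r)}$, splitting into the case where $\deg_{Y^{(r)}}P>1$ or $H_2=0$ (so everything stays below the complexity of $P$) and the case $\deg_{Y^{(r)}}P=1$, $H_2\neq 0$, where $H'$ is rewritten as $(G_1P+G_2)/G$ and Lemma~\ref{suplim4} --- flexibility --- is used ``crucially'' to pick $a$ so that $v\bigl(G_1(a)P(a)+G_2(a)\bigr)=\min\bigl(v(G_1(a)P(a)),v(G_2(a))\bigr)$. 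Your proposal contains none of this, and without it the claim that $K\langle f\rangle$ is a strict (indeed, even a continuous) extension is unjustified.
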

\begin{proof} Let $P$ have order $r$ and take $p\in K[Y_0,\dots,Y_r]$ such that
$$P\ =\ p(Y, Y',\dots, Y^{(r)}).$$ 
Then $p$ is irreducible by $P$ having minimal complexity in $Z(K,\ell)$ and Lemma~\ref{suplim3}. Thus we have an integral domain 
$$K[y_0,\dots, y_r]\ =\ K[Y_0,\dots, Y_r]/(p), \qquad y_i=Y_i + (p) \text{ for $i=0,\dots,r$,}$$
with fraction field $K(y_0,\dots,y_r)=K(y_0,\dots,y_{r-1})[y_r]$ where $y_0,\dots,y_{r-1}$ are algebraically independent over $K$.
Let  $s\in K(y_0,\dots, y_r)^\times$, so $$s\ =\ g(y_0,\dots, y_r)/h(y_0,\dots, y_{r-1})$$ 
where $g\in K[Y_0,\dots,Y_r]^{\ne}$,
$h\in K[Y_0,\dots,Y_{r-1}]^{\neq}$, and $g(Y, Y',\dots, Y^{(r)})\notin Z(K,\ell)$. (This nonmembership in $Z(K,\ell)$ can be arranged by taking $g$ of lower degree in $Y_r$ than $p$.) 
The comment following the proof of
Lemma~\ref{Z1} gives an $a$ such that
$$g(\ell, \ell',\dots, \ell^{(r)})\ \sim\ g(a, a',\dots, a^{(r)}), \qquad h(\ell,\dots, \ell^{(r-1)})\ \sim\ h(a,\dots, a^{(r-1)}),$$
so $vg(\ell, \ell',\dots, \ell^{(r)}),\ vh(\ell,\dots, \ell^{(r-1)})\in \Gamma$.
We claim that 
$$vg(\ell, \ell',\dots, \ell^{(r)})-vh(\ell,\dots, \ell^{(r-1)})$$
depends only on $s$ and not on the choice of $g$ and $h$. To see this, let
$g_1\in K[Y_0,\dots, Y_r]$, $h_1\in K[Y_0,\dots, Y_{r-1}]$ be such that 
$g_1(Y,\dots, Y^{(r)})\notin Z(K,\ell)$, 
$h_1\ne 0$, and $s=g_1(y_0,\dots, y_r)/h_1(y_0,\dots, y_{r-1})$.
Then 
$$gh_1-g_1h \in pK[Y_0,\dots, Y_r], \qquad (gh_1)(Y,\dots, Y^{(r)}), (g_1h)(Y,\dots, Y^{(r)})\notin Z(K,\ell),$$ which
yields the claim by Lemma~\ref{Zw}. We now set, for $g$, $h$ as above, 
$$vs\ :=\ vg(\ell, \ell',\dots, \ell^{(r)})-vh(\ell,\dots, \ell^{(r-1)}),$$
or more suggestively, 
$$vs\ =\ v\big(G(\ell)/H(\ell)\big)\in \Gamma,\ \text{ with }G=g(Y,\dots,Y^{(r)}),\ H=h(Y,\dots, Y^{(r-1)}).$$
We thus have extended $v\colon K^\times\to\Gamma$ to a map  
$$v\ \colon\  K(y_0,\dots, y_r)^\times \to \Gamma.$$ 
Let $s\in K(y_0,\dots, y_r)^\times$ and take
$g\in K[Y_0,\dots, Y_r]$, $h\in K[Y_0,\dots, Y_{r-1}]$ with 
$g(Y, Y',\dots, Y^{(r)})\notin Z(K,\ell)$  and $h\ne 0$ such that 
$s=g(y_0,\dots, y_r)/h(y_0,\dots, y_{r-1})$. 
Let $s_1, s_2\in K(y_0,\dots, y_r)^\times$. Then $v(s_1s_2)=vs_1+vs_2$ follows
easily by means of Lemma~\ref{suplim3}. Next, assume also
$s_1+s_2\ne 0$; to prove that $v\colon K(y_0,\dots, y_r)^\times \to \Gamma$ is a valuation it remains to show that then $v(s_1+s_2) \ge \min(vs_1, vs_2)$.
For $i=1,2$ we have $s_i=g_i(y_0,\dots, y_r)/h_i(y_0,\dots, y_{r-1})$ where
$$ 0\ne g_i\in K[Y_0,\dots, Y_r],\quad 0\ne h_i\in K[Y_0,\dots, Y_{r-1}],$$
and $g_i$ has lower degree in $Y_r$ than $p$. Then for $s:= s_1+s_2$ we have 
$$s\ =\ g(y_0,\dots, y_r)/h(y_0,\dots, y_{r-1}),\qquad  g\ :=\  g_1h_2 + g_2h_1,\quad h=h_1h_2,$$
and so $g\ne 0$ (because $s\ne 0$) and $g$ has also lower degree in $Y_r$ than $p$.
In particular, $g(Y,\dots, Y^{(r)})\notin Z(K,\ell)$, hence
$vs=v\big(g(\ell,\dots, \ell^{(r)})/h(\ell,\dots, \ell^{(r-1)})\big)$, and so by working in the valued field $K\<\ell\>$ we see that $vs\ge \min(vs_1, vs_2)$, as promised.
Thus we now have $K(y_0,\dots, y_r)$ as a valued field extension of $K$.
To show that $K(y_0,\dots, y_r)$ has the same residue field as $K$, consider an
element $s=g(y_0,\dots, y_r)\notin K$ with nonzero $g\in K[Y_0,\dots, Y_r]$ of lower degree in $Y_r$ than $p$;
it suffices to show that $s\sim b$ for some $b$. Set 
$G:= g(Y, \dots, Y^{(r)})$ and take $a$ and $\fv$ with
$a-\ell \prec \fv$ and $\ndeg_{\prec \fv}G_{+a}=0$. Then 
$G(\ell)\sim G(a)$ by Lemma~\ref{Z1}, so for $b:= G(a)$ we have 
$v(s-b)=v\big(g(y_0,\dots, y_r)-b\big)=v(G(\ell)-b) > vb$, that is, $s\sim b$. 
This finishes the proof that the valued field $F:=K(y_0,\dots, y_r)$ is an immediate extension of $K$. 

\medskip\noindent
Next we equip $F$ with the derivation extending the derivation of $K$
such that $y_i'=y_{i+1}$ for $0\le i < r$. Setting $f:= y_0$ we have $f^{(i)}=y_i$
for $i=0,\dots,r$, $F=K\<f\>=K(y_0,\dots, y_r)$, and $P(f)=0$.  Note that $v(G(f))=v(G(\ell))$ for every nonzero $G\in K[Y,\dots, Y^{(r)}]$ of lower degree in $Y^{(r)}$ than $P$, in particular,
$v(f-a)=v(\ell -a)$ for all $a$. We now show that the derivation of $F$ is continuous and that $F$ is a strict extension of $K$.

\medskip\noindent
Let $\phi\in K^\times$ and $v\phi\in \Gamma(\der)$. To get 
$\der\smallo_{F}\subseteq \phi\smallo_{F}$, we set
$$S\ :=\ \big\{H(f):\ H\in K\big[Y,\dots, Y^{(r-1)}\big],\ H(f)\preceq 1\big\}.$$
(If $r=0$, then we have $K\big[Y,\dots, Y^{(r-1)}\big]=K$, so $S=\mathcal{O}$.)
By Lemma~\ref{strictimm} and by \cite[Lemma 6.2.3]{ADH} applied to~$K\big(f,\dots, f^{(r-1)}\big)$ in the role of $E$ and with $F=L$ it is enough to show
that $\der S\subseteq \phi\mathcal{O}_{F}$ and~${\der(S\cap \smallo_F)\subseteq \phi\smallo_F}$. We prove the first of these inclusions. The second follows in 
the same way. 

\medskip\noindent
Let $H\in K\big[Y,\dots,Y^{(r-1)}\big]\setminus K$ with $H(f)\preceq 1$;
we have to show $H(f)'\preceq \phi$. We can assume $H(f)'\ne 0$. 
Take  $H_1(Y), H_2(Y)\in K\big[Y,\dots,Y^{(r-1)}\big]$ such that
$$H'\ =\ H(Y)'\  =\  H_1(Y) + H_2(Y)Y^{(r)}  \quad \text{in $K\{Y\}$.}$$
Then $$H'(f)\ =\ H(f)'\ =\  
H_1(f) + H_2(f)f^{(r)},$$ and for all $a$,
$$H'(a)\ =\ H(a)'\ =\   H_1(a) + H_2(a)a^{(r)}.$$
We now distinguish two cases:  

\case[1]{$P$ has degree $>1$ in $Y^{(r)}$, or $H_2=0$.}
Then $H'$ has lower degree in $Y^{(r)}$ than $P$, so we can take $a, \fv$ with $a-\ell\prec \fv$, $\ndeg_{\prec \fv}H_{+a}=0$,
and $\ndeg_{\prec \fv}H'_{+a}=0$, so $H(a)\sim H(f)\preceq 1$ and 
$H'(a)\sim H'(f)$. Hence $H(f)'\sim H(a)'\preceq \phi$.   

\case[2]{$P$ has degree $1$ in $Y^{(r)}$ and $H_2\ne 0$.} Then
$$H'\ =\ \frac{G_1 P + G_2}{G}, \quad G_1, G_2, G\in K[Y,\dots, Y^{(r-1)}],\ G_1, G\ne 0,$$
so $0\ne H(f)'=G_2(f)/G(f)$, so $G_2\ne 0$. 
By Lemma~\ref{suplim3} there is a $\fv$ such that for some $a$ we have $a-\ell \prec \fv$ and
$$\ndeg_{\prec\fv}H_{+a}\ =\ \ndeg_{\prec\fv}(G_1)_{+a}\ =\ \ndeg_{\prec\fv}(G_2)_{+a}\ =\ \ndeg_{\prec\fv}G_{+a}=0.$$ 
Fix such $\fv$, and let $A\subseteq K$ be the set of all $a$ satisfying the above. Then for $a\in A$ we have
$G(f)\sim G(a)$ and $H(f)\sim H(a)$, so $H(a)'\preceq \phi$. Also 
$G_1(f)\sim G_1(a)$ and
\begin{align*}
G(f)H(f)'\ &=\ G_2(f)\ \sim\ G_2(a),\\
G(a)H(a)'\ &=\ G_1(a)P(a)+ G_2(a).
\end{align*}
We now make crucial use of Lemma~\ref{suplim4} to arrange that
$$v\big(G_1(a)P(a)+G_2(a)\big)\ =\ \min\big(v(G_1(a)P(a)),\ v(G_2(a)\big)$$
by changing $a$ if necessary. Hence $G_2(a)\preceq G_1(a)P(a)+G_2(a)=G(a)H(a)'$, so
$$G(f)H(f)'\ \sim\ G_2(a)\ \preceq\ G(a)H(a)'\ \sim\ G(f)H(a)'\ \preceq\ G(f)\phi$$
and thus $H(f)'\preceq \phi$. This concludes the proof that $F$ is a strict extension of $K$.

\medskip\noindent
Suppose $s$ in a strict extension $M$ of $K$ satisfies
$P(s)=0$ and $v(a-s)=v(a-\ell)$ for all $a$.  By
Lemma~\ref{Z1} and the remarks following its proof we have $vQ(s)=vQ(f)$ for all $Q\notin Z(K,\ell)$, in particular,
$Q(s)\ne 0$ for all $Q$ of lower complexity than $P$. Thus we have a
differential field embedding $K\langle f \rangle \to M$ over $K$ sending $f$ to $s$,
and this is also a valued field embedding.
\end{proof}

\subsection*{Proof of Theorem~\ref{thimm}} Assume $S(\der)= \{0\}$; we show that $K$ has an immediate strict extension 
that is maximal as a 
valued field. We can assume that $K$ itself is not yet maximal, and 
it is enough to show that then $K$ has a proper immediate strict extension, since by Lemma~\ref{glgk} the property $S(\der)=\{0\}$ is preserved by immediate strict extensions. As $K$ is not maximal, we have a divergent pc-sequence
in $K$, which pseudoconverges in an elementary extension of $K$, and 
thus has a pseudolimit~$\ell$
in a strict extension of $K$. If $Z(K,\ell)=\emptyset$, then 
Lemma~\ref{zdf} provides a proper immediate strict extension
of $K$, and if $Z(K,\ell)\ne \emptyset$, then Lemma~\ref{zda} provides 
such an extension. This concludes the proof of Theorem~\ref{thimm}.  \qed

\section{Coarsening and $S(\der)$}\label{sec:coarsening}

\noindent
In this section we finish the proof of the main theorem stated in the introduction. 

\subsection*{Making $S(\der)$ vanish} {\em In this subsection we set $\Delta:=S(\der)$ and assume $\Delta\ne \{0\}$}. Then $\Gamma(\der)$ has no
largest element, and so $v(\der\mathcal{O})>\Gamma(\der)$ by Lemma~\ref{notmax}. The next lemma says much more. Let $K_{\Delta}$ be the $\Delta$-coarsening, with valuation ring 
$\dot{\mathcal{O}}$. 

\begin{lemma}\label{derdot} 
$v(\der\dot{\mathcal{O}})>\Gamma(\der)$. 
\end{lemma}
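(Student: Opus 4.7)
The plan is to prove $v(f')>v\phi$ for every nonzero $f\in\dot{\mathcal O}$ and every $\phi\in K^\times$ with $v\phi\in\Gamma(\der)$. Two ingredients are at hand: first, as noted in the paragraph preceding the lemma, $v(\der\mathcal O)>\Gamma(\der)$ (via Lemma~\ref{notmax}, since $\Gamma(\der)$ has no maximum); second, by the very definition of $\Gamma(\der)$ we also have $v(\der\smallo)>\Gamma(\der)$. The new leverage from $\Delta=S(\der)\ne\{0\}$ is the translation-invariance $\Gamma(\der)+\Delta=\Gamma(\der)$.

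First I would factor $f$ appropriately: choose $a\in K^\times$ with $va\in\Delta^{\ge}$ and $af\in\mathcal O$. Namely, if $vf\ge 0$ take $a=1$; otherwise $vf\in\Delta^{<}$ (by convexity of $\Delta$ together with $0\in\Delta$, since $f\in\dot{\mathcal O}$ and $vf<0$ force $vf\in\Delta$), and take $a$ with $va=-vf\in\Delta^{>}$, so that $h:=af\asymp 1$ lies in $\mathcal O$. Then
\[
   f'\ =\ \frac{h'}{a}\ -\ \frac{ha'}{a^{2}}.
\]

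Next I would estimate the two summands using the $\Delta$-invariance of $\Gamma(\der)$. For the first summand: since $va\in\Delta$, we have $v\phi+va\in\Gamma(\der)$, so applying $v(\der\mathcal O)>\Gamma(\der)$ to $h\in\mathcal O$ gives $v(h')>v\phi+va$, whence $v(h'/a)>v\phi$. For the second summand: if $a=1$ it vanishes. If $va>0$, then $a\in\smallo$, and $2va\in\Delta$ gives $v\phi+2va\in\Gamma(\der)$, so $v(\der\smallo)>\Gamma(\der)$ yields $v(a')>v\phi+2va$, and therefore
\[
   v(ha'/a^{2})\ =\ vh+v(a')-2va\ >\ vh+v\phi\ \ge\ v\phi,
\]
using $vh\ge 0$. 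Taking the minimum of the two estimates gives $v(f')>v\phi$, as desired.

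There is no serious obstacle. The content is recognizing that the quotient rule introduces exactly two shifts ($va$ and $2va$) in the value estimates, and that both lie in $\Delta$ and are therefore absorbed by $\Gamma(\der)$ precisely because $\Delta=S(\der)$. Without this translation invariance the $a'$-term estimate would fail, which is exactly why the hypothesis $\Delta\ne\{0\}$ is what produces the strengthening from $v(\der\mathcal O)>\Gamma(\der)$ to $v(\der\dot{\mathcal O})>\Gamma(\der)$.
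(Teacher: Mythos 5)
Your proof is correct and rests on the same two ingredients the paper uses: the fact that $v(\der\mathcal{O})>\Gamma(\der)$ (from Lemma~\ref{notmax}, since $\Gamma(\der)$ has no maximum when $\Delta\ne\{0\}$), and the translation-invariance $\Gamma(\der)+\Delta=\Gamma(\der)$. The paper's version is a bit leaner: for $a\in\dot{\mathcal{O}}$ with $va<0$ it simply observes that $1/a\in\smallo\subseteq\mathcal{O}$, hence $v\big((1/a)'\big)=v(a')-2va>\Gamma(\der)$, and then shifts by $2va\in\Delta$; this is a single application of the bound. You instead factor $f=h/a$ with $h\asymp 1$ and use the full quotient rule, producing two summands to estimate (the second using $v(\der\smallo)>\Gamma(\der)$ as well). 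Both are valid, and your more explicit decomposition makes visible that the shifts $va$ and $2va$ are exactly what $\Delta$-invariance absorbs — but the single-term computation via $1/a$ achieves the same conclusion more directly.
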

\begin{proof} Let $a\in \dot{\mathcal{O}}$. If $va\ge 0$, then
$va'>\Gamma(\der)$ by the above. If
$va<0$, then $va\in \Delta$ and $va'-2va=v((1/a)')>\Gamma(\der)$,
so $va'>\Gamma(\der)+2va=\Gamma(\der)$. 
\end{proof}

\noindent
It follows in particular that if $\der$ is small, then the derivation of $\res(K_{\Delta})$ is trivial.  Let $\pi\colon \Gamma\to \dot{\Gamma}:=\Gamma/\Delta$ be the canonical map, so
$\pi\Gamma(\der)\subseteq \dot{\Gamma}$. We also have $\dot{\Gamma}(\der):=\dot{\Gamma}_{K_{\Delta}}(\der)\subseteq \dot{\Gamma}$, with $\pi\Gamma(\der)\subseteq \dot{\Gamma}(\der)$ by Lemma~\ref{coarseder1}.

\begin{lemma}\label{skdelta} $S_{K_{\Delta}}(\der)=\{0\}\subseteq \dot{\Gamma}$.
\end{lemma}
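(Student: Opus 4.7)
My plan is to split into the two cases allowed by Lemma~\ref{coarseder1}, which describes $\dot\Gamma(\der)$ almost completely in terms of $\pi\Gamma(\der)$ where $\pi\colon\Gamma\to\dot\Gamma$ is the canonical map.

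First, suppose $\dot\Gamma(\der)=\pi\Gamma(\der)\cup\{\dot\mu\}$ with $\dot\mu=\max\dot\Gamma(\der)$. Then $\dot\Gamma(\der)$ has a supremum in~$\dot\Gamma$, so Lemma~\ref{supremum} applied to~$K_\Delta$ gives $S_{K_\Delta}(\der)=\{0\}$ immediately.

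Second, suppose instead that $\dot\Gamma(\der)=\pi\Gamma(\der)$. Let $\dot\gamma\in S_{K_\Delta}(\der)$; I want to conclude $\dot\gamma=0$. Lift $\dot\gamma$ to $\gamma\in\Gamma$ with $\pi(\gamma)=\dot\gamma$. It suffices to show $\gamma\in\Delta=S_K(\der)$, that is, $\Gamma(\der)+\gamma=\Gamma(\der)$. Given $\alpha\in\Gamma(\der)$, we have $\pi(\alpha+\gamma)=\pi(\alpha)+\dot\gamma\in\dot\Gamma(\der)+\dot\gamma=\dot\Gamma(\der)=\pi\Gamma(\der)$, so there is $\beta\in\Gamma(\der)$ with $\alpha+\gamma\equiv\beta\pmod\Delta$. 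Here is the crucial ingredient: since $\Gamma(\der)$ is a union of cosets of $\Delta=S_K(\der)$ (as noted in the text right after the definition of $S(\der)$), and $\beta\in\Gamma(\der)$, we conclude $\alpha+\gamma\in\Gamma(\der)$. Hence $\Gamma(\der)+\gamma\subseteq\Gamma(\der)$. Running the same argument with $-\dot\gamma\in S_{K_\Delta}(\der)$ in place of $\dot\gamma$ gives the reverse inclusion, so $\gamma\in S_K(\der)=\Delta$ and therefore $\dot\gamma=0$.

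The plan thus relies almost entirely on Lemma~\ref{coarseder1} and Lemma~\ref{supremum} together with the definitional fact that $\Gamma(\der)$ is saturated under $\Delta$-cosets. No step looks like a genuine obstacle; the only thing one must check carefully is that the alternative ``$\dot\Gamma(\der)=\pi\Gamma(\der)\cup\{\dot\mu\}$'' is handled by Lemma~\ref{supremum} rather than by lifting, because the extra top element $\dot\mu$ need not lie in $\pi\Gamma(\der)$ and would break the coset-lifting argument.
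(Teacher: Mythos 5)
Your proof is correct and follows essentially the same case split as the paper's: both handle the case $\dot\Gamma(\der)=\pi\Gamma(\der)\cup\{\dot\mu\}$ via Lemma~\ref{supremum}, and both reduce the case $\dot\Gamma(\der)=\pi\Gamma(\der)$ to the fact that $\Gamma(\der)$ is a union of $\Delta$-cosets (the paper dispatches that case with a ``clearly,'' for which your coset-lifting argument is exactly the missing detail).
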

\begin{proof} If $\pi\Gamma(\der) = \dot{\Gamma}(\der)$, then clearly $S_{K_{\Delta}}(\der)=\{0\}$. Suppose that $\pi\Gamma(\der)\ne \dot{\Gamma}(\der)$. (We don't know if this can happen.) Then Lemma~\ref{coarseder1} tells us that $\dot{\Gamma}(\der)$ has a largest element, and so 
$S_{K_{\Delta}}(\der)=\{0\}$ by Lemma~\ref{supremum}.
\end{proof}

\subsection*{Lifting strictness} Let $K$ have small derivation and let $L$ be an immediate extension of 
$K$ with small derivation. Let $\Delta$ be a convex subgroup of $\Gamma$, giving rise to the extension $L_{\Delta}$ of $K_{\Delta}$, both with value group $\dot{\Gamma}=\Gamma/\Delta$. Note that if
$\phi\in K^\times$ and $v\phi\in \Gamma_K(\der)$, then $\phi^{-1}\der$ is small with respect to $v$, and thus small with respect to $\dot{v}$ by Lemma~\ref{lem:4.4.4}, so $\dot{v}\phi\in \dot{\Gamma}_{K_{\Delta}}(\der)$. 
We show that under various assumptions strictness of $L_{\Delta}\supseteq K_{\Delta}$ yields strictness of $L\supseteq K$:  

\begin{lemma}\label{resstrict} Suppose $L_{\Delta}$ strictly extends $K_{\Delta}$ and $\res(L_{\Delta})=\res(K_{\Delta})$. Then
$L$ strictly extends $K$.
\end{lemma}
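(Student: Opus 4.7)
The plan is to reduce to the first strictness condition via Lemma~\ref{strictimm} and then split an arbitrary $f\in\smallo_L$ along the coarsened residue field. By Lemma~\ref{strictimm} it suffices to show that for every $\phi\in K^\times$ with $\der\smallo\subseteq\phi\smallo$ one has $\der\smallo_L\subseteq\phi\smallo_L$. Fix such a $\phi$. As noted just before the lemma, $\phi^{-1}\der$ is small with respect to $v$, hence by Lemma~\ref{lem:4.4.4} also small with respect to $\dot v$, so $\der\dot{\smallo}\subseteq\phi\dot{\smallo}$ and $\dot v\phi\in\dot\Gamma_{K_\Delta}(\der)$. The assumed strictness of $L_\Delta$ over $K_\Delta$ therefore yields $\der\dot{\smallo}_L\subseteq\phi\dot{\smallo}_L$.

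The crux is the decomposition step. Let $f\in\smallo_L\subseteq\dot{\mathcal{O}}_L$. Since $\res(L_\Delta)=\res(K_\Delta)$, we can choose $g\in\dot{\mathcal{O}}$ with $h:=f-g\in\dot{\smallo}_L$. Because $0\in\Delta$ we have $\dot{\smallo}_L\subseteq\smallo_L$, so $h\in\smallo_L$, and therefore $g=f-h$ lies in $K\cap\smallo_L=\smallo$, using immediateness of $L$ over $K$ (so $\Gamma_L=\Gamma$).

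It remains to differentiate. The $K$-piece satisfies $g'\in\der\smallo\subseteq\phi\smallo\subseteq\phi\smallo_L$ by the choice of $\phi$; the coarse piece satisfies $h'\in\der\dot{\smallo}_L\subseteq\phi\dot{\smallo}_L\subseteq\phi\smallo_L$ by the lifted strictness. Adding, $f'=g'+h'\in\phi\smallo_L$, as desired. There is no real obstacle here: the substance is that immediateness of $L\supseteq K$ together with $\res(L_\Delta)=\res(K_\Delta)$ forces a clean splitting of $\smallo_L$ into a part already in $\smallo$ (handled by the hypothesis on $\phi$) and a part in $\dot{\smallo}_L$ (handled by the coarsened strictness).
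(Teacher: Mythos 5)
Your proof is correct and follows essentially the same route as the paper's: both reduce via Lemma~\ref{strictimm} to showing $\der\smallo_L\subseteq\phi\smallo_L$, push the hypothesis through the coarsening (Lemma~\ref{lem:4.4.4}), then split $f\in\smallo_L$ into a piece lying in $\smallo$ and a piece lying in $\dot\smallo_L$, handling the first by the hypothesis on $\phi$ and the second by strictness of $L_\Delta$ over $K_\Delta$. The only cosmetic difference is that the paper uses the multiplicative split $f=g(1+\epsilon)$ with $g\in K^\times$, $\epsilon\in\dot\smallo_L$, whereas you use the additive split $f=g+h$ with $g\in\smallo$, $h\in\dot\smallo_L$.
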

\begin{proof} Let $\phi\in K^\times$, $v\phi\in \Gamma(\der)$ and $0\ne f\in \smallo_L$. Then $f=g(1+\epsilon)$ with
$g\in K^\times$ and $\dot{v}(\epsilon)>0$, so $vf=vg$ and
$f'=g'(1+\epsilon) + g\epsilon'$. Now
$v(g') > v(\phi)$. Since $L_{\Delta}$ strictly extends $K_{\Delta}$
we have $\dot{v}(\epsilon')>\dot{v}(\phi)$, so
$v(\epsilon') > v(\phi)$. Hence $v(f')>v(\phi)$. 
\end{proof}

\begin{lemma}\label{strictdelta} Suppose $L_{\Delta}$ strictly extends $K_{\Delta}$
and $\Delta=S(\der)\ne\{0\}$. Then $L$ strictly extends $K$.
\end{lemma}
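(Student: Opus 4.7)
I would proceed via Lemma~\ref{strictimm}: since $L$ is an immediate extension of $K$, it suffices to show that $\der\smallo\subseteq\phi\smallo$ implies $\der_L\smallo_L\subseteq\phi\smallo_L$ for every $\phi\in K^\times$. So fix $\phi\in K^\times$ with $v\phi\in\Gamma(\der)$; the goal is to derive $\der_L\smallo_L\subseteq\phi\smallo_L$.

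The crux is to transfer the problem to the coarsened setting where the strictness hypothesis is already available. Because $\Delta=S(\der)\ne\{0\}$, the set $\Gamma(\der)$ is a union of cosets of $\Delta$ and hence has no largest element; in particular $v\phi+\delta\in\Gamma(\der)$ for every $\delta\in\Delta$. Combined with Lemma~\ref{derdot}, which yields $v(\der\dot{\mathcal O})>\Gamma(\der)$, this promotes to the coarsened inequality $\dot v(\der\dot{\mathcal O})>\dot v\phi$ in $\dot\Gamma=\Gamma/\Delta$. Viewing $\der$ as the derivation of $K_\Delta$, this reads $\der\dot{\mathcal O}\subseteq\phi\dot\smallo$.

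Now invoke the hypothesis that $L_\Delta$ strictly extends $K_\Delta$: the second defining implication of strictness applied to this inclusion gives $\der_L\dot{\mathcal O}_L\subseteq\phi\dot\smallo_L$. Since $\smallo_L\subseteq\dot{\mathcal O}_L$ and $\dot\smallo_L\subseteq\smallo_L$, we obtain $\der_L\smallo_L\subseteq\phi\smallo_L$, completing the verification of the hypothesis of Lemma~\ref{strictimm}. The step I would take most care with is the transition between fine and coarse inequalities in the second paragraph; this is where the assumption $\Delta=S(\der)$ is used essentially, since it is precisely $\Delta$-invariance of $\Gamma(\der)$ that converts ``$v(\der\dot{\mathcal O})$ exceeds all of $\Gamma(\der)$'' into ``$\dot v(\der\dot{\mathcal O})>\dot v\phi$''. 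Beyond that bookkeeping, no serious obstacle is expected — the lemma is essentially the compatibility statement between coarsened strictness and fine strictness that the definition of $S(\der)$ was set up to enable.
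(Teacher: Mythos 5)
Your proof is correct and rests on the same ingredients as the paper's proof — Lemma~\ref{derdot}, the strictness of $L_\Delta\supseteq K_\Delta$, and Lemma~\ref{strictimm} — but packages the final step more economically. The paper takes an arbitrary $0\neq f\in\smallo_L$, writes $f=gu$ with $g\in\smallo$ and $v(u)=0$, then bounds $v(g'u)$ by $v(\der\smallo)>\Gamma(\der)$ and bounds $v(gu')$ by applying the coarsened strictness to $u\in\dot{\mathcal O}_L$, finally recombining to get $v(f')>\Gamma(\der)$. You instead observe directly that $\smallo_L\subseteq\mathcal O_L\subseteq\dot{\mathcal O}_L$ (because $0\in\Delta$) and $\dot\smallo_L\subseteq\smallo_L$, so the single inclusion $\der_L\dot{\mathcal O}_L\subseteq\phi\dot\smallo_L$ obtained from coarsened strictness already sandwiches $\der_L\smallo_L$ inside $\phi\smallo_L$ in one stroke, with no need to split $f$ into a $K$-part and a unit. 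The step that actually carries the weight of the hypothesis $\Delta=S(\der)$ — promoting $v(\der\dot{\mathcal O})>\Gamma(\der)$ to $\der\dot{\mathcal O}\subseteq\phi\dot\smallo$ via $\Delta$-invariance of $\Gamma(\der)$ — is carried out identically in both versions. So this is the same argument with slightly less bookkeeping, and nothing is lost.
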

\begin{proof} Let $0\ne f\in \smallo_L$. Then $f=gu$ with
$g\in K$ and $v(u)=0$, so $g\in \smallo$ and $f'=g'u+gu'$.
We have $v(g'u)=v(g')>\Gamma(\der)$. By Lemma~\ref{derdot}
we have 
$\dot{v}(\der\dot{\mathcal{O}})>\dot{\gamma}$ for every $\gamma\in \Gamma(\der)$. Since $L_{\Delta}$ strictly
extends $K_{\Delta}$, this gives 
$\dot{v}(\der\dot{\mathcal{O}}_L)>\dot{\gamma}$ for every $\gamma\in \Gamma(\der)$,
hence $v(\der\dot{\mathcal{O}}_L)>\Gamma(\der)$,
and so $v(u')>\Gamma(\der)$. This gives $v(f')>\Gamma(\der)$. 
\end{proof}

\subsection*{Building strict extensions by extending the residue field} 
Suppose the derivation of $K$ is small.
Let $f\in \smallo$, and let~$a$ be an element in a field extension of~$K$, transcendental over $K$. We extend the derivation of $K$ to the   derivation on $K(a)$ such that $a'=f$. We equip~$K(a)$ with the gaussian extension of the valuation of~$K$~\cite[Lem\-ma~3.1.31]{ADH}: 
the unique valuation on $K(a)$ extending the valuation of $K$ such that $a\preceq 1$ and  $\res a$ is transcendental over $\res(K)$.
So for $b=P(a)/Q(a)\in K(a)$ where
$0\neq P,Q\in K[Y]$, we have $vb=vP-vQ$; in particular, $\Gamma_{K(a)}=\Gamma$  and    $\res\big(K(a)\big)=\res(K)(\res a)$.

\begin{lemma}
The derivation of $K\<a\>$ is small. If
$\der\mathcal O\subseteq\smallo$, then   $\der\mathcal O_{K(a)}\subseteq\smallo_{K(a)}$.
\end{lemma}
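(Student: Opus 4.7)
The plan is to reduce both claims to a uniform computation involving the gaussian valuation. Since $a'=f\in K$, all higher derivatives $a^{(n)}$ lie in $K$, so $K\<a\>=K(a)$ as fields, and it is enough to estimate $b'$ for $b\in K(a)^\times$. Given such $b$, write $b=P(a)/Q(a)$ with $P,Q\in K[Y]$, $Q\neq 0$. Multiplying numerator and denominator by an element of $K^\times$ of valuation $-vQ$, I may assume $vQ=0$, i.e., $Q\in\mathcal O[Y]$ with $\res Q\neq 0$ in $\res(K)[Y]$; by the definition of the gaussian valuation this forces $Q(a)\asymp 1$, and then $vb=vP$.

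For $R=\sum_i c_iY^i\in K[Y]$ let $R^\partial:=\sum_i c_i'Y^i$ and let $R'$ denote the formal $Y$-derivative of $R$. The product rule gives
\[
R(a)'\ =\ R^\partial(a)+R'(a)\cdot f,
\]
and combining with the quotient rule,
\[
b'\ =\ \frac{P(a)'\,Q(a)-P(a)\,Q(a)'}{Q(a)^2},\qquad Q(a)^2\asymp 1,
\]
so only the numerator needs to be controlled. Smallness of $\der$ on $K$ yields $\der\mathcal O\subseteq\mathcal O$ by \cite[Lemma~4.4.2]{ADH}, hence $R\in\mathcal O[Y]\Rightarrow R^\partial,R'\in\mathcal O[Y]$; since $a\preceq 1$ and $f\in\smallo$, this gives $R(a)'\in\mathcal O_{K(a)}$. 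If moreover $R\in\smallo[Y]$, then $\der\smallo\subseteq\smallo$ ensures $R^\partial\in\smallo[Y]$ as well, and therefore $R(a)'\in\smallo_{K(a)}$.

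For the first claim, take $b\in\smallo_{K(a)}$; then $vb>0$, so $P\in\smallo[Y]$, and the observations above give $P(a)'\in\smallo_{K(a)}$ and $P(a)\in\smallo_{K(a)}$, while $Q(a)'\in\mathcal O_{K(a)}$ and $Q(a)\asymp 1$. Both summands of the numerator of $b'$ therefore lie in $\smallo_{K(a)}$, and $b'\in\smallo_{K(a)}$. Thus the derivation of $K\<a\>=K(a)$ is small.

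For the second claim, assume in addition $\der\mathcal O\subseteq\smallo$. Then for any $R\in\mathcal O[Y]$ the coefficients of $R^\partial$ lie in $\smallo$, so $R^\partial(a)\in\smallo_{K(a)}$, and $R'(a)\cdot f\in\mathcal O_{K(a)}\cdot\smallo\subseteq\smallo_{K(a)}$; hence $R(a)'\in\smallo_{K(a)}$. For $b\in\mathcal O_{K(a)}$ normalized as above, $P,Q\in\mathcal O[Y]$, so $P(a)',Q(a)'\in\smallo_{K(a)}$ while $P(a),Q(a)\in\mathcal O_{K(a)}$; hence the numerator lies in $\smallo_{K(a)}$ and $b'\in\smallo_{K(a)}$. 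There is no serious obstacle here; the only pivotal step is the normalization of the denominator to arrange $Q(a)\asymp 1$, after which both claims reduce to elementary polynomial estimates on $R^\partial$ and on $R'\cdot f$.
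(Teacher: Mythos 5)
Your proof is correct and essentially identical to the paper's: both write $b=P(a)/Q(a)$ with $Q(a)\asymp 1$, apply the product/quotient rule in the form $R(a)'=R^\partial(a)+R'(a)\,f$, and use smallness of $\der$ together with $f\in\smallo$ and $a\preceq 1$ to bound the numerator. The only cosmetic difference is that you scale the denominator explicitly and introduce the notation $R^\partial$, whereas the paper simply picks a representation with $P(a)\prec 1\asymp Q(a)$ and writes the expansion of $P(a)'$ out coefficientwise.
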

\begin{proof}
Given
$P=P_dY^d + \cdots + P_0\in K[Y]$ (where $P_0,\dots,P_d\in K$), we have $P(a)'=P_d'a^d + \cdots + P_0' + f\cdot(\partial P/\partial Y)(a)$, hence $P(a)\prec 1\Rightarrow P(a)'\prec 1$, and $P(a)\preceq 1\Rightarrow P(a)'\preceq1$. Let $b\in \smallo_{K\<a\>}$. Then $b=P(a)/Q(a)$ where $P,Q\in K[Y]$ and $P(a)\prec 1\asymp Q(a)$, so $P(a)'\prec 1$ and $Q(a)'\preceq 1$, 
hence $$b'\ =\ \frac{P(a)'Q(a)-P(a)Q(a)'}{Q(a)^2}\ \prec\ 1.$$ 
Thus  $\der\smallo_{K\<a\>}\subseteq\smallo_{K\<a\>}$.
Similarly  one shows that if $\der\mathcal O\subseteq\smallo$, then $\der\mathcal O_{K(a)}\subseteq\smallo_{K(a)}$.
\end{proof}

\begin{lemma}\label{strictres} Suppose $vf>\Gamma(\der)$. Then $L:=K(a)$ is a strict extension of $K$.
\end{lemma}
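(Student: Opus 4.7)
The plan is to verify both implications in the definition of strict extension, using the explicit gaussian valuation on $K(a)$. Fix $\phi\in K^\times$ with $v\phi\in\Gamma(\der)$; by the hypothesis of the lemma, $vf > v\phi$. The cornerstone is the following identity for $P=\sum_{i=0}^d P_iY^i\in K[Y]$:
$$P(a)' \;=\; \sum_i P_i'\,a^i \;+\; f\cdot (\partial_YP)(a),$$
which holds because $a'=f$. Since the valuation on $K(a)$ is gaussian, $v\bigl(\sum_i P_i'a^i\bigr) = \min_i v(P_i')$ and $v\bigl((\partial_YP)(a)\bigr) \geq v(P)$.

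Next I would establish two subclaims, under the standing assumption that $vf > v\phi$.  First, assume $\der\smallo\subseteq\phi\smallo$ (equivalently $\phi^{-1}\der$ is small, which by \cite[Lem\-ma~4.4.2]{ADH} also gives $\der\mathcal O\subseteq\phi\mathcal O$). Then for $P\in K[Y]$ with $vP\geq 0$ we get $v(P_i')\geq v\phi$ for all $i$, so the first term has valuation $\geq v\phi$; and $vf + vP > v\phi$, so the second term has valuation $>v\phi$. Hence $v(P(a)')\geq v\phi$, with strict inequality whenever $vP>0$ (since then also $v(P_i')>v\phi$). Second, assume the stronger $\der\mathcal O\subseteq\phi\smallo$. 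Then $v(P_i')>v\phi$ for every $P_i\in\mathcal O$, so the same computation yields $v(P(a)')>v\phi$ whenever $vP\geq 0$.

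To finish, let $b\in L^\times$ and write $b=P(a)/Q(a)$ with $P,Q\in K[Y]$, $Q\neq 0$. After multiplying numerator and denominator by a common constant, I may assume $vQ=0$, so $vP=vb$. Then
$$b' \;=\; \frac{P(a)'Q(a)-P(a)Q(a)'}{Q(a)^2},$$
with $v(Q(a)^2)=0$. If $b\in\smallo_L$ (Case A, $\der\smallo\subseteq\phi\smallo$), then $vP>0$, so the first subclaim gives $v(P(a)')>v\phi$ and $v(Q(a)')\geq v\phi$; thus $v(P(a)'Q(a))>v\phi$ and $v(P(a)Q(a)')\geq vP+v\phi>v\phi$, yielding $v(b')>v\phi$. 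If $b\in\mathcal O_L$ (Case B, $\der\mathcal O\subseteq\phi\smallo$), then $vP\geq 0$, and the second subclaim gives both $v(P(a)')>v\phi$ and $v(Q(a)')>v\phi$, so again $v(b')>v\phi$. This establishes both strictness conditions.

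I do not expect a real obstacle here; the argument is essentially a direct calculation once the identity for $P(a)'$ and the gaussian valuation are in hand. The only minor subtlety is keeping track of strict versus non-strict inequalities in the two subclaims, which is why I separate Case A into the two subcases $vP>0$ and $vP\geq 0$ and only need the strict bound in the numerator term that controls $b'$.
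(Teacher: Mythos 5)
Your argument is correct, and the computation checks out. You are doing essentially the same underlying calculation as the paper, but packaged differently: the paper first proves (in the preceding unnumbered lemma) that the derivation of $K(a)$ is small and, if $\der\mathcal O\subseteq\smallo$, that $\der\mathcal O_{K(a)}\subseteq\smallo_{K(a)}$ — i.e.\ it handles only the normalized case $\phi=1$ — and then deduces Lemma~\ref{strictres} in two lines by applying that lemma to the compositional conjugate $K^\phi$ with $\phi^{-1}f$ in place of $f$, noting $L^\phi=K^\phi(a)$ with $\phi^{-1}\der(a)=\phi^{-1}f\prec 1$. You instead inline the conjugation: you keep the general $\phi$ throughout and redo the identity $P(a)' = \sum_i P_i'a^i + f\cdot(\partial_Y P)(a)$ together with the gaussian valuation, separating the two strictness conditions into your Cases~A and~B. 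This is more self-contained (it doesn't lean on the preceding lemma) but costs a bit more bookkeeping, in particular the normalization $vQ=0$ and the careful strict-vs.-nonstrict inequalities you track; the paper's route buys brevity by exploiting the invariance of the whole setup under compositional conjugation, which is its standard device. One small point worth making explicit in your write-up: the inference from $\der\smallo\subseteq\phi\smallo$ to $\der\mathcal O\subseteq\phi\mathcal O$, which you use to get $v(P_i')\geq v\phi$ for $P_i\in\mathcal O$, is exactly \cite[Lemma~4.4.2]{ADH} applied to $\phi^{-1}\der$ — you cite it, which is right, but this is precisely where the conjugation the paper uses is hiding in your argument.
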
 
\begin{proof} Let $\phi\in K^\times$ and $\der\smallo\subseteq \phi\smallo$. 
Then the derivation of $K^\phi$ is small and $L^\phi=K^\phi(a)$ where $\phi^{-1}\der(a)=\phi^{-1}f\prec 1$. Hence by the preceding lemma applied to $K^\phi$, $\phi^{-1}f$ instead of $K$, $f$, we have
$\phi^{-1}\der\smallo_{L}\subseteq \smallo_L$ and hence
$\der \smallo_L\subseteq \phi\smallo_L$.
In the same way we show that if $\der\mathcal{O}\subseteq \phi\smallo$, then $\der\mathcal{O}_L\subseteq \phi\smallo_L$.  
\end{proof} 
 
\noindent
This leads to the following variant of 
\cite[Corollary~6.3.3]{ADH}:

\begin{cor}\label{corstrictres} Suppose $\der\mathcal{O}\subseteq \smallo$ and let $E$ be a field extension of $\res(K)$. Then there is a strict extension $L$ of $K$ such that $\Gamma_L=\Gamma$, the derivation of $\res(L)$ is trivial, and $\res(L)$ is, as a field, isomorphic to $E$ over 
$\res(K)$.
\end{cor}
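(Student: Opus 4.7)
The plan is to construct $L$ in two stages: first realize a purely transcendental part of $E/\res(K)$ by iterated application of Lemma~\ref{strictres}, then realize the remaining algebraic part by an unramified algebraic extension of valued fields, which is automatically strict by Lemma~\ref{strictalg}.

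Fix a transcendence basis $(e_i)_{i\in I}$ of $E$ over $\res(K)$ and well-order $I$ by some ordinal $\alpha$. By transfinite recursion I build a chain of valued differential fields $K=K_0\subseteq K_1\subseteq\cdots\subseteq K_j\subseteq\cdots$ ($j\leq\alpha$) such that each $K_j$ is a strict extension of $K$ with $\Gamma_{K_j}=\Gamma$ and $\der\mathcal O_{K_j}\subseteq\smallo_{K_j}$, and with $\res(K_j)$ identified as a subfield of $E$ generated over $\res(K)$ by $\{e_i:i<j\}$. At a successor stage, given $M=K_j$, take $a$ transcendental over $M$, equip $M(a)$ with the gaussian extension of the valuation (so $a\preceq 1$ and $\res a$ transcendental over $\res M$), and extend $\der_M$ to a derivation of $M(a)$ by declaring $a':=0$. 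The lemma immediately preceding Lemma~\ref{strictres} yields $\der\mathcal O_{M(a)}\subseteq\smallo_{M(a)}$, and Lemma~\ref{strictres} applied with $f=0$ (so $v(0)=\infty>\Gamma_M(\der)$) shows that $M(a)$ strictly extends $M$; by observation (2) following Lemma~\ref{smaller}, it strictly extends $K$. Identifying $\res a$ with $e_j$ gives $K_{j+1}$. At limit stages take unions, noting that the defining inclusions of strictness and of smallness of $\der$ persist under directed unions. Set $L_0:=K_\alpha$; then $\res(L_0)$ is the purely transcendental subfield of $E$ generated by $(e_i)_{i\in I}$ over $\res(K)$, and the derivation of $\res(L_0)$ is trivial.

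For the second stage, $E$ is algebraic over $\res(L_0)$. Pass to a Henselization $L_0^h$ of $L_0$ inside an algebraic closure of $L_0$; this is an immediate algebraic extension (hence strict by Lemma~\ref{strictalg}) to which the derivation extends uniquely and continuously. Standard valuation theory in equicharacteristic zero provides, for each finite intermediate extension $\res(L_0)\subseteq F\subseteq E$, a finite unramified extension of $L_0^h$ in a fixed algebraic closure with residue field $F$ and value group $\Gamma$; taking the union yields an algebraic extension $L\supseteq L_0^h$ with $\Gamma_L=\Gamma$ and $\res(L)=E$. By Lemma~\ref{strictalg}, $L$ strictly extends $L_0^h$, hence $K$ by transitivity; applying the lemma with $\phi=1$ yields $\der\mathcal O_L\subseteq\smallo_L$, so the induced derivation on $\res(L)=E$ extends the trivial derivation on $\res(L_0)$ and is itself trivial.

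The main obstacle will be the lifting in the second stage: realizing a prescribed algebraic extension $E/\res(L_0)$ as the residue field of a valued algebraic extension of $L_0^h$ with unchanged value group. This is classical valuation theory but is not stated explicitly in the earlier parts of the excerpt; it is precisely the ingredient that makes this corollary a derivational analogue of \cite[Corollary~6.3.3]{ADH}.
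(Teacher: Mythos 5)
Your proof is correct and your first stage (the transcendental part, iterating Lemma~\ref{strictres} with $f=0$) matches the paper's reduction, made explicit by transfinite recursion. For the algebraic part you take a genuinely different route: you Henselize $L_0$ and then invoke the classical existence and uniqueness of unramified extensions of a Henselian field with prescribed residue extension, whereas the paper avoids Henselization altogether by treating one residue generator $y$ at a time, lifting its minimal polynomial $\overline F$ over $\res(K)$ to a monic $F\in\mathcal O[Y]$ (which is then irreducible in $K[Y]$) and citing \cite[Lemma~3.1.35]{ADH} for the existence and uniqueness of the valuation on $K[Y]/(F)$ extending $v$; strictness then follows from Lemma~\ref{strictalg} just as in your argument. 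The paper's version is more self-contained and uniform, since the single cited lemma replaces the Henselization and unramified machinery and the same single-generator reduction handles both the transcendental and algebraic cases; your version imports a heavier but standard toolkit and is equally valid. You correctly flagged that the realization of the algebraic residue extension is the one ingredient not explicit in the excerpt, and your observation that a derivation vanishing on $\res(L_0)$ must vanish on all of the algebraic extension $E$ (characteristic zero) is exactly what the paper also uses, via the remark following Lemma~\ref{strictalg}.
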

\begin{proof} We can reduce to the case $E=\res(K)(y)$.   
If $y$ is transcendental over~$\res(K)$, then the corollary holds with 
$L=K(a)$ as defined above with $f=0$, by Lemma~\ref{strictres}. 
Next, suppose $y$ is algebraic over $\res(K)$, with minimum
polynomial $\overline{F}(Y)\in \res(K)[Y]$ over~$\res(K)$. Take
monic $F\in \mathcal{O}[Y]$
with image~$\overline{F}$ in $\res(K)[Y]$.
Then $F$ is irreducible in $K[Y]$. Take a field extension 
$L=K(a)$ of~$K$ where~$a$ is algebraic over $K$ with
minimum polynomial~$F$ over~$K$. 
Then there is a unique valuation $v_L\colon L^\times \to \Gamma$ that extends the valuation of~$K$; see \cite[Lemma~3.1.35]{ADH}. Then $L$  with this valuation and the  unique derivation extending the derivation of $K$  has the desired property, by Lemma~\ref{strictalg} and the remark following its proof. 
\end{proof} 

\noindent
For future reference we also state~\cite[Corollary~6.3.3]{ADH} itself:

\begin{lemma}\label{lem:6.3.3}
Let $E$ be a differential field extension of $\res(K)$. Then there is an extension $L$ of $K$ with small derivation having the same value group as $K$ and differential residue field isomorphic to $E$ over $\res(K)$.
\end{lemma}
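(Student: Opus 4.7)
The plan is to build $L$ by a Zorn's lemma argument on pairs $(L,\iota)$, where $L\supseteq K$ is a valued differential extension with small derivation and $\Gamma_L=\Gamma$, and $\iota\colon\res(L)\hookrightarrow E$ is a differential $\res(K)$-algebra embedding, ordered by extension in both coordinates. This set contains $(K,\id)$ and is closed under unions of chains, so has a maximal element $(L,\iota)$. It suffices to show $\iota$ is surjective; assuming otherwise, pick $y\in E\setminus\iota(\res(L))$ and produce a proper extension, splitting on whether $y$ is differentially transcendental or differentially algebraic over $\iota(\res(L))$.

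If $y$ is differentially transcendental, take a differential indeterminate $a$, form the differential polynomial ring $L\{a\}$ and its fraction field $L\langle a\rangle$, and equip the latter with the gaussian valuation $v(P):=\min_{\i}v(P_{\i})$ from \cite[Section~4.5]{ADH}. This yields a valued differential extension of $L$ with value group $\Gamma$, small derivation, and residue field $\res(L)\langle\bar a\rangle$ with $\bar a$ differentially transcendental over $\res(L)$; extending $\iota$ by $\bar a\mapsto y$ contradicts maximality. If $y$ is differentially algebraic of order $n$, let $\bar P\in\iota(\res(L))\{Y\}$ be a minimal annihilator of $y$ and let $\bar F(T)\in\iota(\res(L))(y,\ldots,y^{(n-1)})[T]$ be the monic minimum polynomial of $y^{(n)}$ extracted from $\bar P$. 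Adjoin algebraically independent transcendentals $a_0,\ldots,a_{n-1}$ to $L$ with the gaussian valuation ($a_i\asymp 1$, $\bar a_i$ algebraically independent over $\res(L)$); set $M:=L(a_0,\ldots,a_{n-1})$; adjoin a root $a_n$ of a monic lift $F(T)\in\mathcal O_M[T]$ of $\bar F$; let the valuation extend uniquely to $L':=M(a_n)=L(a_0,\ldots,a_n)$. Define the derivation on $L'$ extending that of $L$ via $a_i'=a_{i+1}$ for $0\le i<n$; the value of $a_n'$ is then forced by applying $\der$ to $F(a_n)=0$. The induced residue derivation on $\res(L')=\res(L)(\bar a_0,\ldots,\bar a_n)$ sends $\bar a_i\mapsto\bar a_{i+1}$ for $i<n$, and $\bar a_n$ to the element forced by differentiating $\bar F(\bar a_n)=0$; the analogous calculation in $E$ applied to $\bar F(y^{(n)})=0$ yields $y^{(n+1)}$, so extending $\iota$ by $\bar a_i\mapsto y^{(i)}$ gives a differential $\res(K)$-embedding $\res(L')\hookrightarrow E$, again contradicting maximality.

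The main obstacle is verifying smallness of the derivation on $L'$: the derivation is genuinely defined only on the full $L'$ and not on the intermediate purely transcendental subextension $M$, since $a_{n-1}'=a_n\notin M$, so Lemma~\ref{strictalg} does not directly apply to the algebraic step $M\subseteq L'$. Instead, one checks smallness on $L'$ directly: applying $\der$ to $F(a_n)=0$ and solving expresses $a_n'$ as an explicit rational expression in the $a_i$'s whose numerator has coefficients in $\mathcal O_{L'}$ (using smallness of $L$ together with $a_i\preceq 1$), and whose denominator $F'(a_n)\asymp 1$ in view of the separability of $\bar F$ in characteristic zero; hence $a_n'\preceq 1$, and a routine polynomial and quotient-rule calculation then extends this to $\der\smallo_{L'}\subseteq\smallo_{L'}$.
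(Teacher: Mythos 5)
The paper does not prove this lemma itself; it states it as a verbatim quotation of~\cite[Corollary~6.3.3]{ADH} ``for future reference.'' So there is no in-paper argument to compare against, only the adjacent Corollary~\ref{corstrictres}, which uses the same skeleton (Zorn reduction to a one-step extension, then a transcendental/algebraic dichotomy) in the easier situation of trivial residue derivation. Your proof follows the pattern one would expect for~\cite[Cor.~6.3.3]{ADH}, and it is correct.

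A few confirmations of the points you flag or leave implicit. In the differentially transcendental case, smallness of the derivation of $L\langle a\rangle$ with the gaussian valuation does hold: for $P=\sum P_{\i}a^{\i}$ one has $\der P=\sum Q_{\j}a^{\j}$ with each $Q_{\j}$ a $\Z$-linear combination of the $P_{\i}$ and $\der P_{\i}$, so $v(\der P)\ge v(P)$ by smallness of $\der_L$; the quotient rule then gives $\der\smallo_{L\langle a\rangle}\subseteq\smallo_{L\langle a\rangle}$, and the residue field is $\res(L)\langle\bar a\rangle$ with $\bar a$ differentially transcendental, so $\bar a\mapsto y$ extends $\iota$. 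In the differentially algebraic case you correctly identify the crux: since $\der$ moves $a_{n-1}$ out of $M$, neither the unnumbered lemma preceding Lemma~\ref{strictres} nor Lemma~\ref{strictalg} applies at the intermediate stage, and smallness must be checked on $L'$ directly. Your computation works: $\partial F/\partial T(a_n)\asymp 1$ (separability of $\bar F$ over $\res(M)$ in characteristic zero), the numerator of $a_n'$ lies in $\mathcal O_{L'}$, so $a_n'\preceq 1$; then every $f\in\smallo_{L'}$ is uniquely $g(a_n)$ with $g\in M[T]$ of degree $<\deg F$ and $v(f)=\min_i v(g_i)$ (because $\bar g\neq 0$ of degree $<\deg\bar F$ forces $\bar g(\bar a_n)\neq 0$), so the $g_i\in\smallo_M$, and the gaussian description of $\smallo_M$ plus $a_i,a_i'\preceq 1$ gives $\der g_i\in\smallo_{L'}$ and hence $\der f\in\smallo_{L'}$. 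Two minor points worth adding: $\bar F$ is irreducible over $\res(M)\cong\iota(\res(L))(y,\dots,y^{(n-1)})$ because the minimal annihilator $\bar P$, being irreducible and of positive degree in $Y^{(n)}$, is primitive in $Y^{(n)}$ (so \cite[Lemma~3.1.35]{ADH} applies and the valuation on $L'$ is unique); and the Zorn argument needs the routine set-theoretic remark that one can bound $|L|$ by $\max(|K|,|E|,\aleph_0)$.
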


\subsection*{Further generalities about coarsening} In this subsection we suspend our convention that $K$ denotes a 
valued {\em differential\/} field, and just assume it is a valued field, not necessarily of characteristic $0$. Notations
not involving $\der$ keep their usual meaning; in particular, the valuation of $K$ is $v\colon K^\times \to \Gamma=v(K^\times)$. Let
$\Delta$ be a convex subgroup of $\Gamma$. Then the
coarsening $K_{\Delta}$ of $K$ by
$\Delta$ is the valued field  with the same underlying field
as $K$, but with valuation
$\dot{v}=v_{\Delta}\colon K^\times \to \dot{\Gamma}=\Gamma/\Delta$.
The residue field $\res(K_\Delta)$ of $K_\Delta$ is turned into a valued field with value group $\Delta$ and residue field $\res(K)$ as described in the subsection on coarsening of Section~\ref{sec:prelim}.
The following well-known fact is \cite[Corollary~3.4.6]{ADH}, and is used several times below:

\begin{lemma}\label{lem:3.4.6}
The valued field $K$ is spherically complete iff the valued fields $K_\Delta$ and $\res(K_\Delta)$ are spherically complete.
\end{lemma}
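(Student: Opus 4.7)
The plan is to reduce the biconditional to the standard pc-sequence characterization of spherical completeness (a valued field is spherically complete iff every pc-sequence in it admits a pseudolimit), and then shuttle pc-sequences between $K$, $K_\Delta$, and $\res(K_\Delta)$, exploiting that the induced valuation on $\res(K_\Delta)$ has value group $\Delta$ and residue field $\res(K)$.

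For the forward direction, suppose $K$ is spherically complete. If $(a_\rho)$ is a pc-sequence in $K_\Delta$, I would first note that strict increase of $\dot v(a_{\rho+1} - a_\rho)$ in $\dot\Gamma$ forces strict increase of $v(a_{\rho+1} - a_\rho)$ in $\Gamma$, because a strict $\dot\Gamma$-inequality corresponds to a difference lying outside $\Delta$ and hence positive in $\Gamma$. So $(a_\rho)$ is pc in $K$, and its $v$-pseudolimit $\ell$ satisfies $v(\ell - a_\rho) = v(a_\sigma - a_\rho)$ eventually; projecting this identity to $\dot v$ exhibits $\ell$ as a $\dot v$-pseudolimit. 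For $\res(K_\Delta)$, lift a pc-sequence $(\bar a_\rho)$ to $(a_\rho)$ in $\dot{\mathcal O}\setminus\dot\smallo$; since $v(a_{\rho+1} - a_\rho) = v(\bar a_{\rho+1} - \bar a_\rho) \in \Delta$, the sequence $(a_\rho)$ is pc in $K$, its $v$-pseudolimit $\ell$ has $v(\ell - a_\rho) \in \Delta$ (hence $\ell \in \dot{\mathcal O}$), and $\bar\ell$ pseudoconverges to $(\bar a_\rho)$.

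For the backward direction, assume $K_\Delta$ and $\res(K_\Delta)$ are spherically complete, and let $(a_\rho)$ be a nontrivial pc-sequence in $K$ with $\gamma_\rho := v(a_{\rho+1} - a_\rho)$ strictly increasing eventually. The projection $\dot\gamma_\rho$ in $\dot\Gamma$ is weakly increasing, and I split into two cases. In Case~A, $\dot\gamma_\rho$ is strictly increasing cofinally often; passing to a cofinal subsequence, $(a_\rho)$ is pc in $K_\Delta$ with a $\dot v$-pseudolimit $\ell \in K$, and strict $\dot v$-increase of $\dot v(\ell - a_\rho)$ once more forces strict $v$-increase, so $\ell$ is a $v$-pseudolimit (of the subsequence, hence of the original pc-sequence). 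In Case~B, $\dot\gamma_\rho$ eventually stabilizes at some $\dot\gamma$; pick $c \in K^\times$ with $vc = \gamma$, fix a large $\rho_0$, and set $b_\rho := (a_\rho - a_{\rho_0})/c$ for $\rho > \rho_0$. Then $v(b_{\rho+1} - b_\rho) = \gamma_\rho - \gamma \in \Delta$ is strictly increasing and $v(b_\rho) = \gamma_{\rho_0} - \gamma \in \Delta$, so $b_\rho \in \dot{\mathcal O}\setminus\dot\smallo$. The residues $(\bar b_\rho)$ form a pc-sequence in $\res(K_\Delta)$; lift its pseudolimit $\bar\ell$ to $\ell_0 \in \dot{\mathcal O}$ and set $\ell := a_{\rho_0} + c\ell_0$. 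The identity $\ell - a_\rho = c(\ell_0 - b_\rho)$ yields $v(\ell - a_\rho) = \gamma + v(\bar\ell - \bar b_\rho)$, strictly increasing in $\gamma + \Delta$, making $\ell$ a $v$-pseudolimit of $(a_\rho)$.

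The main obstacle is the bookkeeping in Case~B of the backward direction: one must verify that the scaled differences $b_\rho$ land in $\dot{\mathcal O}\setminus\dot\smallo$ so the residues are defined and nonzero, and that $\ell_0 - b_\rho$ likewise remains in $\dot{\mathcal O}\setminus\dot\smallo$ eventually so that its $v$-valuation coincides with $v(\bar\ell - \bar b_\rho)$. The latter relies on nontriviality of the pc-sequence; an eventually constant pc-sequence has a pseudolimit trivially, so this is no restriction.
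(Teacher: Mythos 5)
Your proof is correct, and it follows the standard route (pc-sequence characterization of spherical completeness, plus the dichotomy on whether the coarsened gauge $\dot\gamma_\rho$ eventually stabilizes). Note, though, that the paper gives no proof of this lemma at all: it is stated as a restatement of \cite[Corollary~3.4.6]{ADH}, so there is no in-paper argument to compare against; your write-up is a self-contained replacement and, as far as I can tell, matches the argument in the cited source in spirit. Two very small points worth spelling out if you were to include this in a paper: (a) in Case~A of the backward direction, the extraction of a \emph{cofinal} subsequence on which $\dot\gamma_\rho$ is strictly increasing uses that the index set is well-ordered (take the ``record'' indices), and the fact that a pseudolimit of a cofinal subsequence of a pc-sequence is a pseudolimit of the whole sequence; (b) in the forward direction for $\res(K_\Delta)$, only the successor differences $a_{s(\rho)}-a_\rho$ need to lie in $\dot{\mathcal O}\setminus\dot\smallo$, so a nontrivial pc-sequence can always be lifted as you describe even if some $\bar a_\rho$ is $0$.
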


\noindent
Let $F$ be a valued field extension of $K_{\Delta}$ with value group 
$v_F(F^\times)=\Gamma/\Delta$. Let also $\res(F)$ be given a valuation $w\colon \res(F)^\times \to \Delta$ that extends the
valuation $v\colon \res(K_{\Delta})^\times \to \Delta$. Then we can
extend $v\colon K^\times \to \Gamma$ to a map 
$v\colon F^\times \to \Gamma$  as follows. For $f\in F^\times$, take $g\in K^\times$ and $u\in F^\times$ such that
$f=gu$ and $v_F(u)=0$; then $\res u\in
\res(F)^\times$, so $w(\res u)\in \Delta$; it is 
easy to check that $v(g)+w(\res u)\in \Gamma$ depends only on~$f$ and not
on the choice of $g$, $u$; now put $v(f):= v(g)+w(\res u)$. 

\begin{lemma}\label{immcoarse} 
$v\colon F^\times \to \Gamma$ 
is a valuation on $F$ with $\Delta$-coarsening $v_{\Delta}=v_F$.
\end{lemma}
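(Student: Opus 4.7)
The plan is to view $v$ as the composition of the valuation $v_F$ on $F$ with the valuation $w$ on $\res(F)$, landing in the extension $\Gamma$ of $\Delta$ by $\dot\Gamma$ prescribed by the valuation on $K$. First I will check that $v(f)$ is well-defined and multiplicative; then I will derive the ultrametric inequality by reducing (via multiplicativity) to the statement ``$v(1+u)\ge 0$ whenever $v(u)\ge 0$''; and finally I will observe that the coarsening identity $v_\Delta = v_F$ is essentially built into the formula.

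For well-definedness, suppose $f = g_1 u_1 = g_2 u_2$ are two admissible decompositions. Then $g_1/g_2 = u_2/u_1 \in K^\times$ has $v_F$-value $0$, so $g_1/g_2 \in \dot{\mathcal{O}} \setminus \dot{\smallo}$, and $\res(g_1/g_2) = \res(u_2/u_1)$ is a nonzero element of $\res(K_\Delta) \subseteq \res(F)$ whose $\res(K_\Delta)$-valuation is $v(g_1) - v(g_2)$. Since $w$ extends this valuation, $w(\res u_2) - w(\res u_1) = v(g_1) - v(g_2)$, which is exactly what is needed. Multiplicativity then follows by factoring $f_1 f_2 = (g_1 g_2)(u_1 u_2)$, and the coarsening identity $v(f) + \Delta = v(g) + \Delta = \dot v(g) = v_F(g) = v_F(f)$ falls out of the formula. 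For the ultrametric inequality, multiplicativity reduces the problem to showing $v(1+u) \ge 0$ whenever $u \in F^\times$ satisfies $v(u) \ge 0$ and $1 + u \ne 0$. If $v_F(u) > 0$, then $1 + u$ has $v_F$-value $0$ and residue $1$, so $v(1+u) = w(1) = 0$. Otherwise $v_F(u) = 0$ and $w(\res u) \ge 0$; if $1 + \res u \ne 0$ in $\res(F)$, then $v(1+u) = w(1 + \res u) \ge \min(0, w(\res u)) = 0$ by the ultrametric inequality for $w$, while if $\res u = -1$, writing $1 + u = g' u'$ with $g' \in K^\times$ and $v_F(u') = 0$, the fact that $v_F(1+u) > 0$ in $\dot\Gamma = \Gamma/\Delta$ forces $v(g') > \Delta$ in $\Gamma$, which yields $v(1+u) = v(g') + w(\res u') > 0$.

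The one genuinely delicate step is in that last subcase of the ultrametric argument: translating $v_F(1+u) > 0$ in $\dot\Gamma$ into $v(g') > \Delta$ in $\Gamma$ relies essentially on $\Delta$ being \emph{convex} in $\Gamma$, and this convexity is also what guarantees that the ``coarse'' contribution $v(g) \in \Gamma$ and the ``fine'' contribution $w(\res u) \in \Delta$ never interfere in the computation of $v(f) = v(g) + w(\res u)$. Beyond this point, everything reduces to routine bookkeeping.
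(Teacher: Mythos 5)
Your proof is correct and follows essentially the same route as the paper's (much terser) argument: both verify that $v$ is a group homomorphism, note the coarsening identity directly from the formula, and establish the ultrametric inequality by a case split that amounts to distinguishing $v_F(f_1) < v_F(f_2)$ from $v_F(f_1) = v_F(f_2)$ (equivalently, $v_F(u) > 0$ vs.\ $v_F(u) = 0$ for $u = f_2/f_1$ after reduction by multiplicativity). Your write-up supplies more detail than the paper—in particular the further subcase $\res u = -1$ and the explicit use of convexity of $\Delta$—but there is no difference in method.
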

\begin{proof} Clearly 
$v\colon F^\times \to \Gamma$ is a group morphism  
with $v_F(f)=v(f)+ \Delta\in \Gamma/\Delta$ for $f\in F^\times$. 
Also, if $f\in F^\times$ and $v_F(f)>0$, then $vf>0$ and $v(1+f)=0$. 
Next, for $f_1,f_2\in F^\times$ with $f_1+f_2\ne 0$ one shows that 
$v(f_1+f_2) \ge \min\big\{vf_1,vf_2\big\}$ by distinguishing the cases
$v_F(f_1)=v_F(f_2)$ and $v_F(f_1) < v_F(f_2)$. 
\end{proof}
   
\noindent
Let $L$ be the valued field extension of $K$ that has the same underlying field as $F$ and has valuation $v$ as above. Then the lemma above says that
$L_{\Delta}=F$, and the valuation $w$ on $\res(F)$ equals
the valuation $v\colon \res(L_{\Delta})^\times \to \Delta$ induced by $v\colon L^\times \to \Gamma$ and $\Delta$. If $\res(L_{\Delta})$ is an immediate extension of $\res(K_{\Delta})$, then $L$ is an immediate extension of $K$.  See the following diagram, where
arrows like $\dashrightarrow$ indicate partial maps; for example, the residue map of~$K_\Delta$ is defined
only on $\dot{\mathcal O}$.
$$\xymatrix{ K_\Delta\ \ar@{^{(}->}[rr]\ar@{-->}[d] & & F\ar@{-->}[d] &\hskip-4.5em=\ L_\Delta \\ 
\res(K_\Delta)\ \ar@{-->}[dr]_v \ar@{^{(}->}[rr] &  &\ar@{-->}[dl]^w \, \res(F) \\
& \Delta & } $$
In the situation above, assume $K$ is of characteristic zero and is equipped with a small derivation (with respect to $v$), and $F$ is equipped with a small
derivation (with respect to $v_F$) that makes it a valued differential field extension of $K_{\Delta}$. Assume also that
the induced derivation on $\res(F)$ is small with respect to $w$.
Then the derivation of $F$ is small as a derivation of $L$ (with respect to the valuation $v$ of $L$).

\subsection*{Putting it all together} First one more special case of the main theorem:

\begin{prop}\label{adhimms} Suppose $\der$ is small and the derivation of $\res(K)$ is nontrivial. Then $K$ has the Krull property.
\end{prop}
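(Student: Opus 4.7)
The plan is to reduce the proposition directly to a known result from \cite{ADH} together with Lemma~\ref{strict} above. First I observe that the hypothesis ``the derivation of $\res(K)$ is nontrivial'' is, by the description of the induced derivation on the residue field, equivalent to $\der\mathcal{O}\not\subseteq\smallo$: there is some $a\in \mathcal{O}$ whose residue has nonzero derivative, which means $a'\notin \smallo$, i.e.\ $\der\mathcal O\not\subseteq \smallo$, and conversely.

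With this rephrasing, we are exactly in the setting of \cite[Corollary~6.9.5]{ADH}, which the introduction explicitly recalls: if $K$ has small derivation and $\der\mathcal{O}\not\subseteq\smallo$, then $K$ has a spherically complete immediate extension $L$ with small derivation. I would simply quote that result to produce such an $L$. Note that since $L$ has small derivation, its derivation is automatically continuous (take $\phi=1$ in the continuity criterion), so $L$ is a legitimate extension of $K$ in the sense of this paper.

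It remains only to verify that $L$ is a \emph{strict} extension of $K$. For this I invoke Lemma~\ref{strict}: the hypotheses there are that $\der$ is small, $L$ has small derivation, and $\der\mathcal{O}\not\subseteq \smallo$, all of which are at hand. The conclusion of that lemma is precisely that $L$ strictly extends $K$. Hence $L$ is a spherically complete immediate strict extension of $K$, establishing the Krull property.

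There is no real obstacle: the proof is a two-line concatenation, where the nontrivial content (the existence statement) is entirely outsourced to \cite[Corollary~6.9.5]{ADH}, and the contribution of the present paper is only the matching of ``strict'' to the residue-field-nontriviality hypothesis via Lemma~\ref{strict}.
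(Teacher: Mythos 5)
Your proof is correct and matches the paper's own argument exactly: the paper also notes that, in view of Lemma~\ref{strict}, the proposition is just \cite[Corollary~6.9.5]{ADH}. Your added remark about continuity being automatic for a small derivation is a reasonable sanity check but not something the paper dwells on.
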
 

\noindent
In view of Lemma~\ref{strict}, this is just \cite[Corollary~6.9.5]{ADH}. We have not yet completely settled the case $S(\der)=\{0\}$ of the main theorem, but we can now take care of this:

\begin{prop}\label{sdg} Suppose $S(\der)=\{0\}$ and $\Gamma^{>}$ has a least element. Then $K$ has the Krull property.
\end{prop}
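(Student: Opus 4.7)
\noindent
The plan is to reduce to Proposition~\ref{adhimms} through a careful case analysis. I would first establish that $\Gamma(\der)$ has a maximum element. Let $\beta$ denote the least positive element of $\Gamma$; if $\Gamma(\der)$ had no maximum, then for any $\gamma_0 \in \Gamma(\der)$ there would exist $\gamma' \in \Gamma(\der)$ with $\gamma' > \gamma_0$, and since $\beta$ is the least positive element, $\gamma' \ge \gamma_0 + \beta$, whence $\gamma_0 + \beta \in \Gamma(\der)$ by downward-closedness. Combined with the trivial inclusion $\Gamma(\der) - \beta \subseteq \Gamma(\der)$ (also by downward-closedness), this yields $\Gamma(\der) + \beta = \Gamma(\der)$, placing $\beta$ in $S(\der)$ and contradicting $S(\der) = \{0\}$. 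Picking $\phi_0 \in K^\times$ with $v(\phi_0) = \max \Gamma(\der)$ and passing to the compositional conjugate $K^{\phi_0}$ (which preserves the Krull property), I would reduce to the case where $\der$ is small and $0 = \max \Gamma(\der)$.

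Next I would split into two subcases according to whether $\der\mathcal{O}$ is contained in $\smallo$. If $\der\mathcal{O} \not\subseteq \smallo$, then the derivation induced on $\res(K)$ is nontrivial, and Proposition~\ref{adhimms} immediately supplies the desired spherically complete immediate strict extension of $K$.

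The remaining subcase $\der\mathcal{O} \subseteq \smallo$ (so the induced derivation on $\res(K)$ is trivial) is the main obstacle. Here I plan to construct the extension directly: take $L$ to be a spherically complete immediate valued field extension of $K$, which exists by Krull's theorem for valued fields, and extend $\der$ to a continuous derivation $\der_L$ on $L$. The key Lipschitz-type estimate $v(\der a) \ge \beta$ for $a \in \mathcal{O}$---inherited from $\der\mathcal{O} \subseteq \smallo$ together with $\Gamma^>$ having least element $\beta$---should ensure that for every pc-sequence $(a_\rho)$ in $K$ the image sequence $(\der a_\rho)$ again pseudoconverges in $L$, giving a well-defined continuous extension $\der_L$ still satisfying $\der_L\mathcal{O}_L \subseteq \smallo_L$ and $\der_L\smallo_L \subseteq \smallo_L$. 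Then Lemma~\ref{glgk} gives $\Gamma_L(\der) = \Gamma_K(\der)$, and strictness of $L$ over $K$ follows. The delicate part will be justifying rigorously the existence of $\der_L$ via a careful analysis of the interaction between pseudolimits and the derivation, exploiting the uniform-continuity estimate available in this subcase.
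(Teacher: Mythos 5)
Your opening reduction is sound and matches the paper: the observation that $\Gamma(\der)$ has a maximum (else the least positive element of $\Gamma$ would lie in $S(\der)$), followed by compositional conjugation to the case where $\der$ is small and $\max\Gamma(\der)=0$ (equivalently $\Gamma(\der)=\Gamma^{\le}$), is exactly how the paper's proof begins. Your first subcase ($\der\mathcal{O}\not\subseteq\smallo$, hence nontrivial derivation on $\res(K)$, apply Proposition~\ref{adhimms}) is also correct, though it is worth noting that the paper does not actually split into cases here: its single coarsening argument covers both subcases at once.

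The subcase $\der\mathcal{O}\subseteq\smallo$ is where your argument has a genuine gap. The estimate $v(\der a)\ge\beta$ for $a\in\mathcal{O}$ (where $\beta:=\min\Gamma^{>}$) is merely a restatement of $\der\mathcal{O}\subseteq\smallo$; it is not a Lipschitz-type bound. What a Leibniz-rule bootstrap actually yields is: $v(\der a)\ge v(a)$ whenever $v(a)\in(\Z\beta)^{\ge}$, and $v(\der a)>\Z\beta$ whenever $v(a)>\Z\beta$. This controls the coarsened value $\dot v(\der a)$ with $\Delta:=\Z\beta$, but gives no control at all on $v(\der a)$ within a single coset of $\Delta$ once $v(a)>\Delta$. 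Consequently, for a pc-sequence $(a_\rho)$ in $K$ whose successive differences have values above $\Delta$, the values $v\big(\der(a_{\rho'}-a_\rho)\big)$ need not be eventually strictly increasing, so there is no reason for $(\der a_\rho)$ to be a pc-sequence. And even if it were, a spherically complete immediate extension $L$ of the valued field $K$ is \emph{not} a completion in general: pseudolimits are determined only modulo the width of the sequence, so ``the pseudolimit of $(\der a_\rho)$'' does not single out an element of $L$, and $\der_L$ is not well-defined by this recipe. (For $\Gamma=\Z$ all divergent pc-sequences have width $\{\infty\}$ and the Leibniz bootstrap gives a true Lipschitz bound; that is exactly the setting of the paper's Proposition~\ref{coarseder3} and the $\Gamma=\Z$ uniqueness result, but it does not extend as you propose.) Note also that if your construction worked as stated, it would manufacture a canonical extension of $\der$ to the maximal immediate valued field extension, which would settle the uniqueness question that the paper explicitly leaves open already for $\Gamma=\Z^{2}$.

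What the paper does instead is coarsen by $\Delta=\Z\beta$: the residue field $\res(K_\Delta)$ is a valued differential field with discrete value group $\Delta\cong\Z$ and, crucially, a \emph{nontrivial} small derivation (because $\Gamma(\der)=\Gamma^{\le}$ forces some $g\in\smallo$ with $v(g')=\beta$). The derivation then extends uniquely to the completion $\res(K_\Delta)^{\c}$. One lifts this to an extension $F$ of $K_\Delta$ with the right differential residue field via Lemma~\ref{lem:6.3.3}, makes $F$ spherically complete using Proposition~\ref{adhimms} (which applies because $\res(F)$ has nontrivial derivation), and finally ``uncoarsens'' using the machinery preceding the proposition to get the desired extension $L$ of $K$ with $L_\Delta=F$. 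To repair your proof you would need to bring in this coarsening step; the direct extension of $\der$ to a maximal immediate valued field extension of $K$ does not go through.
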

\begin{proof} Let $1$ denote the least element of $\Gamma^{>}$. We first note that $\Gamma(\der)$ has a largest element: otherwise, $\Gamma(\der)$ would be closed under adding $1$, and so $1\in S(\der)$, a contradiction. Thus by compositional conjugation we can arrange that $\Gamma(\der)=\Gamma^{\le}$, so the derivation
of $K$ is small. We have the convex
subgroup $\Delta:=\Z 1$ of $\Gamma$, so
the valuation of the
differential residue field $\res(K_{\Delta})$ of the coarsening
$K_{\Delta}$ is discrete. The completion $\res(K_{\Delta})^{\operatorname{c}}$ of the valued field $\res(K_{\Delta})$ is
a spherically complete immediate extension of $\res(K_{\Delta})$.
Since the derivation of $K$ is small, so is that of $K_\Delta$ and hence that
of $\res(K_\Delta)$. (See the remarks after Lemma~\ref{lem:4.4.4}.)
The derivation of $\res(K_{\Delta})$ is nontrivial: with $\phi\in K$ satisfying $v\phi=1$ we have $\der\smallo\not\subseteq\phi\smallo$, since  $\Gamma(\der)=\Gamma^{\le}$, so we can take  $g\in\smallo$ with $v(g')\leq v\phi=1$, and then $v_\Delta(g)\geq 0=v_\Delta(g')$. This derivation extends uniquely to a continuous derivation on $\res(K_{\Delta})^{\operatorname{c}}$, and $\res(K_{\Delta})^{\operatorname{c}}$ equipped
with this derivation is a strict extension
of the valued differential field $\res(K_{\Delta})$.  
$$\xymatrix{ K_\Delta\ \ar@{^{(}->}[r]\ar@{-->}[d]  &  F\ar@{-->}[d] &\hskip-6.5em=\ L_\Delta  \\ 
\res(K_\Delta)\   \ar@{^{(}->}[r]   &   \res(F) &\hskip-2.5em \cong \ \res(K_\Delta)^{\operatorname{c}}} $$
By applying Lemma~\ref{lem:6.3.3} to the differential field extension $\res(K_{\Delta})^{\operatorname{c}}\supseteq \res(K_{\Delta})$
we obtain an extension $F$ of $K_{\Delta}$ with small derivation,
the same value group~$v_F(F^\times)=\Gamma/\Delta$ as 
$K_{\Delta}$, and with
differential residue field $\res(F)$ isomorphic to
$\res(K_{\Delta})^{\operatorname{c}}$ over~$\res(K_{\Delta})$. Extending $F$
further using Proposition~\ref{adhimms}, if necessary, we arrange also that $F$ is spherically complete.

Next we equip
$\res(F)$ with a valuation $w\colon \res(F)^\times \to \Delta$ that makes 
$\res(F)$ isomorphic as a valued differential field to $\res(K_{\Delta})^c$ over $\res(K_{\Delta})$. This places us in the situation of the previous subsection, and so we obtain an extension $L$ of $K$ with the same value group $\Gamma$ such that
$L_{\Delta}=F$ (so $L$ and $F$ have the same underlying differential field), the valuation induced by $L$ and $\Delta$
on $\res(L_{\Delta})=\res(F)$ equals $w$, and the derivation of
$L$ is small. It follows easily that $L$ is an immediate extension of~$K$. Since $F=L_{\Delta}$ and $\res(L_{\Delta})$ are spherically complete, $L$ is spherically complete by Lemma~\ref{lem:3.4.6}. Since the derivation of $L$ is small
and $\Gamma_K(\der)$ has largest element $0$, the extension $L$ of $K$ is strict, by  Lemma~\ref{strictimm}.  
\end{proof}

\noindent
We can now finish the proof of our main theorem. We are given
$K$ and have to show that $K$ has a spherically complete
immediate strict extension. We already did this in several cases, and 
by Theorems~\ref{thimm} and Proposition~\ref{sdg} it only remains to consider the case $\Delta:=S(\der)\ne \{0\}$. We assume this below and also arrange by
compositional conjugation that the derivation is small. By
Lemma~\ref{derdot} we have $\der\dot{\mathcal{O}}\subseteq \dot\smallo$, and so the derivation of
$\res(K_{\Delta})$ is trivial. Take a spherically complete
immediate valued field extension $E$ of the valued field
$\res(K_{\Delta})$.
By Corollary~\ref{corstrictres} applied to $K_{\Delta}$ we obtain 
a strict extension $F$ of $K_{\Delta}$ with value group
$v_F(F^\times)=\Gamma/\Delta$, the derivation of $\res(F)$
is trivial, and $\res(F)$, as a field, is isomorphic to $E$ over
$\res(K_{\Delta})$. We equip $\res(F)$ with a valuation 
$w\colon \res(F)^\times\to \Delta$ that makes $\res(F)$ isomorphic as a valued field to $E$ over $\res(K_{\Delta})$.
We are now in the situation of the previous subsection, and so we obtain an extension $L$ of $K$ with the same value group $\Gamma$ as~$K$ such that
$L_{\Delta}=F$ (so $L$ and $F$ have the same underlying differential field), the valuation induced by $L$ and $\Delta$
on $\res(L_{\Delta})=\res(F)$ equals $w$, and the derivation of
$L$ is small. Now $\res(L_{\Delta})$ is an immediate extension of $\res(K_{\Delta})$, hence $L$ is an immediate extension of $K$, and so $L$  strictly extends $K$ by Lemma~\ref{strictdelta}. 
$$\xymatrix{ K_\Delta\ \ar@{^{(}->}[r]^{\text{strict}}\ar@{-->}[d]  & L_\Delta 
\ \ar@{^{(}->}[r]^{\text{strict}}\ar@{-->}[d] & M_\Delta\ar@{-->}[d]   \\ 
\res(K_\Delta)\  \ar@{^{(}->}[r]   &  \, \res(L_\Delta) \ar@{=}[r] & \res(M_\Delta)} $$
Lemma~\ref{skdelta} yields $S_{K_{\Delta}}(\der)=\{0\}$, and so $S_{L_{\Delta}}(\der)=\{0\}$ by Lemma~\ref{glgk}. Then
Theorem~\ref{thimm} and Proposition~\ref{sdg} yield a spherically complete immediate strict extension $G$ of $L_{\Delta}$. This places us again
in the situation of the previous subsection, with $L$
and $G$ in the role of $K$ and $F$. Hence we obtain 
an extension $M$ of $L$ with the same value group $\Gamma$ as $L$ such that
$M_{\Delta}=G$ (so $M$ and $G$ have the same underlying differential field), the valuation induced by $M$ and $\Delta$
on $\res(M_{\Delta})=\res(G)=\res(F)$ equals $w$, and the derivation of $L$ is small. Therefore $M$ is an immediate extension of~$L$ and thus of $K$. Since $M_{\Delta}$ and $\res(M_{\Delta})$ are spherically complete, $M$ is spherically complete by Lemma~\ref{lem:3.4.6}. The
extension $M$ of $L$ is strict by Lemma~\ref{resstrict}. Thus
$M$ is a spherically complete immediate strict extension of $K$ as
required. This concludes the proof of the main theorem. \qed

\section{Uniqueness}\label{Uniqueness}

\noindent
Let us say that {\it $K$ has the uniqueness property\/} if it has up to isomorphism over~$K$ a unique spherically complete immediate strict
extension. If $\Gamma=\{0\}$ and more generally, if $K$ is spherically complete, then $K$ clearly has  the uniqueness property. If $\der=0$, then the derivation of any immediate strict extension of $K$ is also trivial, so $K$ has the uniqueness property. The next result describes a more interesting situation where $K$ has the uniqueness property.

\begin{prop} Suppose $\Gamma=\Z$. Then $K$ has the uniqueness property.
\end{prop}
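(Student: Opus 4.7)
The plan is to reduce the problem to the classical fact that a valued field with value group $\mathbb Z$ has a unique completion up to unique isomorphism, and then to use continuity of the derivation to promote this valued-field isomorphism to an isomorphism of valued differential fields.

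First I would observe that any immediate extension $L$ of $K$ satisfies $\Gamma_L=\Gamma=\mathbb Z$, so its valuation is discrete of rank one. In this setting a decreasing chain of balls in $L$ is at most countable (distinct radii are integers), and so $L$ is spherically complete if and only if $L$ is complete as a topological valued field. Consequently, for $i=1,2$, any spherically complete immediate strict extension $L_i$ of $K$ is, viewed purely as a valued field, a completion of $K$. Since the completion of a topological field is unique up to a unique isomorphism over $K$, we obtain a unique valued-field isomorphism $\sigma\colon L_1\to L_2$ over $K$.

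Next I would check density: for any $f\in L_i^\times$, using $\Gamma_K=\Gamma_{L_i}$ pick $a_1\in K^\times$ with $v(a_1)=v(f)$, and using $\res(K)=\res(L_i)$ pick $b_1\in\mathcal O_K$ with $\res(f/a_1)=\res(b_1)$. Then $c_1:=a_1b_1\in K$ satisfies $v(f-c_1)>v(f)$. Iterating and summing produces a Cauchy sequence in $K$ converging to $f$; here discreteness of $\mathbb Z$ guarantees the valuations of the errors tend to $+\infty$. Thus $K$ is dense in each $L_i$. Because the derivation on any valued differential field is by our convention continuous, each $\der_{L_i}$ is the unique continuous derivation on $L_i$ extending $\der_K$. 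For $f=\lim a_n$ with $a_n\in K$ we have $\der_{L_i}(f)=\lim \der_K(a_n)$, and applying $\sigma$ term by term gives $\sigma\bigl(\der_{L_1}(f)\bigr)=\lim \der_K(a_n)=\der_{L_2}\bigl(\sigma(f)\bigr)$. Hence $\sigma$ commutes with derivations and is an isomorphism of valued differential fields over $K$.

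I do not expect any serious obstacle. The only points requiring care are (i) verifying that strictness is not an extra constraint that can be violated by the transported structure — but since we are only comparing two a priori given spherically complete immediate strict extensions and deducing that they are abstractly isomorphic over $K$, strictness transfers automatically along $\sigma$; and (ii) the density argument, which is standard for immediate extensions of discretely valued fields. Once these are in hand, the uniqueness is immediate from Kaplansky's result specialized to the rank one discrete case, upgraded via continuity of the derivation.
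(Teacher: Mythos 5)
Your proposal is correct and takes essentially the same approach as the paper: the paper's proof also works with the completion $\hat K$ of the discretely valued field $K$, extends $\der$ by continuity, and uses the universal property of the completion to get the unique valued-field embedding $\hat K\to L$ into any spherically complete immediate extension $L$, which is then automatically a valued differential field isomorphism.
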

\begin{proof} Let $\hat{K}$ be the completion of the discretely valued field $K$. Then the unique extension of $\der$ to a continuous function $\hat{K} \to \hat{K}$ is a derivation on $\hat{K}$ that makes~$\hat{K}$ an immediate strict extension of 
$K$.
If $L$ is any spherically complete immediate extension of $K$, then we have a unique valued
field embedding $\hat{K} \to L$ over $K$, and this embedding is clearly an isomorphism of valued differential fields.
\end{proof}

\begin{prop}\label{coarseder3} Suppose $\Delta$ is a convex subgroup of $\Gamma$ and $\res(K_{\Delta})$ is spherically complete. If $K_{\Delta}$ has the uniqueness property, then so does $K$.  
\end{prop}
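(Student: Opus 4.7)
My plan is to reduce uniqueness for $K$ to uniqueness for $K_{\Delta}$ by coarsening two candidate extensions and lifting the resulting coarsened isomorphism. Concretely, suppose $L_1,L_2$ are spherically complete immediate strict extensions of $K$; I want to produce a valued differential field isomorphism $L_1\to L_2$ over $K$.

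First I would verify that the coarsenings $L_{i,\Delta}$ are themselves spherically complete immediate strict extensions of $K_{\Delta}$. Since $L_i$ is immediate over $K$, the valued field $\res(L_{i,\Delta})$ has value group $\Delta$ and residue field $\res(K)$, so it is an immediate valued field extension of $\res(K_{\Delta})$; the spherical completeness (hence maximality) of $\res(K_{\Delta})$ then forces $\res(L_{i,\Delta})=\res(K_{\Delta})$. With this in hand, Lemma~\ref{coarseder2} yields that $L_{i,\Delta}$ is a strict extension of $K_{\Delta}$ (clearly an immediate one), and Lemma~\ref{lem:3.4.6} together with spherical completeness of $L_i$ shows that $L_{i,\Delta}$ is spherically complete. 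The assumed uniqueness property of $K_{\Delta}$ then provides a valued differential field isomorphism $\phi\colon L_{1,\Delta}\to L_{2,\Delta}$ over $K_{\Delta}$.

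Since $L_i$ and $L_{i,\Delta}$ share the same underlying differential field, $\phi$ is automatically a differential field isomorphism $L_1\to L_2$ over $K$, and the remaining task is to check that it preserves the finer valuation $v$, not just $\dot v$. For this I would invoke the reconstruction of $v$ from $\dot v$ and the residue-field valuation $w$ described just before Lemma~\ref{immcoarse}: for $f\in L_1^\times$, writing $f=gu$ with $g\in K^\times$ and $\dot v(u)=0$ gives $v(f)=v(g)+w(\res u)$. Now $\phi$ fixes $K$, preserves $\dot v$, and (being over $K_{\Delta}$) induces a valued field automorphism of $\res(L_{i,\Delta})=\res(K_{\Delta})$ that fixes $\res(K_{\Delta})$ pointwise; this induced automorphism must therefore be the identity, so in particular $\res\phi(u)=\res u$, whence $v(\phi(f))=v(g)+w(\res u)=v(f)$.

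The only mildly delicate point is the equality $\res(L_{i,\Delta})=\res(K_{\Delta})$, on which both the application of Lemma~\ref{coarseder2} and the triviality of the induced automorphism of $\res(K_{\Delta})$ depend; this is exactly what the hypothesis on $\res(K_{\Delta})$ is designed to supply, so there is no serious obstacle beyond this bookkeeping.
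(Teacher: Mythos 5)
Your proof is correct and follows essentially the same route as the paper's: you first verify (exactly as the paper does) that the coarsenings $L_{i,\Delta}$ are spherically complete immediate strict extensions of $K_\Delta$, with $\res(L_{i,\Delta})=\res(K_\Delta)$ forced by the spherical completeness hypothesis, and then lift the resulting isomorphism of coarsenings to an isomorphism over $K$. The only cosmetic difference is in the lifting step: the paper writes $a=b(1+\epsilon)$ with $b\in K^\times$, $\epsilon\in\dot\smallo_L$ and computes $va=vb=vi(a)$ directly, while you invoke the reconstruction $v(f)=v(g)+w(\res u)$ from the subsection on coarsening and argue that $\phi$ induces the identity on $\res(K_\Delta)$; these are equivalent bookkeeping choices resting on the same key fact $\res(L_{i,\Delta})=\res(K_\Delta)$.
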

\begin{proof} Let $L$ and $M$ be spherically complete immediate strict extensions of $K$. Then $\res(L_{\Delta})$ and $\res(M_{\Delta})$ are immediate valued field extensions of $\res(K_{\Delta})$
and thus equal to $\res(K_{\Delta})$. Hence $L_{\Delta}$ and $M_{\Delta}$ are spherically complete immediate extensions of
$K_{\Delta}$, and $L_{\Delta}$ and $M_{\Delta}$ are strict extensions of $K_{\Delta}$ by Lemma~\ref{coarseder2}. 

Next, let $i\colon L_{\Delta} \to M_{\Delta}$ be an isomorphism over $K_{\Delta}$; it is enough to show that then $i\colon L\to M$ is an
isomorphism over $K$. For $a\in L^\times$ we have
$a=b(1+\epsilon)$ with $b\in K^\times$ and $\epsilon\in \dot{\smallo}_L$, so $i(a)=b(1+i(\epsilon))$ and $i(\epsilon)\in \dot{\smallo}_M$, hence $va=vb=vi(a)$.   
\end{proof} 

\noindent
One could try to use this last result inductively, but at this stage we do not even know if uniqueness holds when $\Gamma=\Z^2$, lexicographically ordered. 

\subsection*{The role of linear surjectivity} In the next section we give an example of an $H$-field $K$ that doesn't have the uniqueness property. This has to do with the fact that certain
linear differential equations over this $K$ have no solution in $K$. Here we focus on the opposite situation: as in \cite[Section~5.1]{ADH} a differential field
$E$ of characteristic zero is said to be {\em linearly surjective\/} if for all
$a_1,\dots, a_n, b\in E$ the linear differential equation
$$y^{(n)} + a_1y^{(n-1)} + \cdots + a_n y\ =\ b $$
has a solution in $E$. For valued differential fields this property is related to differential-henselianity: we say that
$K$ is {\em differential-henselian\/} (for short: $\d$-hen\-selian)
if $K$ has small derivation and every differential polynomial $P\in \mathcal{O}\{Y\}=\mathcal{O}[Y, Y', Y'',\dots]$ whose reduction 
$\bar{P}\in \res(K)\{Y\}$ has degree~$1$ has a zero in $\mathcal{O}$; cf.~\cite[Chapter~7]{ADH}. If $K$ is $\d$-henselian, then its differential residue field $\res(K)$ is clearly linearly surjective. Here is a differential analogue of Hensel's Lemma:

 {\em If $K$ has small derivation, $\res(K)$ is linearly surjective, and $K$ is spherically complete, then $K$ is
$\d$-henselian}. This is \cite[Corollary 7.0.2]{ADH}; the
case where $K$ is monotone goes back to Scanlon~\cite{Sc}.

\begin{conjecture}  If $K$ has small derivation and $\res(K)$ is linearly surjective, then $K$ has the uniqueness property.
\end{conjecture}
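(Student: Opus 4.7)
The plan is to use a Zorn/back-and-forth argument to produce an isomorphism between any two spherically complete immediate strict extensions $L_1$ and $L_2$ of $K$. First, by strictness applied with $\phi=1$ and the smallness of $\der$ on $K$, both $L_i$ have small derivation; since $\res(L_i)=\res(K)$ is linearly surjective, the differential Hensel's lemma quoted above (\cite[Corollary~7.0.2]{ADH}) makes each $L_i$ a $\d$-henselian, spherically complete valued differential field. Consider the poset of partial valued differential field embeddings $f\colon K'\to L_2$ over $K$, with $K\subseteq K'\subseteq L_1$ a valued differential subfield, ordered by extension. By Zorn there is a maximal such $f$; I aim to show that its domain equals $L_1$.

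If not, then $K'\subsetneq L_1$ cannot be spherically complete (otherwise $L_1$, being an immediate extension of $K'$, would coincide with $K'$), so there exists a divergent pc-sequence $(a_\rho)$ in $K'$ with pseudolimit $\ell\in L_1\setminus K'$. When $Z(K',\ell)=\emptyset$, spherical completeness of $L_2$ provides a pseudolimit $\ell_2\in L_2$ of $(f(a_\rho))$, and the embedding clause of Lemma~\ref{zdf} extends $f$ uniquely to $K'\langle\ell\rangle\to L_2$ sending $\ell\mapsto\ell_2$, contradicting maximality.

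The remaining case $Z(K',\ell)\ne\emptyset$ is the crux. Take $P\in Z(K',\ell)$ of minimal complexity. The goal is to exhibit $s\in L_2$ satisfying $f(P)(s)=0$ and $v(f(a)-s)=v(a-\ell)$ for every $a\in K'$; granted such $s$, the embedding argument at the end of the proof of Lemma~\ref{zda} (which invokes only Lemma~\ref{Z1} and does not itself use flexibility) produces the desired extension of $f$ to $K'\langle\ell\rangle$. The principal obstacle is producing $s$ using only $\d$-henselianity of $L_2$, without invoking flexibility of $K'$, which is not guaranteed by the hypotheses. What appears to be needed is a strengthened differential Hensel's lemma: every divergent pc-sequence over a subfield of a $\d$-henselian spherically complete valued differential field whose minimal-complexity vanishing polynomial is $P$ admits a pseudolimit that is a zero of $P$.

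I would attack this first in the monotone setting (building on Scanlon~\cite{Sc}) and in settings where $S(\der)=\{0\}$ and $\Gamma^{>}$ has no least element, where flexibility is automatic via Lemma~\ref{flex0} so that Lemma~\ref{zda} applies directly and produces both $\ell\in L_1$ and $s\in L_2$ fitting the construction. To reach the general case one would then coarsen by $\Delta:=S(\der)$ and iterate uniqueness across the coarsening hierarchy using Proposition~\ref{coarseder3}, exploiting that $\res(K_\Delta)$ has trivial derivation (Lemma~\ref{derdot}) and that $S_{K_\Delta}(\der)=\{0\}$ (Lemma~\ref{skdelta}), so that uniqueness for $K_\Delta$ lifts—provided one can arrange $\res(K_\Delta)$ to be spherically complete, or bypass this hypothesis in Proposition~\ref{coarseder3} by a direct argument using $\d$-henselianity of $L_1,L_2$. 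The hard part will be the strengthened Hensel lemma for pseudolimits of minimal-complexity algebraic type: this is precisely the differential analogue of the uniqueness step in Kaplansky's classical theorem, and by design is where linear surjectivity of $\res(K)$ must do all the work.
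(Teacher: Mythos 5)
This statement is a \emph{conjecture} in the paper, not a theorem: the authors explicitly leave it open in general, noting only that it has been established for monotone $K$ (\cite[Theorem~7.4]{ADH}), for $K$ with value group of finite archimedean rank and related cases \cite{DP}, and (in then-unpublished work of Pynn-Coates) for asymptotic $K$. So there is no ``paper's own proof'' to compare against, and any complete argument you give would be new.

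Your outline is a reasonable strategy, and in broad strokes it resembles what one would do: a Zorn/back-and-forth on partial embeddings, splitting into the $Z(K',\ell)=\emptyset$ case (handled by Lemma~\ref{zdf}) and the $Z(K',\ell)\ne\emptyset$ case. But the crux case is precisely the gap, and you name it yourself: to extend $f$ when $P\in Z(K',\ell)$ has minimal complexity, you need to produce $s\in L_2$ with $f(P)(s)=0$ and $v(f(a)-s)=v(a-\ell)$ for all $a\in K'$. Lemma~\ref{zda} gives existence in \emph{some} immediate strict extension, under a flexibility hypothesis, and its embedding clause presupposes an $s$ already in hand. What you propose to bridge this --- a ``strengthened differential Hensel lemma'' producing a pseudolimit that is a zero of the minimal-complexity vanishing polynomial inside a given $\d$-henselian spherically complete target --- is exactly the unproven core of the conjecture; stating that it is needed does not supply it. The secondary coarsening step also has an unresolved circularity: Proposition~\ref{coarseder3} requires $\res(K_\Delta)$ to be spherically complete, which is not automatic, and iterating across the coarsening hierarchy for general $\Gamma$ has no evident termination. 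In short, the proposal correctly identifies the shape of the problem but does not close it; the statement remains a conjecture.
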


\noindent
For monotone $K$ this conjecture has been established: \cite[Theorem 7.4]{ADH}. It has also been proved for $K$ whose value group has finite archimedean rank
and some related cases in \cite{DP}. Recently, Nigel Pynn-Coates has proved the conjecture in the case of most interest to us, namely for asymptotic $K$.
This is part of work in progress.

\section{Nonuniqueness}\label{Nonuniqueness}

\noindent
We begin with a general remark. Let $A\in K[\der]$ and suppose
the equation $A(y)=1$ has no solution in any immediate strict
extension of $K$. Assume in addition that $a\in K$ is such that
the equation $A(y)=a$ has a solution $y_0$ in an immediate strict extension $K_0$ of $K$ and the equation $A(y)=a+1$ has a solution $y_1$ in an immediate strict extension $K_1$ of $K$. Extending $K_0$ and $K_1$ we arrange that $K_0$ and $K_1$ are spherically complete, and we then observe that $K_0$ and $K_1$ cannot be isomorphic over $K$. Thus $K$ does not have the uniqueness property. 

Below we indicate a real closed $H$-field $K$ where the above assumptions hold for a certain $A\in K[\der]$ of order $1$, and so this $K$ does not have the uniqueness property. 

The first two subsections contains generalities about solving linear differential equations of order $1$ in immediate extensions of $\d$-valued fields. In the last subsection we assume familiarity with \cite[Sections~5.1, 11.5, 11.6, 13.9, Appendix~A]{ADH}. 

We recall from \cite[Section~9.1]{ADH} that an asymptotic field $K$ is said to be {\it $\d$-valued}\/ (short for: ``differential-valued'') if $\mathcal O=C+\smallo$. (So each $H$-field is $\d$-valued.)
We also recall that if $K$ is an asymptotic field, then for $f\in K^\times$ with $ f\nasymp 1$, the valuation $v(f^\dagger)$ of the logarithmic derivative of $f$ only depends on~$vf$, so we have a function $\psi\colon\Gamma^{\neq}:=\Gamma\setminus\{0\}\to \Gamma$ with $\psi(vf)=v(f^\dagger)$ for such $f$.
If we want to stress the dependence on $K$ we write $\psi_K$ instead of $\psi$, and for $\gamma\in\Gamma^{\neq}$ we also set $\gamma':=\gamma+\psi(\gamma)$. The pair $(\Gamma,\psi)$ is an {\it asymptotic couple,}\/ that is (see \cite[Section~6.5]{ADH}):
$\psi(\alpha+\beta)\geq\min\big\{\psi(\alpha),\psi(\beta)\big\}$ for all $\alpha,\beta\in\Gamma^{\neq}$ with $\alpha+\beta\neq 0$; $\psi(k\gamma)=\psi(\gamma)$ for $\gamma\in\Gamma^{\neq}$ and $0\neq k\in\Z$; and 
$$\Psi:=\big\{\psi(\gamma):\gamma\in\Gamma^{\neq}\big\}\ <\ (\Gamma^>)':=\big\{\gamma':\gamma\in\Gamma^>\big\}.$$

\subsection*{Slowly varying functions}
In this subsection $K$ is an asymptotic field,  
$\Gamma\neq\{0\}$, and $A\in K[\der]$ is of order $1$.
Proposition~\ref{cor:9.7.1 variant} below is
a variant of \cite[Proposition~9.7.1]{ADH}. Recall from \cite[Section~9.7]{ADH} that for an ordered abelian group $G$ and $U\subseteq G$ a function $\eta\colon U\to G$ is said to be {\it slowly varying}\/ if $\eta(\alpha)-\eta(\beta)=o(\alpha-\beta)$ for all $\alpha\neq\beta$ in $U$; note that then $\gamma\mapsto \gamma+\eta(\gamma)\colon U\to G$ is strictly increasing. Note also that $\psi\colon \Gamma^{\ne}\to \Gamma$ is slowly varying~\cite[Lemma~6.5.4(ii)]{ADH}.

\begin{lemma}\label{lem:v(ydagger-s), variant 1} 
Let  $a\in K^\times$ and $s=a^\dagger$. Then there is a slowly varying function
$\eta\colon\Gamma\setminus\{va\}\to\Gamma$ such that 
$v(y^\dagger-s)=\eta(vy)$ for all $y\in K^\times$ with $vy\neq va$.
\end{lemma}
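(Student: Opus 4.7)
The plan is to reduce the statement to the slow variation of $\psi$ itself. For $y\in K^\times$ with $vy\ne va$, set $u:=y/a$, so $u\in K^\times$ and $vu=vy-va\ne 0$; hence $u\nasymp 1$, and the logarithmic derivative identity gives
$$y^\dagger - s\ =\ (au)^\dagger - a^\dagger\ =\ u^\dagger,$$
so that $v(y^\dagger-s)=v(u^\dagger)=\psi(vu)=\psi(vy-va)$. This suggests defining
$$\eta\colon\Gamma\setminus\{va\}\to\Gamma,\qquad \eta(\gamma)\ :=\ \psi(\gamma-va).$$
With this definition the required identity $v(y^\dagger-s)=\eta(vy)$ holds by the computation above, and the value depends only on $vy$, not on the particular $y$.

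It then remains to verify that $\eta$ is slowly varying. For distinct $\alpha,\beta\in\Gamma\setminus\{va\}$, set $\alpha_0:=\alpha-va$ and $\beta_0:=\beta-va$; these are distinct elements of $\Gamma^{\ne}$, and
$$\eta(\alpha)-\eta(\beta)\ =\ \psi(\alpha_0)-\psi(\beta_0),\qquad \alpha-\beta\ =\ \alpha_0-\beta_0.$$
Since $\psi\colon\Gamma^{\ne}\to\Gamma$ is slowly varying by \cite[Lemma~6.5.4(ii)]{ADH}, we have $\psi(\alpha_0)-\psi(\beta_0)=o(\alpha_0-\beta_0)$, i.e., $\eta(\alpha)-\eta(\beta)=o(\alpha-\beta)$, as required.

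No serious obstacle is expected: the content of the lemma is just the translation identity $y^\dagger-a^\dagger=(y/a)^\dagger$ combined with the fact that $v(u^\dagger)$ is determined by $vu$ for $u\nasymp 1$, plus slow variation of $\psi$ on $\Gamma^{\ne}$. The only subtlety is noting that excluding $vy=va$ from the domain corresponds exactly to excluding $u\asymp 1$, where $u^\dagger$ is not controlled by $\psi$.
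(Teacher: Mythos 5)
Your proof is correct and is exactly the paper's argument: the paper simply writes ``We can take $\eta(\gamma):=\psi(\gamma-va)$ for $\gamma\in\Gamma\setminus\{va\}$,'' leaving the translation identity $y^\dagger-a^\dagger=(y/a)^\dagger$ and the slow variation of $\psi$ implicit. You have just spelled out those two steps.
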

\begin{proof}
We can take $\eta(\gamma):=\psi(\gamma-va)$ for $\gamma\in\Gamma\setminus\{va\}$.
\end{proof}

\begin{lemma}\label{lem:v(ydagger-s), variant 2} Assume $K$ is $\d$-valued.
Let $s\in K$ be such that $v(y^\dagger-s)<(\Gamma^>)'$ for all $y\in K^\times$. 
Then there 
is a slowly varying
function $\eta\colon\Gamma\to\Gamma$ such that 
$$\eta(vy)=v(y^\dagger-s)\quad\text{for all  $y\in K^\times$.}$$
\end{lemma}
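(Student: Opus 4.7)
The plan is to define $\eta(vy):=v(y^\dagger-s)$ for $y\in K^\times$ and verify two properties: that the right-hand side depends only on $vy$, and that the resulting function $\eta\colon\Gamma\to\Gamma$ is slowly varying. Well-definedness is the first step: given $y_1,y_2\in K^\times$ with $vy_1=vy_2$, I write $y_2=y_1u$ with $u\asymp 1$. Since $K$ is $\d$-valued, $u=c(1+\epsilon)$ for some $c\in C^\times$ and $\epsilon\in\smallo$, so $u^\dagger=\epsilon'/(1+\epsilon)$; when $\epsilon\neq 0$ the asymptotic property yields $v(u^\dagger)=v(\epsilon')=(v\epsilon)'\in(\Gamma^>)'$. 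The hypothesis $v(y_1^\dagger-s)<(\Gamma^>)'\leq v(u^\dagger)$, combined with $y_2^\dagger-s=(y_1^\dagger-s)+u^\dagger$ and the ultrametric inequality, then forces $v(y_2^\dagger-s)=v(y_1^\dagger-s)$.

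Next I would tackle the slowly varying property. For $\gamma_1\neq\gamma_2$ in $\Gamma$ and $y_i\in K^\times$ with $vy_i=\gamma_i$, the identity $(y_1^\dagger-s)-(y_2^\dagger-s)=(y_1/y_2)^\dagger$ yields $v\big((y_1^\dagger-s)-(y_2^\dagger-s)\big)=\psi(\gamma_1-\gamma_2)$, hence $\min(\eta(\gamma_1),\eta(\gamma_2))\leq\psi(\gamma_1-\gamma_2)$ with equality when the two values differ. In the case $\eta(\gamma_1)=\eta(\gamma_2)$ the slowly varying condition at $(\gamma_1,\gamma_2)$ is immediate; otherwise, without loss of generality $\eta(\gamma_1)<\eta(\gamma_2)$ and $\eta(\gamma_1)=\psi(\gamma_1-\gamma_2)$, so it remains to show $n\big(\eta(\gamma_2)-\psi(\gamma_1-\gamma_2)\big)<|\gamma_1-\gamma_2|$ for every $n\geq 1$.

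The hard part will be this last estimate. The input $\eta(\gamma_2)<(\Gamma^>)'$ gives $\eta(\gamma_2)<\beta+\psi(\beta)$ for every $\beta\in\Gamma^>$; combined with the invariance $\psi(k\beta)=\psi(\beta)$ for $0\neq k\in\Z$, this yields the required inequality directly when $\Gamma$ is divisible (take $\beta=|\gamma_1-\gamma_2|/(n+1)$). For general $\Gamma$ the cleanest route, I expect, is to pass to a $\d$-valued extension $K'$ of $K$ in which $s=a^\dagger$ for some $a\in(K')^\times$---the hypothesis forces $va\notin\Gamma$---and then to invoke Lemma~\ref{lem:v(ydagger-s), variant 1} in $K'$ to identify $\eta(\gamma)$ with $\psi_{K'}(\gamma-va)$ for $\gamma\in\Gamma$. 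Slowly-varying-ness of $\eta$ on $\Gamma$ then follows from that of $\psi_{K'}$ on $\Gamma_{K'}^{\neq}$ together with the inclusion $\Gamma\subseteq\Gamma_{K'}$.
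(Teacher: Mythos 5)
Your well-definedness check is correct, and your direct argument for the slowly-varying estimate when $\Gamma$ is divisible (taking $\beta=|\gamma_1-\gamma_2|/(n+1)$ and using $\eta(\gamma_2)<\beta'=\beta+\psi(\beta)$ together with $\psi(k\beta)=\psi(\beta)$) is also correct and pleasantly self-contained. It differs from the paper's route, which never splits into a divisibility dichotomy.

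The gap is in the passage to general $\Gamma$. You want a $\d$-valued extension $K'$ in which $s=a^\dagger$ with $va\notin\Gamma$, but you neither establish existence nor address what happens if no such extension is available. Solving $a^\dagger=s$ exactly is a stronger demand than the lemma needs, and the set $\{v(s-b^\dagger):b\in K^\times\}$ may have no largest element, which makes the construction of such a $K'$ a nontrivial issue (and the extension must remain $\d$-valued, of $H$-type, etc.). Your parenthetical claim that the hypothesis forces $va\notin\Gamma$ is also not immediate once $\Gamma_{K'}\supsetneq\Gamma$, since the value $(v\epsilon)'_{K'}$ arising in the check need not lie above $(\Gamma^>)'$.

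The paper sidesteps all of this by never asking for an exact exponential integral. Instead it passes to a sufficiently saturated elementary extension $L$ of $K$ and takes $\phi\in L^\times$ with $v(\phi^\dagger-s)$ as large as possible (a condition that is always realizable by saturation, since it is finitely satisfiable over $K$). Setting $\delta:=v(\phi^\dagger-s)$, one gets the clean formula $\eta(\gamma)=\min\{\psi_L(\gamma-v\phi),\delta\}$ for $\gamma\in\Gamma$. Slowly-varyingness on $\Gamma\setminus\{v\phi\}$ then follows from that of $\psi_L(\cdot-v\phi)$ because truncating a slowly varying function by a constant keeps it slowly varying; and the remaining case $\gamma\in\Gamma$, $v\phi\in\Gamma$, is handled by \cite[Lemma~9.2.10(iv)]{ADH} applied to the asymptotic couple $(\Gamma_L,\psi_L-\delta)$. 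This is where your approach and the paper's diverge essentially: you try to kill the $\delta$ (by making it $\infty$), while the paper keeps $\delta$ finite and handles the truncation directly. If you want to preserve your divisibility argument, a cleaner fix would be to pass to an elementary extension in which $v(\phi^\dagger-s)$ is maximal, rather than to a $\d$-valued extension solving $a^\dagger=s$; the elementary-extension move is unconditional.
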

\begin{proof}
Let $y$ range over $K^\times$.
Take a nonzero $\phi$ in an elementary extension $L$ of~$K$ such that
$\phi^\dagger-s\preceq y^\dagger -s$ for all $y$; thus $\delta:=v(\phi^\dagger-s)<\big(\Gamma_L^{>}\big)'$.
From $v(y^\dagger-s) \le v(\phi^\dagger -s)$ we get
$y^\dagger-\phi^\dagger\not\sim s-\phi^\dagger$, and thus
$$v(y^\dagger-s)\ =\ v\big((y^\dagger-\phi^\dagger)-(s-\phi^\dagger)\big)\ =\ \min\big\{v\big((y/\phi)^\dagger\big), \delta\big\}\ =\ \min\big\{\psi_L(vy-v\phi), \delta\big\},$$
where in case $y\asymp \phi$ we use that $L$ is $\d$-valued to
get the last equality. Thus $v(y^\dagger-s)=\eta(vy)$, where $\eta\colon \Gamma\to \Gamma$ is defined by $\eta(\gamma):= \min\big\{\psi_L(\gamma-v\phi), \delta\big\}$.  
Next we show that $\eta$ is slowly varying. The function $\gamma\mapsto \psi_L(\gamma-v\phi)\colon \Gamma_L\setminus\{v\phi\}\to\Gamma_L$ is slowly varying, hence so is
the restriction of $\eta$ to $\Gamma\setminus\{v\phi\}$. Moreover, if $v\phi\in \Gamma$ and
$\gamma\in \Gamma\setminus \{v\phi\}$, then $\eta(v\phi)=\delta$, so
\begin{align*}
\eta(\gamma)-\eta(v\phi)\ &=\ \min\big\{\psi_L(\gamma-v\phi),\delta\big\}-\delta \\
&=\  \min\big\{\psi_L(\gamma-v\phi)-\delta,0\big\}\ =\ o(\gamma-v\phi)
\end{align*}
by \cite[Lemma~9.2.10(iv)]{ADH} applied to the asymptotic couple 
$(\Gamma_L,\psi_L-\delta)$, which has small derivation.   
\end{proof}

\begin{lemma}\label{prop:9.7.1 variant} Suppose $K$ is $\d$-valued and $\big\{f\in K:\ vf\in (\Gamma^>)'\big\}\subseteq (K^\times)^\dagger$.
Then there is a slowly varying function $\eta\colon\Gamma\setminus v(\ker A)\to\Gamma$ such that
$$v\big(A(y)\big)=vy+\eta(vy)\quad\text{for all $y\in K$ with $vy\notin v(\ker A)$.}$$
\end{lemma}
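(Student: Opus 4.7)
The strategy is to reduce to the two variants of the valuation formula for $y^{\dagger}-s$ proved immediately above. Write $A = a\der + b$ with $a\in K^{\times}$ and $b\in K$, put $s := -b/a \in K$, and use the identity
$$A(y)\ =\ ay\bigl(y^{\dagger}-s\bigr) \qquad\text{for $y\in K^{\times}$,}$$
so that $v(A(y)) = va + vy + v(y^{\dagger}-s)$. It will then suffice to produce a slowly varying function $\eta_0$ on the right domain with $\eta_0(vy) = v(y^{\dagger}-s)$ and set $\eta := va + \eta_0$; adding a constant preserves slow variation. The kernel of $A$ consists of the nonzero $y$ with $y^{\dagger} = s$, so $\ker A$ is a $C$-subspace of $K$ of dimension $\leq 1$. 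Since $K$ is $\d$-valued, $\mathcal{O} = C+\smallo$ forces $C\subseteq\mathcal{O}$ and then $C\cap\smallo=\{0\}$, so $vc=0$ for all $c\in C^{\times}$; hence $v(\ker A)$ is either empty or a singleton $\{vy_0\}$, which already matches the two alternative conclusions below.

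If $\ker A\neq 0$, fix $y_0 \in \ker A\setminus\{0\}$, so $s = y_0^{\dagger}$, and apply Lemma~\ref{lem:v(ydagger-s), variant 1} with $y_0$ playing the role of $a$: this yields the slowly varying function $\eta_0\colon\Gamma\setminus\{vy_0\}\to\Gamma$, $\eta_0(\gamma) := \psi(\gamma - vy_0)$, satisfying $v(y^{\dagger}-s) = \eta_0(vy)$ whenever $vy \neq vy_0$. If instead $\ker A = \{0\}$, the plan is to verify the hypothesis of Lemma~\ref{lem:v(ydagger-s), variant 2}, namely $v(y^{\dagger}-s) < (\Gamma^{>})'$ for all $y\in K^{\times}$, and apply that lemma to obtain a slowly varying $\eta_0\colon\Gamma\to\Gamma$ with $\eta_0(vy) = v(y^{\dagger}-s)$ on all of $\Gamma$. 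In either case $\eta := va + \eta_0$ is slowly varying and $v(A(y)) = vy + \eta(vy)$ on the prescribed domain.

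The main obstacle is this verification when $\ker A = \{0\}$. Suppose for contradiction there exist $y\in K^{\times}$ and $\gamma\in\Gamma^{>}$ with $v(y^{\dagger}-s) \geq \gamma'$. If equality holds, then $y^{\dagger}-s \in K$ has $v(y^{\dagger}-s) \in (\Gamma^{>})'$, and the hypothesis $\{f\in K : vf\in(\Gamma^{>})'\}\subseteq(K^{\times})^{\dagger}$ provides $w\in K^{\times}$ with $w^{\dagger} = y^{\dagger}-s$; then $(y/w)^{\dagger} = s$, so $y/w \in \ker A\setminus\{0\}$, contradicting triviality of the kernel. If the inequality is strict, reduce to the equality case by choosing $\epsilon\in K^{\times}$ with $v\epsilon = \gamma$ (so $\epsilon\prec 1$ and $v(\epsilon') = \gamma'$) and replacing $y$ by $y_1 := y(1+\epsilon)$: expanding
$$y_1^{\dagger}-s\ =\ (y^{\dagger}-s) + \frac{\epsilon'}{1+\epsilon}$$
gives two summands of valuations $>\gamma'$ and $=\gamma'$ respectively, so $v(y_1^{\dagger}-s) = \gamma' \in (\Gamma^{>})'$, and the previous step applied to $y_1$ produces the same contradiction. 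This perturbation inside $1+\smallo$ is the only step that uses the full strength of the assumption on $(\Gamma^{>})'$; everything else is packaging around the two preceding lemmas.
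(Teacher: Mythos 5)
Your proof follows the same route as the paper: decompose $A = a_1\der + a_0$, set $s=-a_0/a_1$, use $A(y)=a_1 y(y^\dagger - s)$, and reduce to the two preceding lemmas on $v(y^\dagger - s)$. The paper's proof stops at ``the claim follows from Lemmas~\ref{lem:v(ydagger-s), variant 1} and~\ref{lem:v(ydagger-s), variant 2}'', leaving to the reader the case split on whether $\ker A$ is trivial and, when it is, the verification that $v(y^\dagger - s) < (\Gamma^>)'$ for all $y\in K^\times$; you carry out exactly this verification correctly, via the perturbation $y\mapsto y(1+\epsilon)$ to reduce the strict-inequality case to the case $v(y^\dagger-s)\in(\Gamma^>)'$, where the hypothesis $\{f : vf\in(\Gamma^>)'\}\subseteq(K^\times)^\dagger$ yields a nonzero element of $\ker A$ and the desired contradiction.
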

\begin{proof}
We have $A=a_0+a_1\der$ with $a_0,a_1\in K$, $a_1\neq 0$;
put $s:=-a_0/a_1$.
For $y\in K^\times$ we get
$A(y)=a_1y(y^\dagger-s)$, hence
$v\big(A(y)\big)=va_1+vy+v(y^\dagger-s)$, and
the claim  follows from Lemmas~\ref{lem:v(ydagger-s), variant 1} and~\ref{lem:v(ydagger-s), variant 2}.
\end{proof}

\noindent
We refer to \cite[Section~11.1]{ADH} for the definition of the subset 
$\exc^{\ev}(A)$ of $\Gamma$, for ungrounded $K$; since $A$ has order $1$, this set $\exc^{\ev}(A)$ 
has at most one element. Recall also that $K$ is said to be of {\it $H$-type}\/ or {\it $H$-asymptotic}\/ if
   $\psi$ restricts to a decreasing function~$\Gamma^>\to\Gamma$,
and to have {\it asymptotic integration}\/ if $(\Gamma^{\neq})'=\Gamma$.

\begin{prop}\label{cor:9.7.1 variant}
Let  
$K$ be $\d$-valued of $H$-type with asymptotic integration.
Then there is a slowly varying function $\eta\colon\Gamma\setminus \exc^{\ev}(A)\to\Gamma$ such that
$$v\big(A(y)\big)\ =\ vy+\eta(vy)\quad\text{for all $y\in K^\times$ with $vy\notin \exc^{\ev}(A)$.}$$
\end{prop}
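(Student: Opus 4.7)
My plan is to reduce Proposition~\ref{cor:9.7.1 variant} to Lemmas~\ref{lem:v(ydagger-s), variant 1} and~\ref{lem:v(ydagger-s), variant 2} by analysing $s:=-a_0/a_1\in K$, where $A=a_0+a_1\der$ with $a_1\in K^\times$. For $y\in K^\times$ one has $A(y)=a_1 y(y^\dagger-s)$, hence $v(A(y))=v(a_1)+vy+v(y^\dagger-s)$, so it suffices to exhibit a slowly varying function $\eta_0$ on $\Gamma\setminus\exc^{\ev}(A)$ controlling $v(y^\dagger-s)$; adding the constant $v(a_1)$ then yields the required $\eta$.

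First I would dispose of the case $s\in(K^\times)^\dagger$: picking $a\in K^\times$ with $a^\dagger=s$, Lemma~\ref{lem:v(ydagger-s), variant 1} directly supplies a slowly varying $\eta_0\colon\Gamma\setminus\{va\}\to\Gamma$. Because $a\in\ker A$ and this zero persists under every compositional conjugation $A\mapsto A^\phi$, the definition of $\exc^{\ev}$ from \cite[Section~11.1]{ADH} forces $\exc^{\ev}(A)=\{va\}$, matching the domain of $\eta_0$.

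In the remaining case $s\notin(K^\times)^\dagger$ I would further split on whether there exists $y_0\in K^\times$ with $v(y_0^\dagger-s)\in(\Gamma^>)'$. If no such $y_0$ exists, then $v(y^\dagger-s)<(\Gamma^>)'$ for every $y\in K^\times$, Lemma~\ref{lem:v(ydagger-s), variant 2} supplies $\eta_0\colon\Gamma\to\Gamma$, and $\exc^{\ev}(A)=\emptyset$. If such $y_0$ does exist, then using $H$-type with asymptotic integration (which identifies $(\Gamma^>)'$ with $\{v(g'):g\in\smallo^{\neq}\}$) I would iteratively correct $y_n$ by a factor $1+\epsilon_n$ with $\epsilon_n\prec 1$ chosen so that $(1+\epsilon_n)^\dagger\sim\epsilon_n'\asymp s-y_n^\dagger$, producing a pc-sequence $(y_n)$ in $K^\times$ with $v(y_n^\dagger-s)$ strictly increasing through $(\Gamma^>)'$ while $v(y_n)=v(y_0)$ stays constant. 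For any $y\in K^\times$ with $vy\neq v(y_0)$, the decomposition $y^\dagger-s=(y/y_0)^\dagger+(y_0^\dagger-s)$ and the comparison $v((y/y_0)^\dagger)=\psi(vy-v(y_0))\in\Psi<(\Gamma^>)'\ni v(y_0^\dagger-s)$ show $v(y^\dagger-s)=\psi(vy-v(y_0))$, giving a slowly varying $\eta_0\colon\Gamma\setminus\{v(y_0)\}\to\Gamma$; and $\exc^{\ev}(A)=\{v(y_0)\}$ should then follow from the eventual-exception machinery of \cite[Section~11.1]{ADH}.

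The main obstacle is this second subcase of Case 2: executing the iterative construction of $(y_n)$ cleanly (the standard asymptotic-field manoeuvre using $(1+\epsilon)^\dagger\sim\epsilon'$ for $\epsilon\prec 1$, combined with bounds on $v(c')$ for $c\asymp 1$ to ensure that the error term $\epsilon_n'-(s-y_n^\dagger)$ is strictly smaller than $s-y_n^\dagger$), and identifying $\exc^{\ev}(A)$ precisely with $\{v(y_0)\}$ against the definition in \cite[Section~11.1]{ADH}, which tracks the stabilization of exceptional indices under compositional conjugation.
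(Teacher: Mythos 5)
Your proposal takes a genuinely different route from the paper, but it has a real gap in the case you yourself flag as the ``main obstacle.''

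The paper's own proof is a two-line reduction: by \cite[Lemma~10.4.3]{ADH} one passes to an \emph{immediate} $\d$-valued extension $L$ of $K$ satisfying $\{f\in L:vf\in(\Gamma_L^>)'\}\subseteq(L^\times)^\dagger$, applies Lemma~\ref{prop:9.7.1 variant} to $L$ (whose hypothesis is now met), and finishes by noting $v\big((\ker_L A)\setminus\{0\}\big)\subseteq\exc_L^{\ev}(A)=\exc^{\ev}(A)$. Inside $L$ your trichotomy collapses to your Cases~1 and~2a: if $v(y_0^\dagger-s)\in(\Gamma^>)'$ for some $y_0$, then $y_0^\dagger-s\in(L^\times)^\dagger$, hence $s\in(L^\times)^\dagger$, so one is back in your Case~1. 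Your Cases~1 and~2a, done directly in $K$, are fine: Case~1 needs $va\in\exc^{\ev}(A)$, which follows from $a\in\ker A$ and $v(\ker A\setminus\{0\})\subseteq\exc^{\ev}(A)$ together with $|\exc^{\ev}(A)|\le 1$; in Case~2a the domain question is vacuous since $\eta_0$ is already defined on all of $\Gamma$ and can simply be restricted (your aside that $\exc^{\ev}(A)=\emptyset$ there is not needed). The formula $\eta_0(\gamma)=\psi(\gamma-vy_0)$ in Case~2b is also correct, by exactly the comparison you give.

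The gap is establishing $vy_0\in\exc^{\ev}(A)$ in Case~2b. The proposition is false without this: for $y\asymp y_0$, writing $y=c y_0$ with $c=c_0(1+\epsilon)$, $c_0\in C^\times$, $\epsilon\prec 1$, the value $v(y^\dagger-s)=v\big((1+\epsilon)^\dagger+(y_0^\dagger-s)\big)$ genuinely varies with $\epsilon$ (choosing $\epsilon$ with $\epsilon'\sim-(y_0^\dagger-s)$ pushes it strictly above $v(y_0^\dagger-s)$), so no function $\eta$ can satisfy the stated formula at $\gamma=vy_0$; hence one must have $vy_0\in\exc^{\ev}(A)$. Your iterative pc-sequence $(y_n)$ does not obviously prove this: only $y_0$ enters the definition of $\eta_0$, the sequence plays no role in the argument as written, and a countable iteration need not be cofinal in $(\Gamma^>)'$ nor produce a kernel element. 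The cleanest way to close the gap is exactly the paper's: pass to $L$, where $y_0^\dagger-s\in(L^\times)^\dagger$ yields $a\in(\ker_L A)\setminus\{0\}$ with $va=vy_0$, so $vy_0\in v(\ker_L A)\subseteq\exc_L^{\ev}(A)=\exc^{\ev}(A)$ (the invariance under immediate extensions being part of the $\exc^{\ev}$ machinery you cite but do not use). In other words, what you defer to ``the eventual-exception machinery'' is precisely the step that makes the paper's proof a one-step reduction rather than a case analysis.

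A small positive remark: your concern about the improvement step ($\epsilon_n'\sim s-y_n^\dagger$ rather than merely $\asymp$) is correctly resolved in a $\d$-valued field via $\mathcal O=C+\smallo$: given $g$ with $vg\in(\Gamma^>)'$, asymptotic integration yields $h\prec 1$ with $h'\asymp g$, writing $g/h'=c_0+\delta$ with $c_0\in C^\times$, $\delta\prec 1$ gives $(c_0 h)'\sim g$.
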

\begin{proof}
By \cite[Lem\-ma~10.4.3]{ADH} we have an immediate $\d$-valued extension $L$ of $K$
such that $\big\{s\in L:\ vs\in (\Gamma_L^>)'\big\}\subseteq (L^\times)^\dagger$.
Applying Lemma~\ref{prop:9.7.1 variant} to $L$ in place of $K$
yields 
 a slowly varying function $\eta\colon\Gamma\setminus v(\ker_L A)\to\Gamma$ such that
$$v\big(A(y)\big)\ =\ vy+\eta(vy)\quad\text{for all $y\in K$ with $vy\notin v(\ker_L A)$.}$$
It only remains
to note that $v\big((\ker_L  A)\setminus\{0\}\big)\subseteq \exc_L^{\ev}(A)=\exc^{\ev}(A)$.
\end{proof}

\subsection*{Application to solving first-order linear differential equations}
In this subsection $K$ is $\d$-valued, $A\in K[\der]$ has 
order $1$, and $g\in K$ is such that $g\notin A(K)$,
so $S:=v\big(A(K)-g\big)\subseteq \Gamma$.

\begin{lemma}\label{lem:solve order 1}
Suppose $K$ is henselian of $H$-type with asymptotic integration. Also assume 
$\exc^{\ev}(A)=\emptyset$ 
and  $S$
does not have a largest element.
Let $L=K(f)$ be a field extension of $K$
with~$f$ transcendental over~$K$, equipped
with the unique derivation extending that of $K$ such that $A(f)=g$.
Then there is a valuation of $L$ that makes~$L$ an 
immediate asymptotic extension of~$K$.
\end{lemma}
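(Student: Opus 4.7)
The plan is to construct a pc-sequence $(y_\rho)$ in $K$ whose intended pseudolimit is $f$, adjoin it by the Kaplansky construction to define a valuation on $L=K(f)$, and then show the prescribed derivation makes $L$ an immediate asymptotic extension of $K$.

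First, using that $S$ has no largest element I would inductively pick $(y_\rho)$ in $K$ with $\beta_\rho:=v(A(y_\rho)-g)$ strictly increasing and cofinal in $S$. By Proposition~\ref{cor:9.7.1 variant}, applied with the hypothesis $\exc^{\ev}(A)=\emptyset$, there is a slowly varying $\eta\colon \Gamma\to\Gamma$ with $v(A(y))=vy+\eta(vy)$ for all $y\in K^\times$, so $\gamma\mapsto\gamma+\eta(\gamma)$ is strictly increasing. For $\sigma<\rho$ we have $v(A(y_\rho-y_\sigma))=v(A(y_\rho)-A(y_\sigma))=\beta_\sigma$, hence $v(y_\rho-y_\sigma)=\alpha_\sigma$ where $\alpha_\sigma+\eta(\alpha_\sigma)=\beta_\sigma$, and $(\alpha_\sigma)$ is strictly increasing; thus $(y_\rho)$ is a pc-sequence in $K$. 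Moreover $(y_\rho)$ has no pseudolimit in $K$: any such $y$ would give $v(A(y)-g)\geq\beta_\rho$ for all $\rho$ by the same formula, forcing $v(A(y)-g)$ to exceed every element of the cofinal subset $\{\beta_\rho\}$ of $S$---yet $v(A(y)-g)\in S$. Because $K$ is henselian of equicharacteristic zero, $K$ is algebraically maximal, so $(y_\rho)$ cannot be of algebraic type over $K$; it is thus of transcendental type, and Kaplansky's construction (cf.~\cite[\S3.2]{ADH}) endows the field $K(f)$ with a unique valuation extending that of $K$ for which $f$ is a pseudolimit of $(y_\rho)$. With this valuation $L=K(f)$ is an immediate valued field extension of $K$.

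It remains to verify that the derivation of $L$ defined by $A(f)=g$ is continuous for this valuation and that $L$ is asymptotic. The clean algebraic identity $A(f-y_\rho)=g-A(y_\rho)$, immediate from $K$-linearity of $A$, yields $v(A(f-y_\rho))=\beta_\rho=\alpha_\rho+\eta(\alpha_\rho)$ with $v(f-y_\rho)=\alpha_\rho$, showing that the formula of Proposition~\ref{cor:9.7.1 variant} persists at $f-y_\rho$; from this one computes $v\bigl((f-y_\rho)^\dagger\bigr)=\psi_K(\alpha_\rho)$. I would then verify strictness of $L$ over $K$ via Lemma~\ref{strictimm}: for $\phi\in K^\times$ with $\der\smallo\subseteq\phi\smallo$ and $u\in\smallo_L$, immediacy gives $u=b(1+\epsilon)$ with $b\in\smallo$ and $\epsilon\in\smallo_L$, and approximating $\epsilon$ by $K$-elements via the pc-sequence while controlling logarithmic derivatives through the identity for $(f-y_\rho)^\dagger$ forces $u'\in\phi\smallo_L$. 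Once strictness is in hand, Lemma~\ref{as2} applies (using $\Gamma_L=\Gamma$ and that $K$ is ungrounded by asymptotic integration), upgrading $L$ to an asymptotic extension of $K$. The main obstacle is precisely this last compatibility step: converting the pc-sequence valuation data into uniform control of the logarithmic derivative on all of $\smallo_L$, so that the $K$-level behavior encoded by $\eta$ (equivalently $\psi_K$) lifts to $L$; the identity $A(f-y_\rho)=g-A(y_\rho)$ is the essential bridge, but the bookkeeping that propagates it from the generators $f-y_\rho$ to arbitrary elements of $L$ must be done with care.
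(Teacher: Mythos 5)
Your construction of the pc-sequence $(y_\rho)$ and the divergence argument essentially match the paper's: both use Proposition~\ref{cor:9.7.1 variant} to get the strictly increasing gauge $\gamma\mapsto\gamma+\eta(\gamma)$, show $(y_\rho)$ is a pc-sequence, and rule out a pseudolimit in $K$ by the "no largest element of $S$'' hypothesis. That part is fine. (Your divergence argument is phrased slightly differently from the paper's but is correct: for a pseudolimit $y$, one gets $v(y-y_\rho)=\alpha_\rho$ eventually, hence $v\bigl(A(y)-A(y_\rho)\bigr)=\beta_\rho$, so $v\bigl(A(y)-g\bigr)\geq\beta_\rho$ eventually, which would make $v\bigl(A(y)-g\bigr)\in S$ an upper bound for the cofinal set $\{\beta_\rho\}$.)

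The genuine gap is in the second half. The paper delegates the entire construction of the valuation on $L=K(f)$ and the verification that $L$ is an immediate \emph{asymptotic} extension to \cite[Proposition~9.7.6]{ADH}; this is the heavy lifting. You instead invoke Kaplansky's transcendental-type construction to put a valuation on $K(f)$, and then plan to deduce asymptoticity via Lemma~\ref{strictimm} and Lemma~\ref{as2}. But both of those lemmas apply only to an \emph{extension} in the sense of this paper, i.e.\ to a valued \emph{differential} field extension, which already presupposes that the prescribed derivation (the one with $A(f)=g$) is continuous for the Kaplansky valuation. That continuity is precisely what is not established: Kaplansky's theorem gives a canonical valuation on $K(f)$ for which $f$ is a pseudolimit of $(y_\rho)$, but it says nothing about compatibility with a derivation defined by an unrelated algebraic relation on $f$. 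You acknowledge this ("the bookkeeping that propagates it from the generators $f-y_\rho$ to arbitrary elements of $L$ must be done with care") but do not carry it out, and the identity $A(f-y_\rho)=g-A(y_\rho)$, while the right starting point, does not by itself propagate through rational functions of $f$; making that work is the substance of \cite[Proposition~9.7.6]{ADH}. As it stands, the argument begs the question at exactly the step where the paper appeals to that proposition, so the proof is incomplete.
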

\begin{proof}
Take a well-indexed sequence $(y_\rho)$ in $K$ such that 
$\big(v\big(A(y_\rho)-g\big)\big)$ is strictly increasing and cofinal
in $S$. Proposition~\ref{cor:9.7.1 variant} yields a strictly increasing function $i\colon\Gamma\to\Gamma$
with $v\big(A(y)\big)=i(vy)$ for all  $y\in K^\times$. Hence
for $\rho<\sigma$,
$$v\big(A(y_\rho)-g\big)\ =\ 
v\big((A(y_{\rho})-g)-(A(y_{\sigma})-g)\big)\ =\ 
v\big(A(y_\rho-y_\sigma)\big)\ =\ 
i\big(v(y_\rho-y_\sigma)\big),$$ 
so
$i\big(v(y_\rho-y_\sigma)\big) < i\big(v(y_\sigma-y_\tau)\big)$ and
thus $v(y_\rho-y_\sigma)<v(y_\sigma-y_\tau)$
for $\rho<\sigma<\tau$. Hence $(y_\rho)$ is a pc-sequence. 
Suppose towards a contradiction that $y_{\rho} \leadsto y\in K$. 
Then $v(y_\rho-y)$ is eventually strictly increasing, so 
$v\big(A(y_\rho)-A(y)\big)=i\big( v(y_\rho-y) \big)$ is eventually strictly increasing, and thus eventually
$v\big( A(y_{\rho}) - g \big) \leq v\big( A(y)-g \big)$,
contradicting the assumption that $S$ has no largest element. Hence $(y_\rho)$ does not have a pseudolimit in~$K$.
It remains to use \cite[Proposition~9.7.6]{ADH}. 
\end{proof}

\noindent
Here is a situation where the hypothesis about $S$ in Lemma~\ref{lem:solve order 1} is satisfied:

\begin{lemma} \label{lem:no max}
If $S\subseteq v\big(A(K)\big)$, then $S$ does not have a largest element.
\end{lemma}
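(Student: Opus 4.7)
The plan is to assume toward a contradiction that $S$ has a largest element $\sigma$, and then use the hypothesis $\sigma\in v\bigl(A(K)\bigr)$ together with the $\d$-valuedness of $K$ to produce an element of $S$ strictly above $\sigma$. First I would pick $y\in K$ with $v\bigl(A(y)-g\bigr)=\sigma$, and then pick $w\in K^\times$ with $v\bigl(A(w)\bigr)=\sigma$, which is possible since $\sigma\in S\subseteq v\bigl(A(K)\bigr)$. Thus $A(y)-g$ and $A(w)$ have the same valuation, so the ratio $r:=\bigl(A(y)-g\bigr)/A(w)\in K^\times$ satisfies $v(r)=0$, that is, $r\in\mathcal O^\times$.

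Next I would use that $K$ is $\d$-valued, so $\mathcal O = C + \smallo$ with $C$ the constant field, and the residue map restricts to a surjection $C\to\res(K)$. Choosing $c\in C$ with the same residue as $r$ yields $v(r-c)>0$, and therefore
$$v\bigl((A(y)-g) - c\cdot A(w)\bigr)\ =\ v\bigl((r-c)\,A(w)\bigr)\ >\ \sigma.$$
Writing $A=a_0+a_1\der$ with $a_0,a_1\in K$, the identity $c'=0$ gives the key $C$-linearity relation
$$A(cw)\ =\ a_0(cw)+a_1(cw)'\ =\ c\,(a_0w+a_1w')\ =\ c\,A(w),$$
so $c\cdot A(w)=A(cw)$ lies in $A(K)$.

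Combining these, $A(y-cw)-g = (A(y)-g) - c\,A(w)$ has valuation strictly exceeding $\sigma$. Either $A(y-cw)=g$, contradicting $g\notin A(K)$, or $v\bigl(A(y-cw)-g\bigr)\in S$ is strictly larger than $\sigma$, contradicting maximality of $\sigma$. Either way we reach a contradiction, proving the lemma. There is no real obstacle here: the argument is essentially a one-step Hensel-like approximation, and its only subtle ingredient is the $\d$-valuedness, which is exactly what guarantees that the residue of $r$ lifts to a \emph{constant}, so that the correction term $c\cdot A(w)$ automatically stays in $A(K)$.
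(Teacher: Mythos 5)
Your proof is correct and takes essentially the same route as the paper's: the paper picks $h\in K^\times$ with $A(h)\sim A(y)-g$ and sets $y_{\new}:=y-h$, which is exactly your $h=cw$ with $w$ and $c$ combined into one element. You simply make explicit the use of $\d$-valuedness and $C$-linearity of $A$ that the paper leaves implicit in the phrase ``we can pick $h\in K^\times$ such that $A(h)\sim A(y)-g$''.
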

\begin{proof}
Let $y\in K$ be given; we need to find $y_\new\in K$ with
$A(y_\new)-g\prec A(y)-g$.
Since $v\big(A(y)-g\big)\in v\big(A(K)\big)\cap\Gamma$, we
can pick $h\in K^\times$ such that $A(h)\sim A(y)-g$.
Set $y_\new:=y-h$. Then
$A(y_\new)-g=\big(A(y)-g\big)-A(h)\prec A(y)-g$
as required. 
\end{proof}

\subsection*{Some differential-algebraic lemmas}
In this subsection $E$ is a differential field of characteristic zero and $F$ is a differential field extension of $E$.

\begin{lemma}\label{lem:trace} Let $F$ be algebraic over $E$, and $f'+af=1$ with $a\in E$ and $f\in F$. Then $g'+ag=1$ for some $g\in E$.
\end{lemma}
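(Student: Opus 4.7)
The plan is to use a trace argument. First I would reduce to the case where $F/E$ is finite by replacing $F$ with the subfield $E(f)\subseteq F$, which is algebraic and hence finitely generated over $E$ (in fact generated by a single element since it is a simple algebraic extension in characteristic zero). So we may assume $[F:E]=n<\infty$.

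Next, let $\widetilde{E}$ be an algebraic closure of $E$; pick an embedding $F\hookrightarrow\widetilde{E}$ over $E$. The derivation of $E$ extends uniquely to a derivation on $\widetilde{E}$, and with respect to this derivation every $E$-algebra embedding $\sigma\colon F\to\widetilde{E}$ is automatically a differential embedding (by uniqueness of the extension of $\der$ along each conjugate). Let $\sigma_1,\dots,\sigma_n$ be the distinct $E$-embeddings $F\hookrightarrow \widetilde{E}$ (here I use that $F/E$ is separable, which holds since $\operatorname{char}(E)=0$).

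Applying each $\sigma_i$ to the identity $f'+af=1$ yields $\sigma_i(f)'+a\,\sigma_i(f)=1$ in $\widetilde{E}$. Summing these $n$ equations gives
\[
\operatorname{Tr}_{F/E}(f)'+a\cdot\operatorname{Tr}_{F/E}(f)\ =\ n,
\]
where $\operatorname{Tr}_{F/E}(f)=\sum_{i=1}^n\sigma_i(f)\in E$. Since $\operatorname{char}(E)=0$, we have $n\in E^\times$, so setting $g:=\operatorname{Tr}_{F/E}(f)/n\in E$ gives $g'+ag=1$, as desired.

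The only point that requires minor care is the assertion that every $E$-embedding $\sigma\colon F\to\widetilde{E}$ commutes with the derivation; this is standard and follows from the uniqueness of derivation extensions along algebraic extensions (the image $\sigma(F)\subseteq\widetilde{E}$ carries both the pushed-forward derivation from $F$ and the restriction of the derivation on $\widetilde{E}$, and both extend the derivation of $E$, hence agree). There is no real obstacle: the lemma is essentially a one-line computation once the trace is in hand.
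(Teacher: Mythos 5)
Your proof is correct and is essentially the same as the paper's: both reduce to the case $n=[F:E]<\infty$ and take $g=\frac{1}{n}\operatorname{tr}_{F|E}(f)$. The only difference is that you additionally justify, via the conjugate $E$-embeddings into an algebraic closure, the standard fact that $\operatorname{tr}_{F|E}$ commutes with the derivation, which the paper simply asserts.
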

\begin{proof}
We can assume that $n:=[F:E]<\infty$. The trace map $\operatorname{tr}_{F|E}\colon F\to E$ is $E$-linear and satisfies
$\operatorname{tr}_{F|E}(y')=\operatorname{tr}_{F|E}(y)'$ for all $y\in F$ and $\operatorname{tr}_{F|E}(1)=n$.
Thus $g:=\frac{1}{n}\operatorname{tr}_{F|E}(f)\in E$ satisfies $g'+ag=1$.
\end{proof}

\begin{lemma}\label{lem:diff trans}
Let $F=E\<y\>$ where $y$ is differentially transcendental over $E$, and let $a\in E(y)$. Then there is no $f\in F\setminus E$ with $f'+af=1$.
\end{lemma}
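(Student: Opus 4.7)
The plan is to use the hypothesis of differential transcendence to turn the differential equation into a purely algebraic statement about rational functions in countably many independent transcendentals, and then read off a contradiction from the top-order term.

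First I would set $y_i := y^{(i)}$ and observe that because $y$ is differentially transcendental over $E$, the sequence $y_0, y_1, y_2, \dots$ is algebraically independent over $E$, so $E\{y\} \cong E[y_0, y_1, y_2, \dots]$ and $F$ is its fraction field. For each $n \geq 0$, let $K_n := E(y_0, \dots, y_n)$, a purely transcendental extension of $E$, with the convention $K_{-1} := E$. Any element of $F$ lies in some $K_n$, so I choose $n \geq 0$ minimal with $f \in K_n$; by assumption $f \notin E = K_{-1}$, so such an $n$ exists and $f \notin K_{n-1}$.

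Next I would compute $f'$ by the chain rule for the extension $K_n \hookrightarrow K_{n+1}$. On $K_n$ the derivation $\partial$ decomposes as
\[
 f'\ =\ f^{[\partial]}\ +\ \sum_{i=0}^{n} \frac{\partial f}{\partial y_i}\, y_{i+1},
\]
where $f^{[\partial]} \in K_n$ is obtained by applying $\partial$ to the $E$-coefficients of $f$ (treating $y_0,\dots,y_n$ as formal indeterminates) and the $\partial f/\partial y_i \in K_n$ are formal partial derivatives. In particular, viewing $f'$ as a polynomial in $y_{n+1}$ over $K_n$ (legitimate because $y_{n+1}$ is transcendental over $K_n$), its degree is at most $1$ with leading coefficient $\partial f/\partial y_n$.

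Now the hypothesis $a \in E(y) = K_0 \subseteq K_n$ is used in precisely one place: it guarantees $af \in K_n$, so the equation $f' + af = 1$ rewrites as
\[
 \frac{\partial f}{\partial y_n}\cdot y_{n+1}\ =\ 1\ -\ af\ -\ f^{[\partial]}\ -\ \sum_{i=0}^{n-1} \frac{\partial f}{\partial y_i}\, y_{i+1}\ \in\ K_n.
\]
Transcendence of $y_{n+1}$ over $K_n$ forces $\partial f/\partial y_n = 0$, whence $f \in K_{n-1}$, contradicting the choice of $n$ (and yielding $f \in E$ in the case $n=0$).

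The argument is elementary once this picture is set up; there is no real obstacle. The only point requiring a bit of care is the explicit partial-derivative decomposition of $\partial$ on $K_n/E$ and the observation that, thanks to $a \in E(y)$ (rather than $a \in E\langle y\rangle$), the term $af$ introduces no $y_{n+1}$ and hence cannot cancel the top-order contribution of $f'$.
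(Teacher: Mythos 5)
Your proof is correct, and it is a genuine proof rather than the route the paper takes: the paper disposes of the lemma by citing Lemma~4.1.5 of~\cite{ADH}, whereas you give a self-contained direct argument. Setting $y_i:=y^{(i)}$ and $K_n:=E(y_0,\dots,y_n)$, you use that the derivation maps $K_n$ into $K_{n+1}=K_n(y_{n+1})$ via $f\mapsto f^{[\partial]}+\sum_{i\le n}(\partial f/\partial y_i)\,y_{i+1}$, so that for $n$ minimal with $f\in K_n$ the $y_{n+1}$-coefficient of $f'$ is $\partial f/\partial y_n$; since $a\in E(y)=K_0\subseteq K_n$, the relation $f'+af=1$ forces this coefficient to vanish, and the characteristic-zero fact that an element of $K_{n-1}(y_n)$ with zero formal $y_n$-derivative lies in $K_{n-1}$ then contradicts minimality of $n$. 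That last small fact is used implicitly and could be spelled out, but it is standard. What your version buys over the citation is that it makes visible exactly where the hypothesis $a\in E(y)$ (as opposed to $a\in E\langle y\rangle$) enters — it is what keeps $af$ in $K_n$ and so prevents cancellation of the top-order term; indeed the statement fails without it, e.g.\ $f=y'$ and $a=(1-y'')/y'$ satisfy $f'+af=1$ with $f\notin E$.
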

\begin{proof} This is a special case of \cite[Lemma 4.1.5]{ADH}.
\end{proof}

\begin{lemma}
Let $Y$ be an indeterminate over a field $G$ and let $R\in G(Y)$ be such that $R(Y)=R(Y+g)$ for infinitely many $g\in G$. Then $R\in G$.
\end{lemma}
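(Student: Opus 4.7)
The plan is to write $R=P/Q$ with coprime $P,Q\in G[Y]$, $Q$ monic and nonzero. If $P=0$ then $R=0\in G$ and we are done, so I assume $P\neq 0$. I will reduce the claim to the polynomial version: any non-constant $S\in G[Y]$ is fixed by only finitely many translations $Y\mapsto Y+g$.

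Let $H:=\{g\in G : R(Y+g)=R(Y)\}$, which is infinite by hypothesis. For $g\in H$, cross-multiplication in $G(Y)$ yields the polynomial identity $P(Y+g)\,Q(Y)=P(Y)\,Q(Y+g)$. Since $\gcd(P,Q)=1$, $Q(Y)$ divides $Q(Y+g)$; as both polynomials are monic of the same degree, $Q(Y+g)=Q(Y)$, and then $P(Y+g)=P(Y)$ follows at once. Hence $H\subseteq H_P\cap H_Q$, where for $S\in G[Y]$ I set $H_S:=\{g\in G:S(Y+g)=S(Y)\}$.

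For the polynomial step, fix any $y_0\in G$: then $g\in H_S$ forces $S(y_0+g)=S(y_0)$, so $y_0+H_S$ is contained in the root set of $S(Y)-S(y_0)\in G[Y]$, which has at most $\deg S$ elements. In particular $H_S$ is finite whenever $\deg S\ge 1$. Since $H$ is infinite, both $P$ and $Q$ must be constant, so $R\in G$. There is no real obstacle in the argument: the coprimality of $P$ and $Q$ transfers invariance of $R$ to simultaneous invariance of $P$ and $Q$, and a single evaluation closes it out.
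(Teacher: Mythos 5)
Your proof is correct, but it takes a genuinely different route from the paper. You normalize $R=P/Q$ with $P,Q$ coprime and $Q$ monic, use the gcd/degree argument to transfer the translation-invariance of $R$ to simultaneous invariance of $P$ and $Q$, and finish by counting roots of $S(Y)-S(y_0)$ to show a non-constant polynomial has only finitely many period-translations. The paper instead avoids coprimality altogether: it introduces a second indeterminate $Z$, observes that $P(Y)Q(Y+Z)-Q(Y)P(Y+Z)\in G[Y][Z]$ vanishes at $Z=g$ for infinitely many $g\in G$ and hence is identically zero, then substitutes $Z=g-Y$ to get $P(Y)Q(g)=Q(Y)P(g)$ for \emph{all} $g\in G$, and picks one $g$ with $Q(g)\ne 0$ to conclude $R=P(g)/Q(g)\in G$. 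Your route is the more elementary and self-contained one (standard root-counting plus a small $\gcd$ step), whereas the paper's two-variable trick is slicker and needs no reduction to lowest terms. Both are perfectly valid; the content is the same, the bookkeeping differs.
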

\begin{proof} We have $R=P/Q$ with $P,Q\in G[Y]$. Let $Z$ be an indeterminate over $G(Y)$. Then 
$R(Y)=R(Y+g)$ for infinitely many $g\in G$ yields  $$P(Y)Q(Y+Z)\ =\ Q(Y)P(Y+Z).$$
Substituting $g-Y$ for $Z$ yields $P(Y)Q(g)=Q(Y)P(g)$ for all $g\in G$.
Choosing $g$ such that $Q(g)\neq 0$, we obtain $R(Y)=P(Y)/Q(Y)=P(g)/Q(g)\in G$.
\end{proof}

\begin{cor}\label{cor:int}
Let $F=E(y)$ with  $y'\in E\setminus\der E$, and let $a\in E\setminus (E^\times)^\dagger$. Then there is no $f\in F\setminus E$ with $f'+af=1$.
\end{cor}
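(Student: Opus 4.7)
The plan is to show first that $y$ is transcendental over $E$, then to prove as a key lemma that any $u\in E(y)^\times$ with $u^\dagger\in E$ must lie in $E^\times$, and finally to use the $E$-automorphism $y\mapsto y+c$ (for $c\in C_E$) together with the preceding lemma to force $R\in E$.

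If $y$ were algebraic over $E$ of degree $n$, then by the trace argument of Lemma~\ref{lem:trace} the element $g:=\operatorname{tr}_{F|E}(y)/n\in E$ would satisfy $g'=\operatorname{tr}_{F|E}(y')/n=y'$ (since $y'\in E$), contradicting $y'\notin \der E$. Hence $y$ is transcendental. For the key lemma, write $u=c\prod_i \pi_i(y)^{e_i}$ with $c\in E^\times$, distinct monic irreducibles $\pi_i\in E[Y]$ of positive degrees $d_i$, and nonzero integers $e_i$. Using $y'=b\in E$, one checks that $\pi_i(y)'\in E[y]$ has $y$-degree strictly less than $d_i$, and so $u^\dagger=c^\dagger+\sum_i e_i\pi_i(y)'/\pi_i(y)$ is already in partial-fraction form over $E(y)$. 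If $u^\dagger\in E$, uniqueness of partial fractions forces $e_i\pi_i(y)'=0$ for every $i$. But the coefficient of $y^{d_i-1}$ in $\pi_i(y)'$ is $d_i b+c_{i,d_i-1}'$, where $c_{i,d_i-1}$ denotes the corresponding coefficient of $\pi_i$; its vanishing would give $b\in \der E$, a contradiction. So $\pi_i(y)'\ne 0$, all $e_i=0$, and $u=c\in E^\times$.

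For the main argument, suppose toward a contradiction that $f=R(y)\in E(y)\setminus E$ satisfies $f'+af=1$, with $R\in E(Y)\setminus E$. The constant field $C_E$ contains $\Q$ and so is infinite. For each $c\in C_E$ the $E$-algebra automorphism $\sigma_c$ of $E(y)$ defined by $\sigma_c(y)=y+c$ commutes with $\der$ (since $\sigma_c(y)'=(y+c)'=y'=\sigma_c(y')$), hence $R(y+c)$ also satisfies the equation. Then $u:=R(y+c)-R(y)$ satisfies $u'+au=0$; if $u\ne 0$, the key lemma gives $u\in E^\times$, whence $-a=u^\dagger\in (E^\times)^\dagger$, and since $(E^\times)^\dagger$ is a subgroup of $(E,+)$ this forces $a\in (E^\times)^\dagger$, contrary to assumption. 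Therefore $R(y+c)=R(y)$ in $E(y)$, and the transcendence of $y$ promotes this to the identity $R(Y+c)=R(Y)$ in $E(Y)$ for infinitely many $c\in C_E\subseteq E$. The preceding lemma then yields $R\in E$, contradicting $f\notin E$.

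The main obstacle is the key lemma on logarithmic derivatives: the hypothesis $y'\notin \der E$ must be fed into the partial-fraction computation at precisely the leading coefficient of $\pi_i(y)'$, in order to preclude the cancellations that would otherwise allow nonconstant $u$ with $u^\dagger\in E$.
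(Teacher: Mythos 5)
Your argument is correct, and its core strategy coincides with the paper's: apply the differential automorphisms $\sigma_c$ ($c\in C_E$) sending $y$ to $y+c$, observe that $f-\sigma_c(f)$ satisfies the homogeneous equation $u'+au=0$, conclude $\sigma_c(f)=f$ for all $c$, and then invoke the preceding rational-function lemma to force $f\in E$. Where you differ is that the paper simply cites two facts from \cite{ADH} — Lemma~4.6.10 for the transcendence of $y$ over $E$, and Corollary~4.6.13 for the non-existence of a nonzero $g\in F$ with $g'+ag=0$ — whereas you supply self-contained proofs of both. Your trace argument for transcendence is exactly the mechanism of Lemma~\ref{lem:trace} turned against the hypothesis $y'\notin\der E$, and your ``key lemma'' (if $u\in E(y)^\times$ has $u^\dagger\in E$ then $u\in E^\times$) is a clean partial-fraction rederivation of the relevant case of \cite[Corollary~4.6.13]{ADH}: since $y'\in E$, the formal derivative $\pi_i(y)'$ of each monic irreducible factor $\pi_i(y)$ of $u$ has degree $d_i-1$ in $y$, with the hypothesis $y'\notin\der E$ guaranteeing the top coefficient $d_i\,y'+c_{i,d_i-1}'$ is nonzero, so uniqueness of partial fractions kills all $e_i$. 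Both inline derivations check out, so the only real divergence is pedagogical: your version is self-contained modulo the single rational-function lemma, whereas the paper leans on earlier machinery it has already built up.
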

\begin{proof}
By \cite[Lemma~4.6.10]{ADH}, $y$ is transcendental over $E$, and by \cite[Corollary~4.6.13]{ADH} there is no $g\in F^\times$ with $g'+ag=0$. 
For each $c\in C_E$ we have an automorphism~$\sigma_c$ of the differential field $E(y)$ which is the identity on $E$ and sends~$y$ to~$y+c$. Suppose $f'+af=1$, $f\in F$. Then $\big(f-\sigma_c(f)\big)'+a\big(f-\sigma_c(f)\big)=0$ and hence
$\sigma_c(f)=f$, for each $c\in C_E$. Hence $f\in F$ by the preceding lemma.
\end{proof}

\subsection*{Non-isomorphic spherically complete extensions}
We now use the preceding subsections to construct an $H$-field $K$ with two spherically complete immediate $H$-field extensions that are not isomorphic over $K$. Let $\mathfrak M$ be the subgroup of the ordered multiplicative group $G^{\text{LE}}$ of $\text{LE}$-monomials
generated by the rational powers of~$\ex^x$ and the iterated logarithms $\ell_n$ of~$x$:
$$\mathfrak M\ =\ \bigcup_n \ex^{\Q x} \ell_0^\Q \cdots \ell_n^\Q.$$
We consider the spherically complete ordered valued Hahn field 
$$M\ :=\ \R[[\mathfrak M]]\ \subseteq\ \R[[G^{\text{LE}}]].$$ 
Note that $\mathfrak L:= \bigcup_n   \ell_0^\Q \cdots \ell_n^\Q$ is a convex subgroup of $\mathfrak M$ with $\mathfrak L\cap \ex^{\Q x}=\{1\}$ and $\mathfrak{M}=\mathfrak{L}\ex^{\Q x}$, and so $M=\mathbb L[[\ex^{\Q x}]]$ where $\mathbb L=\R[[\mathfrak L]]$. 
(Our use of the symbols~$\mathfrak L$,~$\mathbb L$ differs slightly from that in \cite[Section~13.9]{ADH}.) 
We 
equip $M$ with the unique strongly $\R$-linear derivation satisfying  
$$(\ex^{rx})'=r\ex^{rx}, \quad (\ell_0^r)'\ =\ r\,\ell_0^{r-1}, \quad (\ell_{n+1}^r)'\ =\ r\,\ell_{n+1}^{r-1}(\ell_0\cdots \ell_n)^{-1}\qquad (r\in \Q).$$
Then $M$ is an $H$-field with constant field $\R$. The element $\upl\in \mathbb{L}$ is defined by
$$\upl\, :=\, \left(\sum_{n=1}^{\infty}\ell_n\right)'\, =\,\sum_{n=0}^{\infty}(\ell_0\cdots\ell_n)^{-1},$$  
as in \cite[Section~13.9]{ADH}. Consider the real closed $H$-subfield $E := \R\<\upl,\ell_0,\ell_1,\dots\>^{\operatorname{rc}}$  of~$\mathbb L$ and the real closed $H$-subfield
$K := E[[\ex^{\Q x}]]$ of $M$. Note that $\mathbb L$ is an immediate extension of $E$ and $M$ is an 
immediate extension of $K$. Thus $K$ has the same divisible value group  
$\Q v(\ex^x)\oplus \bigoplus_n \Q v(\ell_n)$ as $M$, and $K$ has asymptotic integration. Note also that $(\ell_n)$ is a logarithmic  sequence in $K$ in the sense of \cite[Section~11.5]{ADH}. 

We set $A :=\ \der-\upl\in E[\der]$. 
Let $K^*$ be an immediate $H$-field extension of $K$. By~\cite[Lemma~11.5.13]{ADH} we have 
$\ker_{K^*} A = \{0\}$. Moreover, $-\upl$ creates a gap over~$K^*$,
by~\cite[Lemma~11.5.14]{ADH} and so $A(y) \nasymp 1$ for all $y\in K^*$,
by~\cite[Lemma~11.5.12]{ADH}; in particular $1\notin A(K^*)$.
These remarks apply in particular to $K^*=M$.
We are going to show:

\begin{prop}\label{prop:non-iso}
For every $c\in \R$ there is an element $y$ in some immediate $H$-field extension $K_c$ of $K$ with $A(y)=\ex^x+c$.
\end{prop}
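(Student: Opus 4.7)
The plan is to apply Lemma~\ref{lem:solve order 1} with $g := \ex^x + c$ to obtain an immediate asymptotic extension $L = K(f)$ of $K$ with $A(f) = \ex^x + c$; by the remark just after the Corollary in the introduction, $L$ carries a unique $H$-field structure extending that of $K$, and I set $K_c := L$. (If $\ex^x + c$ already lies in $A(K)$, take $K_c := K$.) The first two hypotheses of Lemma~\ref{lem:solve order 1} are immediate from the construction preceding the proposition: $K$ is real closed (hence henselian), an $H$-field (hence $H$-asymptotic), and has asymptotic integration. That $\exc^{\ev}(A) = \emptyset$ follows from $-\upl$ creating a gap over $K$ by \cite[Lemma~11.5.14]{ADH}, and Proposition~\ref{cor:9.7.1 variant} then applies to $A$.

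The substantive task is to show $S := v(A(K) - g)$ has no largest element. By Lemma~\ref{lem:no max} it suffices to establish $S \subseteq v(A(K))$, and I would exploit the Hahn series structure $K = E[[\ex^{\Q x}]]$ for this. Writing $y = \sum_{r\in\Q} y_r \ex^{rx}$ with $y_r \in E$, we have $A(y) = \sum_r A_r(y_r) \ex^{rx}$ where $A_r := \der + (r - \upl) \in E[\der]$. For $y \in K$ with $A(y) \neq g$, let $r^*\in\Q$ be the largest $r$ such that the $\ex^{rx}$-coefficient of $A(y)-g$ is nonzero. When $r^* \notin \{0,1\}$, that coefficient equals $A_{r^*}(y_{r^*})$, and $z := y_{r^*}\ex^{r^*x}$ realizes $v(A(z)) = v(A(y)-g)$. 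When $r^*=1$, the coefficient $\beta_1 := A_1(y_1)-1$ is a nonzero element of $E$, and $A_1(\beta_1) = \beta_1' + (1-\upl)\beta_1 \sim \beta_1$ in $E$ (because $\beta_1^\dagger \prec 1$, a consequence of $\Psi_{\mathbb L} \subseteq \Gamma_{\mathbb L}^{>}$, and $\upl \prec 1$); hence $z := \beta_1\ex^x$ gives $v(A(z)) = v(A(y)-g)$.

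The hard part will be ruling out $r^* \leq 0$, which would force $A_1(y_1) = 1$ to have a solution $y_1 \in E$, equivalently $\ex^x \in A(K)$. My approach is an integrating-factor calculation: with $\mu := \ex^{x - \sum_n \ell_n}$ we have $\mu^\dagger = 1 - \upl$, so $(\mu y_1)' = \mu A_1(y_1)$, and any solution must (up to $\ker A_1$, which is trivial in $\mathbb L$ since $v(\upl-1)=0\notin\Psi_{\mathbb L}$) reduce to the formal series $y_1 = \sum_{n\geq 0}(-1)^n g^{(n)}/g$ with $g := \ex^{-\sum_n\ell_n}$. This is an infinite $\R$-linear combination of differential polynomials in $\upl$ and its iterated derivatives. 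Since $E = \R\langle\upl, \ell_0, \ell_1, \dots\rangle^{\operatorname{rc}}$ consists of elements algebraic over a finitely generated differential subfield of $\mathbb L$, such an infinite sum does not lie in $E$; making this rigorous (e.g.\ by showing the Hahn-series support of the formal solution is not contained in the support of any element of $E$) is the technical crux. Once done, $S \subseteq v(A(K))$, and Lemma~\ref{lem:solve order 1} delivers the required $K_c$.
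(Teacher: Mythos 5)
Your overall strategy --- apply Lemma~\ref{lem:solve order 1} to $g=\ex^x+c$, after verifying that $S=v\big(A(K)-g\big)$ has no largest element --- is sound, and your route to the latter via Lemma~\ref{lem:no max} and direct extraction of Hahn-series coefficients from $K=E[[\ex^{\Q x}]]$ is a genuine alternative to what the paper does. The paper instead shows directly, via two claims (that $A(y)-g\comp\ex^x$ for all $y\in K$, and that $If\asymp f$ whenever $f\comp\ex^x$), that $y_{\new}:=y-I\big(A(y)-g\big)$ always satisfies $A(y_{\new})-g\prec A(y)-g$; your approach gets the same non-maximality by exhibiting, for each $y\in K$, an explicit $z\in K$ (namely $y_{r^*}\ex^{r^*x}$, or $\bigl(A_1(y_1)-1\bigr)\ex^x$ when $r^*=1$) with $v\big(A(z)\big)=v\big(A(y)-g\big)$. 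Both hinge on the same structural fact: the $\ex^x$-coefficient of $A(y)-g$ never vanishes.

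That is exactly where your proposal has a gap. Ruling out $r^*\le 0$ is, as you observe, equivalent to showing $1\notin A_1(E)$ where $A_1=\der+(1-\upl)=B$. But this is precisely Lemma~\ref{1notBE}, which the paper proves just before the proposition (by induction on the tower $L_0=\R\<\upl\>\subseteq L_{n+1}=L_n(\ell_n)$, using Lemmas~\ref{lem:trace}, \ref{lem:diff trans}, and Corollary~\ref{cor:int}), and which you should simply cite rather than re-derive. Your proposed integrating-factor alternative does not stand on its own: the factor $\mu=\ex^{x-\sum_n\ell_n}$ does not lie in $M$, the formal series $\sum_{n\ge 0}(-1)^n g^{(n)}/g$ requires a summability argument you have not given, and even granting that it defines an element of some larger Hahn field you would still have to show it is not in~$E$ --- a support/algebraicity argument that is markedly harder than the paper's short proof of Lemma~\ref{1notBE}. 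As written, your ``technical crux'' is the entire content of that lemma; invoking it closes the gap and turns your proposal into a correct, mildly different proof.

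Two minor points. First, $g=\ex^x+c\notin A(K)$ for \emph{every} $c\in\R$ (again by Lemma~\ref{1notBE}), so the parenthetical fallback ``take $K_c:=K$'' never occurs. Second, you reuse the letter $g$ both for $\ex^x+c$ and for $\ex^{-\sum_n\ell_n}$; harmless, but worth fixing.
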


\noindent
By Lemma~\ref{as1}, any immediate $H$-field extension of $K$ strictly extends $K$.
Thus in view of the remark in the beginning of this section and
using Proposition~\ref{prop:non-iso}:

\begin{cor} There is a family $(K_c)_{c\in \R}$ of spherically complete immediate strict $H$-field extensions $K_c$ of $K$ that
are pairwise non-isomorphic over $K$. 
\end{cor}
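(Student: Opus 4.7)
The plan is to combine Proposition~\ref{prop:non-iso} with the general remark opening Section~\ref{Nonuniqueness} to produce a family of pairwise non-isomorphic extensions, then upgrade each one to be spherically complete using tools from earlier in the paper.

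First I would produce, for each $c\in\R$, a spherically complete immediate strict $H$-field extension $K_c$ of $K$ containing a solution of $A(y)=\ex^x+c$. Proposition~\ref{prop:non-iso} supplies an immediate $H$-field extension $K_c^0$ of $K$ and an element $y_c\in K_c^0$ with $A(y_c)=\ex^x+c$. Applying the Corollary from the Introduction (every $H$-field has an immediate spherically complete $H$-field extension) to $K_c^0$ yields a spherically complete $H$-field extension $K_c$ of $K_c^0$, immediate over $K_c^0$ and hence immediate over $K$. That $K_c$ strictly extends $K$ is automatic from Lemma~\ref{as1}, since $K$ and $K_c$ are both asymptotic. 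This preserves $y_c$ and the identity $A(y_c)=\ex^x+c$ inside $K_c$.

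Next I would establish pairwise non-isomorphism along the exact pattern of the opening remark of Section~\ref{Nonuniqueness}. Fix $c\neq c'$ in $\R$ and suppose toward a contradiction that $\sigma\colon K_c\to K_{c'}$ is an isomorphism over $K$. Because $A=\der-\upl$ has coefficients in $E\subseteq K$ and $\sigma$ fixes $K$ pointwise, $\sigma$ commutes with $A$; hence $A\bigl(\sigma(y_c)\bigr)=\ex^x+c$ inside $K_{c'}$, and therefore
$$A\bigl(\sigma(y_c)-y_{c'}\bigr)\ =\ c-c'\ \in\ \R^\times\ \subseteq\ C_{K_{c'}}^\times.$$
Since $A$ is $C$-linear, rescaling by $(c-c')^{-1}$ gives an element $z\in K_{c'}$ with $A(z)=1$. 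This contradicts the observation made just before Proposition~\ref{prop:non-iso} that $1\notin A(K^\ast)$ for every immediate $H$-field extension $K^\ast$ of $K$ (a consequence of $-\upl$ creating a gap, via \cite[Lemmas~11.5.12, 11.5.14]{ADH}).

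The substantive work has already been absorbed into Proposition~\ref{prop:non-iso}; what remains is routine bookkeeping. The only mild points to verify are that any $K$-isomorphism commutes with $A$, which follows from $A\in K[\der]$, and that $A$ is $C$-linear in the sense needed to rescale the constant $c-c'$ to $1$, which follows from $\R\subseteq C_{K_{c'}}$. No step here requires further machinery beyond the Proposition and the earlier corollary on immediate $H$-field extensions.
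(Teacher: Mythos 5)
Your proposal is correct and is essentially the paper's own argument: Proposition~\ref{prop:non-iso} plus the Corollary from the Introduction to get spherically complete immediate $H$-field (hence, by Lemma~\ref{as1}, strict) extensions $K_c$, and the opening remark of Section~\ref{Nonuniqueness} for non-isomorphism, using $1\notin A(K^*)$ for immediate $H$-field extensions $K^*$ of $K$. Your rescaling of $A\bigl(\sigma(y_c)-y_{c'}\bigr)=c-c'$ by the constant $(c-c')^{-1}\in\R^\times\subseteq C_{K_{c'}}$ is exactly the minor extension of that remark (stated there for $a$ and $a+1$) needed for the full family over $\R$.
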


\noindent
In particular, $K$ does not have the uniqueness property.
Towards the proof of the proposition, we still need two lemmas. 

\begin{lemma}\label{lem:alg indep}
The elements $\ell_0,\ell_1,\dots$ of $\mathbb{L}$ are algebraically independent over the subfield $\R\<\upl\>=\R(\upl,\upl',\dots)$ of $\mathbb{L}$.
\end{lemma}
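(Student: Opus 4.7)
The plan is to reduce the statement to showing that $\upl$ is differentially transcendental over the differential subfield $E := \R(\ell_n : n \in \N)$ of $\mathbb L$; note that $E$ is closed under $\der$, since $\ell_0' = 1$ and $\ell_{n+1}' = (\ell_0 \cdots \ell_n)^{-1} \in E$. Granted such d-transcendence, $\upl, \upl', \upl'', \ldots$ are algebraically independent over $E$ by definition. Combined with the standard fact that $\{\ell_n\}_{n \in \N}$ is algebraically independent over $\R$ (cf.\ \cite[Appendix~A]{ADH}), a routine ``coefficient'' argument shows that $\{\ell_n\}_n \cup \{\upl^{(k)}\}_k$ is algebraically independent over $\R$: any polynomial relation among these elements, viewed as a polynomial in the $\upl^{(k)}$ with coefficients in $\R[\ell_0, \ldots, \ell_N]$, forces each coefficient to vanish at $(\ell_0, \ldots, \ell_N)$ by d-transcendence of $\upl$ over $E$, and then algebraic independence of the $\ell_n$ forces the original relation to be trivial. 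Consequently $\{\ell_n\}$ is algebraically independent over $\R(\upl, \upl', \ldots) = \R\<\upl\>$, which is the desired conclusion.

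To prove the d-transcendence of $\upl$ over $E$, set $d_n := (\ell_0 \cdots \ell_n)^{-1}$ and, for each $N \geq 0$, decompose $\upl = f_N + \epsilon_N$ with $f_N := \sum_{n=0}^N d_n \in \R(\ell_0, \ldots, \ell_N)$ and $\epsilon_N := \sum_{n > N} d_n$. Using the explicit formula $d_n' = -d_n \sum_{j \leq n} d_j$ and the chain rule for Hahn series in $\mathbb L = \R[[\mathfrak L]]$, an induction shows that every monomial in the support of $\epsilon_N^{(k)}$ has strictly negative $\ell_{N+1}$-exponent, for every $k \geq 0$. Suppose toward a contradiction that some nonzero $Q \in E\{Y\}$ of minimal complexity satisfies $Q(\upl) = 0$; since only finitely many $\ell_n$ occur in its coefficients, $Q \in \R(\ell_0, \ldots, \ell_N)\{Y\}$ for some $N$. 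A formal Taylor expansion about $f_N$ gives
\[
0 \ =\  Q(\upl)\  =\  Q(f_N) + L_Q(\epsilon_N) + R_Q(\epsilon_N),
\]
where $L_Q(\epsilon_N) := \sum_k (\partial Q/\partial Y^{(k)})(f_N)\,\epsilon_N^{(k)}$ collects the linear-in-$\epsilon_N$ part (coefficients in $\R(\ell_0,\ldots,\ell_N)$) and $R_Q(\epsilon_N)$ collects the terms of degree $\geq 2$ in the $\epsilon_N^{(k)}$. Separating by $\ell_{N+1}$-exponent---which is $0$ for $Q(f_N)$, lies in $\{-1,-2,\ldots\}$ for $L_Q(\epsilon_N)$, and in $\{-2,-3,\ldots\}$ for $R_Q(\epsilon_N)$---one immediately obtains $Q(f_N) = 0$ in $\R(\ell_0,\ldots,\ell_N)$, and moreover the $\ell_{N+1}^{-1}$-part of $L_Q(\epsilon_N)$ must vanish.

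The remaining task is to turn these identities into a contradiction with the minimality of $Q$. The key auxiliary observation is the recursion $\epsilon_N'|_{\ell_{N+1}^{-1}} = -f_N\,\epsilon_N$, which propagates to $\epsilon_N^{(k)}|_{\ell_{N+1}^{-1}} = P_k\,\epsilon_N$ with $P_k = (D - f_N)^k(1) \in \R(\ell_0, \ldots, \ell_N)$, where $D$ denotes the derivation of $E$. The vanishing of the $\ell_{N+1}^{-1}$-part of $L_Q(\epsilon_N)$ thus translates into the identity $\sum_k (\partial Q/\partial Y^{(k)})(f_N)\,P_k = 0$ in $\R(\ell_0, \ldots, \ell_N)$; iterating the grading argument on successive $\ell_{N+1}^{-j}$-layers (each time using $\epsilon_N^{(k)}|_{\ell_{N+1}^{-j}}$ and the already-established lower-order identities) yields a sequence of equations whose combined effect, together with the minimality of $Q$, forces $Q$ to reduce to a nonzero differential polynomial of strictly smaller complexity vanishing at $\upl$---the desired contradiction. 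The main technical obstacle is precisely this last step: carefully managing the interplay between the operator $D - f_N$, the coefficient field $\R(\ell_0, \ldots, \ell_N)$, and the $\ell_{N+1}$-graded structure to produce a strictly smaller differential polynomial annihilating $\upl$; specialized material about transseries differential transcendence from \cite[Sections~13.9, Appendix~A]{ADH} may be used to streamline the bookkeeping.
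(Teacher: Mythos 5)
Your high-level strategy is the same as the paper's: both proofs reduce to showing that $\upl$ is differentially transcendental over $E := \R(\ell_0, \ell_1, \dots)$ and then run the standard exchange/symmetry argument for algebraic independence. The paper settles the key input immediately by citing \cite[Corollary~13.6.3]{ADH}, which gives differential transcendence of $\upl$ over $\R$; since each $\ell_n$ is differentially algebraic over $\R\langle\ell_0,\dots,\ell_{n-1}\rangle$ (via $\ell_n'=(\ell_0\cdots\ell_{n-1})^{-1}$ and $\ell_0'=1$), the field $E$ is differentially algebraic over $\R$, and transitivity of differential algebraicity upgrades the citation to transcendence over $E$. That one-line upgrade is all that was needed.

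You, by contrast, try to re-prove the differential transcendence of $\upl$ over $E$ from scratch via a grading by $\ell_{N+1}$-exponent. The pieces you work out are correct as far as they go: the support of $\epsilon_N$ lies purely in $\ell_{N+1}$-exponent $-1$; the exponents appearing in $\epsilon_N^{(k)}$ stay $\leq -1$ (because for any $\mathfrak L$-monomial $\mathfrak m$, $\mathfrak m^\dagger$ is a $\Q$-linear combination of the $d_j$, so has $\ell_{N+1}$-exponent $\leq 0$); and the recursion $\epsilon_N^{(k)}|_{\ell_{N+1}^{-1}}=P_k\,\epsilon_N$ with $P_k=(D-f_N)^k(1)$ does hold. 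But the essential step---converting the resulting system of scalar identities $Q(f_N)=0$, $\sum_k(\partial Q/\partial Y^{(k)})(f_N)\,P_k=0$, and their higher-layer analogues into a differential polynomial over $E$ of strictly smaller complexity that again vanishes at $\upl$---is never carried out; you yourself flag it as ``the main technical obstacle.'' Those identities only constrain the coefficients of $Q$ evaluated at the specific point $f_N\in\R(\ell_0,\dots,\ell_N)$; there is no visible mechanism by which they yield a new annihilator of $\upl$ (for instance, $Q(f_N)=0$ together with $Q(\upl)=0$ is compatible with $Q$ having minimal complexity, so nothing contradicts minimality yet). Without that step the differential transcendence of $\upl$ is neither proved nor cited, and the argument has a genuine gap. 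The efficient repair is the paper's: cite \cite[Corollary~13.6.3]{ADH} and observe that $E$ is differentially algebraic over $\R$.
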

\begin{proof} The element $\upl$ is differentially transcendental over $\R$
by \cite[Corollary~13.6.3]{ADH}, and hence over 
$\R(\ell_0,\ell_1,\dots)$, so $\upl, \upl', \upl'',\dots$ are algebraically independent over $\R(\ell_0, \ell_1,\dots)$.
Since $\ell_0,\ell_1,\dots$ are algebraically independent over
$\R$, $$\ell_0, \ell_1, \ell_2,\dots, 
\upl, \upl', \upl'',\dots$$
are algebraically independent over $\R$. Hence $\ell_0, \ell_1,\dots$ are algebraically independent over $\R(\upl, \upl',\dots)$.
\end{proof}

\noindent
Let $B:=\der+(1-\upl)\in E[\der]$. 
We have $\upl\notin (M^\times)^\dagger$ by \cite[Lemma~11.5.13]{ADH} and $1=(\ex^x)^\dagger\in (M^\times)^\dagger$, so $1-\upl\notin (M^\times)^\dagger$, that is,
$\ker_M B = \{0\}$.

\begin{lemma}\label{1notBE}
$1\notin B(E)$.
\end{lemma}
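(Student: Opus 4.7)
The plan is to derive a contradiction from the assumption $B(f)=1$ for some $f\in E$, by working up the tower
$$
E_0\ :=\ \R\langle\upl\rangle,\qquad E_n\ :=\ \R\langle\upl\rangle(\ell_0,\dots,\ell_{n-1})\ \text{ for }n\geq 1,\qquad E_\infty\ :=\ \bigcup_n E_n,
$$
noting that $E$ is the real closure of $E_\infty$. First I would reduce the statement to $1\notin B(E_n)$ for all $n$: given $f\in E$ with $B(f)=1$, the field $F:=E_\infty(f)$ is a finite (hence differential, since we are in characteristic zero) algebraic extension of $E_\infty$, so Lemma~\ref{lem:trace} yields $g\in E_\infty$ with $B(g)=1$, and this $g$ already lies in some $E_n$.

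For the base case $n=0$, I would invoke Lemma~\ref{lem:diff trans} with ambient field $\R$ and $y=\upl$: since $\upl$ is differentially transcendental over $\R$ (by \cite[Corollary~13.6.3]{ADH}, already used in the proof of Lemma~\ref{lem:alg indep}), no $f\in E_0\setminus\R$ can satisfy $f'+(1-\upl)f=1$. For $f\in\R$, $B(f)=f-f\upl$, and equating this to $1$ in the polynomial ring $\R[\upl]\subseteq E_0$ forces $f=1$ and $f=0$, absurd.

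For the inductive step, assume $1\notin B(E_n)$ and consider $E_{n+1}=E_n(\ell_n)$. By Lemma~\ref{lem:alg indep}, $\ell_n$ is transcendental over $E_n$, and $\ell_n'\in E_n$ (equal to $1$ if $n=0$ and $(\ell_0\cdots\ell_{n-1})^{-1}$ otherwise). I would apply Corollary~\ref{cor:int} with $E=E_n$, $y=\ell_n$, $a=1-\upl$. To verify its hypotheses: $\ell_n'\notin\der E_n$ because $g'=\ell_n'$ with $g\in E_n$ would give $g-\ell_n\in C_M=\R$, contradicting $\ell_n\notin E_n$ from Lemma~\ref{lem:alg indep}; and $1-\upl\notin(E_n^\times)^\dagger$ follows from the inclusion $(E_n^\times)^\dagger\subseteq(M^\times)^\dagger$ together with $1-\upl\notin(M^\times)^\dagger$, which is precisely the observation recorded just before this lemma. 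Corollary~\ref{cor:int} then rules out $f\in E_{n+1}\setminus E_n$ with $B(f)=1$, and the induction hypothesis handles $f\in E_n$.

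The only non-routine step is verifying $1-\upl\notin(E_n^\times)^\dagger$, but this is essentially given to us: the paper has already combined $\upl\notin(M^\times)^\dagger$ (from \cite[Lemma~11.5.13]{ADH}) with $1=(\ex^x)^\dagger\in(M^\times)^\dagger$ and the group structure of $(M^\times)^\dagger$ to get $1-\upl\notin(M^\times)^\dagger$. Everything else is bookkeeping around the tower $E_0\subseteq E_1\subseteq\cdots\subseteq E_\infty\subseteq E$ and careful application of Lemmas~\ref{lem:trace}, \ref{lem:diff trans}, and Corollary~\ref{cor:int}.
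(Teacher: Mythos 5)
Your proof is correct and follows essentially the same route as the paper: you build the same tower (the paper's $L_n$ is your $E_n$), reduce to the subfields via Lemma~\ref{lem:trace}, settle the base case with Lemma~\ref{lem:diff trans}, and do the inductive step with Corollary~\ref{cor:int} using $1-\upl\notin(M^\times)^\dagger$. The only difference is that you spell out the hypothesis checks (the $f\in\R$ subcase at $n=0$, and $\ell_n'\notin\der E_n$) that the paper leaves implicit.
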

\begin{proof}
Put $L_0:=\R\<\upl\>$ and $L_{n+1}:=\R\<\upl,\ell_0,\dots,\ell_{n}\>$, so  $L_{n+1}=L_n(\ell_n)$ in view of $\ell_n'=\ell_{n-1}^\dagger\in L_n$ for $n\ge 1$, and $\ell_0'=1\in L_0$. Note that
$E$ is algebraic over $\R\<\upl,\ell_0,\ell_1,\dots\>=\bigcup_n L_n$. By
Lemma~\ref{lem:trace} it suffices
that $1\notin B(L_n)$ for all $n$.
The case $n=0$ follows from Lemma~\ref{lem:diff trans}.
Suppose $1\notin B(L_n)$. Now
$L_{n+1}=L_n(\ell_n)$ and~$\ell_n$ is transcendental over $L_n$, by Lemma~\ref{lem:alg indep}, so $1\notin B(L_{n+1})$ by Co\-rollary~\ref{cor:int}.
\end{proof}

\begin{proof}[Proof of Proposition~\ref{prop:non-iso}]
Let $c\in \R$ and $g:=\ex^x+c\in K$.

\claim[1]{$A(y)\ne g$ and $A(y)-g \comp \ex^x$, for all $y\in K$.}

\noindent
This is obvious for $y=0$, so assume $y\in K^\times$. Let $r$ range over $\Q$ and let the $y_r\in E$ be such that $y=\sum_r y_r \ex^{rx}$ with the reverse-well-ordered  set $\{r:y_r\neq 0\}$ having
largest element $r_0$. Then $$A(y) =\sum_r \big(y_r'+(r-\upl) y_r\big)\ex^{rx}.$$ For $r_0\ne 0$ we have 
$r_0-\upl\asymp 1$, so $r_0-\upl\notin (\mathbb L^\times)^\dagger$, and thus for $r_0>1$,
$$A(y)-g\ \sim\ 
\big(y_{r_0}'+(r_0-\upl) y_{r_0}\big)\ex^{r_0x}\   \comp\  \ex^{x}.$$
Next, assume $r_0=1$. By Lemma~\ref{1notBE}
we have $y_1'+(1-\upl)y_1-1\ne 0$,
and thus 
$$A(y)-g\sim \big(y_1'+(1-\upl)y_1-1\big)\ex^x \comp \ex^x.$$  
Finally, if $r_0<1$, then $A(y)-g\sim -g \asymp \ex^x$. 

\medskip
\noindent
Since $K$ is an $H$-field with asymptotic integration we can pick
for every $f\in K^\times$ an element $If\in K^\times$ with $If\nasymp 1$ and $(If)' \sim f$.

\claim[2]{Suppose $f\in K^\times$ and $f\comp \ex^x$. Then $If\asymp f$.}

\noindent
To prove this, note that $h^\dagger \preceq 1$ for all $h\in M^\times$, hence
$f/If \sim (If)^\dagger \preceq 1$ and so $f \preceq If$.
If $f \prec If$, then $f' \prec  (If)' \sim f$, whereas
$f\comp \ex^x$ means $f^\dagger \asymp (\ex^x)^\dagger=1$, a contradiction. Thus $f\asymp If$, as claimed.

\medskip
\noindent
Let $y\in K$ be given, and set $z:=A(y)-g$ and
$y_{\operatorname{new}}:= y-Iz\in K$. Then
$$z_{\operatorname{new}}\ :=\ A(y_{\operatorname{new}})-g\  =\ 
 z - (\textstyle I z)' + \upl I z.$$
By Claim~1 we have $z\comp \ex^x$, so  $I z\asymp z$ by Claim~2,
and thus $\upl I z \prec z$.
Since
$z-(Iz)' \prec z$, this yields $z_{\operatorname{new}}\prec z$.

\medskip
\noindent
This argument shows that the subset
$v\big( A(K)-g \big)$
of $\Gamma$ does not have a largest element. By \cite[Example at end of Section 11.1, Lemma 11.5.13]{ADH} we have 
$\exc^{\ev}_K(A)=\emptyset$. Thus  
Proposition~\ref{prop:non-iso} follows from Lemma~\ref{lem:solve order 1}.
\end{proof}

\noindent
To finish this paper we indicate how the operator $B$ differs
in its behavior on $E$ from that on its immediate extension $\mathbb{L}$. This uses the following:

\begin{lemma}\label{11.6.15v}
Let $L$ be an $H$-asymptotic field with asymptotic integration and divisible value group $\Gamma_L$, and let $s\in L$ be such that 
$$S:=\big\{v(s-a^\dagger):\ a\in L^\times\big\}\ \subseteq\ \Psi_L^{\downarrow}.$$
Then the following are equivalent for $g\in L^\times$: \begin{enumerate}
\item[(i)] $vg\notin v\big(D(L)\big)$ for $D:=\der-s\in L[\der]$;
\item[(ii)] $g^\dagger -s$ creates a gap over $L$.
\end{enumerate}
\end{lemma}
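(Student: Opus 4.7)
My plan is to reduce both (i) and (ii) to a common concrete algebraic condition on $f := g^\dagger - s \in L$ — namely, the non-existence of $u \in L^\times$ with $u' + uf \asymp 1$ — and then invoke the hypothesis $S \subseteq \Psi_L^\downarrow$ together with the standard description of gap creation in $H$-asymptotic fields to close the argument.

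First I would reformulate (i). Since $D(y) = y(y^\dagger - s)$ for $y \in L^\times$, the substitution $y = gu$ with $u \in L^\times$ gives $y^\dagger - s = u^\dagger + f$ and hence $D(gu) = g(u' + uf)$, so
$$v(D(gu))\ =\ vg + v(u' + uf).$$
Thus (i) is equivalent to: no $u \in L^\times$ satisfies $u' + uf \asymp 1$. Setting $b := 1/u$ (so $b^\dagger = -u^\dagger$ and $v(f - b^\dagger) = v(f + u^\dagger)$), this becomes: no $b \in L^\times$ satisfies $v(f - b^\dagger) = vb$.

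Next I would reformulate (ii). The bijection $a \leftrightarrow b := g/a$ of $L^\times$ with itself yields $f - b^\dagger = a^\dagger - s$, so that
$$\{v(f - b^\dagger) : b \in L^\times\}\ =\ S\ \subseteq\ \Psi_L^\downarrow.$$
In particular $f \notin (L^\times)^\dagger$, the basic prerequisite for $f$ to create a gap. The remaining task is to invoke the standard characterization from the theory of asymptotic couples with asymptotic integration (as developed in [ADH, Sections~9.7, 11.5, 11.6]): under the constraint $S \subseteq \Psi_L^\downarrow$, the element $f$ creates a gap over $L$ precisely when no $b \in L^\times$ achieves $v(f - b^\dagger) = vb$. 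Heuristically, such a $b$ would provide an integration of $f$ placing $vy = vb \in \Gamma_L$ for any $y$ in an extension with $y^\dagger = f$, and thereby blocking a gap at $vy$.

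The main obstacle is establishing this last characterization of gap creation. It requires a careful asymptotic-couple argument using both the $H$-asymptotic property (so $\psi_L$ is decreasing on $\Gamma_L^>$) and asymptotic integration (so $(\Gamma_L^{\neq})' = \Gamma_L$): on the one hand, the existence of $b$ with $v(f-b^\dagger) = vb$ must force $vy \in \Gamma_L$ in every $H$-asymptotic extension $L\langle y\rangle$ with $y^\dagger = f$, ruling out a gap; on the other hand, the non-existence of such $b$ must allow one to construct an extension in which $vy$ sits strictly between $\Psi_L$ and $(\Gamma_L^>)'$, drawing on techniques along the lines of [ADH, Prop.~9.7.6] already used in the proof of Lemma~\ref{lem:solve order 1} above.
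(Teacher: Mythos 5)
Your algebraic reformulation of (i) is correct and is, in fact, exactly the internal reduction one performs: writing $y=gu$ gives $D(gu)=g(u'+uf)$ with $f=g^\dagger-s$, so (i) is equivalent to the non-existence of $u\in L^\times$ with $u'+uf\asymp 1$, equivalently of $b\in L^\times$ with $v(f-b^\dagger)=vb$; and the bijection $a\leftrightarrow b=g/a$ correctly identifies $\{v(f-b^\dagger):b\in L^\times\}$ with $S$. But what remains is precisely the whole content of the lemma, and you explicitly leave it unproved. Your ``standard characterization'' --- that under $S\subseteq\Psi_L^\downarrow$, the element $f$ creates a gap over $L$ iff no $b\in L^\times$ has $v(f-b^\dagger)=vb$ --- is not a quotable off-the-shelf result; it is essentially the statement being proved, restated in your new coordinates. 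You acknowledge this (``The main obstacle is establishing this last characterization of gap creation'') and offer only a heuristic for why such a $b$ should force $vy\in\Gamma_L$ and why its absence should produce a gap, which is a plan, not a proof.

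More importantly, your proposal misses the specific point that makes this lemma a \emph{variant}. The paper's proof is short and structurally different: if $S$ has no largest element, the statement \emph{is} \cite[Lemma~11.6.15]{ADH} verbatim; the only new case is when $S$ does have a largest element, and there the paper invokes \cite[Lemma~10.4.6]{ADH} to adjoin $b$ with $b^\dagger=s$, $vb\notin\Gamma_L$, and $\Psi_{L(b)}=\Psi_L\cup\{\max S\}\subseteq\Psi_L^\downarrow$, after which the argument of 11.6.15 goes through unchanged. Your proposal does not distinguish these cases, does not cite 11.6.15 (which would already settle the generic case), and does not explain how to handle $\max S$ existing --- which is the one genuinely new ingredient. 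As written, the proposal correctly sets up the problem but does not prove the lemma.
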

\begin{proof} If $S$ has no largest element, this is 
\cite[Lemma~11.6.15]{ADH}. Suppose $S$ has a largest element. Then
\cite[Lemma~10.4.6]{ADH} yields an $H$-asymptotic extension $L(b)$ with
$b\ne 0$, $b^\dagger = s$, $\eta:= vb\notin \Gamma_L$, and
$\Gamma_{L(b)}=\Gamma \oplus \Z \eta$, and 
$\Psi_{L(b)} = \Psi_L\cup \{\max S\}\subseteq \Psi_L^{\downarrow}$. The rest of the argument is as in the proof of \cite[Lemma~11.6.15]{ADH}.
\end{proof} 

\noindent
In contrast to Lemma~\ref{1notBE} we have:

\begin{prop}\label{BLL}
$B(\mathbb L)=\mathbb L$; in particular $1\in B(\mathbb L)$.
\end{prop}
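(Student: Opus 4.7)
The plan is to show $B(\mathbb L)=\mathbb L$ by solving $B(y)=g$ for each $g\in\mathbb L$ via a transfinite iterative improvement, exploiting spherical completeness of the Hahn field $\mathbb L=\R[[\mathfrak L]]$.

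The key structural fact is that the canonical derivation on $\mathbb L$ has infinitesimal logarithmic derivatives: each monomial $\mathfrak m=\ell_0^{r_0}\cdots\ell_n^{r_n}\in\mathfrak L$ satisfies $\mathfrak m^\dagger=r_0\ell_0^{-1}+r_1(\ell_0\ell_1)^{-1}+\cdots+r_n(\ell_0\cdots\ell_n)^{-1}\prec 1$, and for a general $z\in\mathbb L^\times$ with leading monomial $\mathfrak d_z$ one has $z^\dagger\sim\mathfrak d_z^\dagger\prec 1$. In particular $v(z')>v(z)$ for all $z\in\mathbb L^\times$. Writing $B=\id+R$ with $R(z):=z'-\upl z$, the estimate $\upl\prec 1$ gives $v(\upl z)>v(z)$ as well, so $R$ strictly increases valuation. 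Consequently $v(B(z)-B(w))=v\bigl((z-w)+R(z-w)\bigr)=v(z-w)$ for distinct $z,w\in\mathbb L$, which makes $B$ continuous with respect to pseudoconvergence in $\mathbb L$.

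This yields an iterative improvement step: given $y\in\mathbb L$ with $\delta:=B(y)-g\neq 0$, the element $y_{\new}:=y-\delta$ satisfies $B(y_{\new})-g=-R(\delta)$, of strictly larger valuation than $v(\delta)$. Starting from $y_0:=0$, I build a transfinite sequence $(y_\alpha)$: at successor stages apply the improvement (stopping at once if $\delta_\alpha=0$), so that $v(\delta_{\alpha+1})>v(\delta_\alpha)$; at a limit ordinal $\lambda$, the differences $y_{\alpha+1}-y_\alpha=-\delta_\alpha$ have strictly increasing valuations, so $(y_\alpha)_{\alpha<\lambda}$ is a pc-sequence and spherical completeness of $\mathbb L$ provides a pseudolimit $y_\lambda\in\mathbb L$. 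Continuity of $B$ then makes $B(y_\lambda)$ a pseudolimit of $(B(y_\alpha))_{\alpha<\lambda}$, forcing $v(\delta_\lambda)\geq v(\delta_\alpha)$ for all $\alpha<\lambda$. Since $v(\delta_\alpha)$ strictly increases at each successor stage, a cardinality argument on $\Gamma_{\mathbb L}$ bounds the length of this transfinite sequence, so some $\delta_{\alpha_0}=0$, giving $B(y_{\alpha_0})=g$ as desired.

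The main obstacle is establishing the structural estimate $z^\dagger\prec 1$ for all $z\in\mathbb L^\times$; this is a standard fact about Hahn-series $H$-fields of logarithmic type, and reduces to the monomial computation above together with the observation that leading-monomial asymptotics commute with the logarithmic-derivative operator on $\mathbb L$. Once this is in hand, spherical completeness of the Hahn field and the continuity of $B$ do the rest.
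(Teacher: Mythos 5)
Your proof is correct, and it takes a genuinely different route from the paper. You observe that on $\mathbb{L}=\R[[\mathfrak{L}]]$ the derivation strictly raises valuations ($z^\dagger\prec 1$ for all $z\in\mathbb L^\times$, hence $v(z')>v(z)$) and that $\upl\prec 1$, so the additive decomposition $B=\id+R$ with $R(z)=z'-\upl z$ has $v(R(z))>v(z)$ for $z\ne 0$; consequently $B$ is valuation-preserving and the Newton-type update $y\mapsto y-\bigl(B(y)-g\bigr)$ strictly improves the error. A transfinite run of this iteration, using spherical completeness at limit stages and the valuation-preservation of $B$ to show the error valuation strictly increases across limits, terminates by a cardinality bound on $\Gamma_{\mathbb L}$. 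This is a direct ``contraction in a spherically complete valued field'' argument (essentially a special case of the differential Hensel lemma where the dominant part $D_B=Y$ is linear with a simple zero), and it is both more elementary and more self-contained than the paper's proof. The paper instead argues indirectly: it first shows $v\bigl(B(\mathbb L)\bigr)=v(\mathbb L^\times)$ and $\exc^{\ev}_{\mathbb L}(B)=\emptyset$ using Lemma~\ref{11.6.15v} and the gap/$\upl_n$-machinery, and then invokes Lemmas~\ref{lem:solve order 1} and~\ref{lem:no max} to conclude from spherical completeness that a missing $g$ would force a proper immediate asymptotic extension. What the paper's heavier route buys is reuse: Lemma~\ref{lem:solve order 1} and its supporting material are precisely what is needed for Proposition~\ref{prop:non-iso}, where one works over the non--spherically-complete field $K$ and with the operator $A=\der-\upl$ whose constant term is $\prec 1$ rather than $\asymp 1$, so your contraction argument would not apply there. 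The only point you gloss over---that $a^\dagger\prec 1$ for \emph{all} $a\in\mathbb L^\times$, not just monomials---does require that $\mathbb L$ is $\d$-valued (so $a\asymp 1$ implies $a=c(1+\epsilon)$ with $c\in\R^\times$, $\epsilon\prec 1$, whence $a^\dagger\asymp\epsilon'\prec 1$), but as you say this is a standard fact which the paper itself uses in the same proof.
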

\begin{proof} Set $s:=\upl-1$. We have $\upl\prec 1$ and for 
$a\in \mathbb{L}^\times$ we have $a^\dagger \prec 1$. Thus
$$\big\{v(s-a^\dagger):\ a\in \mathbb{L}^\times\big\}\ =\ \{0\}\ \subseteq\ \Psi_{\mathbb{L}}^{\downarrow}.$$
Let $g\in \mathbb L^\times$. Applying Lemma~\ref{11.6.15v} yields: 
$$vg\notin v\big(B(\mathbb L)\big)\ \Longleftrightarrow\ g^\dagger-s\ \text{ creates a gap over }\ \mathbb L.$$
We have $\upl_n \leadsto \upl$. If $vg\notin v\big(B(\mathbb L)\big)$, then $\upl_n\leadsto s+g^\dagger$ by 
\cite[11.5.12]{ADH} and the above equivalence, so
$v(1+g^\dagger)>\Psi_{\mathbb{L}}$ by \cite[Lemma 11.5.2]{ADH}. But $g^\dagger\prec 1$, so
$v(1+g^\dagger)=0\in \Psi_{\mathbb{L}}^{\downarrow}$. Thus
$v\big(B(\mathbb L)\big)=v(\mathbb L^\times)$.
As we saw,
$v(g^\dagger-s)\in \Psi_{\mathbb{L}}^{\downarrow}$ for all 
$g\in \mathbb{L}^\times$, so 
$\exc^{\ev}_{\mathbb{L}}(B)=\emptyset$, by 
\cite[Example at end of Section~11.1]{ADH}.
 The desired result now follows from Lemmas~\ref{lem:solve order 1} and \ref{lem:no max} and the spherical completeness of $\mathbb L$. 
\end{proof}

\noindent
As a consequence of Proposition~\ref{BLL} we have
$\ex^x\in A(M)$: taking $y\in \mathbb L$ with $B(y)=1$ gives $A(y\ex^x)=\ex^x$. In view of the remarks just before Proposition~\ref{prop:non-iso} we also obtain that $\ex^x+c\notin A(M)$ for all nonzero $c\in \R$.


\begin{thebibliography}{5}


\bibitem{ADH} M. Aschenbrenner, L. van den Dries, J. van der Hoeven,
{\it Asymptotic Differential Algebra and Model Theory of Transseries,} Annals of Mathematics Studies, vol.~195, Princeton University Press, Princeton, NJ, 2017.

\bibitem{ADHO}\bysame,
{\em Toward a model theory for trans\-series,}
Notre Dame J. Form. Log. {\bf 54} (2013), 279--310.

\bibitem{DP} L. van den Dries, N. Pynn-Coates, {\it On the uniqueness of maximal immediate extensions of valued differential fields,} submitted,  arXiv:1707.07034.

\bibitem{Ka} I. Kaplansky, \textit{Maximal fields with valuations,} Duke Math. J. {\bf 9}	(1942), 303--321.
 


\bibitem{K} W. Krull, {\em Allgemeine Bewertungstheorie,} J. Reine Angew. Math. {\bf 167} (1932), 160--196.

\bibitem{Ma}
M. Matusinski, \textit{On generalized series fields and exponential-logarithmic series fields with deri\-vations,}
in:  A. Campillo et al. (eds.), \textit{Valuation Theory in Interaction,}  pp. 350--372, 
EMS Series of Congress Reports, European Mathematical Society (EMS), Z\"urich, 2014.

\bibitem{Sc}
T. Scanlon, \textit{A model complete theory of valued $D$-fields,} J. Symbolic Logic {\bf 65} (2000),  1758--1784.


\end{thebibliography}
\end{document}